\title{The horoboundary of outer space, and growth under random automorphisms}
\author{Camille Horbez}
\begin{document}
\maketitle
%\tableofcontents
\newtheorem{de}{Definition} [section]
\newtheorem{theo}[de]{Theorem} 
\newtheorem{prop}[de]{Proposition}
\newtheorem{lemma}[de]{Lemma}
\newtheorem{cor}[de]{Corollary}
\newtheorem{propd}[de]{Proposition-Definition}

\theoremstyle{remark}
\newtheorem{rk}[de]{Remark}
\newtheorem{ex}[de]{Example}
\newtheorem{question}[de]{Question}

\normalsize

\addtolength\topmargin{-.5in}
\addtolength\textheight{1.in}
\addtolength\oddsidemargin{-.045\textwidth}
\addtolength\textwidth{.09\textwidth}

\begin{abstract}
We show that the horoboundary of outer space for the Lipschitz metric is a quotient of Culler and Morgan's classical boundary, two trees being identified whenever their translation length functions are homothetic in restriction to the set of primitive elements of $F_N$. We identify the set of Busemann points with the set of trees with dense orbits. We also investigate a few properties of the horoboundary of outer space for the backward Lipschitz metric, and show in particular that it is infinite-dimensional when $N\ge 3$. We then use our description of the horoboundary of outer space to derive an analogue of a theorem of Furstenberg and Kifer \cite{FK83} and Hennion \cite{Hen84} for random products of outer automorphisms of $F_N$, that estimates possible growth rates of conjugacy classes of elements of $F_N$ under such products.
\end{abstract}

\setcounter{tocdepth}{2}
\tableofcontents

\section*{Introduction}

Over the past decades, the study of the group $\text{Out}(F_N)$ of outer automorphisms of a free group of rank $N$ has benefited a lot from the study of its action on some geometric complexes, among which stands Culler and Vogtmann's outer space \cite{CV86}. A main source of inspiration in this study comes from analogies with arithmetic groups acting on symmetric spaces, and mapping class groups of surfaces acting on Teichmüller spaces. \emph{Outer space} $CV_N$ (or its unprojectivized version $cv_N$) is the space of equivariant homothety (isometry) classes of simplicial free, minimal, isometric actions of $F_N$ on simplicial metric trees. It is naturally equipped with an asymmetric metric $d$ (i.e. $d$ satisfies the separation axiom and the triangle inequality, but we can have $d(x,y)\neq d(y,x)$). This metric is defined in analogy with Thurston's asymmetric metric on $Teich(S)$. The distance between two trees $T,T'\in CV_N$ is the logarithm of the infimal Lipschitz constant of an $F_N$-equivariant map from the covolume one representative of $T$ to the covolume one representative of $T'$ \cite{FM11}. We aim at giving a description of the horoboundary of outer space, which we then use to derive a statement about the growth of elements of $F_N$ under random products of automorphisms, analogous to a theorem of Furstenberg and Kifer \cite{FK83} and Hennion \cite{Hen84} about random products of matrices.

The horoboundary of a metric space was introduced by Gromov in \cite{Gro81}. Let $(X,d)$ be a metric space, and $b$ be a basepoint in $X$. Associated to any $z\in X$ is a continuous map 
\begin{displaymath}
\begin{array}{cccc}
\psi_z :&X&\to &\mathbb{R}\\
&x&\mapsto &d(x,z)-d(b,z)
\end{array}.
\end{displaymath}

\noindent Let $\mathcal{C}(X)$ be the space of real-valued continuous functions on $X$, equipped with the topology of uniform convergence on compact sets. Under some geometric assumptions on $X$, the map
\begin{displaymath}
\begin{array}{cccc}
\psi:&X&\to&\mathcal{C}(X)\\
&z&\mapsto &\psi_z
\end{array}
\end{displaymath}

\noindent  is an embedding, and taking the closure of its image yields a compactification of $X$, called the \emph{horofunction compactification}. The space $\overline{\psi(X)}\smallsetminus\psi(X)$ is called the \emph{horoboundary} of $X$. In \cite{Wal11}, Walsh extended this notion to the case of asymmetric metric spaces. 

Walsh identified the horofunction compactification of the Teichmüller space of a compact surface, with respect to Thurston's asymmetric metric, with Thurston's compactification, defined as follows (see \cite{FLP79}). Let $\mathcal{C}(S)$ denote the set of free homotopy classes of simple closed curves on $S$. The space $\text{Teich}(S)$ embeds into $\mathbb{PR}^{\mathcal{C}(S)}$ by sending any element to the collection of all lengths of geodesic representatives of homotopy classes of simple closed curves, and the image of this embedding has compact closure. Thurston has identified the boundary with the space of projectivized measured laminations on $S$. 

In the context of group actions on trees, lengths of curves are replaced by translation lengths of elements of the group. The \emph{translation length} of an element $g$ of a group $G$ acting by isometries on an $\mathbb{R}$-tree $T$ is defined as $||g||_T:=\inf_{x\in T}d_T(x,gx)$. Looking at the translation lengths of all elements of $F_N$ yields an embedding of $cv_N$ into $\mathbb{R}^{F_N}$, whose image has projectively compact closure, as was proved by Culler and Morgan \cite{CM87}. This compactification $\overline{CV_N}$ of outer space was described by Cohen and Lustig \cite{CL95} and Bestvina and Feighn \cite{BF94} as the space of homothety classes of minimal, very small, isometric actions of $F_N$ on $\mathbb{R}$-trees, see also \cite{Hor14-5}.

We prove that Culler and Morgan's compactification of outer space is not isomorphic to the horofunction compactification. To get the horocompactification of outer space, one has to restrict translation length functions to the set $\mathcal{P}_N$ of primitive elements of $F_N$, i.e. those elements that belong to some free basis of $F_N$. This yields an embedding of $CV_N$ into $\mathbb{PR}^{\mathcal{P}_N}$, whose image has compact closure $\overline{CV_N}^{prim}$, called the \emph{primitive compactification} \cite{Hor14-1}. Alternatively, the space $\overline{CV_N}^{prim}$ is the quotient of $\overline{CV_N}$ obtained by identifying two trees whenever their translation length functions are equal in restriction to $\mathcal{P}_N$. An explicit description of this equivalence relation in terms of trees was given in \cite[Theorem 0.2]{Hor14-1}. The equivalence class of a tree with dense $F_N$-orbits consists of a single point. The typical example of a nontrivial equivalence class is obtained by equivariantly folding an edge $e$ of the Bass--Serre tree of a splitting of the form $F_N=F_{N-1}\ast$ along some translate $ge$, where $g\in F_{N-1}$ is not contained in any proper free factor of $F_{N-1}$.

\begin{theo} \label{intro-horocompactification}
There exists a unique $\text{Out}(F_N)$-equivariant homeomorphism from $\overline{CV_N}^{prim}$ to the horocompactification of $CV_N$ which restricts to the identity on $CV_N$. For all $z\in\overline{CV_N}^{prim}$, the horofunction associated to $z$ is given by $$\psi_z(x)= \log \sup_{g\in \mathcal{P}_N}\frac{||g||_z}{||g||_x}-\log\sup_{g\in \mathcal{P}_N}\frac{||g||_z}{||g||_b}$$ for all $x\in CV_N$ (identified with its covolume $1$ representative).
\end{theo}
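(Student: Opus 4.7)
The plan is to construct an $\mathrm{Out}(F_N)$-equivariant map $\Psi : \overline{CV_N}^{prim} \to \mathcal{C}(CV_N)$ by the formula of the statement, and to show that $\Psi$ is a continuous injection whose image is precisely the horocompactification of $CV_N$.

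The crucial preliminary is a \emph{primitive version of White's formula}: for every $T, T' \in CV_N$ (taken with their covolume-one representatives),
\[
d(T',T) \;=\; \log \sup_{g \in \mathcal{P}_N} \frac{\|g\|_T}{\|g\|_{T'}}.
\]
White's theorem provides this equality with the supremum taken over $F_N \setminus \{1\}$ and identifies a finite family of \emph{candidate} loops in $T'/F_N$ that realize the supremum. I would argue either that these candidates can be chosen to represent primitive elements, or that primitive elements approximate the supremum arbitrarily well, invoking density properties of $\mathcal{P}_N$ and folding constructions in the spirit of \cite{Hor14-1}. This is the main obstacle: it is precisely this step that explains why the horocompactification collapses onto the primitive, rather than the full Culler--Morgan, compactification.

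Granted the primitive White formula, for $T \in CV_N$ the function $\Psi_T$ defined by the stated formula coincides with the horofunction $\psi_T$. The formula is homogeneous of degree zero in $\|\cdot\|_z$ and depends only on its restriction to $\mathcal{P}_N$, so $\Psi_z$ descends to a well-defined map on $\overline{CV_N}^{prim}$. Injectivity follows directly from the definition of the primitive compactification: two distinct points have non-proportional translation length functions on $\mathcal{P}_N$, and appropriate test trees $x \in CV_N$ detect the difference in $\Psi_z(x)$. Continuity is the remaining analytic work: along any $z_n \to z$ in $\overline{CV_N}^{prim}$ one has pointwise convergence $\|g\|_{z_n} \to \|g\|_z$ on $\mathcal{P}_N$ after suitable normalization, and this must be upgraded to convergence of the two suprema in the formula, uniformly for $x$ in compact subsets of $CV_N$. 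The Lipschitz bound $\|g\|_{z_n}/\|g\|_x \le e^{d(x,b)} \cdot \|g\|_{z_n}/\|g\|_b$, itself a consequence of White's formula applied to $x$ and $b$, allows one to truncate the supremum to an effectively finite family of primitive elements achieving near-maximal ratios, and pointwise convergence then yields uniform convergence on compacts.

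Since $\overline{CV_N}^{prim}$ is compact and $\Psi$ is a continuous injection extending the canonical embedding $\psi : CV_N \hookrightarrow \mathcal{C}(CV_N)$, the image $\Psi(\overline{CV_N}^{prim})$ is a compact subset of $\mathcal{C}(CV_N)$ containing $\psi(CV_N)$, hence containing its closure $\overline{\psi(CV_N)}$. For the reverse inclusion, any horofunction is a subsequential limit of some $\psi_{T_n}$ with $T_n \in CV_N$; compactness of $\overline{CV_N}^{prim}$ extracts a further subsequence $T_n \to z$, and continuity of $\Psi$ identifies the limit horofunction with $\Psi_z$. This yields the desired $\mathrm{Out}(F_N)$-equivariant homeomorphism, the explicit horofunction formula being built into the construction.
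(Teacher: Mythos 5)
Your proposal follows essentially the same route as the paper: establish the primitive form of White's formula (which, as you suspect, holds because candidates are primitive elements, see \cite[Lemma 1.12]{Hor14-1}), extend $\psi$ to $\overline{CV_N}^{prim}$ by the stated formula, check continuity using the finite set of candidates in $x$ and in $b$ together with the fact that the $\psi'_{z_n}$ are $1$-Lipschitz for $d_{sym}$, and conclude by compactness of $\overline{CV_N}^{prim}$ exactly as you do in your last paragraph.

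The one step you understate is injectivity: it does \emph{not} follow ``directly from the definition of the primitive compactification''. Knowing that $z\neq z'$ have non-proportional length functions on $\mathcal{P}_N$ is not enough, because $\Psi_z$ only records the suprema $\sup_{g\in\mathcal{P}_N}\|g\|_z/\|g\|_x$ as $x$ varies, and one must show these suprema determine the projectivized restriction of $\|\cdot\|_z$ to $\mathcal{P}_N$. This is where the only genuine construction of the proof lives: for each primitive $g$ one takes roses $x_\epsilon$ with the petal labelled by $g$ of length $\epsilon$ and the other petals of fixed length; since the candidate set of $x_\epsilon$ is independent of $\epsilon$ and the lengths of the other candidates stay bounded below, $\psi_z(x_\epsilon)$ tends to $+\infty$ as $\epsilon\to 0$ if and only if $\|g\|_z\neq 0$, and in that case its asymptotics recover $\|g\|_z$ up to the constant $C(z)=\sup_{\mathcal{F}(b)}\|g\|_z/\|g\|_b$. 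Hence $\psi_z=\psi_{z'}$ forces the elliptic primitive elements to coincide and the ratios $\|g\|_{z'}/\|g\|_z$ to be constant on the non-elliptic ones, giving $z=z'$. Your phrase ``appropriate test trees detect the difference'' points in the right direction, but without this degenerating-rose argument (or the variant in Remark \ref{rk-horo-spine} staying in the thick part) the injectivity claim is unsupported.
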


Both suprema in the above formula can be taken over a finite set of elements that only depends on $x$ and $b$. We could also choose any representative of $z$ in $\overline{cv_N}$, and take the supremum over all elements of $F_N$. Denoting by $\text{Lip}(x,z)$ the infimal Lipschitz constant of an $F_N$-equivariant map from $x$ to a fixed representative of $z$ in $\overline{cv_N}$, we also have $$\psi_z(x)=\log\text{Lip}(x,z)-\log\text{Lip}(b,z).$$

A special class of horofunctions in the horoboundary of a metric space $X$ comes from points arising as limits of infinite almost-geodesic rays in $X$, called \emph{Busemann points} \cite{Rie02}. Walsh proved that all points in the horoboundary of the Teichmüller space of a compact surface are Busemann. This is no longer true in outer space, one obstruction coming from the noncompleteness of outer space, see \cite{AK12}: some points in the boundary are reached in finite time along geodesic intervals. We show that Busemann points in the horoboundary of outer space coincide with trees having dense orbits under the $F_N$-action. 

As the Lipschitz metric on outer space is not symmetric, one can also consider the horoboundary of outer space for the backward metric. We investigate some of its properties, but we only give a complete description when $N=2$. There seems to be some kind of duality between the two boundaries we get, the horofunctions for the backward metric being expressed in terms of dual currents. Topologically though, both boundaries are of rather different nature. For example, we show that the backward horocompactification has infinite topological dimension when $N\ge 3$, while the forward horocompactification of outer space has dimension $3N-4$.
\\
\\
\indent Our motivation for understanding the horoboundary of outer space comes from the question of describing the behaviour of random walks on $\text{Out}(F_N)$. Karlsson and Ledrappier proved that a typical trajectory of the random walk on a locally compact group $G$ acting by isometries on a proper metric space $X$ follows a (random) direction, given by a point in the horoboundary of $X$, see \cite{KL06,KL11-2}. 

Given a probability measure $\mu$ on a group $G$, the \emph{left random walk} on $(G,\mu)$ is the Markov chain on $G$ whose initial distribution is given by the Dirac measure at the identity element of $G$, with transition probabilities given by $p(x,y):=\mu(yx^{-1})$. In other words, the position $\Phi_n$ of the random walk at time $n$ is given from its position $e$ at time $0$ by multiplying successively $n$ independent increments $\phi_i$ of law $\mu$ on the left, i.e. $\Phi_n=\phi_n\dots\phi_1$. 

Random walks on linear groups were first considered by Furstenberg and Kesten \cite{FK60}, who studied the asymptotic behaviour of the norm $||X_n\dots X_1||$ of a product of independent matrix-valued random variables. Furstenberg then studied the growth of the vector norms $||X_n\dots X_1v||$, where $v\in\mathbb{R}^N$, along typical sample paths of the random walk on $(SL(N,\mathbb{R}),\mu)$, where $\mu$ is a probability measure. He showed \cite[Theorems 8.5 and 8.6]{Fur63-2} that if $\mu$ satisfies some moment condition, and if the support of $\mu$ generates a noncompact \emph{irreducible} subgroup of $SL(N,\mathbb{R})$, then all vectors in $\mathbb{R}^N\smallsetminus\{0\}$ have the same positive exponential growth rate along typical sample paths of the random walk. Here, a subgroup is \emph{irreducible} if it does not virtually preserve any proper linear subspace of $\mathbb{R}^N$. An analogue of Furstenberg's result for random products in the mapping class group of a compact surface $S$, was provided by Karlsson \cite[Corollary 4]{Kar12}. More precisely, Karlsson showed (again under some moment and irreducibility condition) that the lengths of all isotopy classes of essential simple closed curves on $S$ have the same exponential growth rate. Karlsson derived this statement from Karlsson and Ledrappier's ergodic theorem, by using Walsh's description of the horoboundary of the Teichmüller space of $S$, equipped with Thurston's asymmetric metric.

We use our description of the horoboundary of outer space to study the growth of conjugacy classes of elements of $F_N$ under random products of outer automorphisms, and prove an analogue of Furstenberg's and Karlsson's results in this context. Let $\mu$ be a probability measure on $\text{Out}(F_N)$. In the (generic) case where the support of $\mu$ generates a \emph{nonelementary} subgroup of $\text{Out}(F_N)$, we show that all elements of $F_N$ have the same positive exponential growth rate along typical sample paths of the random walk on $(\text{Out}(F_N),\mu)$. Here, by \emph{nonelementary}, we mean a subgroup of $\text{Out}(F_N)$ which does not virtually fix the conjugacy class of any finitely generated subgroup of $F_N$ of infinite index (this is the analogue of Furstenberg's irreducibility condition). The length $||g||$ of an element $g\in F_N$ (or more precisely of its conjugacy class) is defined as the word length of the cyclically reduced representative of $g$ in the standard basis of $F_N$. The group $\text{Out}(F_N)$ acts on the set of conjugacy classes of elements of $F_N$. In the following statement, we denote by $d_{CV_N}^{sym}$ the symmetrized version of the Lipschitz metric on $CV_N$, defined by setting $d_{CV_N}^{sym}(T,T'):=d_{CV_N}(T,T')+d_{CV_N}(T',T)$.

\begin{theo} \label{intro-random}
Let $\mu$ be a probability measure on $\text{Out}(F_N)$, whose support is finite and generates a non virtually cyclic, nonelementary subgroup of $\text{Out}(F_N)$. Then there exists a (deterministic) constant $\lambda>0$ such that for all $g\in F_N$, and almost every sample path $(\Phi_n)_{n\in\mathbb{N}}$ of the random walk on $(\text{Out}(F_N),\mu)$, we have $$\lim_{n\to +\infty}\frac{1}{n}\log{||\Phi_n(g)||}=\lambda.$$
\end{theo}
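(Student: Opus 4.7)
My plan is to combine Theorem~\ref{intro-horocompactification} with Karlsson and Ledrappier's multiplicative ergodic theorem \cite{KL06,KL11-2}. Let $b_0\in CV_N$ be the rose carrying the standard basis, so that $\|g\|_{b_0}=\|g\|$ and $\|\Phi_n(g)\|=\|g\|_{\Phi_n^{-1}\cdot b_0}$. White's formula $\sup_{g\in F_N}\|g\|_{T'}/\|g\|_T=e^{d_{CV_N}(T,T')}$ together with $\text{Out}(F_N)$-equivariance of $d_{CV_N}$ gives $d_{CV_N}(b_0,\Phi_n^{-1}b_0)=d_{CV_N}(\Phi_n b_0,b_0)$, and since $\mu$ has finite support, Kingman's subadditive ergodic theorem produces almost sure deterministic limits $\lambda^-:=\lim_n\frac{1}{n}d_{CV_N}(\Phi_n b_0,b_0)$ and $\lambda^+:=\lim_n\frac{1}{n}d_{CV_N}(b_0,\Phi_n b_0)$. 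Under the nonelementary, non virtually cyclic hypothesis, both should be strictly positive by standard drift-positivity arguments for random walks on $\text{Out}(F_N)$ acting on its hyperbolic complexes. The upper bound $\limsup\frac{1}{n}\log\|\Phi_n(g)\|\le \lambda^-$ is then immediate from White's formula.

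The matching lower bound will come from Karlsson--Ledrappier applied to the inverse walk $\Phi_n^{-1}$, which has the same law as a walk driven by the reflected measure $\check\mu$, and whose forward displacement $d_{CV_N}(b_0,\Phi_n^{-1}b_0)$ equals $d_{CV_N}(\Phi_n b_0,b_0)$ and thus has drift $\lambda^-$. This produces almost surely a horofunction $h_\omega$ with $-\frac{1}{n}h_\omega(\Phi_n^{-1}b_0)\to \lambda^-$. By Theorem~\ref{intro-horocompactification}, one can write $h_\omega=\psi_{z_\omega}$ for a tree $z_\omega\in\overline{cv_N}$ and use the supremum over all of $F_N$, obtaining
$$\sup_{g\in F_N}\frac{\|g\|_{z_\omega}}{\|g\|_{\Phi_n^{-1}b_0}}=e^{-\lambda^- n+o(n)}.$$
Bounding each term by the supremum, for every $g\in F_N$ with $\|g\|_{z_\omega}>0$ one has
$$\|\Phi_n(g)\|=\|g\|_{\Phi_n^{-1}b_0}\ge \|g\|_{z_\omega}\,e^{\lambda^- n+o(n)},$$
which yields $\liminf\frac{1}{n}\log\|\Phi_n(g)\|\ge \lambda^-$. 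Setting $\lambda:=\lambda^-$ will then close the argument.

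The principal obstacle is justifying that almost surely $\|g\|_{z_\omega}>0$ for every $g\ne 1$, equivalently that $z_\omega$ has dense $F_N$-orbits and hence a free action. By the Busemann-point characterization mentioned in the introduction, this is the same as asking that the random horofunction produced by Karlsson--Ledrappier is almost surely a Busemann point, and this is precisely where the nonelementary hypothesis is essential. My plan is to show that the random walk converges almost surely in $\overline{CV_N}^{prim}$ to the $\text{Out}(F_N)$-invariant subspace of arational trees with dense orbits, by transferring known convergence of nonelementary random walks at the boundary of a hyperbolic $\text{Out}(F_N)$-complex (for instance the free factor graph, whose Gromov boundary is parametrized by arational trees) back to convergence in $\overline{CV_N}^{prim}$. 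Once $z_\omega$ is known to have dense orbits, all nontrivial translation lengths are positive and the lower bound applies uniformly in $g$.
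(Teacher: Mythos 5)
Your overall architecture matches the paper's: apply Karlsson--Ledrappier to the inverse walk, identify the horofunction with a tree $z_\omega\in\overline{CV_N}^{prim}$ via Theorem~\ref{intro-horocompactification}, get the upper bound from subadditivity and the lower bound from the horofunction, and reduce to showing that $z_\omega$ almost surely has no elliptic elements. The drift-positivity step is also what the paper does (it goes through the free factor complex and Calegari--Maher). Where you diverge is the justification that $z_\omega$ gives a free action.

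There is a genuine gap at exactly that point. You assert: ``Once $z_\omega$ is known to have dense orbits, all nontrivial translation lengths are positive.'' This is false. A tree with dense $F_N$-orbits can have nontrivial point stabilizers; the standard example is a tree dual to an arational measured foliation on a once-punctured surface, where the boundary curve is elliptic. These are exactly the surface-type arational trees, and they \emph{do} lie in (the relevant piece of) the Gromov boundary of the free factor graph. So even if you successfully transfer the a.s.\ convergence from $\partial FF_N$ back to $\overline{CV_N}^{prim}$ and land on an arational tree, you do not automatically get freeness, and for the boundary curve $g$ your claimed lower bound $\|\Phi_n(g)\|\ge \|g\|_{z_\omega}e^{\lambda^- n+o(n)}$ degenerates because $\|g\|_{z_\omega}=0$. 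The same conflation appears when you say that Busemann points suffice: Busemann points are characterized by dense orbits (Theorem~\ref{Busemann-dense}), which is strictly weaker than freeness.

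The paper avoids this by a different mechanism: it does not deduce freeness from arationality, but instead uses Karlsson's refinement (Proposition~\ref{Karlsson-Ledrappier}) plus a maximum-principle argument on $\mu$-stationary measures (Proposition~\ref{nonelementary} via Lemma~\ref{disjoint-translations}). The point is that if a $\mu$-stationary measure on $CV_N(\infty)$ gave positive mass to trees with a nontrivial point stabilizer, the measurable map sending a tree to its finite set of conjugacy classes of point stabilizers would produce a finite $gr(\mu)$-orbit of a conjugacy class of an infinite-index finitely generated subgroup, contradicting nonelementarity; therefore every stationary measure is concentrated on free actions, and one can choose $T(\omega)$ free. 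You could salvage your route by adding an argument of this type to rule out the surface case (for example: a surface-type arational limit would force the conjugacy class of the boundary curve to have finite $gr(\mu)$-orbit), but as written the step from ``dense orbits'' to ``no elliptics'' is a gap, and it is precisely the step where the nonelementary hypothesis must be used beyond just getting positive drift.
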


The growth rate $\lambda$ is equal to the drift of the random walk on $CV_N$ for the asymmetric Lipschitz metric.
\\
\\
\indent Even if we no longer assume the subgroup generated by the support of $\mu$ to be nonelementary, we can still provide information about possible growth rates of elements of $F_N$ under random automorphisms. This time, several growth rates can arise, and we give a bound on their number. Our main result is an analogue of a version of Oseledets' multiplicative theorem, that is due to Furstenberg and Kifer \cite[Theorem 3.9]{FK83} and Hennion \cite[Théorème 1]{Hen84} in the case of random products of matrices. Given a probability measure $\mu$ on the linear group $GL(N,\mathbb{R})$, Furstenberg--Kifer and Hennion's theorem states that there exists a (deterministic) filtration of $\mathbb{R}^N$ by linear subspaces $\{0\}=L_0\subseteq L_1\subseteq\dots\subseteq L_r=\mathbb{R}^N$, and (deterministic) \emph{Lyapunov exponents} $0\le\lambda_1<\dots<\lambda_r$, so that for all $i\in\{1,\dots,r\}$, all $v\in L_i\smallsetminus L_{i-1}$, and almost every sample path $(A_n)_{n\in\mathbb{N}}$ of the left random walk on $(GL(N,\mathbb{R}),\mu)$, we have $$\lim_{n\to +\infty}\frac{1}{n}\log||A_n v||=\lambda_i.$$ In the case of free groups, the filtration of $\mathbb{R}^N$ is replaced by the following notion. A \emph{filtration} of $F_N$ is a finite rooted tree, whose nodes are labelled by subgroups of $F_N$, in which the label of the root is $F_N$, and the children of a node labelled by $H$ are labelled by subgroups of $H$.

\begin{theo}\label{intro-FK}
Let $\mu$ be a probability measure on $\text{Out}(F_N)$, with finite first moment with respect to $d_{CV_N}^{sym}$. Then there exists a (deterministic) filtration of $F_N$, and a (deterministic) \emph{Lyapunov exponent} $\lambda_H\ge 0$ associated to each subgroup $H$ of the filtration, with $\lambda_{H'}\le\lambda_{H}$ as soon as $H'$ is a child of $H$, such that the following holds. 

For all $g\in F_N$ conjugate into a subgroup $H$ of the filtration, but not conjugate into any of the children of $H$, and almost every sample path $(\Phi_n)_{n\in\mathbb{N}}$ of the random walk on $(\text{Out}(F_N),\mu)$, we have $$\lim_{n\to +\infty}\frac{1}{n}\log ||\Phi_n(g)||=\lambda_H.$$ In addition, there are at most $\frac{3N-2}{4}$ positive Lyapunov exponents.
\end{theo}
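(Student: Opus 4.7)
The plan is to combine Karlsson and Ledrappier's multiplicative ergodic theorem with Theorem \ref{intro-horocompactification}, and then recursively build the filtration by analyzing the elliptic structure of the limit trees.

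The first step is to apply the Karlsson--Ledrappier theorem to the isometric action of $\text{Out}(F_N)$ on $(CV_N, d_{CV_N}^{sym})$. Under the finite first moment hypothesis, for almost every sample path $\omega$ there exists a horofunction $\psi_{T_\omega}$, with $T_\omega \in \overline{CV_N}^{prim}$, such that
$$\lim_{n\to\infty} \frac{1}{n}\psi_{T_\omega}(\Phi_n^{-1}\cdot b) = -\ell,$$
where $\ell\ge 0$ is the drift for the symmetric metric. Plugging in the explicit formula from Theorem \ref{intro-horocompactification} and using the equivariance $||g||_{\Phi_n^{-1}b}=||\Phi_n(g)||_b$, this convergence translates into asymptotic control on the word lengths $||\Phi_n(g)||$: elements which remain loxodromic in $T_\omega$ realize the top growth rate, while elements conjugate into a point stabilizer of $T_\omega$ can only grow more slowly. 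This yields the top Lyapunov exponent $\lambda_{F_N}=\ell$, attached to the root of the filtration.

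The second step is to identify the candidates for children of $F_N$. An ergodicity argument (shift-invariance of the distribution of $T_\omega$) shows that the set of conjugacy classes of point stabilizers of $T_\omega$ is almost surely a deterministic free factor system of $F_N$, whose representatives are exactly the children of the root. The argument is then iterated: for each such child $H$, one restricts the random walk to stopping times where the conjugacy class of $H$ is preserved, obtains thereby an induced walk on the outer automorphism group of $H$, and applies the same analysis to the outer space of $H$ and its horoboundary. The Lyapunov exponents produced by these sub-walks are automatically $\le \lambda_H$ of the parent, by the monotonicity built into the horofunction formula. This gives the tree-indexed family of exponents with the required monotonicity.

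The main obstacle is making this recursion rigorous: showing that the induced walks on outer automorphism groups of subgroups satisfy the integrability hypotheses of Karlsson--Ledrappier, that the Lyapunov exponents emerging at each node are almost surely deterministic, and that the process terminates in finitely many steps. For the bound $\frac{3N-2}{4}$ on positive exponents, the expected strategy is a complexity count on nested free factor systems: each strict drop in Lyapunov exponent corresponds to passing to a proper invariant free factor system, and one can attach a combinatorial invariant (measuring total rank and number of components) which decreases by at least $4$ at each such strict drop, while being bounded initially by $3N-2$. Verifying this combinatorial inequality on the allowed transitions, and ensuring that the "invisible" steps (where the exponent does not strictly drop but the filtration branches) do not contribute, is the most delicate part of the argument.
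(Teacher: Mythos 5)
Your overall architecture — Karlsson--Ledrappier applied to the Lipschitz metric, identification of the limiting horofunction with a tree $T_\omega\in\overline{CV_N}^{prim}$ with dense orbits when the drift is positive, children given by point stabilizers of $T_\omega$, and induction via first-return measures on the stabilizer of each child's conjugacy class — is the paper's approach. But two of your steps contain genuine gaps. First, the point stabilizers of a very small $F_N$-tree with dense orbits are \emph{not} a free factor system: a typical limit tree is a graph of actions whose elliptic vertex groups are amalgamated to surface pieces along boundary curves, and such a vertex group is in general not contained in any proper free factor. The paper only records that these subgroups are malnormal, of rank at most $N-1$ (via Gaboriau--Levitt's index inequality $\sum(2\,\mathrm{rk}(G_v)-1)\le 2N-2$), and that their conjugacy classes have finite $gr(\mu)$-orbit. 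That last fact does not follow from ``shift-invariance of the distribution of $T_\omega$'' — the distribution of $T_\omega$ need not be atomic — but from a maximum-principle argument applied to $\mu$-stationary measures on the horoboundary (the map sending a tree to its finite set of conjugacy classes of point stabilizers is measurable and equivariant into a countable set, so any set of positive stationary measure must meet a finite $gr(\mu)$-orbit). Determinism of the exponents themselves is then obtained afterwards, by comparing the first-return walks along the finitely many conjugates of $H$ and showing a discrepancy between the induced exponents would force $\mu$ to have infinite first moment.

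Second, and more seriously, your counting scheme for the bound $\frac{3N-2}{4}$ would fail. Because the children are point stabilizers rather than free factors, the only bound one gets from rank considerations alone is $N-1$ non-leaf nodes, and this is sharp for general chains of actions (one can nest surface pieces so that each level loses exactly one unit of rank). The improvement to $\frac{3N-2}{4}$ requires an additional probabilistic input: one must show the chain of actions arising from the random walk is \emph{good}, meaning every boundary curve of a surface dual to an indecomposable component of some $T_H$ is eventually elliptic (conjugate into a leaf), hence grows subexponentially. This is established by a second stationary-measure argument showing that the stabilizers of the indecomposable components themselves have finite $gr(\mu)$-orbit, so their boundary curves have finite orbit and cannot grow. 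Only for good chains does the combinatorial count — which combines the Gaboriau--Levitt index formula with an Euler-characteristic identity $N=R+E_0-V+1$ for graph-of-groups decompositions with cyclic edge groups, and a case analysis on the exceptional low-complexity surfaces (once-punctured torus, four-holed sphere) — yield $\frac{3N-2}{4}$. Your proposed invariant ``decreasing by at least $4$ per strict drop on nested free factor systems'' has the right numerology but does not survive contact with the actual structure of the filtration.
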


We can actually be a bit more precise about the filtration that arises in Theorem \ref{intro-FK}, namely: at each level of the filtration, the children of $H$ coincide with a set of representatives of the conjugacy classes of point stabilizers of some very small $H$-tree with dense orbits. In addition, the conjugacy class of a subgroup of $F_N$ arising in the filtration has finite orbit under the subgroup of $\text{Out}(F_N)$ generated by the support of $\mu$.

To prove Theorem \ref{intro-FK}, we start by associating to almost every sample path of the random walk on $(\text{Out}(F_N),\mu)$ a (random) filtration, before showing that this filtration can actually be chosen to be deterministic (i.e. independent from the sample path). By Karlsson and Ledrappier's theorem, almost every sample path of the random walk is directed by a (random) horofunction. It follows from our description of the horoboundary of outer space that this horofunction is associated to a (random) tree $T$. We show that we may choose $T$ to have dense $F_N$-orbits as soon as the drift of the random walk is positive (if not, then all elements of $F_N$ have subexponential growth). The children of the root will be representatives of the conjugacy classes of the point stabilizers of $T$. We then argue by induction on the rank to construct the whole filtration. One ingredient of the proof is the study of stationary measures on the horoboundary of outer space.

A consequence of Theorem \ref{intro-FK} is that given an element $g\in F_N$, either $g$ grows subexponentially along almost every sample path of the random walk, or $g$ grows exponentially at speed $\lambda_H$ (independent of the sample path). The number of possible growth rates is uniformly bounded in the rank $N$ of the free group. 

The bound on the number of Lyapunov exponents was inspired by a result of Levitt about possible growth rates of elements of $F_N$ under iteration of a single automorphism \cite{Lev09}. In the case where $\mu$ is a Dirac measure supported on some element of $\text{Out}(F_N)$, Theorem \ref{intro-FK} specifies to Levitt's statement. In a sense, the bound on the number of Lyapunov exponents is optimal: in \cite{Lev09}, Levitt gave an example of a single automorphism of $F_N$ with exactly $\frac{3N-2}{4}$ exponential growth rates, see Example \ref{sharp-good} of the present paper. We saw however that in the opposite (and generic) case where the support of $\mu$ generates a sufficiently big subgroup of $\text{Out}(F_N)$, all conjugacy classes in $F_N$ have the same positive growth rate. 

By using Walsh's description of the horoboundary of the Teichmüller space for Thurston's metric, and building on Karlsson's ideas in \cite{Kar12}, our methods also yield the analogous result about growth of curves under random products of elements of the mapping class group of a compact surface. In this case, the appropriate analogue of the filtration is given by a decomposition of the surface into subsurfaces. 

We also provide a version of Theorem \ref{intro-random} in the case where increments are no longer assumed to be independent, in analogy to Karlsson's in \cite[Theorem 2]{Kar12}, see Section \ref{cocycle}. It would be interesting to know whether our methods can generalize to give a full version of an Oseledets-like theorem for ergodic cocycles of automorphisms of free groups.
\\
\\
\indent The paper is organised as follows. In Section \ref{sec-outer-space}, we recall basic facts about outer space, and present two ways of compactifying it (namely, Culler and Morgan's compactification, and the primitive compactification), as well as the Lipschitz (asymmetric) metric on outer space. Section \ref{sec-horo} is devoted to the identifiaction of the horofunction compactification of outer space with the primitive compactification. In Section \ref{sec-Busemann}, we investigate the geometry of the horoboundary of outer space. In particular, we discuss the link between the horoboundary and the metric completion of outer space, which was identified by Algom-Kfir in \cite{AK12} with the space of trees in $\overline{CV_N}$ having a nontrivial simplicial part, and with trivial arc stabilizers; we also identify the set of Busemann points with the set of trees with dense $F_N$-orbits in $\overline{CV_N}$. In Section \ref{sec-backward}, we discuss some properties of the horoboundary of outer space for the backward metric, and give a description when $N=2$. The last section of the paper is devoted to the study of the growth of elements of $F_N$ under random products of elements of $\text{Out}(F_N)$. We start with the case of ergodic cocycles of automorphisms (Section \ref{cocycle}), before turning to the case of random walks on $\text{Out}(F_N)$. The case of a random walk on a nonelementary subgroup is treated in Section \ref{sec-nonelementary}. Finally, Section \ref{sec-rw-2} is devoted to the proof of our $\text{Out}(F_N)$-version of Furstenberg--Kifer and Hennion's theorem. 

\section*{Acknowledgments}

I would like to thank my advisor Vincent Guirardel for suggesting many stimulating questions, and for the many discussions we had that led to improvements in the exposition of the material in the paper.

\section{Background on outer space} \label{sec-outer-space}

\subsection{Outer space}

Let $N\ge 2$. Denote by $R_N$ the graph having one vertex and $N$ edges, whose petals are identified with some free basis of $F_N$. A \emph{marked metric graph} is a pair $(X,\rho)$, where $X$ is a compact graph, all of whose vertices have valence at least $3$, equipped with a path metric (each edge being assigned a positive length that makes it isometric to a segment), and $\rho:R_N\to X$ is a homotopy equivalence. \emph{Outer space} $CV_N$ was defined by Culler and Vogtmann in \cite{CV86} to be the space of equivalence classes of marked metric graphs, two graphs $(X,\rho)$ and $(X',\rho')$ being equivalent if there exists a homothety $h:X\to X'$ such that $\rho'$ is homotopic to $h\circ\rho$. Passing to the universal cover, one can alternatively define outer space as the space of simplicial free, minimal, isometric actions of $F_N$ on simplicial metric trees, up to equivariant homothety (an action of $F_N$ on a tree is said to be \emph{minimal} if there is no proper invariant subtree). It is possible to normalize all the graphs in $CV_N$ to have volume $1$. We denote by $cv_N$ the \emph{unprojectivized outer space}, in which graphs (or equivalently trees) are considered up to isometry, instead of homothety.  The group $\text{Out}(F_N)$ acts on $CV_N$ on the right by precomposing the markings (we may also want to consider a left action by setting $\Phi.X:=X.\Phi^{-1}$ for $\Phi\in\text{Out}(F_N)$ and $X\in CV_N$). This action is proper but not cocompact, however outer space has a \emph{spine} $K_N$, which is a deformation retract of $CV_N$ on which $\text{Out}(F_N)$ acts cocompactly. For $\epsilon>0$, we also define the \emph{$\epsilon$-thick part} $CV_N^{\epsilon}$ of outer space to be the subspace of $CV_N$ consisting of (normalized) graphs having no loop of length smaller than $\epsilon$, on which $\text{Out}(F_N)$ acts cocompactly. The reader is referred to \cite{Vog02} for an excellent survey and reference article about outer space.

\subsection{Culler and Morgan's compactification of outer space} 

An \emph{$\mathbb{R}$-tree} is a metric space $(T,d_T)$ in which any two points $x$ and $y$ are joined by a unique embedded topological arc, which is isometric to a segment of length $d_T(x,y)$. Let $T$ be an \emph{$F_N$-tree}, i.e. an $\mathbb{R}$-tree equipped with an isometric action of $F_N$. For $g\in F_N$, the \emph{translation length} of $g$ in $T$ is defined to be $$||g||_T:=\inf_{x\in T}d_T(x,gx).$$ Culler and Morgan have shown in \cite[Theorem 3.7]{CM87} that the map
\begin{displaymath}
\begin{array}{cccc}
i:&cv_N&\to &\mathbb{R}^{F_N}\\
&T&\mapsto & (||g||_T)_{g\in F_N}
\end{array}
\end{displaymath}

\noindent is injective (and actually a homeomorphism onto its image for the \emph{weak topology} on outer space introduced in \cite{CV86}), whose image has projectively compact closure $\overline{CV_N}$ \cite[Theorem 4.5]{CM87}. Bestvina and Feighn \cite{BF94}, extending results by Cohen and Lustig \cite{CL95}, have characterized the points of this compactification, see also \cite{Hor14-5}. They showed that $\overline{CV_N}$ is the space of homothety classes of minimal, \emph{very small} $F_N$-trees, i.e. trees with trivial or maximally cyclic arc stabilizers and trivial tripod stabilizers. We also denote by $\overline{cv_N}$ the lift of $\overline{CV_N}$ to $\mathbb{R}^{F_N}$. We call the topology induced by this embedding on each of the spaces $CV_N, \overline{CV_N}, cv_N$ and $\overline{cv_N}$ the \emph{axes topology}, it is equivalent to the weak topology on $CV_N$. Bestvina and Feighn showed that $\overline{CV_N}$ has topological dimension $3N-4$, their result was improved by Gaboriau and Levitt who computed the dimension of the boundary $\partial CV_N$.

\begin{theo} \label{dimension} (Bestvina--Feighn \cite[Corollary 7.12]{BF94}, Gaboriau--Levitt \cite[Theorem V.2]{GL95})
The closure $\overline{CV_N}$ of outer space has dimension $3N-4$. The boundary $\partial CV_N$ has dimension $3N-5$.
\end{theo}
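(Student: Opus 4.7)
The plan is to bound the topological dimension of $\overline{CV_N}$ stratum by stratum, using the structure theorem for very small $F_N$-trees. First I would recall the Levitt decomposition: every point of $\overline{cv_N}$ admits a canonical presentation as a graph of actions, with a simplicial ``skeleton'' of $F_N$ and vertex actions that are either trivial (points) or indecomposable dense-orbit actions of free factor-like subgroups. Thus $\overline{CV_N}$ is naturally stratified according to (i) the isomorphism type of the underlying graph of groups, (ii) which vertex actions are dense, and (iii) which edges have trivial versus maximally cyclic stabilizer. I would set $d(N):=\dim\overline{CV_N}$ and argue by induction on $N$: on each stratum the local parameters are the edge lengths of the skeleton plus the parameters of each dense-orbit vertex action, the latter being bounded by $d(k)$ in rank $k$.

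Second, I would get the lower bound $d(N)\ge 3N-4$ for free by exhibiting an open simplex of $CV_N$ coming from a trivalent marked rose or graph: an $F_N$-graph with $3N-3$ edges has $3N-3$ edge-length parameters in $cv_N$, dropping to $3N-4$ after projectivizing. For the upper bound I would use the following Euler-characteristic bookkeeping for a graph of groups decomposition of $F_N$ with vertex groups of ranks $r_1,\dots,r_k$ and $e$ edges (with cyclic or trivial edge stabilizers): one has $e\le N-1+\sum r_i$ with equality exactly for trivial edge stabilizers and trivial-group vertices outside the dense-orbit parts. Combining this with the inductive estimate on dense-orbit factors gives, for any non-top stratum, strictly fewer than $3N-3$ parameters in $\overline{cv_N}$ and hence strictly fewer than $3N-4$ in $\overline{CV_N}$. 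The top value $3N-4$ is then only attained on the strata coming from $CV_N$ itself (trivalent free actions).

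For the boundary statement $\dim\partial CV_N=3N-5$, the point is that any tree in $\partial CV_N$ fails at least one of the defining conditions of $CV_N$: either it has a nontrivial point stabilizer, or a nontrivial (maximally cyclic) arc stabilizer, or a non-simplicial indecomposable component. In each case the above bookkeeping loses at least one parameter: a cyclic edge stabilizer or a vertex group of positive rank is matched by a smaller $e$ in the Euler relation; a dense-orbit $F_k$-component contributes at most $d(k)<3k-4$ parameters by induction (since indecomposable dense-orbit trees form a proper closed subset of $\overline{CV_k}$, avoiding the top-dimensional simplicial strata). Putting all cases together yields $\dim\partial CV_N\le 3N-5$, and the reverse inequality is realized concretely, e.g.\ by taking a sequence of simplicial trees in $CV_N$ in which a single loop is collapsed or equivariantly folded.

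The main obstacle, as I see it, is the combinatorial bookkeeping in the inductive step: one must ensure that across \emph{every} admissible graph of groups decomposition --- with simplicial pieces, cyclic edge stabilizers, and several dense-orbit vertex actions on lower-rank free groups --- the total parameter count never exceeds $3N-3$ in $\overline{cv_N}$, with equality precisely on the maximal open simplices of $CV_N$, and drops to at most $3N-4$ whenever any boundary degeneration is present. Making this uniform requires a clean form of the Euler identity for graphs of groups, together with the inductive hypothesis not just for $\dim\overline{CV_k}$ but for the sharper statement that the subspace of dense-orbit trees in $\overline{CV_k}$ has dimension strictly less than $3k-4$.
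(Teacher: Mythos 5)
The theorem is cited without proof; what the paper actually does is adapt Gaboriau--Levitt's argument to the primitive compactification in the corollary that immediately follows. That argument is a \emph{$\mathbb{Q}$-rank} argument, quite different from yours: one sets $r_{\mathbb{Q}}(T)$ to be the $\mathbb{Q}$-dimension of $L(T)\otimes_{\mathbb{Z}}\mathbb{Q}$, where $L(T)\subseteq\mathbb{R}$ is generated by the translation lengths, shows $r_{\mathbb{Q}}(T)\le 3N-3$ with equality forcing $T\in CV_N$, and then proves that the set of projectivized length functions of $\mathbb{Q}$-rank at most $k$ has topological dimension at most $k-1$. Both dimension claims then drop out simultaneously, with no stratification or countable-sum bookkeeping needed. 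Your stratum-by-stratum Euler-characteristic approach is closer in spirit to Bestvina--Feighn's original Corollary 7.12 (the other cited source), so it is a legitimate alternative route, and you correctly isolate the delicate point; but as written it has two soft spots. First, the justification ``indecomposable dense-orbit trees form a proper closed subset of $\overline{CV_k}$, avoiding the top-dimensional simplicial strata'' does not by itself imply smaller dimension --- a proper closed subset of a $d$-dimensional space can have dimension $d$ (e.g.\ a closed ball in $\mathbb{R}^d$). What you actually need is that dense-orbit trees lie in $\partial CV_k$, so the inductive hypothesis $\dim\partial CV_k\le 3k-5$ applies; and one must state that hypothesis as part of the induction, not derive it from ``proper closed subset.'' Second, you implicitly use a countable sum theorem for topological dimension (a countable union of pieces of dimension $\le d$ still has dimension $\le d$); this requires the strata to be $F_\sigma$, which holds here because they are locally closed in a separable metric space, but this should be said. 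The $\mathbb{Q}$-rank route buys you exactly the avoidance of these two technicalities, at the cost of the less elementary intermediate statement about $\mathbb{Q}$-ranks of very small actions; your route is more elementary in inputs but requires carrying the sharper inductive hypothesis on $\partial CV_k$ and the sum theorem explicitly.
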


\subsection{Primitive compactification of outer space}\label{sec-prim}

Let $\mathcal{P}_N$ denote the set of primitive elements of $F_N$, i.e. elements that belong to some free basis of $F_N$. In \cite[Section 2.4]{Hor14-1}, we defined another compactification of outer space, called the \emph{primitive compactification}, by only looking at translation lengths of primitive elements of $F_N$. We get a continuous injective map $$i_{prim}:CV_N\to\mathbb{PR}^{\mathcal{P}_N}$$ which is a homeomorphism onto its image, and whose image has compact closure $\overline{CV_N}^{prim}$  \cite[Theorem 2.9]{Hor14-1}. This compactification is isomorphic to $\overline{CV_N}/\sim$, where $\sim$ denotes the \emph{primitive-equivalence} relation, that identifies two trees whose translation length functions are projectively equal in restriction to $\mathcal{P}_N$. The $\sim$-relation was explicitely described in \cite{Hor14-1}. In particular, we showed that the $\sim$-class of every tree with dense $F_N$-orbits is reduced to a point. We also proved that every $\sim$-class contains a \emph{canonical} representative $T$, so that for all trees $T'\sim T$, there is an $F_N$-equivariant morphism from $T$ to $T'$ (in particular, all elliptic elements in $T$ are also elliptic in $T'$).

The computation of the topological dimension of the closure and the boundary of outer space in \cite[Theorem V.2]{GL95} adapts to compute the topological dimension of $\overline{CV_N}^{prim}$ and the boundary $\partial CV_N^{prim}:=\overline{CV_N}^{prim}\smallsetminus CV_N$.

\begin{cor}
The space $\overline{CV_N}^{prim}$ has dimension $3N-4$. The boundary $\partial CV_N^{prim}$ has dimension $3N-5$.
\end{cor}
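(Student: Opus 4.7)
The plan is to adapt the stratification method that Gaboriau and Levitt use in their proof of Theorem \ref{dimension}. For the lower bounds, note that the primitive-equivalence relation is trivial on $CV_N$, so the quotient map $q:\overline{CV_N}\to\overline{CV_N}^{prim}$ restricts to a homeomorphism on $CV_N$. In particular $CV_N$ is an open subset of $\overline{CV_N}^{prim}$ of dimension $3N-4$ (inherited from $\overline{CV_N}$), giving $\dim\overline{CV_N}^{prim}\ge 3N-4$. For the boundary, I would exhibit a subspace $Y\subseteq\partial CV_N$ of dimension $3N-5$ on which $\sim$ is trivial. By \cite[Theorem 0.2]{Hor14-1}, the $\sim$-class of every tree with dense $F_N$-orbits is a singleton, so it suffices to extract such a $Y$ from Gaboriau and Levitt's stratification, for instance a top-dimensional stratum of indecomposable boundary trees with dense orbits. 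Then $q|_Y$ is a homeomorphism onto its image, yielding $\dim\partial CV_N^{prim}\ge 3N-5$.

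For the upper bounds, I would revisit Gaboriau and Levitt's proof. They partition $\overline{CV_N}$ into countably many strata indexed by the topological type of the Levitt decomposition of a very small $F_N$-tree into its simplicial part together with its indecomposable dense components; each stratum is, up to projectivization, a product of an open simplex recording the edge lengths of the simplicial part with strata parametrizing each dense component, and the dimension count shows that every stratum has dimension at most $3N-4$, and at most $3N-5$ if the tree lies in $\partial CV_N$. By the countable sum theorem for topological dimension in separable metric spaces, it suffices to show that the image of each stratum under $q$ has dimension at most that of the original stratum. This follows from the explicit description of $\sim$-classes in \cite[Theorem 0.2]{Hor14-1}: each nontrivial class corresponds to folding along an edge of a Bass--Serre tree in a prescribed direction, and thus to collapsing coordinates within a single stratum, so passing to the quotient cannot raise dimensions.

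The main obstacle is to verify that the $\sim$-identifications are truly compatible with the Gaboriau--Levitt stratification: one must check that no $\sim$-class glues together pieces of strata of different combinatorial types in a dimension-increasing way, and that within a single stratum the identifications affect only a lower-dimensional sub-simplicial subset. This is a careful but essentially routine check based on the explicit description in \cite[Theorem 0.2]{Hor14-1}, after which the countable sum theorem delivers the stated upper bounds and completes the proof.
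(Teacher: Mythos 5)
Your lower-bound argument is sound: the restriction of $q:\overline{CV_N}\to\overline{CV_N}^{prim}$ to $CV_N$ is a homeomorphism, giving $\dim\overline{CV_N}^{prim}\ge 3N-4$, and exhibiting a $(3N-5)$-dimensional family in $\partial CV_N$ on which $\sim$ is trivial (or, as the paper does, simply an embedded $(3N-5)$-simplex of simplicial actions directly in $\partial CV_N^{prim}$) handles the boundary.

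The upper-bound argument, however, has a real gap, and it also departs from what Gaboriau and Levitt actually do. You propose to stratify $\overline{CV_N}$, observe that each stratum has dimension at most $3N-4$ (resp. $3N-5$ for boundary strata), and then claim ``passing to the quotient cannot raise dimensions.'' This last step is exactly where the argument breaks down: a continuous surjection between separable metric spaces can raise topological dimension. Even under the favourable hypothesis that $q$ restricted to a stratum is closed with finite fibers, Hurewicz's dimension-raising theorem only gives $\dim q(\Sigma)\le\dim\Sigma + (k-1)$ when fibers have cardinality at most $k$, and the $\sim$-classes described in \cite[Theorem 0.2]{Hor14-1} are genuinely nontrivial on boundary strata, so one cannot take $k=1$. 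Verifying that the identifications are ``dimension-preserving'' is therefore not a routine check; it would require a detailed analysis of how the $\sim$-classes intersect each stratum, which is not supplied.

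The paper avoids the quotient map entirely. Gaboriau and Levitt's upper bound in \cite[Theorem V.2]{GL95} is not a stratification argument but a $\mathbb{Q}$-rank argument: for a tree $T$, let $r_{\mathbb{Q}}(T)$ be the $\mathbb{Q}$-rank of the subgroup of $\mathbb{R}$ generated by the translation lengths. Their \cite[Theorem IV.4]{GL95} gives $r_{\mathbb{Q}}(T)\le 3N-3$ with equality only for $T\in CV_N$, and their \cite[Proposition V.1]{GL95} shows that the subset of $\mathbb{PR}^{S}$ consisting of projectivized functions of $\mathbb{Q}$-rank $\le k$ has topological dimension $\le k-1$, for any countable index set $S$. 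Applying this with $S=\mathcal{P}_N$ immediately bounds $\dim\overline{CV_N}^{prim}\le 3N-4$ and $\dim\partial CV_N^{prim}\le 3N-5$, with no need to understand the fibers of $q$. You should replace the stratification step by this $\mathbb{Q}$-rank argument (noting that the $\mathbb{Q}$-rank computed from primitive elements alone is bounded by the $\mathbb{Q}$-rank from all elements, so the GL95 bound applies verbatim).
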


\begin{proof}
For all $T\in\overline{CV_N}^{prim}$, let $L(T)$ be the subgroup of $\mathbb{R}$ generated by the translation lengths in $T$ of all primitive elements of $F_N$. The \emph{$\mathbb{Q}$-rank} $r_{\mathbb{Q}}(T)$ is the dimension of the $\mathbb{Q}$-vector space $L(T)\otimes_{\mathbb{Z}}\mathbb{Q}$. Then \cite[Theorem IV.4]{GL95} shows that for all $T\in\overline{CV_N}^{prim}$, we have $r_{\mathbb{Q}}(T)\le 3N-3$, and that equality may hold only if $T\in CV_N$. In addition, we get as in \cite[Proposition V.1]{GL95} that the space $\mathbb{PR}_{\le k}^{\mathcal{P}_N}$ of all projectivized length functions with $\mathbb{Q}$-rank smaller than or equal to $k$ has topological dimension smaller than or equal to $k-1$. Since we can find a $(3N-4)$-simplex in $CV_N$, and a $(3N-5)$-simplex consisting of simplicial actions in $\partial CV_N^{prim}$, the claim follows.
\end{proof}

\subsection{Metric properties of outer space} \label{sec-metric}

There is a natural asymmetric metric on outer space, whose systematic study was initiated by Francaviglia and Martino in \cite{FM11}: given normalized marked metric graphs $(X,\rho)$ and $(X',\rho')$ in $CV_N$, the distance $d(X,X')$ is defined to be the logarithm of the infimal (in fact minimal by an easy Arzelà--Ascoli argument, see \cite[Lemma 3.4]{FM11}) Lipschitz constant of a map $f:X\to X'$ such that $\rho'$ is homotopic to $f\circ\rho$. This may also be defined as the logarithm of the infimal Lipschitz constant of an $F_N$-equivariant map between the corresponding trees (see the discussion in \cite[Sections 2.3 and 2.4]{AK12}). This defines a topology on outer space, which is equivalent to the classical one (see \cite[Theorems 4.11 and 4.18]{FM11}). The metric on outer space is not symmetric. One can define a symmetric metric by setting $d_{sym}(X,X'):=d(X,X')+d(X',X)$. Elements of $\text{Out}(F_N)$ act by isometries on $CV_N$ with respect to $d$ or $d_{sym}$. Given an $F_N$-tree $T\in CV_N$, an element $g\in F_N$ is a \emph{candidate} in $T$ if it is represented in the quotient graph $X:=T/F_N$ by a loop which is either
\begin{itemize}
\item an embedded circle in $X$, or

\item a bouquet of two circles in $X$, i.e. $\gamma=\gamma_1\gamma_2$, where $\gamma_1$ and $\gamma_2$ are embedded circles in $X$ which meet in a single point, or

\item a barbell graph, i.e. $\gamma=\gamma_1e\gamma_2\overline{e}$, where $\gamma_1$ and $\gamma_2$ are embedded circles in $X$ that do not meet, and $e$ is an embedded path in $X$ that meets $\gamma_1$ and $\gamma_2$ only at their origin (and $\overline{e}$ denotes the path $e$ crossed in the opposite direction). 
\end{itemize}

\noindent The following result, due to White, gives an alternative description of the metric on outer space. A proof can be found in \cite[Proposition 3.15]{FM11}, it was simplified by Algom-Kfir in \cite[Proposition 2.3]{AK11}.

\begin{theo} \label{White} (White, see \cite[Proposition 3.15]{FM11} or \cite[Proposition 2.3]{AK11}) 
For all $T$,$T'\in CV_N$, we have $$d(T,T')=\log \sup_{g\in F_N\smallsetminus\{e\}}\frac{||g||_{T'}}{||g||_T}.$$ The supremum is achieved for an element $g\in F_N$ which is a candidate in $T$.
\end{theo}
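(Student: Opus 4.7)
The plan is to prove the two inequalities separately. One direction is essentially formal: if $f:T\to T'$ is any $F_N$-equivariant $L$-Lipschitz map and $g\in F_N\smallsetminus\{e\}$, then picking $x$ on the axis of $g$ in $T$ and using equivariance gives $||g||_{T'}\le d_{T'}(f(x),gf(x))=d_{T'}(f(x),f(gx))\le L\cdot d_T(x,gx)=L\cdot||g||_T$. Taking the infimum over $L$ and the supremum over $g$ yields $\log\sup_{g\neq e}||g||_{T'}/||g||_T\le d(T,T')$.

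For the reverse inequality, I would exhibit an explicit element attaining equality. By Arzel\`a--Ascoli (as used in \cite[Lemma~3.4]{FM11}) there exists an $F_N$-equivariant map $f:T\to T'$ achieving the infimal Lipschitz constant $\Lambda:=e^{d(T,T')}$; arrange $f$ to be linear on each edge of $T$. Descend to $\bar f:X\to X'$ with $X=T/F_N$ and define the \emph{tension subgraph} $\Delta\subseteq X$ as the union of edges on which $\bar f$ stretches by exactly $\Lambda$. The remaining task is to locate inside $\Delta$ a candidate loop whose image in $X'$ has no backtracking: for such a loop representing a conjugacy class $[g]$, its $X$-length equals $||g||_T$ (embedded circle) or admits a controlled decomposition (figure-eight, barbell), while the tight image in $X'$ forces $||g||_{T'}\ge\Lambda\cdot||g||_T$. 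Combined with the first paragraph this gives the equality and exhibits a candidate realizing the supremum.

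The main obstacle, and the heart of the argument, is the following structural lemma: the restriction of $\bar f$ to $\Delta$ must support a tight loop of one of the three candidate types. I would argue by contradiction using a local relaxation. If no such loop exists, then at some vertex $v\in\Delta$ the images under $\bar f$ of the outgoing directions of all $\Delta$-edges at $v$ fail to cover the space of directions at $f(v)$ in $T'$; equivalently, there is a direction $\delta$ at $f(v)$ avoided by every $\Delta$-edge image. Moving $f(v)$ a small distance along $\delta$ strictly decreases the slope of $\bar f$ on every maximally-stretched edge at $v$; and since the slopes on non-$\Delta$ edges are bounded strictly below $\Lambda$, they remain below $\Lambda$ for small enough perturbation. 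Performing the perturbation $F_N$-equivariantly and iterating over the finitely many vertex orbits of $\Delta$ produces a new equivariant map with Lipschitz constant strictly less than $\Lambda$, contradicting minimality. The three candidate types are precisely the topological configurations in $\Delta$ whose presence blocks the existence of such a free direction everywhere: an embedded circle pins the tangent at each of its vertices, a figure-eight at a common vertex with opposite image-tangents blocks every direction at that shared vertex, and a barbell transmits the obstruction across its tensed bridge.

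Finally, once a tight candidate loop $\gamma\subset\Delta$ is produced, I would verify case by case that the associated element $g\in F_N$ satisfies $||g||_{T'}=\Lambda\cdot||g||_T$: in the embedded-circle case, a lift of $\gamma$ is a fundamental domain of the axis of $g$ in $T$ of length $||g||_T$ and is mapped by $f$ to a $g$-equivariant tight path of length $\Lambda\cdot||g||_T$ in $T'$, so this path is the axis of $g$ in $T'$; in the figure-eight and barbell cases, the absence of backtracking of the image of $\gamma$ at the shared vertex, respectively along the retraced bridge $\overline e$, ensures that the commutator/concatenation structure of $\gamma$ translates into a computation of $||g||_{T'}$ that realizes $\Lambda\cdot||g||_T$. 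This produces the required candidate and completes the proof.
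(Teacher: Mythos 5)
The paper does not prove this statement; it is quoted from Francaviglia--Martino \cite[Proposition 3.15]{FM11} and Algom-Kfir \cite[Proposition 2.3]{AK11}, and your overall strategy (easy inequality via equivariance, then an optimal map, its tension graph, and a legal candidate loop inside it) is exactly the strategy of those proofs. Your first paragraph is correct.

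However, the "local relaxation" step at the heart of your argument is stated backwards, and this is a genuine error rather than a slip of wording. You claim that if some direction $\delta$ at $f(v)$ is \emph{avoided} by the images of all $\Delta$-edges at $v$, then moving $f(v)$ a small distance \emph{along} $\delta$ decreases the stretch of those edges. In an $\mathbb{R}$-tree the opposite happens: if the image path of an edge at $v$ leaves $f(v)$ in a direction different from $\delta$, then displacing its initial point by $\epsilon$ along $\delta$ \emph{increases} the distance to the far endpoint by $\epsilon$, so every slope at $v$ goes up. The correct criterion is in terms of gates: the map can be relaxed at $v$ precisely when the $\Delta$-directions at $v$ all map to a \emph{single} direction at $f(v)$ (one gate), and the perturbation moves $f(v)$ along that common image direction, which shortens all the image paths simultaneously. "Failing to cover all directions at $f(v)$" is also not equivalent to having one gate (a trivalent vertex of $T'$ with two gates hit is not relaxable), so both the hypothesis and the move in your contradiction argument are wrong. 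Separately, even after one knows that every vertex of $\Delta$ admits at least two gates, the passage from "there exists a legal loop in $\Delta$" to "there exists a legal loop of one of the three candidate types" is a genuine combinatorial argument (one builds a legal path, uses finiteness to force a recurrence, and performs surgery to get a loop crossing each edge at most twice, then classifies such loops); your sentence asserting that the three types are "precisely the configurations that block a free direction" asserts the conclusion rather than proving it. Both points need to be repaired for the proof to stand.
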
 

Notice in particular that candidates in $T$ are primitive elements of $F_N$ (see \cite[Lemma 1.12]{Hor14-1}, for instance). White's theorem has been extended by Algom-Kfir in \cite[Proposition 4.5]{AK12} to the case where $T\in\overline{cv_N}$ is a simplicial tree, and $T'\in\overline{cv_N}$ is arbitrary (Algom-Kfir actually states her result for trees in the metric completion of outer space). The extension to all trees in $\overline{cv_N}$ was made in \cite[Theorem 0.3]{Hor14-1}. Given $T,T'\in\overline{cv_N}$, we denote by $\text{Lip}(T,T')$ the infimal Lipschitz constant of an $F_N$-equivariant map from $T$ to the metric completion $\overline{T'}$ if such a map exists, and $+\infty$ otherwise. In the following statement, we take the conventions $\frac{1}{0}=+\infty$ and $\frac{0}{0}=0$.

\begin{theo}(Horbez \cite[Theorem 0.3]{Hor14-1})
For all $T,T'\in\overline{cv_N}$, we have $$\text{Lip}(T,T')=\sup_{g\in F_N}\frac{||g||_{T'}}{||g||_T}.$$
\end{theo}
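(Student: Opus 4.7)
The inequality $\text{Lip}(T,T')\geq \sup_{g\in F_N}||g||_{T'}/||g||_T$ is the easy direction. If $f:T\to\overline{T'}$ is $L$-Lipschitz and $F_N$-equivariant, then for any $g\in F_N$ and any $\varepsilon>0$ we pick $x\in T$ with $d_T(x,gx)\leq ||g||_T+\varepsilon$, and equivariance gives $||g||_{T'}\leq d_{T'}(f(x),gf(x))\leq L(||g||_T+\varepsilon)$; letting $\varepsilon\to 0$ yields $||g||_{T'}\leq L||g||_T$. If $||g||_T=0$ for some $g\neq e$, then $g$ fixes a point of $T$, whose image under $f$ is fixed by $g$, so $||g||_{T'}=0$ and the convention $0/0=0$ applies. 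The remaining edge cases (a trivial factor, or $g$ elliptic in $T$ but hyperbolic in $T'$) are handled by the conventions and by noting that no equivariant map can exist in the second case, forcing $\text{Lip}(T,T')=+\infty$.

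For the reverse inequality, set $\Lambda:=\sup_{g\in F_N}||g||_{T'}/||g||_T$, which we may assume is finite. The plan is to approximate $T$ by simplicial trees and reduce to Algom-Kfir's theorem (the statement recalled just before the theorem, which handles the case of $T$ simplicial and $T'$ arbitrary in $\overline{cv_N}$). Concretely, I would produce a sequence of simplicial trees $T_n\in cv_N$ together with $F_N$-equivariant morphisms $p_n:T_n\to T$ such that $T_n\to T$ in the axes topology. For $T$ with dense $F_N$-orbits this is the classical simplicial approximation of Levitt--Paulin; for a general $T\in\overline{cv_N}$, one uses the decomposition of $T$ into a simplicial part and subtrees with dense orbit-actions of their stabilizers, and assembles the approximation from pieces. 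Because morphisms are $1$-Lipschitz and equivariant, and both $T_n$ and $T$ are minimal, one gets $||g||_T\leq||g||_{T_n}$ for every $g\in F_N$, so $\sup_g||g||_{T'}/||g||_{T_n}\leq\Lambda$. Applying Algom-Kfir's theorem to each $T_n$ then yields $F_N$-equivariant $\Lambda$-Lipschitz maps $f_n:T_n\to\overline{T'}$.

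The heart of the argument is then to extract from $(f_n)$ a $\Lambda$-Lipschitz equivariant map $f:T\to\overline{T'}$. Fix a basepoint $x_0\in T$ and, using surjectivity of each morphism, pick $x_{0,n}\in T_n$ with $p_n(x_{0,n})=x_0$. Equivariance combined with the uniform Lipschitz bound confines $f_n(x_{0,n})$ to a bounded region of $\overline{T'}$: for any hyperbolic $g\in F_N$ with $||g||_{T'}>0$, the identity $d_{T'}(y,gy)=||g||_{T'}+2d_{T'}(y,\text{Axis}(g))$ applied to $y=f_n(x_{0,n})$ produces an explicit bound on the distance from $f_n(x_{0,n})$ to the axis of $g$ in $T'$. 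Fixing a countable subset $S\subset T$ that meets every $F_N$-orbit and is dense in $T$, lifting each $s\in S$ to $s_n\in T_n$ through $p_n$, and applying a diagonal/ultrafilter extraction, one produces a pointwise limit on $S$; the uniform Lipschitz bound, equivariance, and completeness of $\overline{T'}$ then extend this to a $\Lambda$-Lipschitz $F_N$-equivariant map defined on all of $T$.

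The main obstacle is the compactness step, since $\overline{T'}$ need not be locally compact (a metric ball may contain infinitely many directions at a branch point), so Arzel\`a--Ascoli does not apply directly. The workaround is an ultrafilter argument over the countable dense set $S$, using completeness of $\overline{T'}$ together with Cauchy-type control coming from the isometric convergence of $p_n$ on larger and larger finite subtrees of $T$ to ensure the limit stays inside $\overline{T'}$ rather than drifting into a strictly larger ultraproduct. Verifying that the resulting $f$ is independent of the choices (lifts, ultrafilter) and genuinely $\Lambda$-Lipschitz on all of $T$ then requires careful bookkeeping at elliptic elements of $T$ and at points whose orbits are not dense.
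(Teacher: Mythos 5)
First, a remark on scope: the paper you are reading does not prove this statement at all --- it is imported verbatim from \cite{Hor14-1} (Theorem 0.3 there), so I am assessing your argument against the proof in that reference rather than against anything internal to this paper. Your easy direction is correct and standard. The overall strategy for the hard direction (approximate $T$ by simplicial trees, invoke Algom-Kfir's case, pass to a limit) is indeed the strategy of \cite{Hor14-1}; the problems are in how you execute the approximation and the limit.

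The first gap is your insistence on morphisms $p_n\colon T_n\to T$ from \emph{simplicial} trees that are ``isometric on larger and larger finite subtrees of $T$''. That amounts to exhibiting $T$ as a strong limit of simplicial trees, and by the Levitt--Paulin characterization of geometric actions this is impossible when $T$ is a geometric tree with dense orbits (e.g.\ a tree dual to an arational lamination on a surface): such a tree is not a proper strong limit of anything. Without $d_{T_n}(s_n,t_n)\to d_T(s,t)$ your Lipschitz estimate on the limit map collapses, since a morphism only gives the useless inequality $d_{T_n}(s_n,t_n)\ge d_T(s,t)$. The point you are missing is that the isometric control is not needed: all one needs from the approximation is that $T_n\to T$ non-projectively in the axes topology with $\|g\|_{T_n}\ge\|g\|_T$ for all $g$ (a ``Lipschitz approximation from above''), which \emph{does} exist by free simplicial trees for every tree with dense orbits, though proving this is itself a nontrivial chunk of \cite{Hor14-1}. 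One then forms the ultralimit $T_\omega=\lim_\omega T_n$ of the trees themselves (with basepoints of uniformly bounded displacement) and identifies its minimal subtree with $T$ via Culler--Morgan's uniqueness theorem, since its translation lengths are $\lim_\omega\|g\|_{T_n}=\|g\|_T$; this replaces your attempt to transport points of $T$ backwards through the $p_n$.

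The second gap is the extraction of the limit map. The optimal maps $f_n\colon T_n\to\overline{T'}$ supplied by White/Algom-Kfir for different $n$ bear no relation to one another, so the sequences $f_n(s_n)$ are bounded (your axis argument is fine) but have no reason whatsoever to be Cauchy; the ``Cauchy-type control coming from the isometric convergence of $p_n$'' constrains distances inside the source trees, not the positions of the images in $\overline{T'}$. The ultralimit therefore genuinely lands in the ultraproduct $\prod_\omega\overline{T'}$, and the correct repair is not to hope it does not, but to compose the (equivariant, $\Lambda$-Lipschitz) ultralimit map with the equivariant $1$-Lipschitz nearest-point projection of the ultraproduct onto the closed invariant subtree $\overline{T'}$, which preserves both properties. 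With these two corrections --- weak Lipschitz approximation plus Culler--Morgan in place of strong convergence, and projection in place of Cauchyness --- your outline becomes the proof of \cite{Hor14-1}; as written, both the approximation step and the limit step fail.
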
 

\section{The horocompactification of outer space} \label{sec-horo}

In this section, we define the horocompactification of outer space and show that it is isomorphic to the primitive compactification (in particular, it has finite topological dimension). Our approach is motivated by Walsh's analogous statements in the case of the Teichmüller space of a surface, equipped with Thurston's asymmetric metric \cite{Wal11}. 

We start by recalling the construction of a compactification of an (asymmetric) metric space by horofunctions, under some geometric assumptions. This notion was first introduced in the symmetric case by Gromov in \cite{Gro81}, we refer the reader to \cite[Section 2]{Wal11} for the case of an asymmetric metric.

Let $(X,d)$ be a (possibly asymmetric) metric space, and let $b\in X$ be some fixed basepoint. For all $z\in X$, we define a continuous map 
\begin{displaymath}
\begin{array}{cccc}
\psi_z:&X&\to&\mathbb{R}\\
&x&\mapsto &d(x,z)- d(b,z)
\end{array}.
\end{displaymath}

\noindent Let $\mathcal{C}(X)$ be the space of continuous real-valued functions on $X$, equipped with the topology of uniform convergence on compact sets of $(X,d_{sym})$ (where we recall that $d_{sym}(x,y):=d(x,y)+d(y,x)$). We get a map

\begin{center}
$\begin{array}{cccc}
\psi :& X&\to &\mathcal{C}(X)\\
& z&\mapsto& \psi_z
\end{array}$
\end{center}

\noindent which is continuous and injective, see \cite[Chapter II.1]{Bal95} or \cite[Lemma 2.1]{Wal11}. We say that an asymmetric metric space is \emph{quasi-proper} if
\begin{itemize}
\item the space $(X,d)$ is geodesic, and
\item the space $(X,d_{sym})$ is proper (i.e. closed balls are compact), and 
\item for all $x\in X$ and all sequences $(x_n)_{n\in\mathbb{N}}$ of elements of $X$, the distance $d(x_n,x)$ converges to $0$ if and only if $d(x,x_n)$ does.
\end{itemize} 

\begin{prop} \label{horocompactification-general}(Ballmann \cite[Chapter II.1]{Bal95}, Walsh \cite[Proposition 2.2]{Wal11})
Let $(X,d)$ be a (possibly asymmetric) quasi-proper metric space. Then $\psi$ defines a homeomorphism from $X$ to its image in $\mathcal{C}(X)$, and the closure $\overline{\psi(X)}$ in $\mathcal{C}(X)$ is compact.
\end{prop}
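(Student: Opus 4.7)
The plan is to verify continuity and injectivity of $\psi$, then deduce relative compactness of $\psi(X)$ by Arzel\`a--Ascoli on the proper space $(X, d_{sym})$, and finally to prove that $\psi^{-1}$ is continuous on $\psi(X)$; this last step is where all three components of the quasi-properness assumption combine nontrivially and constitutes the main obstacle.

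The asymmetric triangle inequality applied in both directions gives the basic estimates $|d(x, z) - d(x, z')| \le d_{sym}(z, z')$, $|d(x, z) - d(y, z)| \le d_{sym}(x, y)$, and $|d(x, z) - d(b, z)| \le d_{sym}(x, b)$ for all $x, y, z, z' \in X$. From these I would read off three properties: first, $|\psi_z(x) - \psi_{z'}(x)| \le 2\, d_{sym}(z, z')$, so $\psi$ is $2$-Lipschitz from $(X, d_{sym})$ to $\mathcal{C}(X)$ and in particular continuous for the compact-open topology; second, $|\psi_z(x) - \psi_z(y)| \le d_{sym}(x, y)$, so the family $\{\psi_z\}_{z \in X}$ is equi-Lipschitz on $(X, d_{sym})$; third, $|\psi_z(x)| \le d_{sym}(x, b)$, so the family is pointwise bounded. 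For injectivity, $\psi_z = \psi_{z'}$ evaluated at $z$ and $z'$ and summed yields $d(z, z') + d(z', z) = 0$, hence $z = z'$. The Arzel\`a--Ascoli theorem applied on the proper space $(X, d_{sym})$ then gives that $\psi(X)$ has compact closure in $\mathcal{C}(X)$.

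For the continuity of $\psi^{-1}$, given $\psi_{z_n} \to \psi_z$ in $\mathcal{C}(X)$, my plan is to first establish that $(z_n)$ is $d_{sym}$-bounded; once this is done, properness extracts a subsequential limit $z^* \in X$, continuity of $\psi$ gives $\psi_{z^*} = \psi_z$, injectivity forces $z^* = z$, and uniqueness of subsequential limits implies $z_n \to z$. The boundedness step is the hard part: the key observation is that each $\psi_z$ for $z \in X$ is bounded below by $-d(b, z)$ (since $d(x, z) \ge 0$), whereas if $(z_n)$ escapes to infinity in $d_{sym}$ then the limit of $\psi_{z_n}$ should be unbounded below. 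More precisely, using geodesicity I would produce, along $d$-geodesics from $b$ toward $z_n$, points at prescribed $d$-distance $R$ from $b$ on which $\psi_{z_n}$ takes the value $-R$; the symmetry-of-convergence axiom combined with $d_{sym}$-properness should then allow extracting a $d_{sym}$-subsequential limit $y^* \in X$ with $d(b, y^*) = R$, and uniform convergence of $\psi_{z_n}$ on $d_{sym}$-compact sets would force $\psi_z(y^*) = -R$, i.e., $d(y^*, z) = d(b, z) - R$; taking $R > d(b, z)$ yields the desired contradiction. The dual case $d(z_n, b) \to \infty$ is handled symmetrically. This would complete the proof of the proposition.
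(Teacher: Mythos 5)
Your first two steps are correct and are the standard route: the triangle-inequality estimates give that $\psi$ is $2$-Lipschitz into $(\mathcal{C}(X),\|\cdot\|_\infty)$, that each $\psi_z$ is $1$-Lipschitz for $d_{sym}$ and bounded by $d_{sym}(\cdot,b)$, injectivity follows by evaluating at $z$ and $z'$, and Arzelà--Ascoli on the proper space $(X,d_{sym})$ gives compactness of $\overline{\psi(X)}$. Your reduction of the homeomorphism statement to showing that no sequence escaping every $d_{sym}$-compact set can have $\psi_{z_n}\to\psi_z$ with $z\in X$ is also exactly where the geodesic hypothesis is meant to enter (the paper says as much in the proof of its proposition on the spine). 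The gap is in the execution of that last step.

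First, the extraction of a $d_{sym}$-convergent subsequence from the points $y_n$ with $d(b,y_n)=R$ does not follow from the three quasi-properness axioms. The convergence-symmetry axiom is purely local (it concerns sequences converging to a point of $X$) and gives no global comparison between $d(b,\cdot)$ and $d(\cdot,b)$, so the forward sphere $\{y:\,d(b,y)=R\}$ need not be $d_{sym}$-bounded, and $d_{sym}$-properness then buys you nothing. This is not a hypothetical worry: in $CV_N$ itself, out-going balls are not relatively compact --- a normalized rose with one petal of length $\epsilon$ stays at forward distance at most $\log\frac{N}{N-1}$ from the standard rose while leaving every compact set as $\epsilon\to 0$ --- so your points $\gamma_n(R)$ may escape to the boundary. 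Second, the ``dual case'' is not handled symmetrically: $\psi_z$ is built from distances \emph{into} $z$, so reversing arrows changes the function under study. If $d(z_n,b)\to\infty$ while $d(b,z_n)\le M$ (which is the typical escaping behaviour in outer space, where boundary points are reached at finite forward distance), then $\psi_{z_n}\ge -M$ everywhere and there are no points at which it takes the value $-R$ for $R>M$; the mechanism of your argument is simply unavailable there, and no alternative is proposed. Closing these gaps genuinely requires input beyond the three stated axioms; the present paper sidesteps the issue for $CV_N$ by proving directly, in Theorem \ref{horocompactification} via translation lengths of primitive elements, that the extension of $\psi$ to $\overline{CV_N}^{prim}$ is injective, which in particular separates boundary limits of the $\psi_{z_n}$ from the functions $\psi_y$ with $y\in CV_N$.
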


We call $\overline{\psi(X)}$ the \emph{horocompactification} of $X$, the elements in $X(\infty):=\overline{\psi(X)}\smallsetminus\psi(X)$ being \emph{horofunctions}. As noted in \cite[Section 2]{Wal11}, all the functions in $\overline{\psi(X)}$ are $1$-Lipschitz with respect to $d_{sym}$, so uniform convergence on compact sets of $(X,d_{sym})$ is equivalent to pointwise convergence. By the work of Francaviglia and Martino \cite[Theorems 5.5, 4.12 and 4.18]{FM11}, outer space is quasi-proper, so we can define its horocompactification. 

\begin{theo} \label{horocompactification}
There exists a unique $\text{Out}(F_N)$-equivariant homeomorphism from $\overline{CV_N}^{prim}$ to the horocompactification of $CV_N$ which restricts to the identity on $CV_N$. For all $z\in\overline{CV_N}^{prim}$, the horofunction associated to $z$ is given by $$\psi_z(x)= \log \sup_{g\in \mathcal{P}_N}\frac{||g||_z}{||g||_x}-\log\sup_{g\in \mathcal{P}_N}\frac{||g||_z}{||g||_b}$$ for all $x\in CV_N$ (identified with its covolume $1$ representative).
\end{theo}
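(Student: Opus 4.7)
My plan is to define an explicit map $\Psi : \overline{CV_N}^{prim} \to \mathcal{C}(CV_N)$ by the formula of the theorem, and then to verify, in order, that (a) $\Psi$ is well-defined and takes values in $\mathcal{C}(CV_N)$, (b) $\Psi$ agrees with the classical map $z\mapsto\psi_z$ on $CV_N$, (c) $\Psi$ is continuous, and (d) $\Psi$ is injective. Once these are in hand, continuity and injectivity of $\Psi$ on the compact Hausdorff space $\overline{CV_N}^{prim}$ force $\Psi$ to be a homeomorphism onto its image; by (b) this image contains $\psi(CV_N)$ and, being compact hence closed in $\mathcal{C}(CV_N)$, contains $\overline{\psi(CV_N)}$, while the reverse inclusion follows by approximating any $z\in\overline{CV_N}^{prim}$ by elements of $CV_N$ (which is dense by construction). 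Uniqueness of the extension is then automatic from the density of $CV_N$ in $\overline{CV_N}^{prim}$ and Hausdorffness of $\mathcal{C}(CV_N)$, and $\text{Out}(F_N)$-equivariance is read off the equivariance of translation lengths.

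Points (a), (b), (c) are largely formal. The right-hand side of the stated formula depends only on the projectivized restriction of $||\cdot||_z$ to $\mathcal{P}_N$, hence descends to $\overline{CV_N}^{prim}$. For any $x\in CV_N$ the extension of White's theorem to $\overline{cv_N}$ gives $\sup_{g\in\mathcal{P}_N}\frac{||g||_z}{||g||_x}=\text{Lip}(x,z)$ with the supremum realized on the finite set of candidates of $x$, all of which are primitive. This immediately gives $\Psi(z)\in\mathcal{C}(CV_N)$ by continuity of the Lipschitz constant in the first variable, and for $z\in CV_N$ it reproduces $\Psi(z)(x)=d(x,z)-d(b,z)=\psi_z(x)$. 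For continuity of $z\mapsto\Psi(z)$, the key observation is that the set of candidates of $x$ depends only on $x$, so that if $z_n\to z$ in $\overline{CV_N}^{prim}$, picking representatives in $\overline{cv_N}$ along which $||g||_{z_n}\to||g||_z$ pointwise on $\mathcal{P}_N$ makes the maximum over this fixed finite set converge, giving pointwise (hence, by Proposition \ref{horocompactification-general}, uniform-on-compacta) convergence of $\Psi(z_n)$ to $\Psi(z)$.

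The main obstacle is (d). Suppose $z,z'\in\overline{CV_N}^{prim}$ satisfy $\Psi(z)=\Psi(z')$; choosing representatives in $\overline{cv_N}$ this translates to $\text{Lip}(x,z)=\lambda\,\text{Lip}(x,z')$ for some constant $\lambda>0$ and every $x\in CV_N$. My plan to recover proportionality of length functions on $\mathcal{P}_N$ is to test this equality against a family of roses tailored to each primitive element: given $g\in\mathcal{P}_N$, complete it to a free basis $(g,g_2,\dots,g_N)$ and take $x_n$ to be the covolume-one rose in this basis with the $g$-petal of length $1/n$ and the remaining petals of equal length. Candidates of $x_n$ are the $N$ petals and their pairwise bouquets, so they form a fixed finite list on which every element other than $g$ itself has $x_n$-translation length bounded below by a positive constant independent of $n$. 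If $||g||_z=0$ but $||g||_{z'}>0$, then $\text{Lip}(x_n,z)$ stays bounded whereas $\text{Lip}(x_n,z')\ge\frac{||g||_{z'}}{1/n}\to+\infty$, contradicting the equality $\text{Lip}(x_n,z)=\lambda\,\text{Lip}(x_n,z')$; hence $||g||_z=0$ iff $||g||_{z'}=0$. When both lengths are positive, the candidate $g$ alone realizes the maximum in both $\text{Lip}(x_n,z)$ and $\text{Lip}(x_n,z')$ for $n$ large, yielding $||g||_z/||g||_{z'}=\lambda$. Since $g$ is arbitrary, $z$ and $z'$ induce proportional length functions on $\mathcal{P}_N$ and so coincide in $\overline{CV_N}^{prim}$. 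The delicate point will be carefully handling the elliptic case $||g||_z=0$, where one must certify, using only primitive candidates of $x_n$, the uniform-in-$n$ lower bound on their translation lengths that keeps $\text{Lip}(x_n,z)$ bounded.
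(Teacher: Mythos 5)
Your proposal is correct and follows essentially the same route as the paper's proof: define the map by the stated formula, use White's theorem (and its extension to $\overline{cv_N}$) to reduce the suprema to the finite set of candidates, deduce continuity from pointwise convergence of $1$-Lipschitz functions, and prove injectivity by testing against roses whose $g$-petal shrinks to zero, exactly as in the paper's family $x_\epsilon$. The "delicate point" you flag is handled in the paper precisely as you anticipate, via the observation that the candidate set of $x_\epsilon$ is independent of $\epsilon$ and all candidates other than $g^{\pm 1}$ have translation length bounded below.
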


\begin{proof}
Uniqueness follows from the density of $CV_N$ in $\overline{CV_N}^{prim}$. Let $x\in CV_N$, which we identify with its  covolume $1$ representative. For all $z\in \overline{CV_N}^{prim}$, we let $$\psi'_z(x):=\log \sup_{g\in \mathcal{P}_N}\frac{||g||_z}{||g||_x}-\log\sup_{g\in \mathcal{P}_N}\frac{||g||_z}{||g||_b}.$$ This is well-defined, because $\psi'_z$ only depends on the projective class of $||.||_z$. By definition of the metric on $CV_N$, we have $\psi'_z=\psi_z$ for all $z\in CV_N$. In addition, the suprema arising in the expression of $\psi_z'(x)$ are achieved on finite sets $\mathcal{F}(x)$ (resp. $\mathcal{F}(b)$) consisting of candidates in $x$ (resp. in $b$) by Theorem \ref{White}. 

We claim that for all $z\in\overline{CV_N}^{prim}$, the map $\psi'_z$ is continuous. Indeed, let $z\in\overline{CV_N}^{prim}$, and let $(z_n)_{n\in\mathbb{N}}\in{CV_N}^{\mathbb{N}}$ be a sequence that converges to $z$. For all $n\in\mathbb{N}$, we have $$\psi'_{z_n}(x)=\log \sup_{\mathcal{F}(x)}\frac{||g||_{z_n}}{||g||_x}-\log\sup_{\mathcal{F}(b)}\frac{||g||_{z_n}}{||g||_b}.$$ By definition of the topology on $\mathbb{PR}^{\mathcal{P}_N}$, there exists a sequence $(\lambda_n)_{n\in\mathbb{N}}$ of real numbers such that for all $g\in \mathcal{P}_N$, the sequence $(\lambda_n||g||_{z_n})_{n\in\mathbb{N}}$ converges to $||g||_z$. So $\psi'_{z_n}(x)$ converges to $\psi'_{z}(x)$. Therefore, the map $\psi'_z$ is the pointwise limit of the $1$-Lipschitz maps $\psi'_{z_n}$, so $\psi'_z$ is continuous.
 
We can thus extend the map $\psi$ to a map from $\overline{CV_N}^{prim}$ to $\mathcal{C}(CV_N)$, which we still denote by $\psi$. We claim that this extension is continuous. Indeed, if a sequence $(z_n)_{n\in\mathbb{N}}\in (\overline{CV_N}^{prim})^{\mathbb{N}}$ converges to $z\in\overline{CV_N}^{prim}$, then the maps $\psi_{z_n}$ converge pointwise to $\psi_z$, and hence they converge uniformly on compact sets of $(X,d_{sym})$ because all maps $\psi_{z_n}$ are $1$-Lipschitz.

We now prove that the map $\psi:\overline{CV_N}^{prim}\to\mathcal{C}(CV_N)$ is injective. Let $z$,$z'\in\overline{CV_N}^{prim}$ be such that $\psi_z=\psi_{z'}$. Let $g\in \mathcal{P}_N$. Let $x\in CV_N$ be a rose, one of whose petals is labelled by $g$. Denote by $x_{\epsilon}$ the rose in $CV_N$ with same underlying graph as $x$, in which the petal labelled by $g$ has length $\epsilon>0$, while the other petals all have the same length. As $\epsilon$ tends to $0$, the length $||g||_{x_{\epsilon}}$ tends to $0$, while $||g'||_{x_{\epsilon}}$ is bounded below for all $\epsilon>0$ and all $g'\neq g^{\pm 1}\in \mathcal{F}(x_{\epsilon})$, and $\mathcal{F}(x_{\epsilon})$ does not depend on $\epsilon$. Hence for $\epsilon>0$ sufficiently small, we have the following dichotomy (we fix representatives of $z$ and $z'$ in their projective classes).

\begin{itemize}
\item If $||g||_z\neq 0$, then $\psi_z(x_{\epsilon})=\log\frac{||g||_z}{\epsilon C(z)}$ (where $C(z):=\sup_{\mathcal{F}(b)}\frac{||g||_z}{||g||_b}$) tends to $+\infty$ as $\epsilon$ goes to $0$.
\item If $||g||_z=0$, then $\psi_{z}(x_{\epsilon})$ is bounded above independently of $\epsilon>0$.
\end{itemize}

\noindent As $\psi_z=\psi_{z'}$, an element $g\in\mathcal{P}_N$ is elliptic in $z$ if and only if it is elliptic in $z'$, and in addition, we have $\frac{||g||_{z'}}{||g||_z}=\frac{C(z)}{C(z')}$ for all $g\in \mathcal{P}_N$ which are not elliptic in $z$. Hence $z=z'$.

We have shown that $\psi:\overline{CV_N}^{prim}\to\mathcal{C}(CV_N)$ is a continuous injection. As $\overline{CV_N}^{prim}$ is compact, the map $\psi$ is a homeomorphism from $\overline{CV_N}^{prim}$ to its image in $\mathcal{C}(CV_N)$. In particular, the image $\psi(\overline{CV_N}^{prim})$ is compact, and hence closed in $\mathcal{C}(CV_N)$. By continuity of $\psi$, we also have $\psi(CV_N)\subseteq\psi(\overline{CV_N}^{prim})\subseteq\overline{\psi(CV_N)}$, so $\psi(\overline{CV_N}^{prim})=\overline{\psi(CV_N)}$, i.e. $\overline{CV_N}^{prim}$ is isomorphic to the horocompactification of $CV_N$. That $\psi$ is $\text{Out}(F_N)$-equivariant follows from its construction.
\end{proof}

\begin{rk} \label{rk-horo-spine}
In order to prove the injectivity of the map $\psi:\overline{CV_N}^{prim}\to\mathcal{C}(CV_N)$, we "select" the primitive element $g$ in the rose $x$ by making the length of the corresponding petal tend to $0$. There is another way of "selecting" the primitive element $g$ which does not require leaving the thick part of outer space, and will therefore enable us to prove the corresponding statement for the spine or the thick part of outer space. The idea is to replace the rose $x$ whose petals all have the same length, and are labelled by a basis $(g,g_2,\dots,g_N)$ of $F_N$, by a rose $x_k$ whose petals are labelled by $(g,g_2,\dots,g_{N-1},g_Ng^k)$, for $k$ sufficiently large. Unless $||g||_z=0$, the translation length in $z$ of a word represented by a candidate in $x_k$ containing the petal labelled by $g_Ng^k$ becomes arbitrarily large as $k$ tends to $+\infty$, and the translation lengths of such a word in two (unprojectivized) trees $z$ and $z'$ may be equal for arbitrarily large $k$ only if $||g||_z=||g||_{z'}$. 
\end{rk}

The spine $K_N$ (considered as a subspace of $CV_N$) and the $\epsilon$-thick part $CV_N^{\epsilon}$, equipped with the restriction of the Lipschitz metric, are not geodesic metric spaces. However, we show that we can still define their horocompactification. We recall that for all metric spaces $X$, we have defined an embedding $\psi:X\to\mathcal{C}(X)$. We define $\overline{K_N}^{prim}$ and $\overline{CV_N^{\epsilon}}^{prim}$ in the same way as we defined $\overline{CV_N}^{prim}$.

\begin{prop}
The map $\psi$ defines a homeomorphism from $K_N$ to its image in $\mathcal{C}(K_N)$, and the closure $\overline{\psi(K_N)}$ in $\mathcal{C}(K_N)$ is compact, and isomorphic to $\overline{K_N}^{prim}$. For all $\epsilon>0$, the map $\psi$ defines a homeomorphism from $CV_N^{\epsilon}$ to its image in $\mathcal{C}(CV_N^{\epsilon})$, and the closure $\overline{\psi(CV_N^{\epsilon})}$ in $\mathcal{C}(CV_N^{\epsilon})$ is compact, and isomorphic to $\overline{CV_N^{\epsilon}}^{prim}$.
\end{prop}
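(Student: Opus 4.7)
The plan is to adapt the proof of Theorem \ref{horocompactification} to the restricted spaces $X=K_N$ and $X=CV_N^{\epsilon}$ (for $\epsilon>0$). The formal structure is identical: define a candidate extension of $\psi$ by the same formula as before, prove continuity, prove injectivity, and invoke compactness of $\overline{X}^{prim}$ to conclude. The only delicate point is that the original injectivity argument selected a primitive element $g$ by \emph{shrinking} the corresponding petal of a rose to zero length, a move that escapes both the spine and the thick part. The fix is flagged in Remark \ref{rk-horo-spine}: select $g$ instead by \emph{winding}, i.e. by iterating $g$ inside one of the other petals of a rose whose petals all have the same length.

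For the setup, fix a basepoint $b\in X$, and for each $z\in\overline{X}^{prim}$ define
$$\psi'_z(x):=\log\sup_{g\in\mathcal{P}_N}\frac{||g||_z}{||g||_x}-\log\sup_{g\in\mathcal{P}_N}\frac{||g||_z}{||g||_b}$$
for $x\in X$. Theorem \ref{White} applied in $CV_N\supseteq X$ ensures that each supremum is attained on a finite set of candidates in $x$ (resp. $b$), so $\psi'_z$ is well-defined and agrees with $\psi_z$ when $z\in X$. The same pointwise-convergence argument as in the proof of Theorem \ref{horocompactification}, using projective rescalings $(\lambda_n)$ witnessing a convergence $z_n\to z$ in $\overline{X}^{prim}\subseteq\mathbb{PR}^{\mathcal{P}_N}$, shows that $\psi'_z$ is continuous on $X$ and that the extended map $\psi':\overline{X}^{prim}\to\mathcal{C}(X)$ is continuous; the uniform $1$-Lipschitzness of every $\psi'_{z_n}$ with respect to $d_{sym}$ upgrades pointwise convergence to uniform convergence on $d_{sym}$-compact subsets of $X$.

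The substantive step is injectivity of $\psi':\overline{X}^{prim}\to\mathcal{C}(X)$. Suppose $z,z'\in\overline{X}^{prim}$ satisfy $\psi'_z=\psi'_{z'}$. Given $g\in\mathcal{P}_N$, extend it to a free basis $(g,g_2,\dots,g_N)$ of $F_N$, and consider the roses $x_k$ whose petals are labelled by $(g,g_2,\dots,g_{N-1},g_Ng^k)$ and all have length $1/N$. These roses lie in $K_N$, and in $CV_N^{\epsilon}$ for every $\epsilon\le 1/N$, uniformly in $k$. Among the (explicitly enumerable) candidates in $x_k$, only those involving the petal labelled by $g_Ng^k$ have translation length in $z$ growing linearly in $k$ when $||g||_z>0$, while all other candidates have uniformly bounded translation length in $z$; this produces the same dichotomy as in the proof of Theorem \ref{horocompactification}. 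Matching the two regimes for $z$ and $z'$ forces $g$ to be $z$-elliptic if and only if it is $z'$-elliptic, and forces the ratio $||g||_z/||g||_{z'}$ to be independent of the non-elliptic primitive $g$. Hence $[z]=[z']$ in $\overline{X}^{prim}$.

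Compactness of $\overline{X}^{prim}$ then makes $\psi'$ a homeomorphism onto its image, which is therefore closed in $\mathcal{C}(X)$; combined with the sandwich $\psi(X)\subseteq\psi'(\overline{X}^{prim})\subseteq\overline{\psi(X)}$ this yields $\psi'(\overline{X}^{prim})=\overline{\psi(X)}$, as required. The main obstacle, as advertised, is the injectivity step: one must verify both that the winding construction genuinely stays in $X$ for all $k$ (which is why one insists on equal petal lengths) and that among the candidates in $x_k$ only those involving $g_Ng^k$ contribute dominantly to the supremum, uniformly in $k$, so that the comparison between $\psi'_z(x_k)$ and $\psi'_{z'}(x_k)$ really does isolate the individual primitive translation lengths rather than some aggregated ratio.
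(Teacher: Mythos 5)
Your proposal is correct and follows essentially the same route as the paper: the new ingredient in both is the winding construction of Remark \ref{rk-horo-spine} (replacing a petal of an equal-length rose by $g_Ng^k$), which replaces the petal-shrinking step of Theorem \ref{horocompactification} that would leave $K_N$ and $CV_N^\epsilon$. Your packaging is slightly more streamlined—you prove injectivity of the extended map $\psi'$ on the compact space $\overline{X}^{prim}$ directly, which automatically rules out escaping sequences converging to $\psi_y$ with $y\in X$, whereas the paper first checks that condition separately (since it is the only place the geodesicity hypothesis of Proposition \ref{horocompactification-general} is used) and then reruns the Theorem \ref{horocompactification} argument—but the mathematical content is identical.
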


\begin{proof}
In the proof of Proposition \ref{horocompactification-general}, the assumption that $(X,d)$ is geodesic is only used to show that if $(z_n)_{n\in\mathbb{N}}$ is a sequence in $X$ escaping to infinity (i.e. eventually leaving and never returning to every compact set), then no subsequence of $(\psi_{z_n})_{n\in\mathbb{N}}$ converges to a function $\psi_y$ with $y\in X$.

Assume that there exists a sequence $(z_n)_{n\in\mathbb{N}}$ of elements of $K_N$ escaping to infinity such that some subsequence of $(\psi_{z_n})_{n\in\mathbb{N}}$ converges to $\psi_y$, with $y\in K_N$. Up to passing to a subsequence again, we may assume that $(z_n)_{n\in\mathbb{N}}$ converges to an element $z$ in $\overline{CV_N}^{prim}$ (and actually $z\in\partial CV_N^{prim}$), so by Theorem \ref{horocompactification} we have $\psi_z=\psi_y$. However, in this case, the argument in Remark \ref{rk-horo-spine} shows that $z=y$, a contradiction. So $\psi$ defines a homeomorphism from $K_N$ to its image in $\mathcal{C}(K_N)$, and the closure $\overline{\psi(K_N)}$ in $\mathcal{C}(K_N)$ is compact. The argument then goes as in the proof of Theorem \ref{horocompactification}, by using Remark \ref{rk-horo-spine}, to show that $\overline{\psi(K_N)}$ is isomorphic to $\overline{K_N}^{prim}$. The same argument also yields the result for the $\epsilon$-thick part of outer space.
\end{proof}

\section{Completion and Busemann points}\label{sec-Busemann}

\subsection{The metric completion of outer space}

We follow Algom-Kfir's exposition in \cite[Section 1]{AK12} of the construction of a completion of an asymmetric metric space. A sequence $(x_n)_{n\in\mathbb{N}}$ of elements in a (possibly asymmetric) metric space $X$ is \emph{forward admissible} if for all $\epsilon>0$, there exists $N(\epsilon)\in\mathbb{N}$ such that for all $n\ge N(\epsilon)$, there exists $K(n,\epsilon)\in\mathbb{N}$ such that $d(x_n,x_k)\le\epsilon$ for all $k\ge K(n,\epsilon)$. Two forward admissible sequences are \emph{equivalent} if their \emph{interlace} (i.e. the sequence $(z_n)_{n\in\mathbb{N}}$ defined by $z_{2n}=x_n$ and $z_{2n+1}=y_n$ for all $n\in\mathbb{N}$) is forward admissible. The \emph{forward metric completion} $\widehat{X}$ of $X$ is defined to be the set of equivalence classes of forward admissible sequences. The reader is referred to \cite[Section 1]{AK12} for a detailed account of this construction.

\begin{lemma}\label{c} (Algom-Kfir \cite[Lemma 1.8]{AK12})
Let $X$ be a (possibly asymmetric) metric space, and let $(x_n)_{n\in\mathbb{N}}$ and $(y_n)_{n\in\mathbb{N}}$ be two forward admissible sequences of elements in $X$, then either
\begin{itemize}
\item for all $r\ge0$, there exists $N(r)\in\mathbb{N}$ such that for all $n\ge N(r)$, there exists $K(n,r)\in\mathbb{N}$ such that for all $k\ge K(n,r)$, we have $d(x_n,y_k)\ge r$, or
\item there exists $c\ge 0$ such that for all $\epsilon>0$, there exists $N(\epsilon)\in\mathbb{N}$ such that for all $n\ge N(\epsilon)$, there exists $K(n,\epsilon)\in\mathbb{N}$ such that for all $k\ge K(n,\epsilon)$, we have $|d(x_n,y_k)-c|\le \epsilon$.
\end{itemize}
\end{lemma}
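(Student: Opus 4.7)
The plan is to show that the two-variable quantity $d(x_n,y_k)$ admits a well-defined double limit $c\in[0,+\infty]$ obtained by first sending $k\to\infty$ and then $n\to\infty$; the dichotomy is then immediate from the two possibilities $c=+\infty$ and $c<+\infty$. Everything reduces to two applications of the asymmetric triangle inequality, each paired with forward admissibility of one of the two sequences.

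First I would record the two key estimates. Given $\epsilon>0$, forward admissibility of $(y_k)_{k\in\mathbb{N}}$ provides $N_y(\epsilon)$ and, for each $k\ge N_y(\epsilon)$, an integer $K_y(k,\epsilon)$ with $d(y_k,y_{k'})\le\epsilon$ for $k'\ge K_y(k,\epsilon)$; analogous quantities $N_x,K_x$ exist for $(x_n)_{n\in\mathbb{N}}$. The triangle inequality then yields, on the one hand,
\[ d(x_n,y_{k'})\le d(x_n,y_k)+\epsilon \]
whenever $k\ge N_y(\epsilon)$ and $k'\ge K_y(k,\epsilon)$, and on the other hand
\[ d(x_n,y_k)\le\epsilon+d(x_{n'},y_k) \]
whenever $n\ge N_x(\epsilon)$ and $n'\ge K_x(n,\epsilon)$. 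The asymmetry of $d$ is precisely what dictates that the index being controlled sit in the \emph{first} slot of the forward-admissibility inequality in each case; this is the subtle point that needs to be handled carefully.

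Next I would show that for each $n$ the limit $c_n:=\lim_{k\to\infty}d(x_n,y_k)$ exists in $[0,+\infty]$. Indeed, the first estimate gives $\limsup_{k'\to\infty}d(x_n,y_{k'})\le d(x_n,y_k)+\epsilon$ for every $k\ge N_y(\epsilon)$; infimizing over such $k$ and letting $\epsilon\to 0$ forces $\limsup$ and $\liminf$ to coincide. The same scheme applied to the second estimate shows that $c:=\lim_{n\to\infty}c_n$ exists in $[0,+\infty]$: passing to the limit in $k$ in the second estimate gives $c_n\le\epsilon+c_{n'}$ for $n\ge N_x(\epsilon)$ and $n'\ge K_x(n,\epsilon)$, whence $c_n\le\liminf_{n'\to\infty}c_{n'}+\epsilon$, and then $\limsup_n c_n\le\liminf_n c_n+\epsilon$; letting $\epsilon\to 0$ again yields a limit. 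Observe that if $c_n=+\infty$ for some admissible $n$, the same inequality forces $c_{n'}=+\infty$ eventually, hence $c=+\infty$.

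It then remains to split on the value of $c$. If $c=+\infty$, then for each $r\ge 0$ one chooses $N(r)$ so that $c_n>r$ for $n\ge N(r)$ and then $K(n,r)$ so that $d(x_n,y_k)\ge r$ for $k\ge K(n,r)$, which is the first alternative. If $c<+\infty$, then for each $\epsilon>0$ one picks $N(\epsilon)$ with $|c_n-c|\le\epsilon/2$ for $n\ge N(\epsilon)$ and, for each such $n$, $K(n,\epsilon)$ with $|d(x_n,y_k)-c_n|\le\epsilon/2$ for $k\ge K(n,\epsilon)$, giving the second alternative. The hardest part of the argument is really only the bookkeeping in setting up the two estimates so that forward admissibility is applied in the direction permitted by the asymmetric triangle inequality; once that is done, the rest is a routine double-limit argument.
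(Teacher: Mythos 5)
Your argument is correct: the two triangle-inequality estimates are set up with the controlled index in the first slot of $d(\cdot,\cdot)$, exactly as the asymmetry requires, and the iterated-limit bookkeeping (including the $c_n=+\infty$ case) goes through. Note that the paper does not actually prove this lemma — it is quoted from Algom-Kfir \cite[Lemma 1.8]{AK12} — so there is no in-paper proof to compare against; your double-limit argument is the natural one and is essentially the standard proof of this statement.
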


Given two forward admissible sequences $(x_n)_{n\in\mathbb{N}}$ and $(y_n)_{n\in\mathbb{N}}$ of elements in $X$, we denote by $c((x_n),(y_n))$ the number provided by Lemma \ref{c} (in the first case, we set $c((x_n),(y_n)):=+\infty$). In the particular case where $(x_n)_{n\in\mathbb{N}}$ is constant, Algom-Kfir's proof of Lemma \ref{c} actually shows that the second case occurs.

\begin{lemma}\label{c(b,.)}
For all $b\in X$, and all forward admissible sequences $(z_n)_{n\in\mathbb{N}}\in X^{\mathbb{N}}$, we have $c(b,(z_n))<+\infty$. If $(z_n)_{n\in\mathbb{N}}\in X^{\mathbb{N}}$ and $(z'_n)_{n\in\mathbb{N}}\in X^{\mathbb{N}}$ are two equivalent forward admissible sequences, then $c(b,(z_n))=c(b,(z'_n))$.
\end{lemma}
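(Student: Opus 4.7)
The plan is to deduce both assertions quickly from Lemma \ref{c}, the definition of equivalence, and the (asymmetric) triangle inequality, after interpreting $c(b,(z_n))$ as $c((x_n),(z_n))$ where $(x_n)$ is the constant sequence equal to $b$ (which is trivially forward admissible).

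For the first assertion, I will show that the pair $((x_n)=(b),(z_n))$ falls into the second case of Lemma \ref{c}, which forces $c(b,(z_n))<+\infty$. By forward admissibility of $(z_n)$ applied with $\epsilon=1$, there exist $N\in\mathbb{N}$ and $K\ge N$ such that $d(z_N,z_k)\le 1$ for all $k\ge K$. The triangle inequality then yields
\[
d(b,z_k)\le d(b,z_N)+d(z_N,z_k)\le d(b,z_N)+1
\]
for all $k\ge K$, so the sequence $(d(b,z_k))_{k}$ is bounded from above. This rules out the first alternative of Lemma \ref{c} (where $d(b,z_k)$ would have to exceed every $r\ge 0$ eventually), so we are necessarily in the second alternative, giving $c(b,(z_n))<+\infty$.

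For the second assertion, I will exploit the interlace. Let $(w_n)_{n\in\mathbb{N}}$ be the interlace of $(z_n)$ and $(z'_n)$, defined by $w_{2n}:=z_n$ and $w_{2n+1}:=z'_n$. By the very definition of equivalence of forward admissible sequences, $(w_n)$ is itself forward admissible. Applying the first assertion to $(b,(w_n))$, there exists a finite constant $c:=c(b,(w_n))\ge 0$ such that for every $\epsilon>0$, there is $K(\epsilon)\in\mathbb{N}$ with $|d(b,w_k)-c|\le\epsilon$ for all $k\ge K(\epsilon)$. Restricting to even indices shows that $|d(b,z_n)-c|\le\epsilon$ for all $n\ge\lceil K(\epsilon)/2\rceil$, hence $c(b,(z_n))=c$; restricting to odd indices gives $c(b,(z'_n))=c$ by the same argument. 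Therefore $c(b,(z_n))=c(b,(z'_n))$.

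There is no real obstacle here: both statements are routine once one uses the interlace trick in combination with Lemma \ref{c}. The only point worth checking is that the constant sequence $(b)$ legitimately plays the role of $(x_n)$ in the definition of $c((x_n),(y_n))$, which holds because constant sequences are forward admissible for free.
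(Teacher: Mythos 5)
Your proof is correct. The second assertion is handled exactly as in the paper: equivalence makes the interlace forward admissible, and restricting the convergence of $(d(b,w_k))_k$ to even and odd indices identifies the two limits. For the first assertion your route is slightly different: the paper notes that forward admissibility makes the sequence $(d(b,z_n))_{n\in\mathbb{N}}$ \emph{almost monotonically decreasing} in the sense of Algom-Kfir and invokes her convergence result \cite[Proposition 1.10]{AK12}, whereas you apply the dichotomy of Lemma \ref{c} to the pair consisting of the constant sequence $(b)$ and $(z_n)$, and exclude the first (infinite) alternative via the triangle-inequality bound $d(b,z_k)\le d(b,z_N)+1$. Both arguments are valid; yours has the merit of being self-contained modulo Lemma \ref{c} (and it makes explicit the remark, stated in the paper just before the lemma, that the second alternative always occurs when the first sequence is constant), while the paper's is shorter because it delegates the convergence to Algom-Kfir's machinery. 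Your observation that the constant sequence is trivially forward admissible, needed to legitimize applying Lemma \ref{c}, is the right point to check and you check it.
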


\begin{proof}
As $(z_n)_{n\in\mathbb{N}}$ is forward admissible, the sequence $(d(b,z_n))_{n\in\mathbb{N}}$ is almost monotonically decreasing in the sense of \cite[Definition 1.9]{AK12}. Hence by \cite[Proposition 1.10]{AK12}, it converges to a limit $c(b,(z_n))$. If $(z_n)_{n\in\mathbb{N}}\in X^{\mathbb{N}}$ and $(z'_n)_{n\in\mathbb{N}}\in X^{\mathbb{N}}$ are two equivalent forward admissible sequences, then $c(b,(z_n))=c(b,(z'_n))=c(b,(z''_n))$, where $(z''_n)_{n\in\mathbb{N}}$ is the interlace of $(z_n)_{n\in\mathbb{N}}$ and $(z'_n)_{n\in\mathbb{N}}$.
\end{proof}

Algom-Kfir shows that two forward admissible sequences $(x_n)_{n\in\mathbb{N}}$ and $(y_n)_{n\in\mathbb{N}}$ of elements in $X$ are equivalent if and only if $c((x_n),(y_n))=c((y_n),(x_n))=0$ \cite[Lemma 1.12]{AK12}. The metric on $X$ extends to an asymmetric metric $\widehat{d}$ on $\widehat{X}$ (which might not satisfy the separation axiom, and might be $\infty$-valued) by setting $\widehat{d}((x_n),(y_n)):=c((x_n),(y_n))$ \cite[Proposition 1.16]{AK12}. The collection of balls $B(x,r):=\{y\in\widehat{X}|\widehat{d}(y,x)<r\}$ for $x\in\widehat{X}$ and $r\in\mathbb{R}_+^{\ast}$ is a basis for a topology on $\widehat{X}$. One can also consider the symmetrized metric $\widehat{d}_{sym}$ on $\widehat{X}$, which defines another topology on $\widehat{X}$. 
\\
\\
\indent Algom-Kfir has determined the metric completion of outer space in \cite{AK12}.

\begin{theo}\label{AK} (Algom-Kfir \cite[Theorem B]{AK12})
Let $T\in\overline{CV_N}$. Then $T\in\widehat{CV_N}$ if and only if $T$ does not have dense orbits, and $T$ has trivial arc stabilizers. In addition, for all $T,T'\in\widehat{CV_N}$, we have $\widehat{d}(T,T')=\log\text{Lip}(\widetilde{T},\widetilde{T'})$, where $\widetilde{T}$ (resp. $\widetilde{T'}$) denotes the covolume one representative of $T$ (resp. $T'$) in $\overline{cv_N}$. 
\end{theo}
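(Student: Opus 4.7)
The plan is to work throughout in $\overline{cv_N}$ with covolume-one representatives, and to use the extended White formula at the end of Section \ref{sec-metric} to compute every Lipschitz constant that arises. The argument has three parts: (i) associating a canonical limit tree $T\in\overline{cv_N}$ to every forward admissible sequence in $CV_N$; (ii) proving that such a $T$ must have trivial arc stabilizers and non-dense orbits; and (iii) constructing approximating sequences for every such $T$ and verifying the distance formula.

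For (i), given a forward admissible $(T_n)\subset CV_N$, Lemma \ref{c(b,.)} yields $\sup_n d(b,T_n)<\infty$, so by Theorem \ref{White} and the projective compactness of $\overline{CV_N}$, after passing to covolume-one representatives the sequence is relatively compact in the axes topology, with some accumulation point $T\in\overline{cv_N}$. The forward-admissibility condition $\log\text{Lip}(T_n,T_m)\to 0$ for $m\gg n$, combined with the extended White formula $\text{Lip}(T_n,T_m)=\sup_g ||g||_{T_m}/||g||_{T_n}$, forces all accumulation points to have the same translation length function, so $[(T_n)]\mapsto T$ is a well-defined map $\Psi:\widehat{CV_N}\to\overline{cv_N}$.

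For (ii), suppose $g\in F_N$ fixes a nondegenerate arc $I\subset T$. I would pick a primitive element $h$ whose axis in $T_n$ (for $n$ large) crosses edges collapsed onto $I$ in the limit; the ratio $||h||_{T_m}/||h||_{T_n}$ then stays bounded above $1$ for $m\gg n$, contradicting forward admissibility. If $T$ has dense $F_N$-orbits, primitive elements with arbitrarily small translation length in $T$ (extracted from a mixing decomposition of $T$) yield a similar obstruction. Conversely, if $T$ has nontrivial simplicial part and trivial arc stabilizers, I would build a free simplicial approximating sequence $T_n\to T$ by shrinking collapsed edges of the simplicial part of $T$, unfolding to free actions, and replacing any non-simplicial subtree by a coarser simplicial decomposition; the trivial arc stabilizer hypothesis ensures that no forced folding (and hence no uncontrolled Lipschitz distortion) appears in the limit, so $\log\text{Lip}(T_n,T_m)\to 0$. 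For the distance formula, given $T,T'\in\widehat{CV_N}$ with approximating sequences $T_n\to T$ and $T'_m\to T'$, the extended White formula gives $\widehat{d}(T,T')=c((T_n),(T'_m))=\lim_{n,m}\log\sup_g ||g||_{T'_m}/||g||_{T_n}$; interchanging the limit with the supremum, justified by the candidate theorem (the supremum being realized on a finite set of primitive elements depending only on $T_n$), yields $\log\text{Lip}(\widetilde{T},\widetilde{T}')$.

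The main obstacle is the sufficiency half of (ii): producing free simplicial approximating sequences whose pairwise Lipschitz distortion tends to $1$ at the correct rate. This requires an equivariant construction that perturbs $T$ without introducing long thin necks, and the trivial arc stabilizer hypothesis is precisely the condition ruling out the obstruction exhibited in the necessity half. A secondary technical point is the interchange of limit and supremum in the distance formula, which needs uniform control over the candidate witnesses as the approximating sequences vary in $CV_N$.
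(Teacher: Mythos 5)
The paper does not prove this theorem; it quotes it directly as Theorem B of Algom-Kfir \cite{AK12} and uses it as background, so there is no proof in this paper to compare your sketch against.

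On its own terms, your outline follows a natural strategy (send a forward admissible sequence to its accumulation point in $\overline{cv_N}$ and characterize the image), but all three parts are deferred to assertions that you flag as the real difficulty, so what you have is a plan rather than a proof. The step I would single out as genuinely incomplete, not merely terse, is the necessity direction of part (ii). Forward admissibility already caps $\sup_{g}||g||_{T_m}/||g||_{T_n}$ by $e^{\epsilon}$ for $m\gg n$; the contradiction you want therefore requires a matching \emph{lower} bound $||h||_{T_m}/||h||_{T_n}\ge 1+\delta$ for some witness $h$ and a fixed $\delta>0$ independent of $n,m$, and the phrase ``pick a primitive element $h$ whose axis crosses edges collapsed onto $I$'' does not explain where that lower bound comes from — that folding along a $g$-translate of $I$ costs a definite amount of Lipschitz distortion is exactly what needs to be shown. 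For dense orbits the situation is even less clear: for each fixed $g$ the pointwise ratio $||g||_{T_m}/||g||_{T_n}$ does tend to $1$, so the obstruction must be detected entirely in the supremum over $g$, and your sketch offers no uniform estimate.

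The interchange of limit and supremum at the end of (iii) is a second genuine gap. Theorem~\ref{White} produces a finite candidate set that varies with $T_n$, and for the target $\widetilde{T'}\in\overline{cv_N}$ one only has the extension \cite[Theorem~0.3]{Hor14-1} with the supremum taken over all of $F_N$; obtaining a uniform finite family of witnesses across the approximating sequence is part of the work, not a corollary of the candidate theorem. Closing these gaps is the technical content of Algom-Kfir's paper, which is precisely why the result is cited here rather than reproved.
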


\subsection{The metric completion as a subspace of the horocompactification}

Throughout the section, we assume that $X$ is a quasi-proper metric space, so that the horocompactification of $X$ is well-defined. We recall that associated to any $z\in X$ is a function $\psi_z\in\mathcal{C}(X)$.

\begin{prop}\label{completion-embeds-1}
For all forward admissible sequences $(z_n)_{n\in\mathbb{N}}\in X^{\mathbb{N}}$, the sequence $(\psi_{z_n})_{n\in\mathbb{N}}$ has a limit in $\mathcal{C}(X)$. If $(z_n)_{n\in\mathbb{N}}$ and $(z'_n)_{n\in\mathbb{N}}$ are two equivalent forward admissible sequences, then the sequences $(\psi_{z_n})_{n\in\mathbb{N}}$ and $(\psi_{z'_n})_{n\in\mathbb{N}}$ converge to the same limit in $\mathcal{C}(X)$.
\end{prop}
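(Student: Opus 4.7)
The proposition is essentially a packaging of Lemmas \ref{c} and \ref{c(b,.)} into the language of horofunctions. The plan is as follows.

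First, I will establish pointwise convergence of $(\psi_{z_n})_{n\in\mathbb{N}}$. Recall that $\psi_{z_n}(x) = d(x,z_n) - d(b,z_n)$. Fix any $x \in X$. Applying Lemma \ref{c(b,.)} with $x$ as the basepoint (the constant sequence at $x$ is forward admissible), the sequence $d(x,z_n)$ converges to a finite real number $c(x,(z_n))$. Similarly $d(b,z_n) \to c(b,(z_n))$. Hence $\psi_{z_n}(x)$ converges to
\[
\psi_\infty(x) := c(x,(z_n)) - c(b,(z_n)),
\]
for every $x \in X$.

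Second, I will upgrade pointwise convergence to convergence in $\mathcal{C}(X)$. Each $\psi_{z_n}$ is $1$-Lipschitz with respect to $d_{sym}$, since for any $x,y \in X$,
\[
|\psi_{z_n}(x) - \psi_{z_n}(y)| = |d(x,z_n) - d(y,z_n)| \leq \max(d(x,y),d(y,x)) \leq d_{sym}(x,y).
\]
Thus the family $(\psi_{z_n})_{n\in\mathbb{N}}$ is equicontinuous with respect to $d_{sym}$. On any $d_{sym}$-compact subset $K$ of $X$, equicontinuity combined with pointwise convergence yields uniform convergence (a standard Arzelà--Ascoli-type argument). In particular the pointwise limit $\psi_\infty$ is itself $1$-Lipschitz, hence continuous, and $\psi_{z_n} \to \psi_\infty$ in the topology of uniform convergence on $d_{sym}$-compact sets, which is precisely the topology on $\mathcal{C}(X)$.

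Third, to compare the limits for two equivalent forward admissible sequences $(z_n)$ and $(z'_n)$, I apply the second statement of Lemma \ref{c(b,.)} twice: once at the basepoint $b$ to get $c(b,(z_n)) = c(b,(z'_n))$, and once at each point $x \in X$ (regarded as a basepoint) to get $c(x,(z_n)) = c(x,(z'_n))$. Subtracting gives $\psi_\infty(x) = \psi'_\infty(x)$ for every $x$, so the two sequences $(\psi_{z_n})$ and $(\psi_{z'_n})$ converge to the same function in $\mathcal{C}(X)$.

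There is no substantial obstacle here; the whole content of the proposition is already encoded in Lemmas \ref{c} and \ref{c(b,.)}, and the only mild point is to invoke the $1$-Lipschitz/equicontinuity property of the maps $\psi_{z_n}$ to pass from pointwise to uniform-on-compacts convergence. No additional structure on $X$ (beyond being quasi-proper) is needed.
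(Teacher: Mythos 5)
Your proof is correct, and it takes a genuinely different and more direct route than the paper. The paper's proof works by extracting convergent subsequences of $(\psi_{z_n})$ and $(\psi_{z'_n})$ (using compactness of $\overline{\psi(X)}$), and then running an explicit $\epsilon$-estimate, based on the vanishing of $c((z_n),(z'_n))$ and $c((z'_n),(z_n))$ for equivalent sequences, to show that the two subsequential limits coincide; convergence of the whole sequence then follows from uniqueness of subsequential limits. Your proof instead observes that Lemma \ref{c(b,.)} is quantified over the basepoint, so it can be applied with each $x\in X$ playing the role of $b$: this immediately gives pointwise convergence $\psi_{z_n}(x)\to c(x,(z_n))-c(b,(z_n))$, upgraded to convergence in $\mathcal{C}(X)$ via the $1$-Lipschitz/equicontinuity property (a fact the paper itself records just before Proposition \ref{horocompactification-general}), and the second half of Lemma \ref{c(b,.)} applied at each basepoint $x$ handles equivalent sequences. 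Your approach avoids passing to subsequences and makes no use of compactness of the horocompactification, and it has the added merit of identifying the limit explicitly as $x\mapsto c(x,(z_n))-c(b,(z_n))$; the paper's approach, by contrast, only needs Lemma \ref{c(b,.)} at the single fixed basepoint $b$, at the cost of a somewhat more intricate $\epsilon$-chase.
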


\begin{proof}
Let $(z_n)_{n\in\mathbb{N}}$ and $(z'_n)_{n\in\mathbb{N}}$ be two equivalent forward admissible sequences. Let $(z_{\sigma(n)})_{n\in\mathbb{N}}$ (resp. $(z'_{\sigma'(n)})_{n\in\mathbb{N}}$) be a subsequence of $(z_n)_{n\in\mathbb{N}}$ (resp. $(z'_n)_{n\in\mathbb{N}}$) that converges to some function $\psi$ (resp. $\psi'$) in $\mathcal{C}(X)$. Let $\epsilon>0$. By definition of $c$, there exists an integer $N(\epsilon)\in\mathbb{N}$ such that for all $n\ge N(\epsilon)$, there exists $K(n,\epsilon)\in\mathbb{N}$ such that for all $k\ge K(n,\epsilon)$, we have $d(z_{\sigma(n)},z'_{\sigma'(k)})\le\epsilon$. In addition, Lemma \ref{c(b,.)} shows the existence of $N'(\epsilon)\in\mathbb{N}$ and $c\in\mathbb{R}$ such that for all $n\ge N'(\epsilon)$, we have $|d(b,z_{\sigma(n)})-c|\le\epsilon$ and $|d(b,z'_{\sigma'(n)})-c|\le\epsilon$. For all  $n\ge\max\{N(\epsilon),N'(\epsilon)\}$, all $k\ge \max\{K(n,\epsilon),N'(\epsilon)\}$ and all $x\in X$, we have 
\begin{displaymath}
\begin{array}{rl}
\psi_{z'_{\sigma'(k)}}(x)-\psi_{z_{\sigma(n)}}(x)&=d(x,z'_{\sigma'(k)})-d(x,z_{\sigma(n)})+d(b,z_{\sigma(n)})-d(b,z'_{\sigma'(k)}) \\
&\le d(z_{\sigma(n)},z'_{\sigma'(k)})+d(b,z_{\sigma(n)})-d(b,z'_{\sigma'(k)})\\
&\le 3\epsilon.
\end{array}
\end{displaymath}

\noindent By making $\epsilon>0$ arbitrarily small, and letting $n$ and $k$ tend to infinity, we get that $\psi'(x)\le\psi(x)$ for all $x\in X$. Symmetrizing the argument, we also get that $\psi(x)\le\psi'(x)$ for all $x\in X$, whence $\psi=\psi'$. In particular, the sequence $(\psi_{z_n})_{n\in\mathbb{N}}$ associated to any forward admissible sequence $(z_n)_{n\in\mathbb{N}}$ has at most one limit point, and hence it converges in the horocompactification of $X$. Two equivalent sequences give rise to the same limit.
\end{proof}

Proposition \ref{completion-embeds-1} yields a map $i$ from the metric completion $\widehat{X}$ of $X$ to the horocompactification of $X$, which is the identity map in restriction to $X$, by setting
\begin{displaymath}
\begin{array}{cccc}
i:&\widehat{X}&\to &X\cup X(\infty)\\
&(z_n)_{n\in\mathbb{N}}&\mapsto & \lim_{n\to +\infty}\psi_{z_n}
\end{array}.
\end{displaymath}

\begin{prop}
The map $i:\widehat{X}\to X\cup X(\infty)$ is injective.
\end{prop}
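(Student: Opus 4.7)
The plan is to show that if $(x_n)_{n\in\mathbb{N}}$ and $(y_n)_{n\in\mathbb{N}}$ are two forward admissible sequences in $X$ with $i((x_n)) = i((y_n)) =: \psi$, then they are equivalent, which by Algom-Kfir's criterion \cite[Lemma 1.12]{AK12} amounts to proving that $c((x_n),(y_n)) = c((y_n),(x_n)) = 0$. By Lemma \ref{c(b,.)} we set $c_x := \lim_n d(b,x_n)$ and $c_y := \lim_n d(b,y_n)$, both finite. The hypothesis $i((x_n))=i((y_n))$ says that for every $x \in X$,
$$\psi(x) = \lim_n\bigl(d(x,x_n)-d(b,x_n)\bigr) = \lim_n\bigl(d(x,y_n)-d(b,y_n)\bigr),$$
and combining with the existence of $c_x,c_y$ we obtain the two finite limits $\lim_n d(x,x_n) = \psi(x)+c_x$ and $\lim_n d(x,y_n) = \psi(x)+c_y$ for every $x\in X$.

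The key step is to evaluate these identities at the test points $x=y_m$ and $x=x_m$. This yields
$$\lim_n d(y_m,x_n) = \lim_n d(y_m,y_n) + (c_x-c_y), \qquad \lim_n d(x_m,y_n) = \lim_n d(x_m,x_n) + (c_y-c_x).$$
Forward admissibility of $(x_n)$ and $(y_n)$ then ensures that for any $\epsilon>0$ and all $m$ large enough, $\lim_n d(x_m,x_n) \leq \epsilon$ and $\lim_n d(y_m,y_n) \leq \epsilon$. Plugging in, the two displayed quantities are bounded above by $\epsilon+c_x-c_y$ and $\epsilon+c_y-c_x$, respectively. Non-negativity of $d$ forces each of these upper bounds to be $\geq 0$, and letting $\epsilon\to 0$ gives $c_x=c_y$. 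Substituting back, $\lim_n d(y_m,x_n)\leq \epsilon$ and $\lim_n d(x_m,y_n)\leq \epsilon$ for all $m$ large enough, which unpacks exactly to $c((y_n),(x_n))=c((x_n),(y_n))=0$, completing the proof.

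The main subtlety, and the reason we have to evaluate at both $y_m$ and $x_m$, is the asymmetry of the metric: one pair of inequalities alone would not allow us to pin down the unknown constants $c_x,c_y$. It is precisely the combination of the two "directions", together with the non-negativity of $d$, that forces $c_x=c_y$ and thereby converts the pointwise equality of horofunctions into a two-sided control on pairwise distances between the sequences.
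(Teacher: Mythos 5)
Your proof is correct and follows essentially the same route as the paper's: both evaluate the equality of the limiting horofunctions at test points taken from each of the two sequences, use forward admissibility to control the "self-distances," deduce $c_x=c_y$ from the two directions of the asymmetric metric, and conclude via Algom-Kfir's criterion \cite[Lemma 1.12]{AK12}. Your formulation in terms of the limits $\lim_n d(x,x_n)$ (which exist by Lemma \ref{c(b,.)} applied with $x$ as basepoint) is a cleaner packaging of the same $\epsilon$-bookkeeping carried out in the paper.
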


\begin{proof}
Let $(z_n)_{n\in\mathbb{N}},(z'_n)_{n\in\mathbb{N}}\in X^{\mathbb{N}}$ be two forward admissible sequences. Assume that $i((z_n)_{n\in\mathbb{N}})=i((z'_n)_{n\in\mathbb{N}})=\psi\in\mathcal{C}(X)$. Let $\epsilon>0$. For all $p\in\mathbb{N}$, there exists $K_0(p,\epsilon)$ such that for all $n,q\ge K_0(p,\epsilon)$, we have $|\psi_{z_n}(z_p)-\psi_{z'_q}(z_p)|\le\epsilon$ and $|\psi_{z_n}(z'_p)-\psi_{z'_q}(z'_p)|\le\epsilon$. As $(z_n)_{n\in\mathbb{N}}$ is forward admissible, there exists $N_1(\epsilon)\in\mathbb{N}$ such that for all $n\ge N_1(\epsilon)$, there exists $K_1(n,\epsilon)\in\mathbb{N}$ such that for all $k\ge K_1(n,\epsilon)$, we have $d(z_n,z_k)\le\epsilon$. As $(z'_n)_{n\in\mathbb{N}}$ is forward admissible, there exists $N_2(\epsilon)\in\mathbb{N}$ such that for all $n\ge N_2(\epsilon)$, there exists $K_2(n,\epsilon)\in\mathbb{N}$ such that for all $k\ge K_2(n,\epsilon)$, we have $d(z'_n,z'_k)\le\epsilon$. By Lemma \ref{c(b,.)}, there also exist $N_3(\epsilon)\in\mathbb{N}$ and $c,c'\in\mathbb{R}$ such that for all $n\ge N_3(\epsilon)$, we have $|d(b,z_n)-c|\le\epsilon$ and $|d(b,z'_n)-c'|\le\epsilon$.  For all $p\ge N_1(\epsilon)$, all $n\ge\max\{K_0(p,\epsilon),K_1(p,\epsilon),N_3(\epsilon)\}$ and all $q\ge \max\{K_0(p,\epsilon),N_3(\epsilon)\}$, as $$\psi_{z_n}(z_p)-\psi_{z'_q}(z_p)=d(z_p,z_n)-d(z_p,z'_q)+d(b,z'_q)-d(b,z_n),$$ we have $d(z_p,z'_q)+c-c'\le 4\epsilon$. In particular, for all $\epsilon>0$, we have $c-c'\le 4\epsilon$, whence $c\le c'$. Similarly, for all $p\ge N_2(\epsilon)$, all $n\ge \max\{K_0(p,\epsilon),N_3(\epsilon)\}$ and all $q\ge\max\{K_0(p,\epsilon),K_2(p,\epsilon),N_3(\epsilon)\}$, as $$\psi_{z_n}(z'_p)-\psi_{z'_q}(z'_p)=d(z'_p,z_n)-d(z'_p,z'_q)+d(b,z'_q)-d(b,z_n),$$ we have $d(z'_p,z_n)+c'-c\le 4\epsilon$, and in particular this implies that $c'\le c$. So $c=c'$, and the inequalities we have established thus imply that $c((z_n),(z'_n))=c((z'_n),(z_n))=0$. It then follows from \cite[Lemma 1.12]{AK12} that the sequences $(z_n)_{n\in\mathbb{N}}$ and $(z'_n)_{n\in\mathbb{N}}$ are equivalent, thus showing that $i$ is injective.
\end{proof}

\noindent In particular, the space $\widehat{X}$ inherits a (metrizable) topology induced by the topology on $\mathcal{C}(X)$. 

\subsection{Comparing the topologies on $\widehat{X}$}

Let $X$ be a quasi-proper (possibly asymmetric) metric space. We now compare the three topologies on $\widehat{X}$ we have introduced in the previous two sections, namely the topology defined by $\widehat{d}_{sym}$, the topology defined by $d$, and the topology coming from $\mathcal{C}(X)$. The topology defined by $\widehat{d}_{sym}$ dominates the topology defined by $\widehat{d}$. The following proposition shows that the topology defined by $\widehat{d}_{sym}$ also dominates the topology induced by the topology coming from $\mathcal{C}(X)$ (all these topologies are second-countable, which justifies the use of sequential arguments).

\begin{prop}\label{dsym-horo}
Let $z=(z_n)_{n\in\mathbb{N}}\in\widehat{X}$, and let $(z^k)_{k\in\mathbb{N}}=((z_n^k)_{n\in\mathbb{N}})_{k\in\mathbb{N}}$ be a sequence of elements of $\widehat{X}$. If $\widehat{d}_{sym}(z^k,z)$ converges to $0$, then $\psi_{z^k}$ converges to $\psi_z$ in $\mathcal{C}(X)$.
\end{prop}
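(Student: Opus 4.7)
The plan is to reduce everything to a single application of the triangle inequality in $\widehat X$, by first rewriting the horofunction $\psi_{z^k}$ in closed form as a difference of two $\widehat d$-distances.

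First I would establish the formula
$$\psi_{z^k}(x) = \widehat d(x, z^k) - \widehat d(b, z^k)$$
for every $x \in X$ (and similarly for $\psi_z$). To do so, I identify $x$ with the constant sequence $(x, x, \dots) \in \widehat X$ and apply Lemma \ref{c(b,.)} twice, once with the original basepoint $b$ and once with $b$ replaced by $x$: this shows that both sequences $(d(b, z_n^k))_n$ and $(d(x, z_n^k))_n$ converge in $\mathbb{R}$, to $\widehat d(b, z^k)$ and $\widehat d(x, z^k)$ respectively. The pointwise formula for $\psi_{z^k}$ provided by Proposition \ref{completion-embeds-1} then yields the claimed expression.

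Next, I would invoke the triangle inequality for $\widehat d$ (\cite[Proposition 1.16]{AK12}) applied to the triples $(x, z, z^k)$ and $(b, z, z^k)$. For $y \in \{x, b\}$ this gives
$$|\widehat d(y, z^k) - \widehat d(y, z)| \le \max\bigl(\widehat d(z, z^k),\, \widehat d(z^k, z)\bigr) \le \widehat d_{sym}(z^k, z),$$
and subtracting the two closed formulas for $\psi_{z^k}(x)$ and $\psi_z(x)$ produces
$$|\psi_{z^k}(x) - \psi_z(x)| \le 2\,\widehat d_{sym}(z^k, z),$$
a bound which is \emph{uniform in $x \in X$}. Since $\widehat d_{sym}(z^k, z) \to 0$ by hypothesis, this yields uniform convergence of $\psi_{z^k}$ to $\psi_z$ on all of $X$, hence a fortiori convergence in $\mathcal C(X)$.

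There is no real obstacle to this plan: the only step requiring a small verification is the closed formula for $\psi_{z^k}$, which a priori is only defined as a limit of horofunctions in $\mathcal C(X)$; Lemma \ref{c(b,.)} is precisely what lets us pass from that limit to an expression in terms of $\widehat d$. Once this is in place, the triangle inequality settles the matter immediately, and in fact the argument gives the slightly stronger conclusion that $\widehat d_{sym}$-convergence implies \emph{uniform} convergence of horofunctions, not merely uniform convergence on compacts.
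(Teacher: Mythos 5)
Your proof is correct and takes a genuinely cleaner route than the paper's. The paper works directly with the sequences $(z_n^k)_n$ and $(z_n)_n$, unpacking $c(z^k,z)\le\epsilon$ and $c(z,z^k)\le\epsilon$ into quantified statements about $d(z_n^k,z_m)$ and $d(z_n,z_m^k)$, applying the triangle inequality in $X$ term by term, and passing to the limit to obtain the uniform bound $8\epsilon$. You instead first record the closed formula $\psi_{w}(x)=\widehat d(x,w)-\widehat d(b,w)$ for $w\in\widehat X$ — justified by Lemma~\ref{c(b,.)}, since $c(x,(w_n))=\lim_n d(x,w_n)$ for any fixed $x\in X$, so the pointwise limit in Proposition~\ref{completion-embeds-1} has this form — and then apply the triangle inequality once in $\widehat X$ to get the sharper and more conceptual uniform bound $2\,\widehat d_{sym}(z^k,z)$. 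Both arguments deliver \emph{uniform} convergence, not merely local uniform convergence; yours makes the mechanism transparent by showing that the horofunction of a completion point is a renormalized $\widehat d$-distance, so the estimate is exactly the $1$-Lipschitz estimate one has for $\psi_w$ with $w\in X$. The one detail worth stating explicitly, which you gesture at but should not leave implicit: $\widehat d(x,z^k)$, $\widehat d(x,z)$, $\widehat d(b,z^k)$, $\widehat d(b,z)$ are all finite (Lemma~\ref{c(b,.)} with basepoint $x$ or $b$), and $\widehat d(z,z^k)$, $\widehat d(z^k,z)$ are finite for $k$ large by hypothesis, so none of the triangle-inequality manipulations involve $\infty-\infty$ even though $\widehat d$ is allowed to take the value $+\infty$.
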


\begin{proof}
Assume that $\widehat{d}_{sym}(z^k,z)$ converges to $0$, i.e. $c(z^k,z)$ and $c(z,z^k)$ both converge to $0$. Let $\epsilon>0$. There exists $K_0\in\mathbb{N}$ such that for all $k\ge K_0$, we have $c(z^k,z)\le\epsilon$ and $c(z,z^k)\le\epsilon$. We fix $k\ge K_0$. As $c(z^k,z)\le\epsilon$, there exists an integer $N_1(\epsilon)\in\mathbb{N}$ such that for all $n\ge N_1(\epsilon)$, there exists $K_1(n,\epsilon)\in\mathbb{N}$ such that for all $m\ge K_1(n,\epsilon)$, we have $d(z_n^k,z_{m})\le 2\epsilon$. Similarly, as $c(z,z^k)\le\epsilon$, there exists $N_2(\epsilon)\in\mathbb{N}$ such that for all $n\ge N_2(\epsilon)$, there exists $K_2(n,\epsilon)\in\mathbb{N}$ such that for all $m\ge K_2(n,\epsilon)$, we have $d(z_n,z_m^k)\le 2\epsilon$. By Lemma \ref{c(b,.)}, there also exists $N'(\epsilon)\in\mathbb{N}$, and $c^k,c\in\mathbb{R}$ such that for all $n\ge N'(\epsilon)$, we have $|d(b,z_n^k)-c^k|\le\epsilon$ and $|d(b,z_n)-c|\le\epsilon$. Choosing $n\ge\max\{N_1(\epsilon),N'(\epsilon)\}$ and $m\ge\max\{ K_1(n,\epsilon),N'(\epsilon)\}$, we get that
\begin{displaymath}
\begin{array}{rl}
c-c^k &\le d(b,z_m)-d(b,z_n^k)+2\epsilon\\
&\le d(z_n^k,z_m)+2\epsilon\\
&\le 4\epsilon. 
\end{array}
\end{displaymath}

\noindent So for all $n\ge\max\{N_2(\epsilon),N'(\epsilon)\}$, all $m\ge\max\{K_2(n,\epsilon),N'(\epsilon)\}$ and all $x\in X$, we have 
\begin{displaymath}
\begin{array}{rl}
\psi_{z_m^k}(x)-\psi_{z_n}(x)&= d(x,z_m^k)-d(x,z_n)+d(b,z_n)-d(b,z_m^k)\\
&\le d(z_n,z_m^k)+c-c^k+2\epsilon\\
&\le 8\epsilon.
\end{array}
\end{displaymath}

\noindent Letting $m$ go to infinity, we get that $\psi_{z^k}(x)-\psi_{z_n}(x)\le 8\epsilon$, and letting $n$ go to infinity, we get $\psi_{z^k}(x)-\psi_{z}(x)\le 8\epsilon$. Similarly, choosing $n\ge\max\{N_2(\epsilon),N'(\epsilon)\}$ and $m\ge\max\{K_2(n,\epsilon),N'(\epsilon)\}$, we get that
\begin{displaymath}
\begin{array}{rl}
c^k-c&\le d(b,z_m^k)-d(b,z_n)+2\epsilon\\
&\le d(z_n,z_m^k)+2\epsilon\\
&\le 4\epsilon.
\end{array}
\end{displaymath}

\noindent So for all $n\ge\max\{N_1(\epsilon),N'(\epsilon)\}$, all $m\ge\max\{K_1(n,\epsilon),N'(\epsilon)\}$ and all $x\in X$, we have
\begin{displaymath}
\begin{array}{rl}
\psi_{z_m}(x)-\psi_{z_n^k}(x)&=d(x,z_m)-d(x,z_n^k)+d(b,z_n^k)-d(b,z_m)\\
&\le d(z_n^k,z_m)+c^k-c+2\epsilon\\
&\le 8\epsilon.
\end{array}
\end{displaymath}

\noindent Again letting $m$ and then $n$ tend to infinity, we get that $\psi_{z}(x)-\psi_{z^k}(x)\le 8\epsilon$ for all $x\in X$. So $|\psi_{z^k}(x)-\psi_{z}(x)|\le 8\epsilon$ for all $x\in X$ and all $k\ge K_0$. Hence $(\psi_{z^k})_{k\in\mathbb{N}}$ converges uniformly (and in particular uniformly on compact sets) to $\psi_z$.
\end{proof}

However, the examples below show that no two of the three topologies we have defined on $\widehat{X}$ are equivalent when $X=CV_N$. In this case, the topology induced by the topology on $\mathcal{C}(X)$ is the \emph{primitive axes topology}, given by the embedding of $\widehat{X}$ into $\mathbb{PR}^{\mathcal{P}_N}$. In the case of outer space, there is a fourth natural topology on $\widehat{CV_N}$, called the \emph{axes topology}, given by the embedding into $\mathbb{PR}^{F_N}$. The axes topology dominates the primitive axes topology. The examples below show that no two of the four topologies on $\widehat{CV_N}$ are equivalent. 

\begin{ex} \emph{The topology defined by $\widehat{d}_{sym}$ is not dominated by any of the other three topologies.}\\
Let $T\in\widehat{CV_N}$ be a nonsimplicial tree. Let $T^{simpl}\in\widehat{CV_N}$ be the tree obtained by collapsing the nonsimplicial part of $T$. For $n\in\mathbb{N}$, let $T_n$ be the tree obtained from $T$ by applying a homothety with factor $\frac{1}{n}$ to all vertex trees of $T$. Then the sequence $(T_n)_{n\in\mathbb{N}}$ converges to $T^{simpl}\in\widehat{CV_N}$ in the axes topology (and hence also in the primitive axes topology). For all $n\in\mathbb{N}$, there is an obvious $F_N$-equivariant $1$-Lipschitz map from $T_n$ to $T^{simpl}$ given by collapsing all components of the complement of the simplicial part of $T$ to points, so $\widehat{d}(T_n,T^{simpl})=0$, while $\widehat{d}(T^{simpl},T_n)=+\infty$. So the sequence $(T_n)_{n\in\mathbb{N}}$ also converges to $T^{simpl}$ in the topology defined by $\widehat{d}$, but not in the topology defined by $\widehat{d}_{sym}$.  
\end{ex}

\begin{ex} \emph{The topology defined by $\widehat{d}$ does not dominate the primitive axes topology.}\\
Let $T\in\widehat{CV_N}$ be a nonsimplicial tree. As in the previous example, we have $\widehat{d}(T,T^{simpl})=0$. Hence the space $(\widehat{CV_N},\widehat{d})$ is not separated, while $\mathcal{C}(CV_N)$ is. 
\end{ex}

\begin{ex} \emph{The axes topology does not dominate the topology defined by $\widehat{d}$.}\\
Let $T'\in\overline{cv_N}$ be a tree with dense orbits, and let $p\in T'$. Let $(T'_n,p_n)_{n\in\mathbb{N}}$ be a sequence of pointed trees with dense orbits in $\overline{cv_N}$ that converges (non projectively) to $(T',p)$, and such that for all $n\in\mathbb{N}$, the trees $T'$ and $T'_n$ do not belong to a common closed simplex of length measures in $\overline{cv_N}$ (in the sense of \cite[Section 5]{Gui00}). Let $T\in\widehat{CV_{N+1}}$ (resp. $T_n\in\widehat{CV_{N+1}}$) be the tree associated to the graph of actions having

\begin{itemize}
\item two vertices $v_1$ and $v_2$, where the vertex tree $T_{v_1}$ is equal to $T$ (resp. $T'_n$), with attaching point $p$ (resp. $p_n$) and vertex group generated by $x_1,\dots,x_N$, and $T_{v_2}$ is reduced to a point, and $G_{v_2}$ is the cyclic subgroup of $F_{N+1}$ generated by $x_{N+1}$, and

\item a single edge of length $1$ joining $v_1$ and $v_2$, with trivial stabilizer.
\end{itemize}

\noindent The sequence $(T_n)_{n\in\mathbb{N}}$ converges in the axes topology to $T$ by Guirardel's Reduction Lemma \cite[Section 4]{Gui98}. However, for all $n\in\mathbb{N}$, we have $\widehat{d}(T^n,T)=+\infty$ by \cite[Proposition 5.7]{Hor14-1}. 
\end{ex}

\begin{rk}
However, Algom-Kfir has shown that the axes topology is strictly finer than the topology defined by $\widehat{d}$ in restriction to the simplicial part of $\widehat{CV_N}$ \cite[Theorem 5.12]{AK12}.
\end{rk}

\begin{ex}\emph{The primitive axes topology does not dominate the axes topology.}\\
Let $T\in\widehat{CV_N}$ be the Bass--Serre tree of an HNN-extension of the form $F_N=F_{N-1}\ast$. Let $g\in F_{N-1}$ be an element that does not belong to any proper free factor of $F_{N-1}$. Let $T'\in\overline{CV_N}$ be the tree obtained from $T$ by equivariantly folding an edge $e\subseteq T$ along $ge$. We have shown in \cite{Hor14-1} that the trees $T$ and $T'$ have the same translation length functions in restriction to $\mathcal{P}_N$. This implies that any sequence of trees $(T_n)_{n\in\mathbb{N}}$ that converges to $T'$ in $\overline{CV_N}$ for the axes topology, does not converge in $\widehat{CV_N}$ for the axes topology. However, such a sequence converges to $T\in\widehat{CV_N}$ for the primitive axes topology.  
\end{ex}

\subsection{Folding paths and geodesics}

Let $T\in CV_N$, and $T'\in\overline{CV_N}$ be a tree with dense orbits. A \emph{$\widehat{d}$-geodesic ray} from $T$ to $T'$ is a path $\gamma:\mathbb{R}_+\to\widehat{CV_N}$ such that for all $s\le t\in\mathbb{R}_+$, we have $$\widehat{d}(\gamma(s),\gamma(t))=t-s,$$ and the trees $\gamma(t)$ converge to $T'$ for the axes topology on $\overline{CV_N}$ as $t$ goes to $+\infty$. Using the classical construction of folding paths (see \cite{FM11,FM13,GL07,Mei13}), one shows the following fact. We sketch a proof for completeness.

\begin{prop}\label{geodesic-completion}
For all $T\in CV_N$ and all $T'\in\overline{CV_N}$ having dense orbits, there exists a $\widehat{d}$-geodesic ray in $\widehat{CV_N}$ from $T$ to $T'$. 
\end{prop}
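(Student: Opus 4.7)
The plan is to construct the ray explicitly via a Francaviglia--Martino folding path, and then check that its parametrization by arc length makes it a $\widehat{d}$-geodesic that limits to $T'$ in the axes topology.

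First, I would fix a covolume one representative of $T$ and a representative of $T'$ in $\overline{cv_N}$ so that $\text{Lip}(T,T') = 1$; this is possible by the Horbez extension of White's theorem recalled above, which gives $\text{Lip}(T,T') = \sup_g ||g||_{T'}/||g||_T < +\infty$ (since $T$ is a simplicial free action). Standard Arzelà--Ascoli arguments provide an $F_N$-equivariant optimal $1$-Lipschitz map $f_0 : T \to \overline{T'}$. Using $f_0$, one then runs the classical folding procedure: one identifies the tension subgraph of $T$ (the locus where $f_0$ has slope exactly $1$) and folds all illegal turns there at unit speed, simultaneously rescaling to keep covolume $1$. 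This produces a continuous path $(T_t)_{t \geq 0}$ in $CV_N$ together with $F_N$-equivariant factorizations $f_0 = f_t \circ \phi_t$, where $\phi_t : T \to T_t$ is a composition of folds and $f_t : T_t \to \overline{T'}$ is $1$-Lipschitz with respect to the unnormalized metric that scales by $e^t$ along the path.

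The second step is to verify that $(T_t)_t$ is a $\widehat{d}$-geodesic ray. For $0 \leq s \leq t$ the folding procedure produces a natural equivariant map $\phi_{s,t} : T_s \to T_t$ factoring $\phi_t = \phi_{s,t} \circ \phi_s$, and the normalization is arranged so that $\text{Lip}(\phi_{s,t}) = e^{t-s}$ and this is the optimal Lipschitz constant. By (the extension of) White's theorem, this gives $\widehat{d}(T_s,T_t) = t - s$, so the path, parametrized by arc length, is a $\widehat{d}$-geodesic in $\widehat{CV_N}$. Since each $T_t$ is in $CV_N$, there is no issue with staying inside the metric completion.

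The last and genuinely substantive step is to show that $T_t$ converges to $T'$ in the axes topology. For each $g \in F_N$ the factorization $f_0 = f_t \circ \phi_t$ combined with the Lipschitz bounds yields $||g||_{T'} \leq ||g||_{T_t} \leq ||g||_T$ (after appropriate renormalization). As $t \to \infty$ all the illegal turns above $g$'s axis in $T$ have been folded away, so the axis of $g$ in $T_t$ maps isometrically (up to rescaling) to a tighter and tighter path in $\overline{T'}$; bounded cancellation along folds then forces $||g||_{T_t} \to ||g||_{T'}$.

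The main obstacle is precisely this last convergence statement: a priori one must rule out that the folding process stabilizes along some invariant subtree before reaching $T'$, which would leave some translation lengths strictly above those of $T'$ in the limit. This is exactly where the assumption that $T'$ has dense orbits is crucial: density of orbits forces the tension graph to remain nontrivial at every finite time and forces every element of $F_N$ to be eventually legible along the folding path, so no translation length can get stuck above its value in $T'$. With this convergence in hand, the ray $(T_t)_{t \geq 0}$ has all the required properties.
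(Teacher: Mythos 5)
Your overall strategy (folding path guided by an optimal map, then check geodesicity and convergence) is the same as the paper's, but two steps are asserted rather than proved, and one of them is where the real content lies.

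The main gap is geodesicity. You write that "the normalization is arranged so that $\text{Lip}(\phi_{s,t})=e^{t-s}$ and this is the optimal Lipschitz constant" — but optimality is exactly what has to be established, and the natural map between two points of a folding path need not be optimal unless something certifies the lower bound. The paper certifies it with a \emph{witness}: a legal element $g\in F_N$ for the optimal map whose axis is contained in the tension graph (the existence of such an element is itself a nontrivial fact, cited from \cite[Section 6.2]{Hor14-1}). Because the axis of $g$ is maximally stretched at the start and is never folded, one gets $\widehat{d}(\gamma(s),\gamma(t))\ge\log\bigl(||g||_{\gamma(t)}/||g||_{\gamma(s)}\bigr)=t-s$ from White's theorem, matching the Lipschitz upper bound and giving additivity of distances along the path. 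Without this (or an equivalent legal-lamination argument) your geodesicity claim is circular. Relatedly, you fold only "the tension subgraph" of $T$; the paper first rescales to a tree $\overline{T}$ in the same simplex whose entire underlying graph is the tension graph (prepending a straight geodesic segment from $T$ to $\overline{T}$), which is what makes the single witness element work along the whole concatenated path.

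The second problem is your claim that the path stays in $CV_N$. This is false in general: the intermediate trees of a folding path toward a boundary tree with dense orbits need not be free or simplicial. What one must check is that they lie in $\widehat{CV_N}$, i.e.\ that they have trivial arc stabilizers (inherited from $T$) and never acquire dense orbits at a finite time $t_0$ — the paper rules out the latter by noting that no folding can occur from a tree with dense orbits, so $T_{t_0}=T'$, and then $(T_{t_0-1/n})_n$ would be a Cauchy sequence converging to $T'$, contradicting Algom-Kfir's identification of $\widehat{CV_N}$ (Theorem \ref{AK}), which excludes trees with dense orbits. Your closing paragraph correctly identifies convergence to $T'$ as the place where density of orbits enters, but the mechanism you describe (the tension graph "remaining nontrivial") is not the one that does the work; it is the completion-theoretic argument just described, combined with the fact that the limit of the folding path guided by $f$ is $T'$ by construction.
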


\begin{proof}
Let $f:T\to T'$ be an optimal map, and $g\in F_N$ be a legal element for $f$ in $T$, whose axis in $T$ is contained in the tension graph of $f$, i.e. the subgraph made of those edges in $T$ that are maximally stretched by $f$ (the reader is referred to \cite[Section 6.2]{Hor14-1} for definitions and a proof of the existence of such an element $g\in F_N$). We fix representatives of $T$ and $T'$ in $\overline{cv_N}$, again denoted by $T$ and $T'$, slightly abusing notations. 

We define a simplicial tree $\overline{T}\in\overline{cv_N}$ that belongs to the same closed simplex as $T$, in the following way. We first collapse all edges in $T$ which are mapped to a point by $f$. We then shrink all edges outside of the tension graph of $f$, so that all edges in $\overline{T}$ are stretched by a factor of $M:=\text{Lip}(T,T')$ under the map $\overline{f}:\overline{T}\to T'$ induced by $f$. Denote by $K$ the distance (for $\widehat{d}$) from the covolume $1$ representative of $T$ to the covolume $1$ representative of $\overline{T}$ in $\widehat{CV_N}$. Let $(\gamma_1(t))_{t\in [0,K]}$ be a straight segment of length $K$ (staying in a closed simplex of $\widehat{CV_N}$) joining $T$ to $\widehat{T}$, parameterized by arc length. 

There exists a morphism $f:M\overline{T}\to T'$. Let $(T_t)_{t\in\mathbb{R}_+}$ be the folding path guided by $f$ constructed in \cite[Section 3]{GL07}. Notice that for all $t\in\mathbb{R}_+$, the tree $T_t$ has trivial arc stabilizers, because $T$ has trivial arc stabilizers. If the tree $T_t$ had dense orbits for some $t\in\mathbb{R}_+$, then we would have $T_t=T'$, since no folding can occur starting from a tree with dense orbits \cite[Corollary 3.10]{Hor14-1}. Denoting by $t_0$ the smallest such $t\in\mathbb{R}_+$, the sequence $(T_{t_0-\frac{1}{n}})_{n\in\mathbb{N}}$ would then be a Cauchy sequence converging to $T'$, contradicting Theorem \ref{AK}. For all $t\in\mathbb{R}_+$, we denote by $\gamma_2(t)$ the projection of $T_t$ to $\widehat{CV_N}$. 

Let $\gamma$ be the path in $\widehat{CV_N}$ defined as the concatenation of the paths $\gamma_1$ and $\gamma_2$. As the axis of $g$ is contained in the tension graph of $T$, it does not get shortened when passing from $T$ to $\overline{T}$ (and lengths do not increase when passing from $T$ to $\overline{T}$). Legality of $g$ implies that its axis never gets folded along the path $\gamma_2$. Therefore, for all $s\le t\in\mathbb{R}_+$, we have $$\widehat{d}(\gamma(s),\gamma(t))=\log\frac{||g||_{\gamma(t)}}{||g||_{\gamma(s)}}.$$ This shows that for all $s\le t \le u\in\mathbb{R}_+$, we have $\widehat{d}(\gamma(s),\gamma(u))=\widehat{d}(\gamma(s),\gamma(t))+\widehat{d}(\gamma(t),\gamma(u))$. Therefore, up to reparameterization, the path $\gamma$ is a $\widehat{d}$-geodesic ray that converges to $T'$.
\end{proof}

\subsection{Busemann points}

Let $X$ be a (possibly asymmetric) quasi-proper metric space. A path $\gamma:\mathbb{R}_+\to X$ is an \emph{almost geodesic ray} if for all $\epsilon>0$, there exists $t_0\in\mathbb{R}_+$ such that for all $s,t\ge t_0$, we have $|d(\gamma(0),\gamma(s))+d(\gamma(s),\gamma(t))-t|\le\epsilon$. Rieffel proved that every almost geodesic ray converges to a point in $X(\infty)$ \cite[Theorem 4.7]{Rie02}. A horofunction is called a \emph{Busemann point} if there exists an almost geodesic converging to it. We denote by $X_B(\infty)$ the subspace of $X(\infty)$ consisting of Busemann points.

Walsh showed that in the case of the Teichmüller space of a surface, equipped with Thurston's asymmetric metric, all horofunctions are Busemann points, since they are limits of \emph{stretch lines}, which are geodesics for Thurston's  metric, see \cite[Theorem 4.1]{Wal11}. We prove the following characterization of Busemann points in the horoboundary of outer space. Given a tree $T\in\overline{CV_N}^{prim}$, we denote by $\psi_T$ the corresponding horofunction.

\begin{theo} \label{Busemann-dense}
For all $T\in\overline{CV_N}^{prim}$, the following assertions are equivalent.
\begin{itemize}
\item The tree $T$ has dense orbits.
\item The horofunction $\psi_T$ is a Busemann point.
\item The horofunction $\psi_T$ is the limit of a $\widehat{d}$-geodesic ray in $\widehat{CV_N}$.
\item The horofunction $\psi_T$ is unbounded from below.
\end{itemize}
\end{theo}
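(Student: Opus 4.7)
The plan is to establish the cycle $(1) \Rightarrow (3) \Rightarrow (2) \Rightarrow (4) \Rightarrow (1)$. For $(1) \Rightarrow (3)$: if $T$ has dense orbits, Proposition \ref{geodesic-completion} produces a $\widehat{d}$-geodesic ray $\gamma : \mathbb{R}_+ \to \widehat{CV_N}$ from a point of $CV_N$ to $T$, along which $\gamma(t) \to T$ in the axes topology on $\overline{CV_N}$. Since the axes topology dominates the primitive axes topology and Theorem \ref{horocompactification} identifies the horocompactification of $CV_N$ with $\overline{CV_N}^{prim}$, the horofunctions $\psi_{\gamma(t)}$ converge to $\psi_T$ in $\mathcal{C}(CV_N)$. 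The implication $(3) \Rightarrow (2)$ is immediate: an arclength-parametrized $\widehat{d}$-geodesic ray satisfies the almost geodesic inequality with $\epsilon = 0$.

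For $(2) \Rightarrow (4)$, fix an almost geodesic ray $\gamma$ with $\psi_{\gamma(t_n)} \to \psi_T$ for some $t_n \to +\infty$. The almost geodesic condition applied at $s = t = t_n$ gives $d(\gamma(0), \gamma(t_n)) = t_n + o(1)$, and the asymmetric triangle inequality with the basepoint $b$ yields $d(b, \gamma(t_n)) = t_n + O(1)$ uniformly in $n$. For any fixed $s \ge 0$, applying the almost geodesic inequality at $(s, t_n)$ gives $d(\gamma(s), \gamma(t_n)) \le t_n - d(\gamma(0), \gamma(s)) + o(1)$. Letting $n \to \infty$,
$$\psi_T(\gamma(s)) \le -d(\gamma(0), \gamma(s)) + O(1),$$
which tends to $-\infty$ as $s \to \infty$, so $\psi_T$ is unbounded below.

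For $(4) \Rightarrow (1)$, I argue by contrapositive: if $T$ does not have dense orbits, I show $\psi_T$ is bounded below. Using the description of $\sim$-equivalence in Section \ref{sec-prim}, pick a canonical representative $T_0$ of $[T]$; in the principal case $T_0 \in \widehat{CV_N}$. Since primitive translation length functions agree within a $\sim$-class, Theorem \ref{horocompactification} yields
$$\psi_T(x) = \log \sup_{g \in \mathcal{P}_N} \frac{||g||_{T_0}}{||g||_x} - \log \sup_{g \in \mathcal{P}_N} \frac{||g||_{T_0}}{||g||_b}.$$
Candidates in $x$ are primitive, so the extension of White's theorem identifies the first supremum with $\text{Lip}(x, T_0)$. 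Choosing covolume one representatives, Theorem \ref{AK} together with the non-negativity of $\widehat{d}$ gives $\text{Lip}(x, T_0) \ge 1$, hence $\psi_T(x) \ge -\widehat{d}(b, T_0)$ is bounded below independently of $x$. The main obstacle is the residual case of $(4) \Rightarrow (1)$ in which every representative of $[T]$ has nontrivial arc stabilizers, so that $[T]$ is disjoint from $\widehat{CV_N}$; here I would use the detailed description of $\sim$-classes in \cite{Hor14-1} either to unfold within the closure to a representative in $\widehat{CV_N}$ with the same primitive translation lengths, or to extract from the graph of actions of $T_0$ a finite set of primitive elements on which the supremum is uniformly bounded below.
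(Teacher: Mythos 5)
Your step $(3)\Rightarrow(2)$ is where the real content of the paper's proof lives, and it is not immediate. A $\widehat{d}$-geodesic ray produced by Proposition \ref{geodesic-completion} lives in the completion $\widehat{CV_N}$: the construction first collapses the edges killed by the optimal map and shrinks the edges outside the tension graph, and the ensuing folding path typically runs through trees in $\partial CV_N$. A Busemann point, by definition, is the limit of an almost geodesic ray $\gamma:\mathbb{R}_+\to X$ with $X=CV_N$ itself, not its completion (this is precisely where the non-completeness of outer space, which the paper flags as the obstruction to every boundary point being Busemann, enters). The paper therefore devotes the bulk of the proof to perturbing the $\widehat{d}$-geodesic back into $CV_N$: it inductively chooses $\gamma'(n)\in CV_N$ with $\widehat{d}(\gamma'(n),\gamma(n))\le \frac{1}{n}$ and with the two-sided control $n-k-\frac{2}{k}\le d(\gamma'(k),\gamma'(n))\le n-k+\frac{2}{k}$ for $k<n$, then interpolates by geodesic segments and checks the almost-geodesic inequality. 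Without this perturbation (or a substitute), your chain is broken at $(3)\Rightarrow(2)$; note that $(3)\Rightarrow(1)$, by contrast, is essentially definitional, since the paper defines a $\widehat{d}$-geodesic ray as one converging to a tree with dense orbits.

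Your $(4)\Rightarrow(1)$ also has the gap you acknowledge: a tree without dense orbits may have nontrivial arc stabilizers, in which case no representative of its class lies in $\widehat{CV_N}$ and the route through Theorem \ref{AK} is unavailable. The paper's argument bypasses $\widehat{CV_N}$ entirely: pick any representative $\widetilde{T}\in\overline{cv_N}$ of quotient volume $1$ (possible exactly because $T$ does not have dense orbits); minimality then forces $\text{Lip}(x,\widetilde{T})\ge 1$ for every covolume-one $x\in CV_N$, whence $\psi_T(x)\ge -\log\text{Lip}(b,\widetilde{T})$ for all $x$. Your $(1)\Rightarrow(3)$ is the same as the paper's, and your $(2)\Rightarrow(4)$ is a correct reproof of \cite[Lemma 5.2]{Wal11}, which the paper simply cites.
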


\begin{proof}
It follows from \cite[Lemma 5.2]{Wal11} that horofunctions corresponding to Busemann points are unbounded from below.
\\
\\
\indent Let $b\in CV_N$, and let $T\in\overline{CV_N}^{prim}$ be a tree with dense orbits. Theorem \ref{geodesic-completion} gives the existence of a $\widehat{d}$-geodesic $\gamma$ starting at $b$ and converging to $T$ in $\widehat{CV_N}$. By slightly perturbing $\gamma$, we will construct an almost geodesic ray staying in $CV_N$ and converging to $T$.

We define by induction a sequence $(\gamma'(n))_{n\in\mathbb{N}}\in CV_N^{\mathbb{N}}$ satisfying $\widehat{d}(\gamma'(n),\gamma(n))\le\frac{1}{n}$ for all $n\in\mathbb{N}$, and $n-k-\frac{2}{k}\le d(\gamma'(k),\gamma'(n))\le n-k+\frac{2}{k}$ for all $k<n$, in the following way. We let $\gamma'(0):=\gamma(0)$. Let now $n\in\mathbb{N}$, and assume that $\gamma'(k)$ has already been defined for all $k<n$. Since $\widehat{d}(\gamma'(k),\gamma(k))\le\frac{1}{k}$ for all $k<n$, and as $\gamma$ is a $\widehat{d}$-geodesic ray in $\widehat{CV_N}$, by the triangle inequality, we have $\widehat{d}(\gamma'(k),\gamma(n))\le n-k+\frac{1}{k}$. By definition of $\widehat{d}$, we can choose $\gamma'(n)\in CV_N$ so that 

\begin{itemize}
\item we have $\widehat{d}(\gamma'(n),\gamma(n))\le \frac{1}{n}$, and 
\item for all $k<n$, we have $d(\gamma'(k),\gamma'(n))\le n-k+\frac{2}{k}$, and
\item we have $n-\frac{1}{n}\le d(\gamma'(0),\gamma'(n))\le n+\frac{1}{n}$.
\end{itemize}

\noindent The triangle inequality then ensures that for all $k\le n$, we have 
\begin{displaymath}
\begin{array}{rl}
d(\gamma'(k),\gamma'(n))&\ge d(\gamma'(0),\gamma'(n))-d(\gamma'(0),\gamma'(k))\\
&\ge n-k-\frac{2}{k}.
\end{array}
\end{displaymath}

 We then extend $\gamma'$ to a piecewise-geodesic ray $\gamma':\mathbb{R}_+\to CV_N$ by adding a geodesic segment joining $\gamma'(n)$ to $\gamma'(n+1)$ for all $n\in\mathbb{N}$. Let $t_0\in\mathbb{R}$ be such that $\frac{7}{\lfloor t_0\rfloor}\le\epsilon$. For all $t_0\le s\le t$, letting $n:=\lfloor s\rfloor$ and $m:=\lfloor t\rfloor$, the sum $d(\gamma'(0),\gamma'(s))+d(\gamma'(s),\gamma'(t))$ is bounded above by $$d(\gamma'(0),\gamma'(n))+d(\gamma'(n),\gamma'(n+1))+d(\gamma'(n+1),\gamma'(m))+d(\gamma'(m),\gamma'(t))\le t+\epsilon,$$ and on the other hand we have
\begin{displaymath}
\begin{array}{rl}
d(\gamma'(0),\gamma'(s))+d(\gamma'(s),\gamma'(t))&\ge d(\gamma'(0),\gamma'(t))\\
&\ge d(\gamma'(0),\gamma'(m+1))-d(\gamma'(t),\gamma'(m+1))\\
&\ge t-\epsilon. 
\end{array}
\end{displaymath}

\noindent Hence $\gamma'$ is an almost geodesic ray. In particular it converges to some $\xi\in \overline{CV_N}^{prim}$, and $\xi=T$ by construction. Hence $T$ is a Busemann point. 
\\
\\
\indent If $T$ does not have dense orbits, then we can choose a representative $\widetilde{T}\in \overline{cv_N}$ of quotient volume $1$. As $T$ is minimal, for all $x\in CV_N$ (which we identify with its covolume $1$ representative), any $F_N$-equivariant map from $x$ to $\widetilde{T}$ has Lipschitz constant at least $1$. Hence for all $x\in CV_N$, we have $$\xi_T(x)\ge\log\frac{1}{\text{Lip}(b,\widetilde{T})},$$ so $\xi_T$ is bounded below.
\end{proof}

Hence the horoboundary of outer space is naturally partitioned into three subsets, namely

\begin{itemize}
\item trees having dense orbits, which coincide with the set of Busemann points, i.e. those points that are limits of almost geodesic rays (or of geodesic rays in the completion $\widehat{CV_N}$), and
\item trees without dense orbits and with trivial arc stabilizers, which coincide with completion points, i.e. those points that are limits of Cauchy sequences, and
\item trees having nontrivial arc stabilizers.
\end{itemize}

\section{Geodesic currents and the backward horoboundary of outer space}\label{sec-backward}

As $d$ is not symmetric, we can also consider the horocompactification of outer space for the metric $d^{back}$ defined by $d^{back}(X,Y)=d(Y,X)$ for all $X,Y\in CV_N$, which satisfies the hypotheses of Proposition \ref{horocompactification-general} as $d$ does. We denote by $\overline{CV_N}^{back}$ this compactification of outer space. In this section, we investigate some properties of $\overline{CV_N}^{back}$, which show that $\overline{CV_N}^{prim}$ and $\overline{CV_N}^{back}$ are rather different in nature. It seems that there is some kind of duality between the two compactifications. Having a more explicit description of this duality and of the backward horocompactification would be of interest. For example, is the backward horocompactification isomorphic to Reiner Martin's compactification of outer space \cite[Section 6.3]{Mar95} ? The same question is also still open in the context of Teichmüller spaces equipped with Thurston's asymmetric metric. We start by recalling the notion of geodesic currents on $F_N$.

\subsection{Geodesic currents}

Let $\partial^2F_N:=\partial F_N\times\partial F_N\smallsetminus\Delta$, where $\Delta$ is the diagonal, and denote by $i:\partial^2 F_N\to\partial^2F_N$ the involution that exchanges the factors. A \emph{current} on $F_N$ is an $F_N$-invariant Borel measure $\nu$ on $\partial^2F_N$ that is finite on compact subsets of $\partial^2F_N$, see \cite{Kap05,Kap06}. We denote by $Curr_N$ the space of currents on $F_N$, equipped with the weak-$\ast$ topology, and by $\mathbb{P}Curr_N$ the space of projective classes (i.e. homothety classes) of currents.

To every $g\in F_N$ which is not of the form $h^k$ for any $h\in F_N$ and $k>1$ (we say that $g$ is not a \emph{proper power}), one associates a \emph{rational current} $[g]$ by letting $[g](S)$ be the number of translates of $(g^{-\infty},g^{+\infty})$ that belong to $S$ (where $g^{-\infty}:=\lim_{n\to +\infty} g^{-n}$ and $g^{+\infty}:=\lim_{n\to +\infty} g^n$) for all clopen subsets $S\subseteq\partial^2 F_N$, see \cite[Definition 5.1]{Kap06}. For the case of proper powers, one may set $[h^k]:=k[h]$. The group $\text{Out}(F_N)$ acts on $Curr_N$ on the left in the following way \cite[Proposition 2.15]{Kap06}. Given a compact set $K\subseteq\partial^2F_N$, an element $\Phi\in\text{Out}(F_N)$, and a current $\nu\in Curr_N$, we set $\Phi(\nu)(K):=\nu(\phi^{-1}(K))$, where $\phi\in\text{Aut}(F_N)$ is any representative of $\Phi$. The action of $\text{Out}(F_N)$ on $\mathbb{P}Curr_N$ is not minimal, but there is a unique closed (hence compact), minimal, $\text{Out}(F_N)$-invariant subset $\mathbb{P}M_N\subseteq\mathbb{P}Curr_N$, which is the closure of rational currents associated to primitive conjugacy classes of $F_N$, see \cite[Theorem B]{KL09}. We denote by $M_N$ the lift of $\mathbb{P}M_N$ to $Curr_N$. In \cite[Section 5]{Kap06}, Kapovich defined an intersection form between elements of $cv_N$ and currents, which was then extended by Kapovich and Lustig to trees in $\overline{cv_N}$ \cite{KL09}.

\begin{theo} \label{intersection-form} (Kapovich--Lustig \cite[Theorem A]{KL09})
There exists a unique $\text{Out}(F_N)$-invariant continuous function $$\langle .,.\rangle : \overline{cv_N}\times Curr_N\to\mathbb{R}_+$$ which is $\mathbb{R}_+$-homogeneous in the first coordinate and $\mathbb{R}_+$-linear in the second, and such that for all $T\in\overline{cv_N}$, and all $g\in F_N\smallsetminus\{e\}$, we have $\langle T,[g]\rangle = ||g||_T$.
\end{theo}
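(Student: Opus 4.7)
The plan is to establish uniqueness first by density, and then to construct the pairing in two stages: first on $cv_N \times Curr_N$ via a direct combinatorial formula, then extending it to $\overline{cv_N} \times Curr_N$ by a continuity and approximation argument.

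For uniqueness, suppose $\langle \cdot,\cdot\rangle_1$ and $\langle \cdot,\cdot\rangle_2$ both satisfy the hypotheses. On pairs $(T,[g])$ with $T \in cv_N$ and $g \in F_N \smallsetminus \{e\}$, they must both equal $\|g\|_T$. By $\mathbb{R}_+$-linearity in the second variable, they agree on all nonnegative linear combinations of rational currents. Since rational currents are dense in $Curr_N$ (a theorem of Bonahon adapted to free groups by Kapovich, \cite{Kap06}) and both forms are continuous, they agree on $cv_N \times Curr_N$. Then density of $cv_N$ in $\overline{cv_N}$ together with continuity in the first variable forces agreement on $\overline{cv_N} \times Curr_N$.

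For existence on $cv_N \times Curr_N$, I would follow Kapovich's construction. Given $T \in cv_N$, its quotient graph $X = T/F_N$ comes with a bijection between bi-infinite reduced edge paths in $X$ and $F_N$-orbits in $\partial^2 F_N$. A current $\nu$ pushes forward to a Borel measure on this path space, and I would set
\[
\langle T,\nu\rangle := \sum_{e \in E(X)} \ell_T(e)\, \nu(\mathrm{Cyl}(e)),
\]
where $\mathrm{Cyl}(e)$ denotes the (preimage in $\partial^2 F_N$ of the) set of bi-infinite reduced paths crossing $e$ at a fixed position. Direct checks show that this is $\mathbb{R}_+$-homogeneous in $T$, $\mathbb{R}_+$-linear in $\nu$, $\mathrm{Out}(F_N)$-invariant, and satisfies $\langle T,[g]\rangle=\|g\|_T$; continuity in both variables follows from the fact that the cylinder sets are clopen and that simplicial structures deform continuously on each open simplex of $cv_N$.

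The nontrivial step is the extension to $\overline{cv_N}$. Here I would fix $\nu \in Curr_N$ and a sequence $T_n \to T$ in $\overline{cv_N}$ with $T_n \in cv_N$, and attempt to define $\langle T,\nu\rangle := \lim_n \langle T_n,\nu\rangle$. Existence of the limit and its independence from the approximating sequence is precisely the hard part, and this is where I expect the main obstacle: when $T$ has dense orbits, the simplicial formula above degenerates, and one cannot read off the pairing from edge lengths. The correct tool is the dual lamination $L^2(T) \subseteq \partial^2 F_N$ canonically attached to $T$ (Coulbois--Hilion--Lustig), whose key property is that $\langle T,\nu\rangle = 0$ exactly when $\mathrm{supp}(\nu) \subseteq L^2(T)$. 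Using this, together with uniform bounded-cancellation estimates along morphisms $T_n \to T$ (or a Stallings-fold approximation), one controls the contribution of each cylinder to $\langle T_n,\nu\rangle$ independently of $n$, which yields convergence of the sums and joint continuity of the limiting pairing. Once the extension is defined, $\mathbb{R}_+$-homogeneity, $\mathbb{R}_+$-linearity, $\mathrm{Out}(F_N)$-invariance, and the normalization $\langle T,[g]\rangle=\|g\|_T$ all pass to the limit automatically, completing the proof.
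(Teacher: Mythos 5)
The paper does not prove this theorem: it is cited verbatim as Kapovich--Lustig \cite[Theorem A]{KL09}, so there is no in-paper argument to compare against. Judged on its own terms, your sketch is sound for the easy parts --- the uniqueness argument via density of (nonnegative combinations of) rational currents in $Curr_N$ and of $cv_N$ in $\overline{cv_N}$ is standard and correct, and the formula $\langle T,\nu\rangle=\sum_e \ell_T(e)\,\nu(\mathrm{Cyl}(e))$ on $cv_N\times Curr_N$ is exactly Kapovich's intersection number --- but the extension step, which you yourself identify as the heart of the matter, has a genuine logical defect.

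You propose to control the limit $\lim_n\langle T_n,\nu\rangle$ using the dual lamination $L^2(T)$ together with the vanishing criterion $\langle T,\nu\rangle=0\iff\mathrm{supp}(\nu)\subseteq L^2(T)$. But that equivalence is a \emph{theorem of Kapovich--Lustig that presupposes the existence of the pairing on all of $\overline{cv_N}\times Curr_N$}; it appears in their subsequent work on laminations and currents, not as an input to Theorem A. Invoking it here is circular. What the actual Kapovich--Lustig proof uses in place of this is the Bounded Back-Tracking (BBT) property of morphisms $\widetilde{R}_A\to T$ for very small $T\in\overline{cv_N}$, which you gesture at with ``uniform bounded-cancellation estimates'' but do not develop. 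BBT alone is what gives uniform control of the cylinder contributions and hence convergence of the approximating sums, and one must also separately verify that the limit is independent of the choice of basis $A$ (or of the approximating sequence $T_n$) and that the resulting function is jointly continuous at points of $\partial cv_N\times Curr_N$ --- none of which follows ``automatically'' by passing to the limit, as your last sentence asserts. So the proposal identifies the correct auxiliary tool (bounded cancellation) but replaces the needed quantitative estimates with a citation of a downstream result, leaving the core of the theorem unproved.
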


Two currents $\mu,\mu'\in\text{Curr}_N$ are \emph{translation-equivalent} if for all $T\in cv_N$, we have $\langle T,\mu\rangle=\langle T,\mu'\rangle$. This descends to an equivalence relation $\sim$ on $\mathbb{P}\text{Curr}_N$ by setting $[\mu]\sim [\nu]$ if there exist representatives $\mu$ and $\nu$ which are translation-equivalent.

Let $A$ be a free basis of $F_N$. Let $w$ be a nontrivial cyclically reduced word written in the basis $A$. The \emph{Whitehead graph} of $w$ in the basis $A$ is the labelled undirected graph $Wh_A(w)$ defined as follows. The vertex set of $Wh_A(w)$ is $A^{\pm 1}$. For all $x\neq y\in A^{\pm 1}$, there is an edge between $x$ and $y$ labelled by the number of occurrences of the word $xy^{-1}$ in the cyclic word $w$, i.e. the number of those $i\in\{0,\dots,|w|-1\}$ such that the infinite word $www\dots$ begins with $w_ixy^{-1}$, where $w_i$ is the initial segment of $w$ of length $i$. The \emph{Whitehead graph} of a nontrivial conjugacy class $[g]$ of elements of $F_N$ in the basis $A$ is $Wh_A([g]):=Wh_A(w)$, where $w$ is any cyclically reduced word representing $[g]$ in $A$. More generally, given a linear combination $\eta=\lambda_1[g_1]+\dots+\lambda_k[g_k]$ of rational currents, the \emph{Whitehead graph} $Wh_A(\eta)$ is the labelled undirected graph defined as follows. The vertex set of $Wh_A(\eta)$ is $A^{\pm 1}$. For all $x\neq y\in A^{\pm 1}$, there is an edge between $x$ and $y$ labelled by $\lambda_1\alpha_1+\dots+\lambda_k\alpha_k$, where $\alpha_i$ is the label of the edge between $x$ and $y$ in $Wh_A([g_i])$. The following proposition was proven in \cite{KLSS07} in the case of conjugacy classes of $F_N$ (i.e. rational currents), however its proof still works in the case of linear combinations of rational currents. 

\begin{prop}\label{translation-equivalence} (Kapovich--Levitt--Schupp--Shpilrain \cite[Theorem A]{KLSS07})
Let $\eta,\eta'\in \text{Curr}_N$ be two linear combinations of rational currents. Then $\eta$ and $\eta'$ are translation-equivalent if and only if for all free bases $A$ of $F_N$, we have $Wh_A(\eta)=Wh_A(\eta')$. 
\end{prop}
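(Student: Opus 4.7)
The plan is to follow the strategy of KLSS07 for rational currents, noting that every step is linear in the current and hence extends to arbitrary linear combinations.

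For the ($\Leftarrow$) direction, let $T \in cv_N$ be realized as a marked metric graph $X = T/F_N$. Choose a spanning tree $Y \subseteq X$; the non-tree edges determine a free basis $A$ of $F_N$, and $X$ is then obtained from the rose $R_A$ by blowing up its single vertex along the tree $Y$. For any conjugacy class represented by a cyclically reduced word $w$ in $A$, the translation length has the decomposition
\[
||g||_T \;=\; \sum_{e} \ell_e\, c_e(w),
\]
where the sum ranges over edges $e$ of $X$: for a petal edge (a non-tree edge of $X$ corresponding to some $a_i \in A$), $c_e(w)$ is the number of occurrences of $a_i^{\pm 1}$ in $w$, while for a spanning-tree edge, $c_e(w)$ is the number of turns at the vertex of $R_A$ that cross $e$ in the blow-up, which equals a specific cut weight of the entries of $Wh_A(w)$. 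By $\mathbb{R}_+$-linearity of the intersection form (Theorem \ref{intersection-form}), this promotes to
\[
\langle T, \eta\rangle \;=\; \sum_e \ell_e\, c_e(\eta),
\]
where each $c_e(\eta)$ is a fixed linear functional of the entries of $Wh_A(\eta)$. Consequently, if $Wh_A(\eta) = Wh_A(\eta')$ for every free basis $A$, then $\langle T, \eta\rangle = \langle T, \eta'\rangle$ for every $T \in cv_N$.

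For the ($\Rightarrow$) direction, fix a free basis $A$. For every partition $A^{\pm 1} = S \sqcup S^c$ admissible in the sense that the resulting blow-up lies in $cv_N$ (requiring $|S|, |S^c| \geq 2$), blow up the vertex of $R_A$ by a new edge $e_S$ of length $\ell > 0$, placing half-edges indexed by $S$ on one side. The resulting tree $T_S^\ell \in cv_N$ satisfies
\[
\langle T_S^\ell, \eta\rangle \;-\; \langle R_A, \eta\rangle \;=\; \ell \sum_{x \in S,\, y \in S^c} Wh_A(\eta)(x,y),
\]
so the cut weight of $(S, S^c)$ is recoverable from translation lengths of $\eta$. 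Translation-equivalence of $\eta$ and $\eta'$ therefore implies equality of all admissible cut weights. A combinatorial argument, as in KLSS07 and relying on Whitehead's classical combinatorics of free bases, then shows that admissible cut weights determine the individual edge weights of $Wh_A(\eta)$; hence $Wh_A(\eta) = Wh_A(\eta')$.

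The main technical obstacle is the final combinatorial reconstruction in the $(\Rightarrow)$ direction, requiring enough admissible partitions to distinguish all Whitehead edge weights. Everything else is purely linear: the intersection form, Whitehead graph entries, and crossing/turn counts are all linear in the current. Therefore replacing $||g||_T$ by $\langle T, \eta\rangle$ throughout the argument of KLSS07 for rational currents yields the extension to arbitrary linear combinations of rational currents.
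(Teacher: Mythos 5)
Your proposal is correct and takes essentially the same route as the paper, which offers no proof beyond the remark that the argument of \cite{KLSS07} for rational currents is linear in the current and therefore extends verbatim to linear combinations --- precisely the observation your sketch makes precise via the $\mathbb{R}_+$-linearity of the intersection form. Your reconstruction of the underlying mechanism is also the right one (spanning-tree/blow-up decomposition of $\langle T,\eta\rangle$ into edge lengths times cut weights of $Wh_A(\eta)$ for one direction; one-edge blow-ups recovering cut weights for the other), and the deferred combinatorial step is harmless since two-element cuts together with the vertex degrees already give $Wh_A(\eta)(x,y)=\tfrac{1}{2}\bigl(\deg(x)+\deg(y)-c(\{x,y\})\bigr)$.
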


\begin{prop}\label{non-equivalent-currents}
Let $x_1,x_2\in F_N$ be two elements that belong to a common free basis. Let $\eta,\eta'\in \text{Span}\{[x_1x_2^i]\}_{i\in\mathbb{N}}$. If $\eta$ and $\eta'$ are translation-equivalent, then $\eta=\eta'$.
\end{prop}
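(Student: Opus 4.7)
\textbf{Proof plan for Proposition \ref{non-equivalent-currents}.}

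The plan is to exploit the definition of translation-equivalence directly: if $\eta$ and $\eta'$ are translation-equivalent then $\langle T,\eta-\eta'\rangle = 0$ for every $T\in cv_N$, where $\langle\cdot,\cdot\rangle$ is the intersection form of Theorem \ref{intersection-form}. My goal is to exhibit a one-parameter family $(T_k)_{k\in\mathbb{Z}}\subseteq cv_N$ on which the translation lengths $\|x_1 x_2^i\|_{T_k}$ depend on both $i$ and $k$ in a sufficiently nondegenerate way that the only vanishing linear combination of $[x_1 x_2^i]$'s on this family is the zero combination.

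To build the family, I complete $\{x_1,x_2\}$ to a free basis of $F_N$ (this is where the hypothesis that $x_1, x_2$ lie in a common free basis is used). For each $k\in\mathbb{Z}$, the set $A_k := \{x_1 x_2^k,\, x_2,\, x_3,\dots,x_N\}$ is again a free basis, since it is the image of the original one under the Nielsen automorphism $x_1\mapsto x_1 x_2^k$. Let $T_k\in cv_N$ be the rose with $N$ petals of length $1$ marked by $A_k$. Writing
\[
x_1 x_2^i \;=\; (x_1 x_2^k)\, x_2^{\,i-k},
\]
a short case analysis on the sign of $i-k$ shows that the right-hand side is the cyclically reduced form of $x_1 x_2^i$ in the basis $A_k$ (the first letter is $x_1 x_2^k$, and the remaining letters are all copies of $x_2^{\pm 1}$, so no reduction or cyclic reduction occurs). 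Therefore
\[
\|x_1 x_2^i\|_{T_k} \;=\; |i-k|+1.
\]

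Write now $\eta - \eta' = \sum_{i\in F}\gamma_i\,[x_1 x_2^i]$ for some finite $F\subseteq\mathbb{N}$ and real coefficients $\gamma_i$. Linearity of the intersection form in its second argument, combined with translation-equivalence, yields
\[
\sum_{i\in F}\gamma_i\bigl(|i-k|+1\bigr) \;=\; 0 \qquad \text{for every }k\in\mathbb{Z}.
\]
Applying the discrete second-difference operator $\Delta_k^2 f(k) := f(k+1)+f(k-1)-2f(k)$ kills the constant term, and a direct computation gives $\Delta_k^2|i-k| = 2\cdot\mathbf{1}_{\{k=i\}}$. Hence the identity collapses to $2\gamma_k = 0$ for every $k\in\mathbb{Z}$, so all $\gamma_i$ vanish and $\eta=\eta'$. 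The only nonroutine step is guessing the test family $(A_k)_{k\in\mathbb{Z}}$; once one thinks of sweeping through all Nielsen conjugates of $x_1$ relative to $x_2$, the rest is elementary word-length bookkeeping followed by a one-line finite-difference argument, so I do not expect a genuine obstacle.
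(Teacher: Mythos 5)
Your proof is correct, but it takes a genuinely different route from the paper's. The paper does not argue from the definition of translation-equivalence directly; instead it invokes the Whitehead-graph characterization of Kapovich--Levitt--Schupp--Shpilrain (Proposition \ref{translation-equivalence} in the text): if $\lambda_i\neq\lambda'_i$, it passes to the single basis $B=\{x_1x_2^i,x_2,\dots,x_N\}$ and observes that the edge of $Wh_B(\cdot)$ joining $x_1x_2^i$ to its inverse carries the label $\lambda_i$ for $\eta$ and $\lambda'_i$ for $\eta'$, which already contradicts translation-equivalence. You instead test $\eta-\eta'$ against the explicit family of unit roses marked by the Nielsen-transformed bases $A_k$, compute $\|x_1x_2^i\|_{T_k}=|i-k|+1$ (your reducedness and cyclic-reducedness checks are right, and the formula holds for all $k\in\mathbb{Z}$), and finish with the second-difference identity $\Delta_k^2|i-k|=2\cdot\mathbf{1}_{\{k=i\}}$. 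Amusingly, both arguments sweep through essentially the same family of bases $\{x_1x_2^k,x_2,\dots,x_N\}$; the difference is that the paper reads off a Whitehead-graph label (buying brevity at the cost of citing KLSS as a black box), whereas your argument is self-contained, needing only the definition of translation-equivalence and elementary word-length bookkeeping. The one cosmetic point worth tightening: $\eta-\eta'$ need not be a current (the coefficients $\gamma_i$ may be negative), so rather than writing $\langle T,\eta-\eta'\rangle=0$ you should say that $\langle T,\eta\rangle=\sum_i\lambda_i\|x_1x_2^i\|_T$ and $\langle T,\eta'\rangle=\sum_i\lambda'_i\|x_1x_2^i\|_T$ agree for all $T$, hence $\sum_i\gamma_i\|x_1x_2^i\|_T=0$; the rest of your argument is unaffected.
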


\begin{proof}
Let $\{x_1,x_2,\dots,x_N\}$ be a free basis of $F_N$ that contains $x_1$ and $x_2$. There exists $k\in\mathbb{N}$ and real numbers $\lambda_1,\dots,\lambda_k,\lambda'_1,\dots,\lambda'_k$ such that $\eta=\sum_{i=1}^k\lambda_i[x_1x_2^i]$ and $\eta'=\sum_{i=1}^k\lambda'_i[x_1x_2^i]$. Assume that $\eta\neq\eta'$, and let $i\in\{1,\dots,k\}$ be such that $\lambda_i\neq\lambda'_i$. The set $B:=\{x_1x_2^i,x_2,\dots,x_N\}$ is a free basis of $F_N$. The edge joining $x_1x_2^i$ to $(x_1x_2^i)^{-1}$ has label $\lambda_i$ in $Wh_B(\eta)$ and $\lambda'_i$ in $Wh_B(\eta')$, so Proposition \ref{translation-equivalence} implies $\eta$ and $\eta'$ are not translation-equivalent, a contradiction. Hence $\eta=\eta'$.
\end{proof}

We notice the following property of the currents we considered in Proposition \ref{non-equivalent-currents}.

\begin{prop}\label{span-minimal}
Let $N\ge 3$, and let $x_1,x_2\in F_N$ be two elements that belong to a common free basis of $F_N$. Then $\text{Span}\{[x_1x_2^i]\}_{i\in\mathbb{N}}\subseteq M_N$. 
\end{prop}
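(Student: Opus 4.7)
The plan is to exhibit, for any current $\eta := \sum_{j=1}^{k} \lambda_j [x_1 x_2^j]$ in the span (with $\lambda_j \geq 0$ and not all zero), a sequence of primitive elements $w_n \in \mathcal{P}_N$ such that $[w_n]/n$ converges to $\eta$ in the weak-$*$ topology on $Curr_N$. Since each $[w_n]$ lies in $M_N$ and $M_N$ is closed in $Curr_N \smallsetminus \{0\}$ (being the preimage of the closed set $\mathbb{P}M_N$ under the continuous projection $Curr_N \smallsetminus \{0\} \to \mathbb{P}Curr_N$), this will yield $\eta \in M_N$. By continuity of finite sums in $Curr_N$ together with closedness of $M_N$, it suffices to treat the case where the $\lambda_j = a_j$ are positive integers; the general nonnegative-real case follows by rational approximation.

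The hypothesis $N \geq 3$ enters through the following Nielsen construction. Extend $\{x_1, x_2\}$ to a free basis $\{x_1, x_2, x_3, \ldots, x_N\}$ of $F_N$, and set
$$w_n := x_3 \cdot (x_1 x_2)^{n a_1} (x_1 x_2^2)^{n a_2} \cdots (x_1 x_2^k)^{n a_k}.$$
The word $v_n := (x_1 x_2)^{n a_1} \cdots (x_1 x_2^k)^{n a_k}$ is supported on $\{x_1, x_2\}$, so the Nielsen transformation sending $x_3 \mapsto x_3 v_n$ and fixing all other basis elements extends to an automorphism of $F_N$. Hence $\{x_1, x_2, w_n, x_4, \ldots, x_N\}$ is a free basis, so $w_n \in \mathcal{P}_N$ and $[w_n] \in M_N$.

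Finally, I establish the convergence $[w_n]/n \to \eta$ in $Curr_N$. Fix a finite reduced word $u$ and the associated cylinder $C_u \subseteq \partial^2 F_N$. Since $w_n$ is primitive (hence not a proper power), $[w_n](C_u)$ equals the number of cyclic occurrences of $u$ in $w_n$, and similarly for each $[x_1 x_2^j](C_u)$. I decompose the count for $w_n$ into a bulk part, coming from occurrences entirely contained inside a single block $(x_1 x_2^j)^{n a_j}$, and a boundary part, coming from occurrences straddling one of the $k+1$ interfaces between consecutive blocks (including the interface at the initial $x_3$). Since each block is a power of the fixed word $x_1 x_2^j$, the bulk contribution from block $j$ equals $n a_j \cdot [x_1 x_2^j](C_u) + O(|u|)$, and the total boundary contribution is bounded by $O(k|u|) = O(1)$ as $n \to \infty$. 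Dividing by $n$ yields $[w_n](C_u)/n \to \sum_j a_j [x_1 x_2^j](C_u) = \eta(C_u)$, which is the required weak-$*$ convergence. The only real technical point is this block/boundary bookkeeping, but because the blocks are powers of fixed bounded-length words and the boundary terms stay uniformly bounded in $n$, no serious obstacle arises.
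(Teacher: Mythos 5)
Your proof is correct, and it takes a genuinely different route from the paper. The paper's proof is a two-line reduction: it invokes Kapovich's linear topological embedding $\iota:\text{Curr}(F)\to \text{Curr}_N$ for the rank-two free factor $F=\langle x_1,x_2\rangle$ \cite[Proposition 12.1]{Kap06}, identifies $\text{Span}\{[x_1x_2^i]\}$ with the image of the corresponding span in $\text{Curr}(F)$, and then cites the proof of \cite[Proposition 4.3]{KL07} to conclude that currents carried by a proper free factor lie in $M_N$. Your argument instead makes the underlying approximation completely explicit: you build the primitive elements $w_n=x_3(x_1x_2)^{na_1}\cdots(x_1x_2^k)^{na_k}$ by a Nielsen move on the extra generator $x_3$ (which is exactly where $N\ge 3$ enters, in both approaches), and you verify $[w_n]/n\to\eta$ by counting occurrences of a test word $u$ on cylinders, separating the bulk contribution $na_j[x_1x_2^j](C_u)+O(|u|)$ of each block from the $O(k|u|)$ interface contribution. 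The bookkeeping is sound: $w_n$ is cyclically reduced (no cancellation at block interfaces or at $x_3$), weak-$*$ convergence is indeed detected on cylinders, and the reduction from nonnegative real to positive integer coefficients via rational approximation is legitimate because $M_N$ is scale-invariant and closed in $\text{Curr}_N\smallsetminus\{0\}$ as the preimage of the closed set $\mathbb{P}M_N$. What the paper's route buys is brevity and the conceptual point that the statement holds for any current carried by a proper free factor; what yours buys is a self-contained proof that does not require unwinding the proofs of the two cited results.
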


\begin{proof}
Let $F<F_N$ be the free factor generated by $x_1$ and $x_2$. By \cite[Proposition 12.1]{Kap06}, there is a linear topological embedding $\iota:Curr(F)\to Curr_N$ such that for all $g\in F$, we have $\iota([g])=[g]$. In particular, the subspace $\text{Span}_{Curr_N}\{[x_1x_2^i]\}_{i\in\mathbb{N}}$ identifies with the image $\iota(\text{Span}_{Curr(F)}\{[x_1x_2^i]\}_{i\in\mathbb{N}})$. The proof of \cite[Proposition 4.3]{KL07} thus shows that $\text{Span}_{Curr_N}\{[x_1x_2^i]\}_{i\in\mathbb{N}}\subseteq M_N$.
\end{proof}

\subsection{The backward horoboundary of outer space}

We recall that $b$ denotes some fixed basepoint in $CV_N$. For all $z\in CV_N$, we define the function 
\begin{displaymath}
\begin{array}{cccc}
\psi_z^{back}:&CV_N&\to&\mathbb{R}\\
&x&\mapsto &d(z,x)-d(z,b). 
\end{array}
\end{displaymath}

\noindent Given a finite set $S\subseteq M_N$, we define a function $f_S$ on $CV_N$ by setting $$f_{S}(T):=\log\frac{\sup_S\langle T,\mu\rangle}{\sup_S\langle b,\mu\rangle}$$ for all $T\in CV_N$.

\begin{prop} \label{backward-horoboundary}
For all $\xi\in\overline{CV_N}^{back}$, there exists a finite set $S\subseteq M_N$ such that $\xi=f_S$.
\end{prop}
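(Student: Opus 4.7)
The plan is to first write $\psi_z^{back}=f_{S_z}$ for an explicit finite $S_z\subseteq M_N$ when $z\in CV_N$, and then extract a limit $S$ using the compactness of $\mathbb{P}M_N$.

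\textbf{Explicit formula and setup.} Fix $z\in CV_N$. By White's theorem, there is a finite set $\mathcal{C}(z)\subseteq\mathcal{P}_N$ of candidates in $z$ with $d(z,x)=\log\max_{g\in\mathcal{C}(z)}\|g\|_x/\|g\|_z$. Candidates are primitive, so the rational currents $\mu_g:=[g]/\|g\|_z$ lie in $M_N$, and using $\langle y,[g]\rangle=\|g\|_y$ gives $\psi_z^{back}=f_{S_z}$ with $S_z:=\{\mu_g:g\in\mathcal{C}(z)\}$. For a general $\xi\in\overline{CV_N}^{back}$, write $\xi=\lim_n\psi_{z_n}^{back}$ with $z_n\in CV_N$ (convergence is pointwise, since horofunctions are $1$-Lipschitz in $d_{sym}$). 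Graphs in $CV_N$ have at most $3N-3$ edges, so $|\mathcal{C}(z_n)|$ is bounded in terms of $N$ alone; after extraction $|S_{z_n}|=k$ is constant, and we write $S_{z_n}=\{\mu_1^n,\dots,\mu_k^n\}$. Rescale each $\mu_i^n$ by $1/\max_j\langle b,\mu_j^n\rangle$ (without changing $f_{S_{z_n}}$), so that $\langle b,\mu_i^n\rangle\in[0,1]$, and apply pigeonhole and a further extraction to reduce to the case $\langle b,\mu_1^n\rangle=1$ for every $n$.

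\textbf{Compactness argument and conclusion.} For $b\in CV_N$ (free, simplicial), the dual lamination is empty, so Kapovich--Lustig's intersection form satisfies $\langle b,\mu\rangle>0$ for every nonzero $\mu\in Curr_N$. Consequently the map $\sigma:\mathbb{P}M_N\to M_N$, $[\nu]\mapsto\nu/\langle b,\nu\rangle$ is a continuous section of the projection, and its image $\Sigma:=\{\nu\in M_N:\langle b,\nu\rangle=1\}$ is compact (as a homeomorphic image of the compact space $\mathbb{P}M_N$). Decompose $\mu_i^n=t_i^n\sigma_i^n$ with $t_i^n=\langle b,\mu_i^n\rangle\in[0,1]$ and $\sigma_i^n\in\Sigma$; by a diagonal extraction, $t_i^n\to t_i^\infty\in[0,1]$ and $\sigma_i^n\to\sigma_i^\infty\in\Sigma$ for each $i$, so $\mu_i^n\to\mu_i^\infty:=t_i^\infty\sigma_i^\infty$ in $Curr_N$. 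Since $t_1^\infty=1$, the set $I:=\{i:t_i^\infty>0\}\ni 1$ is nonempty, and we put $S:=\{\mu_i^\infty:i\in I\}\subseteq M_N$. Continuity of the intersection form then gives $\langle x,\mu_i^n\rangle\to\langle x,\mu_i^\infty\rangle$ for every $x\in CV_N$: the terms with $i\notin I$ vanish in the limit, and the remaining maximum is positive by the same nonvanishing principle applied at $x$. Combined with $\max_{\mu\in S}\langle b,\mu\rangle=1$ (attained by $\mu_1^\infty$), this yields $\xi(x)=\log\max_{\mu\in S}\langle x,\mu\rangle=f_S(x)$.

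\textbf{Main obstacle.} The delicate point is ensuring the limiting set $S$ actually sits inside $M_N$ and is nonempty. This requires combining two ingredients: the strict positivity of $\langle b,\cdot\rangle$ on $Curr_N\smallsetminus\{0\}$, which provides a genuine compact cross-section of $\mathbb{P}M_N$ inside $M_N$, and the pigeonhole normalization that keeps at least one $\mu_i^n$ at full mass throughout the extractions, preventing every coordinate from collapsing to $0$ in the limit.
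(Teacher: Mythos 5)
Your proof is correct and follows essentially the same strategy as the paper: express $\psi_{z_n}^{back}$ via the (uniformly finitely many) candidate currents of $z_n$, use compactness of $\mathbb{P}M_N$ to extract convergent normalized subsequences, and take as $S$ the nonzero limits. The only difference is cosmetic: you normalize against the basepoint via the compact cross-section $\{\nu\in M_N:\langle b,\nu\rangle=1\}$ (using positivity of $\langle b,\cdot\rangle$ on nonzero currents), whereas the paper normalizes by comparing each candidate to a dominant index $i_0$ through the ratios $\alpha_n^{i,j}$; both devices serve the same purpose of keeping at least one limiting current nonzero.
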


\begin{proof}
Let $\xi\in\overline{CV_N}^{back}$, and let $(T_n)_{n\in\mathbb{N}}\in CV_N^{\mathbb{N}}$ be a sequence of elements of $CV_N$ that converges to $\xi$. For all $n\in\mathbb{N}$, let $\mathcal{F}_n$ denote the set of candidates in $T_n$. There is a uniform bound on the cardinality of $\mathcal{F}_n$. Up to passing to a subsequence, we can thus assume that there exists $k\in\mathbb{N}$, currents $\eta^1,\dots,\eta^k\in M_N$, and  sequences $(g_n^i)_{n\in\mathbb{N}}\in\prod_{n\in\mathbb{N}}\mathcal{F}_n$ and $(\lambda_n^i)_{n\in\mathbb{N}}\in\mathbb{R}^{\mathbb{N}}$ for all $i\in\{1,\dots,k\}$, such that the sequence $(\lambda_n^i [g_n^i])_{n\in\mathbb{N}}$ converges (non-projectively) to the current $\eta^i$, and for all $n\in\mathbb{N}$, we have $\mathcal{F}_n=\{g_n^i\}_{1\le i\le k}$. For all $n\in\mathbb{N}$ and all $i,j\in\{1,\dots,k\}$, we set $$\alpha_n^{i,j}:=\frac{\lambda_n^i\langle T_n,[g_n^i]\rangle}{\lambda_n^j\langle T_n,[g_n^j]\rangle}.$$ Up to passing to a subsequence again, we may assume that for all $i,j\in\{1,\dots,k\}$, the sequence $(\alpha_n^{i,j})_{n\in\mathbb{N}}$ converges in $\mathbb{R}\cup \{+\infty\}$. Denoting its limit by $\alpha^{i,j}$, we can find $i_0\in\{1,\dots,k\}$ such that for all $j\in\{1,\dots,k\}$, we have $\alpha^{i_0,j}<+\infty$. We set $$S:=\{\alpha^{i_0,j}\eta^j|\alpha^{i_0,j}\neq 0\}.$$ For all $T\in CV_N$ and all $n\in\mathbb{N}$, we have 
\begin{displaymath}
\begin{array}{rl}
\psi_{T_n}^{back}(T)&=\log (\sup_j\frac{\langle T,[g_n^j]\rangle}{\langle T_n,[g_n^j]\rangle})-\log (\sup_j\frac{\langle b,[g_n^j]\rangle}{\langle T_n,[g_n^j]\rangle})\\
&=\log (\sup_j\frac{\lambda_n^{i_0}\langle T_n,[g_n^{i_0}]\rangle}{\lambda_n^j\langle T_n,[g_n^j]\rangle}\frac{\lambda_n^j\langle T,[g_n^j]\rangle}{\lambda_n^{i_0}\langle T_n,[g_n^{i_0}]\rangle})-\log (\sup_j\frac{\lambda_n^{i_0}\langle T_n,[g_n^{i_0}]\rangle}{\lambda_n^j\langle T_n,[g_n^j]\rangle}\frac{\lambda_n^j\langle b,[g_n^j]\rangle}{\lambda_n^{i_0}\langle T_n,[g_n^{i_0}]\rangle})\\
&=\log(\sup_j\alpha_n^{i_0,j}\lambda_n^j\langle T,[g_n^j]\rangle)-\log(\sup_j\alpha_n^{i_0,j}\lambda_n^j\langle b,[g_n^j]\rangle),
\end{array}
\end{displaymath}

\noindent which tends to $f_S(T)$ as $n$ goes to $+\infty$. Hence $\xi=f_S$.
\end{proof}

\begin{rk}\label{rk-dual}
It follows from the proof of Proposition \ref{backward-horoboundary} that if a sequence $(T_n)_{n\in\mathbb{N}}\in CV_N^{\mathbb{N}}$ converges to a horofunction $f_S$ in the backward horoboundary of $CV_N$, then all currents in $S$ are dual to all limit points of $(T_n)_{n\in\mathbb{N}}$ in $\overline{CV_N}$.
\end{rk}

\begin{prop}\label{minimal-in-back}
There exists a topological embedding from $\mathbb{P}M_N/{\sim}$ to $\overline{CV_N}^{back}$.
\end{prop}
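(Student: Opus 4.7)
The plan is to define a map $\Phi : \mathbb{P}M_N \to \mathcal{C}(CV_N)$ by $[\mu] \mapsto f_{\{\mu\}}$, using the function already introduced in the proof of Proposition~\ref{backward-horoboundary}: $f_{\{\mu\}}(T) = \log(\langle T,\mu\rangle / \langle b,\mu\rangle)$. Since this expression only involves the projective class of $\mu$, $\Phi$ is well-defined. Using the homogeneity of the intersection form together with the fact that every element of $cv_N$ is a positive multiple of its covolume-one representative in $CV_N$, translation-equivalence of currents is detected on $CV_N$, which combined with the identity $f_{\{\lambda\mu\}} = f_{\{\mu\}}$ yields $\Phi(\mu) = \Phi(\nu)$ if and only if $[\mu] \sim [\nu]$. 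Hence $\Phi$ factors through $\mathbb{P}M_N/{\sim}$ as an injection $\overline{\Phi}$, and it remains to check that its image lies in $\overline{CV_N}^{back}$ and that $\overline{\Phi}$ is a topological embedding.

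The heart of the argument is to show $f_{\{\mu\}} \in \overline{CV_N}^{back}$ for every $\mu \in M_N$. I first treat the case $\mu = [g]$ with $g \in \mathcal{P}_N$. Extending $g$ to a free basis $(g, g_2, \dots, g_N)$, let $R_\epsilon \in CV_N$ be the covolume-one rose with the $g$-petal of length $\epsilon$ and the remaining petals of length $(1-\epsilon)/(N-1)$. By Theorem~\ref{White}, $d(R_\epsilon, T)$ equals the logarithm of the supremum of $||h||_T / ||h||_{R_\epsilon}$ over candidates $h$ of $R_\epsilon$, and in a rose (single vertex) the candidates are the petals and the figure-eights of two petals. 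A direct computation gives that every candidate distinct from $g^{\pm 1}$ has translation length at least $(1-\epsilon)/(N-1)$ in $R_\epsilon$, whereas $||g||_{R_\epsilon} = \epsilon$; so for $\epsilon$ small the supremum is attained at $g$ and equals $||g||_T / \epsilon$, and likewise for $b$. Subtracting yields $\psi_{R_\epsilon}^{back}(T) \to \log(||g||_T / ||g||_b) = f_{\{[g]\}}(T)$ pointwise on $CV_N$, and the $1$-Lipschitz property of horofunctions with respect to $d_{sym}$ upgrades pointwise convergence to convergence in $\mathcal{C}(CV_N)$. For a general $\mu \in M_N$, the definition of $\mathbb{P}M_N$ as the closure of primitive rational projective classes gives a sequence $\lambda_n [g_n] \to \mu$ in $Curr_N$ with $g_n \in \mathcal{P}_N$; continuity of the intersection pairing (Theorem~\ref{intersection-form}) then gives $f_{\{[g_n]\}} = f_{\{\lambda_n[g_n]\}} \to f_{\{\mu\}}$ pointwise, and since $\overline{CV_N}^{back}$ is closed in $\mathcal{C}(CV_N)$, we conclude $f_{\{\mu\}} \in \overline{CV_N}^{back}$.

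The same continuity argument shows that $\Phi$, and hence $\overline{\Phi}$, is continuous. Since $\mathbb{P}M_N$ is compact, $\mathbb{P}M_N/{\sim}$ is compact, and $\overline{\Phi}$ is a continuous injection from a compact space into the Hausdorff space $\overline{CV_N}^{back}$, so it is automatically a topological embedding onto its image. The main point where care is required is the thin-rose construction in the previous paragraph: one must keep the rose basis $(g, g_2, \dots, g_N)$ fixed as $\epsilon \to 0$ so that the set of candidates in $R_\epsilon$ remains finite and independent of $\epsilon$, which is exactly what guarantees that the $g$-term dominates White's supremum in the limit and gives the desired explicit horofunction $f_{\{[g]\}}$.
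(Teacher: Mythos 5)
Your proof is correct and follows essentially the same route as the paper's: thin roses with a short petal labelled by a primitive element realize $f_{\{[g]\}}$ as a backward horofunction via White's theorem, and continuity of the Kapovich--Lustig intersection form together with the compact-to-Hausdorff argument yields the embedding of $\mathbb{P}M_N/{\sim}$. The only difference is organizational: the paper takes a single diagonal sequence of roses (petal labelled by $g_n$ with length shrinking as $n\to\infty$, phrased via the $\alpha^{1,j}$ notation from Proposition \ref{backward-horoboundary}), whereas you separate the two limits and use closedness of $\overline{CV_N}^{back}$ in $\mathcal{C}(CV_N)$ plus the uniform $1$-Lipschitz bound to pass from $f_{\{[g_n]\}}$ to $f_{\{\eta\}}$; both are valid.
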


\begin{proof}
Let $\eta\in M_N$, and let $(g_n)_{n\in\mathbb{N}}\in\mathcal{P}_N^{\mathbb{N}}$ be a sequence of primitive elements so that the rational currents $[g_n]$ projectively converge to $\eta$. For all $n\in\mathbb{N}$, let $T_n\in CV_N$ be a rose, one of whose petals is labelled by $g_n$. By making the length of this petal arbitrarily small, we can ensure, with the notations from the proof of Proposition \ref{backward-horoboundary} (where we assume that $g_n^1:=g_n$), that $\alpha^{1,j}=0$ for all $j>1$. This implies that the functions $\psi_{T_n}^{back}$ converge pointwise (and hence uniformly on compact sets of $d_{sym}$) to $f_{\{\eta\}}$. Therefore, we get an injective map from the compact space $\mathbb{P}M_N/\sim$ to the Hausdorff space $\overline{CV_N}^{back}$, which is continuous by continuity of the intersection form (Theorem \ref{intersection-form}).
\end{proof}

\begin{cor}
For all $N\ge 3$, the space $\overline{CV_N}^{back}$ has infinite topological dimension.
\end{cor}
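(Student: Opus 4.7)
The plan is to deduce the corollary directly from the three preceding propositions by exhibiting topologically embedded simplices of arbitrarily large dimension. By Proposition \ref{minimal-in-back} we have a topological embedding $\mathbb{P}M_N/\sim\ \hookrightarrow\overline{CV_N}^{back}$, so it suffices to produce, for every $n\ge 1$, a subspace of $\mathbb{P}M_N/\sim$ of topological dimension at least $n-1$.

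Fix $x_1,x_2\in F_N$ lying in a common free basis (possible since $N\ge 3$ gives room, though even $N=2$ would suffice here; the rank condition is used via Proposition \ref{span-minimal}). For each $n\ge 1$, let $\Delta_n:=\{(\lambda_1,\dots,\lambda_n)\in\mathbb{R}_+^n\,:\,\sum_i\lambda_i=1\}$ be the standard $(n-1)$-simplex, and define
$$\phi_n:\Delta_n\to\text{Curr}_N,\qquad \phi_n(\lambda):=\sum_{i=1}^n\lambda_i[x_1x_2^i].$$
The rational currents $\{[x_1x_2^i]\}_{i\ge 1}$ are linearly independent: a small enough clopen neighborhood $U_i$ of the boundary pair $((x_1x_2^i)^{-\infty},(x_1x_2^i)^{+\infty})\in\partial^2F_N$ separates it from all $F_N$-orbits of the analogous pairs for $j\ne i$, so the evaluation functionals $\mu\mapsto\mu(U_i)$ separate these currents. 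Thus $\phi_n$ is a continuous injection from a compact Hausdorff space, hence a topological embedding. By Proposition \ref{span-minimal}, $\phi_n(\Delta_n)\subseteq M_N$, so $\phi_n$ descends to a continuous map $\Psi_n:\Delta_n\to\mathbb{P}M_N/\sim$.

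The key step is to check that $\Psi_n$ is injective, which is where Proposition \ref{non-equivalent-currents} enters. Suppose $\Psi_n(\lambda)=\Psi_n(\mu)$, i.e.\ $[\phi_n(\lambda)]\sim[\phi_n(\mu)]$ in $\mathbb{P}\text{Curr}_N$. By definition of $\sim$, there exist $s,t>0$ such that $s\phi_n(\lambda)$ and $t\phi_n(\mu)$ are translation-equivalent. Both currents lie in $\text{Span}\{[x_1x_2^i]\}_{i\in\mathbb{N}}$, so Proposition \ref{non-equivalent-currents} forces the equality $s\phi_n(\lambda)=t\phi_n(\mu)$. By linear independence of the $[x_1x_2^i]$ we obtain $s\lambda_i=t\mu_i$ for all $i$, and summing over $i$ together with the normalization $\sum\lambda_i=\sum\mu_i=1$ gives $s=t$ and hence $\lambda=\mu$. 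Compactness of $\Delta_n$ upgrades $\Psi_n$ to a topological embedding.

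Composing with Proposition \ref{minimal-in-back} yields a topologically embedded $(n-1)$-simplex in $\overline{CV_N}^{back}$ for every $n\ge 1$, proving that the topological dimension is infinite. The only subtle point is the injectivity verification: the definition of $\sim$ allows rescaling on each side, so one has to rule out that two non-proportional convex combinations could become equal after rescaling, but Proposition \ref{non-equivalent-currents} (which rigidifies the span) together with the normalization $\sum\lambda_i=1$ handles this cleanly.
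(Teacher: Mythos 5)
Your proof is correct and takes essentially the same route as the paper, which likewise combines Propositions \ref{non-equivalent-currents}, \ref{span-minimal} and \ref{minimal-in-back} to embed the (infinite-dimensional) projective space spanned by the currents $[x_1x_2^i]$ into $\overline{CV_N}^{back}$; you have merely made explicit the injectivity-after-rescaling verification and the restriction to finite-dimensional simplices that the paper leaves implicit.
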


\begin{proof}
Let $\{x_1,\dots,x_N\}$ be a free basis of $F_N$. Propositions \ref{non-equivalent-currents}, \ref{span-minimal} and \ref{minimal-in-back} show that $\overline{CV_N}^{back}$ contains an embedded copy of the infinite-dimensional projective space spanned by all currents of the form $[x_1x_2^i]$ for $i\in\mathbb{N}$. The claim follows.
\end{proof}

\subsection{The backward horocompactification of $CV_2$}

We finish this section by giving a description of the backward horocompactification of $CV_2$. Culler and Vogtmann gave in \cite{CV91} an explicit description of $\overline{CV_2}$, and an explicit description of the primitive compactification $\overline{CV_2}^{prim}$ was given in \cite[Section 2.2]{Hor14-1}. We will show that the backward horocompactification of $CV_2$ is $2$-dimensional, homeomorphic to a disk with fins attached, see Figure \ref{fig-back}. The reduced part of this compactification, obtained by collapsing the fins, is isomorphic to the reduced part of the forward horocompactification. However, when we include the fins, there are examples of sequences of trees that converge to a point in the forward horoboundary, but not in the backward horoboundary, and vice versa. 

Every outer automorphism of $F_2$ can be realized by a mapping class of a torus with one boundary component. The set of rational geodesic currents associated to essential simple closed curves on the surface that are not isotopic to the boundary curve is $\text{Out}(F_2)$-invariant, and its closure consists of currents associated to measured laminations on the surface. Every such lamination is either minimal and filling, or a simple closed curve on the surface. The minimal set of currents $M_2$ consists of currents of this form. Currents $\eta$ associated to filling laminations are dual to a unique tree $T$, and by unique ergodicity the current $\eta$ is then the unique current in $M_2$ dual to $T$, up to homothety. This implies in particular that currents in $M_2$ that are dual to simplicial trees are rational currents corresponding to primitive elements of $F_2$.

Let $(T_n)_{n\in\mathbb{N}}\in CV_2^{\mathbb{N}}$ be a sequence that converges to a horofunction $f_S$ in $\overline{CV_2}^{back}$. Up to passing to a subsequence, we can assume that $(T_n)_{n\in\mathbb{N}}$ also converges to a tree $T\in\overline{CV_2}$. By \cite{CV91}, the tree $T$ is either simplicial, or dual to an arational measured foliation on a torus with one boundary component. The list of simplicial trees in $\partial{CV_2}$ is displayed on Figure \ref{fig-list}. 

\begin{figure}
\begin{center}
\input{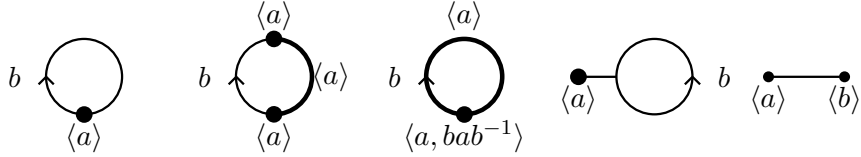}
\caption{The simplicial trees in $\partial CV_2$.}
\label{fig-list}
\end{center}
\end{figure}

First assume that $T$ is a simplicial metric tree, whose quotient graph $T/F_2$ has one of the first four shapes displayed on Figure \ref{fig-list}. It then follows from the above description of $M_2$ that there is a unique projectivized current $\eta\in M_2$ that is dual to $T$ (which is a rational current, associated to $a$). This is clear except in the case where the quotient graph $T/F_2$ has the third shape displayed on Figure \ref{fig-list}. In this case, since $T$ is simplicial, all currents in $M_2$ dual to $T$ are rational. In addition, it follows from \cite[Proposition 2.8]{Hor14-1} that the only primitive elements of $F_2$ contained in the subgroup $\langle a,bab^{-1}\rangle$ are conjugate to $a$. Remark \ref{rk-dual} implies that $S=\{[a]\}$.

Notice in particular that all sequences of trees that converge to a simplicial metric tree $T$ of the fourth type in $\overline{CV_2}$ converge to the same horofunction $f_{\{[a]\}}$ in $\overline{CV_2}^{back}$, regardless of the ratio between the lengths of the separating edge and the nonseparating edge in the quotient graph $T/F_2$. Let $(T_n)_{n\in\mathbb{N}}$ be a sequence of Bass--Serre trees of barbell graphs whose petals are labelled by $a$ and $b$, where the length of the petal labelled by $a$ (respectively by $b$) converges to $0$ (resp. to $l\in(0,1]$) and whose separating edge has a length converging to $1-l$. Then $(T_n)_{n\in\mathbb{N}}$ converges to $f_{\{[a]\}}$ in $\overline{CV_2}^{back}$, and it converges in $\overline{CV_2}^{prim}$ to the Bass--Serre tree of a graph having the fourth shape displayed on Figure \ref{fig-list}, whose separating edge has length $1-l$, and whose nonseparating edge has length $l$. By making $l$ vary, we get examples of sequences of trees that converge to the same point in $\overline{CV_2}^{back}$, but to distinct trees in $\overline{CV_2}^{prim}$.

Also notice that all sequences of trees that converge in $\overline{CV_2}$ to a simplicial tree $T$ having one of the first three shapes displayed, converge to the same horofunction $f_{\{[a]\}}$ in $\overline{CV_2}^{back}$. All these trees are also identified by the quotient map $\overline{CV_2}\to\overline{CV_2}^{prim}$. 

Now assume that $T$ is dual to an arational measured foliation on a torus with a single boundary component. As noticed above, there is a unique projectivized current $\eta\in \mathbb{P}M_2$ that is dual to $T$. Remark \ref{rk-dual} implies that $S=\{\eta\}$.  

In the remaining case where $T$ is the Bass--Serre tree of a splitting of the form $F_2=\langle a\rangle\ast\langle b\rangle$, there are exactly $2$ projectivized currents in $\mathbb{P}M_2$ that are dual to $T$: these are the currents whose lifts to $M_2$ are $[a]$ and $[b]$. We claim that the set $S$ may consist of any pair of the form $\{\lambda_1[a],\lambda_2[b]\}$, where we may assume that $\lambda_1+\lambda_2=1$ because multiplying all currents by a same factor does not change the map $f_S$. Indeed, first assume that $\lambda_1,\lambda_2>0$. For all $n\in\mathbb{N}$, we let $T_n$ be the Bass--Serre tree of a barbell graph, whose central edge has length $1$, and whose loops are labelled by $a$ and $b$ and have respective lengths $\frac{1}{\lambda_1 n}$ and $\frac{1}{\lambda_2 n}$. Then $(T_n)_{n\in\mathbb{N}}$ converges to $f_{\{\lambda_1[a],\lambda_2[b]\}}$ in $\overline{CV_2}^{back}$. If $\lambda_1=0$ and $\lambda_2=1$, then we let $T_n$ be a barbell whose loops are labelled by $a$ and $b$ and have respective lengths $1$ and $\frac{1}{n}$ to get the desired convergence in $\overline{CV_2}^{back}$. In all cases, the sequence $(T_n)_{n\in\mathbb{N}}$ converges in $\overline{CV_2}^{prim}$ to the Bass--Serre tree of the splitting $F_2=\langle a\rangle\ast\langle b\rangle$. This provides examples of sequences of trees that converge to the same point in $\overline{CV_2}^{prim}$, but to distinct points in $\overline{CV_2}^{back}$. The closure of the simplex of a barbell graph in $\overline{CV_2}^{back}$ is displayed on Figure \ref{fig-barbell}. 

We claim that the backward horoboundary $\overline{CV_2}^{back}$ is isomorphic to the forward horoboundary, in which the closures of the simplices of barbell graphs have been replaced by simplices having the shape displayed on Figure \ref{fig-barbell}. Indeed, there is a bijection between $\mathbb{P}M_2$ and the set of simplicial trees in $\overline{CV_2}^{prim}$ that do not contain any separating edge. It thus follows from the above that a sequence $(T_n)_{n\in\mathbb{N}}\in CV_2^{\mathbb{N}}$ converges to a horofunction $f_{\{\eta\}}$ with $\eta\in M_2$ if and only if all its limit points in $\overline{CV_2}^{prim}$ are dual to $\eta$. From this observation, one deduces that the \emph{reduced} parts of $\overline{CV_2}^{back}$ and $\overline{CV_2}^{prim}$ (obtained by forgetting trees in $CV_2$ whose quotient graphs contain a separating edge) coincide. A sequence in $\overline{CV_2}^{back}$ can converge to a horofunction of the form $f_{\{\lambda_1 [a],\lambda_2 [b]\}}$ only if it eventually stays in the corresponding simplex. We note that a sequence $(T_n)_{n\in\mathbb{N}}\in CV_2^{\mathbb{N}}$ of barbell graphs with petals labelled by $a$ and $b$ converges to $f_{\{\lambda_1 [a],\lambda_2 [b]\}}$ if and only if the ratio $\frac{||b||_{T_n}}{||a||_{T_n}}$ converges to $\frac{\lambda_1}{\lambda_2}$. One also checks that all horofunctions described above are pairwise distinct. 

\begin{figure}
\begin{center}
\input{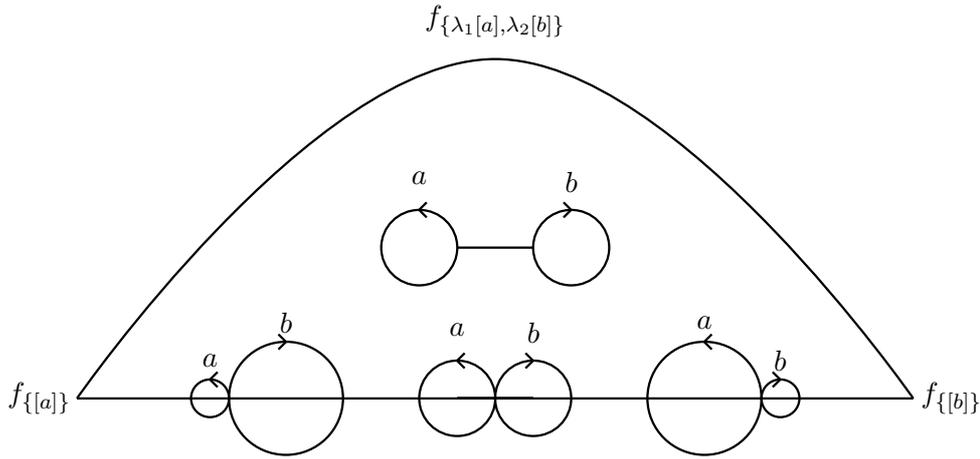}
\caption{The closure of the simplex of a barbell graph in $\overline{CV_2}^{back}$.}
\label{fig-barbell}
\end{center}
\end{figure}

\begin{figure}
\begin{center}
\def\JPicScale{.8}
\input{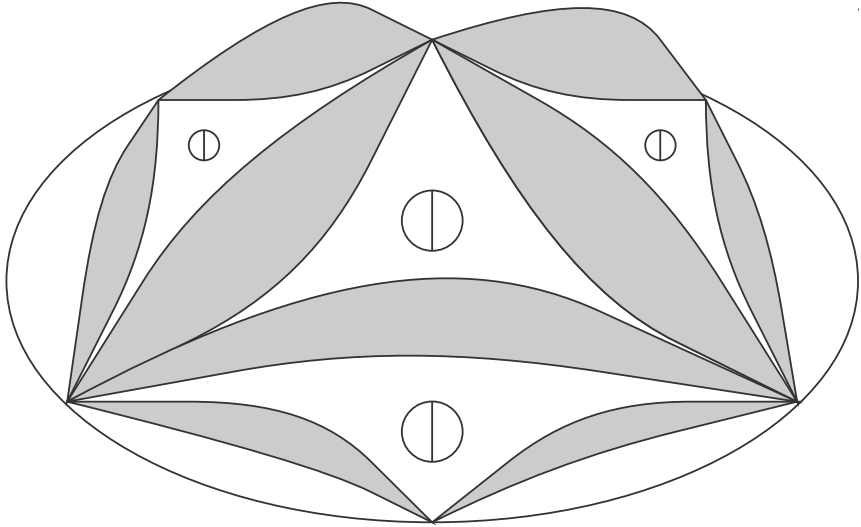}
\caption{The backward horocompactification of $CV_2$.}
\label{fig-back}
\end{center}
\end{figure}

\section{Growth of elements of $F_N$ under random products of automorphisms}

In this section, we will use our description of the horoboundary of outer space to derive results about random products of outer automorphisms of a finitely generated free group, through the study of the possible growth rates of elements of $F_N$ under such products. This is inspired from Karlsson's analogous work for random products of mapping classes of a surface \cite{Kar12}. 

\subsection{Background on ergodic cocycles}

Let $(\Omega,\mathcal{A},\mathbb{P})$ be a standard probability Lebesgue space, and $T:\Omega\to\Omega$ an ergodic measure-preserving transformation. Let $\phi:\Omega\to\text{Out}(F_N)$ be a measurable map. We call $$\Phi_n(\omega):=\phi(T^{n-1}\omega)\dots \phi(\omega)$$ an \emph{ergodic cocycle}. We say that it is \emph{integrable} if $$\int_{\Omega} d_{CV_N}^{sym}(\phi(\omega)b,b)<+\infty,$$ where we recall that $b$ is any basepoint in $CV_N$. The particular case where $\Omega=(\text{Out}(F_N)^{\mathbb{N}},\mu^{\otimes\mathbb{N}})$ is a product probability space (here $\mu$ denotes a probability measure on $\text{Out}(F_N)$), and $T$ is the shift operator, corresponds to the \emph{left random walk} on $(\text{Out}(F_N),\mu)$. This is the Markov chain on $\text{Out}(F_N)$ whose initial distribution is the Dirac measure at the identity, and with transition probabilities $p(x,y):=\mu(yx^{-1})$. In other words, the position of the random walk at time $n$ is given from its initial position $\Phi_0=\text{id}$ by successive multiplications on the left of independent $\mu$-distributed increments $\phi_i$, i.e. $\Phi_n=\phi_n\dots\phi_1$.

The \emph{drift} of an integrable ergodic cocycle $\Phi_n$ is defined as $$l:=\lim_{n\to +\infty}\frac{1}{n}d(b,\Phi_n(\omega)^{-1}b),$$ which almost surely exists by Kingmann subadditive ergodic theorem \cite{Kin68}, and is independent of $\omega$ by ergodicity of $T$.

\subsection{Growth of elements of $F_N$ under cocycles of automorphisms}\label{cocycle}

Given an element $g\in F_N$, we denote by $||g||$ the length of the cyclically reduced word that represents the conjugacy class of $g$ in some fixed basis of $F_N$ (word lengths with respect to two different bases are bi-Lipschitz equivalent). We will show the following theorem.

\begin{theo}\label{ergo-1}
Let $\Phi_n=\phi_n\dots \phi_1$ be an integrable ergodic cocycle of elements of $\text{Out}(F_N)$, and let $l$ be its drift. For $\mathbb{P}$-a.e. $\omega$, there exists a (random) tree $T(\omega)\in\overline{CV_N}$ such that

\begin{itemize}
\item for all $g\in F_N$ which are hyperbolic in $T(\omega)$, we have $$\lim_{n\to +\infty}\frac{1}{n}\log{||\Phi_n(\omega)(g)||}=l ;$$
\item for all $g\in F_N$ which are elliptic in $T(\omega)$, we have  $$\limsup_{n\to +\infty}\frac{1}{n}\log{||\Phi_n(\omega)(g)||}\le l.$$
\end{itemize}
\end{theo}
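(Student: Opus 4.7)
The strategy is to combine Karlsson--Ledrappier's multiplicative ergodic theorem, in its asymmetric form following \cite{Wal11}, with the identification of the horocompactification from Theorem \ref{horocompactification}. Set $T_n(\omega) := b \cdot \Phi_n(\omega)$, using the right action of $\text{Out}(F_N)$ on $CV_N$, so that $\|g\|_{T_n(\omega)} = \|\Phi_n(\omega)(g)\|_b$; since $\|\cdot\|_b$ and $\|\cdot\|$ are comparable up to a fixed multiplicative constant, it suffices to analyze the exponential growth rate of $n \mapsto \|g\|_{T_n(\omega)}$. The cocycle identity $\Phi_{n+m}(\omega) = \Phi_n(T^m\omega)\Phi_m(\omega)$, combined with the isometric right action on $(CV_N,d)$, yields the subadditive cocycle relation
\[
d(b, T_{n+m}(\omega)) \le d(b, T_m(\omega)) + d(b, T_n(T^m\omega)),
\]
so Kingman's subadditive ergodic theorem gives $\tfrac{1}{n}\, d(b, T_n(\omega)) \to l$ almost surely.

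The Karlsson--Ledrappier theorem then promotes this convergence of distances to a convergence of directions: for almost every $\omega$ there is a horofunction $\xi_\omega$ in the horocompactification of $CV_N$ constructed in Section \ref{sec-horo} such that $\tfrac{1}{n}\,\xi_\omega(T_n(\omega)) \to -l$. By Theorem \ref{horocompactification}, $\xi_\omega = \psi_{T(\omega)}$ for a random tree $T(\omega) \in \overline{CV_N}^{prim}$; lift $T(\omega)$ arbitrarily to a representative in $\overline{cv_N}$ (the statement will not depend on this choice). Using the alternative form $\psi_{T(\omega)}(x) = \log\text{Lip}(x,T(\omega)) - \log\text{Lip}(b,T(\omega))$ of the horofunction from Theorem \ref{horocompactification}, the previous convergence rewrites as
\[
\frac{1}{n}\,\log\text{Lip}(T_n(\omega), T(\omega)) \xrightarrow[n\to+\infty]{} -l.
\]

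Both bounds claimed in the theorem now drop out of Horbez's Lipschitz formula recalled in Section \ref{sec-metric}. On the one hand, for every $g \in F_N$, the definition of the forward Lipschitz metric gives $\|g\|_{T_n(\omega)} \le e^{d(b,T_n(\omega))}\,\|g\|_b$, hence $\limsup \tfrac{1}{n}\log\|g\|_{T_n(\omega)} \le l$, which already covers the elliptic case. On the other hand, for every $g$ hyperbolic in $T(\omega)$, the formula $\text{Lip}(T_n(\omega),T(\omega)) = \sup_h \|h\|_{T(\omega)}/\|h\|_{T_n(\omega)}$ gives
\[
\|g\|_{T_n(\omega)} \ge \frac{\|g\|_{T(\omega)}}{\text{Lip}(T_n(\omega),T(\omega))} \ge \|g\|_{T(\omega)}\cdot e^{nl - o(n)},
\]
whence $\liminf \tfrac{1}{n}\log\|g\|_{T_n(\omega)} \ge l$, completing the first bullet upon combination with the upper bound. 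The main technical obstacle is securing the asymmetric version of the Karlsson--Ledrappier theorem in this setting; this requires verifying the quasi-properness of $(CV_N,d)$ (guaranteed by the work of Francaviglia--Martino recalled in Section \ref{sec-horo}) and exploiting the $d_{CV_N}^{sym}$-integrability hypothesis on the cocycle, which controls both the forward and backward displacements simultaneously.
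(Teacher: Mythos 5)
Your proposal is correct and follows essentially the same route as the paper: apply the (asymmetric) Karlsson--Ledrappier theorem to the orbit $\Phi_n(\omega)^{-1}b$, identify the limiting horofunction with a tree $T(\omega)$ via Theorem \ref{horocompactification}, and extract the lower bound for hyperbolic elements from $\frac{1}{n}\log\mathrm{Lip}(T_n(\omega),T(\omega))\to -l$ together with the trivial upper bound $\|g\|_{T_n}\le e^{d(b,T_n)}\|g\|_b$ coming from the drift. The only cosmetic difference is that the paper fixes $b$ to be a Cayley tree (so $\|\cdot\|_b=\|\cdot\|$ exactly) and writes the horofunction as a supremum of translation-length ratios rather than via $\mathrm{Lip}$, but these are equivalent by Theorem \ref{White} and its extension.
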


As in \cite{Kar12}, Theorem \ref{ergo-1} follows from the following (more precise) quantitative version.

\begin{theo}\label{ergo-2}
Let $\Phi_n=\phi_n\dots\phi_1$ be an integrable ergodic cocycle of elements of $\text{Out}(F_N)$, and let $l$ be its drift. For $\mathbb{P}$-a.e. $\omega$, there exist a (random) constant $C(\omega)>0$ and a (random) tree $T(\omega)\in\overline{CV_N}$ such that for all $\epsilon>0$, there exists $n_0\in\mathbb{N}$ such that for all $n\ge n_0$ and all $g\in F_N$, we have $$C(\omega)||g||_{T(\omega)}(e^l-\epsilon)^n\le ||\Phi_n(\omega)(g)||\le C(\omega) ||g||(e^l+\epsilon)^n.$$
\end{theo}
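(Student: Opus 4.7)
My plan is to split the argument into upper and lower bounds. For the upper bound, Kingman's subadditive ergodic theorem and Horbez's extension of White's theorem suffice. The ergodic cocycle identity $\Phi_{n+m}(\omega)^{-1} = \Phi_n(\omega)^{-1}\,\Phi_m(T^n\omega)^{-1}$, combined with the triangle inequality and the fact that $\text{Out}(F_N)$ acts on $(CV_N, d)$ by isometries, shows that $n \mapsto d(b, \Phi_n(\omega)^{-1}\cdot b)$ is subadditive. Kingman's theorem, combined with the integrability hypothesis, then produces the deterministic drift $l$, and the extended White theorem gives
$$||\Phi_n(\omega)(g)|| = ||g||_{\Phi_n(\omega)^{-1}\cdot b} \le \text{Lip}(b, \Phi_n(\omega)^{-1}\cdot b)\cdot||g|| = e^{d(b,\Phi_n(\omega)^{-1}\cdot b)}\cdot||g||,$$
from which $||\Phi_n(\omega)(g)|| \le (e^l+\epsilon)^n\,||g||$ for every $g$ as soon as $n \ge n_0(\omega,\epsilon)$; the finitely many earlier $n$'s are absorbed into the constant $C(\omega)$.

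For the lower bound, I would apply Karlsson and Ledrappier's ergodic theorem to the integrable isometric cocycle $\Phi_n^{-1}$ on $(CV_N, d)$, whose forward horofunction compactification coincides, by Theorem \ref{horocompactification}, with the primitive compactification $\overline{CV_N}^{prim}$. This produces, for $\mathbb{P}$-a.e.\ $\omega$, a random horofunction $\psi_{T(\omega)}$ corresponding to a random tree $T(\omega) \in \overline{CV_N}^{prim}$ (lifted to a representative in $\overline{CV_N}$) such that
$$\lim_{n\to\infty} -\tfrac{1}{n}\,\psi_{T(\omega)}\bigl(\Phi_n(\omega)^{-1}\cdot b\bigr) = l.$$
Rewriting via the formula $\psi_T(x) = \log\text{Lip}(x,T) - \log\text{Lip}(b,T)$ from Theorem \ref{horocompactification}, this becomes $\lim \tfrac{1}{n}\log\text{Lip}(\Phi_n(\omega)^{-1}\cdot b,T(\omega)) = -l$. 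Since $\text{Lip}(\Phi_n(\omega)^{-1}\cdot b, T(\omega)) \ge ||g||_{T(\omega)}/||g||_{\Phi_n(\omega)^{-1}\cdot b}$ for every $g \in F_N$ by Horbez's theorem, we deduce
$$||\Phi_n(\omega)(g)|| = ||g||_{\Phi_n(\omega)^{-1}\cdot b} \ge \frac{||g||_{T(\omega)}}{\text{Lip}(\Phi_n(\omega)^{-1}\cdot b,T(\omega))} \ge ||g||_{T(\omega)}(e^l-\epsilon)^n$$
for $n$ large enough, uniformly in $g$; the projective ambiguity in the choice of a representative of $T(\omega)$ in $\overline{CV_N}$ and any transient behavior at small $n$ are absorbed into $C(\omega)$.

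The main technical obstacle is justifying Karlsson and Ledrappier's theorem in the present framework: namely, for general ergodic cocycles (not just i.i.d.\ random walks) of isometries of the asymmetric metric $(CV_N, d)$, and checking that the limit direction can indeed be interpreted through the explicit horofunctions of Theorem \ref{horocompactification}. Once this is granted, the remaining steps are essentially formal. In particular, the crucial uniformity in $g$ is automatic: the quantity $\text{Lip}(\Phi_n(\omega)^{-1}\cdot b,T(\omega))$ is itself a supremum over all of $F_N$, so a single estimate on it controls every $g$ simultaneously, and the same observation handles the corresponding supremum in the upper bound.
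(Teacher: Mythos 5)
Your proposal is correct and follows essentially the same route as the paper: the upper bound comes from the definition of the drift (via Kingman) together with White's theorem, and the lower bound comes from applying Karlsson--Ledrappier's theorem (in Karlsson's asymmetric-cocycle form, which is exactly the paper's Theorem \ref{KL}) and reading the resulting horofunction through the explicit formula of Theorem \ref{horocompactification}, with $\text{Lip}(\Phi_n(\omega)^{-1}b,T(\omega))$ being a supremum over all of $F_N$ supplying the uniformity in $g$. The only cosmetic difference is that you phrase the estimate via $\log\text{Lip}$ while the paper writes out the suprema of translation-length ratios, which are the same quantity.
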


Our proof of Theorems \ref{ergo-1} and \ref{ergo-2} relies on the following theorem of Karlsson and Ledrappier \cite{KL06}, which was originally stated for symmetric metric spaces. The extension to the case of an asymmetric metric is due to Karlsson \cite{Kar12}. 

\begin{theo} (Karlsson--Ledrappier \cite{KL06}) \label{KL}
Let $T$ be a measure-preserving transformation of a Lebesgue probability space $(\Omega,\mathcal{A},\mathbb{P})$, let $G$ be a locally compact group acting by isometries on a (possibly asymmetric) quasi-proper metric space $X$, let $b\in X$, and let $\phi:\Omega\to G$ be a measurable map satisfying $$\int_{\Omega}d^{sym}(\phi(\omega)b,b)d\mathbb{P}(\omega)<+\infty.$$ Let $\Phi_n$ be the associated integrable ergodic cocycle. Then, for $\mathbb{P}$-almost every $\omega$, there exists $\xi_{\omega}\in X(\infty)$ such that $$\lim_{n\to +\infty}-\frac{1}{n}\xi_{\omega}(\Phi_n(\omega)^{-1}b)=l.$$
\end{theo}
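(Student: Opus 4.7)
\textbf{Plan for proving Theorem \ref{KL}.} The plan is to adapt Karlsson and Ledrappier's argument from \cite{KL06} to the asymmetric setting, following Karlsson \cite{Kar12}. Set $a_n(\omega):=d(b,\Phi_n(\omega)^{-1}b)$. Since $G$ acts isometrically on $(X,d)$, the cocycle identity $\Phi_{n+m}^{-1}=\Phi_n^{-1}\cdot\Phi_m(T^n\omega)^{-1}$ combined with the triangle inequality for $d$ yields the subadditivity
$$a_{n+m}(\omega)\le a_n(\omega)+a_m(T^n\omega).$$
Integrability of $\omega\mapsto d^{sym}(b,\phi(\omega)b)$ implies integrability of $a_1$, so Kingman's subadditive ergodic theorem produces a deterministic $l\ge 0$ with $a_n/n\to l$ almost surely (this is the drift).

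The heart of the argument is the Karlsson--Margulis extraction lemma: for a.e.\ $\omega$ and every $\epsilon>0$, there exists a (random) sequence $n_k=n_k(\omega)\to+\infty$ along which
$$a_{n_k}(\omega)-a_m(\omega)\ge (l-\epsilon)(n_k-m)\quad\text{for every }0\le m\le n_k.$$
I would prove this by contradiction: if only finitely many $n$ satisfied the inequality, then every large $n$ would admit an $m<n$ violating it; iterating the "shortcut" $n\mapsto m$ backwards to $0$ decomposes $[0,n]$ into blocks over each of which $a$ grows strictly slower than $l-\epsilon$, and summation contradicts $a_n/n\to l$.

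Given such a sequence of good times, I use the compactness of the horocompactification (Proposition \ref{horocompactification-general}) to extract a further subsequence (still written $n_k$) along which $\psi_{\Phi_{n_k}^{-1}b}$ converges in $\mathcal{C}(X)$ to some $\xi_\omega$. Because $a_{n_k}\to\infty$ and $X$ is quasi-proper, the points $\Phi_{n_k}^{-1}b$ leave every compact set, so $\xi_\omega\in X(\infty)$. The key geometric identity, valid in any isometric setting, is
$$d(\Phi_n^{-1}b,\Phi_{n_k}^{-1}b)=d(b,\Phi_n\Phi_{n_k}^{-1}b)=a_{n_k-n}(T^n\omega),$$
since $\Phi_n\Phi_{n_k}^{-1}=\Phi_{n_k-n}(T^n\omega)^{-1}$. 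The asymmetric triangle inequality then gives $d(\Phi_n^{-1}b,\Phi_{n_k}^{-1}b)\ge a_{n_k}-a_n$, hence $\xi_\omega(\Phi_n^{-1}b)\ge -a_n$, so $\limsup_n -\xi_\omega(\Phi_n^{-1}b)/n\le l$. For the matching lower bound, I would combine the good-time inequality $a_{n_k-n}(T^n\omega)\ge a_{n_k}-a_n\ge(l-\epsilon)(n_k-n)$ with the ergodic theorem applied to the shifted cocycle $(a_j\circ T^n)_{j\ge 0}$ to show that the difference $a_{n_k-n}(T^n\omega)-a_{n_k}$ is bounded above by $-(l-\epsilon)n+o_k(1)$ along the selected subsequence, and then run a diagonal extraction over a sequence $\epsilon_j\to 0$ to produce a single horofunction $\xi_\omega$ achieving $-\xi_\omega(\Phi_n^{-1}b)/n\to l$.

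The main obstacles are threefold. First, the asymmetry of $d$ weakens one direction of every triangle inequality, so unlike in \cite{KL06} one cannot freely swap the roles of $b$ and $\Phi_n^{-1}b$; the choice of $a_n=d(b,\Phi_n^{-1}b)$ (rather than $d(\Phi_n^{-1}b,b)$) is dictated by the direction of the horofunction $\psi_z(x)=d(x,z)-d(b,z)$, and must be tracked carefully. Second, the extraction lemma yields a distinct random subsequence for each $\epsilon>0$, so producing one horofunction $\xi_\omega$ that witnesses the $\liminf$ bound for \emph{all} $\epsilon$ requires a diagonal argument combined with compactness of $\overline{\psi(X)}$. Third, one must ensure measurability of $\omega\mapsto\xi_\omega$, which I would handle via a measurable selection theorem on the compact metrizable space $\overline{\psi(X)}\smallsetminus\psi(X)$, exploiting that $(X,d_{sym})$ is proper (hence separable).
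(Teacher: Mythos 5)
The paper does not prove this statement: it is quoted from Karlsson--Ledrappier \cite{KL06}, with the asymmetric extension attributed to Karlsson \cite{Kar12}, so your sketch can only be measured against the argument in those references. Your framework --- subadditivity of $a_n(\omega)=d(b,\Phi_n(\omega)^{-1}b)$, Kingman's theorem, extraction of a convergent subsequence of $\psi_{\Phi_{n_k}^{-1}b}$ by compactness of the horocompactification, the identity $d(\Phi_n^{-1}b,\Phi_{n_k}^{-1}b)=a_{n_k-n}(T^n\omega)$, and the $\limsup$ bound via the asymmetric triangle inequality --- is the right one and those steps are correct.

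The gap is in the lower bound, exactly where the real content of the theorem lies. The lemma you call the ``Karlsson--Margulis extraction lemma'' is not that lemma: you compare $a_{n_k}(\omega)$ with $a_m(\omega)$ at the \emph{same} $\omega$, which is a trivial deterministic fact (take record times of $n\mapsto a_n-(l-\epsilon)n$, which tends to $+\infty$). The lemma actually needed states that for a.e.\ $\omega$ and every $\epsilon>0$ there are infinitely many $n$ with $a_n(\omega)-a_{n-m}(T^m\omega)\ge(l-\epsilon)m$ for all $1\le m\le n$; it is this \emph{shifted} comparison that converts directly into $\psi_{\Phi_{n_k}^{-1}b}(\Phi_n^{-1}b)=a_{n_k-n}(T^n\omega)-a_{n_k}(\omega)\le-(l-\epsilon)n$, and its proof requires a genuine ergodic argument (a maximal-inequality/Riesz-type selection applied to the functions $\omega\mapsto a_n(\omega)-a_{n-1}(T\omega)$), not a backwards iteration of shortcuts at fixed $\omega$. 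Your attempted repair --- deducing from subadditivity that $a_{n_k-n}(T^n\omega)\ge a_{n_k}-a_n$ and then invoking ``the ergodic theorem applied to the shifted cocycle'' to bound $a_{n_k-n}(T^n\omega)-a_{n_k}$ above by $-(l-\epsilon)n+o_k(1)$ --- does not close this: subadditivity only bounds $a_{n_k-n}(T^n\omega)$ from below (there is no superadditivity), and Kingman's theorem at $T^n\omega$ only yields $a_{n_k-n}(T^n\omega)=(n_k-n)l+o(n_k)$ and $a_{n_k}(\omega)=n_k l+o(n_k)$, whose difference is $-nl+o(n_k)$; the $o(n_k)$ error is not $o_k(1)$ and can swamp the term $-nl$ entirely. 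A secondary point: your claim that $\xi_\omega\in X(\infty)$ because $\Phi_{n_k}^{-1}b$ leaves every compact set is only valid when $l>0$; for $l=0$ the limit may lie in $\psi(X)$, as the paper itself implicitly acknowledges when it takes $T(\omega)\in CV_N$ in the drift-zero case.
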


\begin{proof}[Proof of Theorems \ref{ergo-1} and \ref{ergo-2}]
We may choose as a basepoint $b\in CV_N$ a Cayley tree of $F_N$ with respect to our fixed free basis of $F_N$, so that for all $g\in F_N$, we have $||g||_b=||g||$. Let $\Phi_n=\phi_n\dots \phi_1$ be an integrable ergodic cocycle of elements of $\text{Out}(F_N)$. Theorem \ref{KL} ensures that for almost every $\omega$, there exists $\xi=\xi(\omega)$, associated to a tree $T=T(\omega)\in \overline{CV_N}$, such that $$\lim_{n\to +\infty} -\frac{1}{n}\xi(\Phi_n(\omega)^{-1}b)=l$$ (if $l=0$ then we can choose $T(\omega)\in CV_N$, while if $l>0$, then $\xi$ is unbounded, and hence $T(\omega)\in CV_N(\infty)$ is a tree with dense orbits by Theorem \ref{Busemann-dense}). Using the expression of horofunctions given by Theorem \ref{horocompactification}, for all $\epsilon>0$, there exists $n_0\in\mathbb{N}$ such that for all $n>n_0$ one has $$\log \sup_{g\in F_N\smallsetminus\{e\}}\frac{||g||_{T}}{||g||_{\Phi_n^{-1} b}}-\log\sup_{g\in F_N\smallsetminus\{e\}}\frac{||g||_{T}}{||g||_{b}}\le -(l-\epsilon)n.$$ Letting $$C(\omega)^{-1}:=\sup_{g\in F_N\smallsetminus\{e\}}\frac{||g||_T}{||g||_{b}},$$ we obtain $$\sup_{g\in F_N}\frac{||g||_{T}}{||g||_{\Phi_n^{-1} b}}\le C(\omega)^{-1} e^{-(l-\epsilon)n},$$ so for all $g\in F_N$ we have $$||\Phi_n (g)||_{b}=||g||_{\Phi_n^{-1} b}\ge C(\omega)||g||_{T}e^{(l-\epsilon)n}.$$ As $$\lim_{n\to +\infty} \frac{1}{n} d(b,\Phi_n^{-1} b)=l,$$ for all sufficiently large $n\in\mathbb{N}$ and all $g\in F_N$, we also have $$||\Phi_n(g)||_{b}\le ||g||_{b}e^{(l+\epsilon)n}.$$ This shows Theorem \ref{ergo-2}. In view of these two inequalities we have $$\lim_{n\to +\infty}\frac{1}{n}\log||\Phi_n(g)||=l$$ for all $g\in F_N$ which are hyperbolic in $T$, showing Theorem \ref{ergo-1}.
\end{proof}

\subsection{The case of a random walk on a nonelementary subgroup of $\text{Out}(F_N)$}\label{sec-nonelementary}

A subgroup of $\text{Out}(F_N)$ is \emph{nonelementary} if it does not virtually fix the conjugacy class of any finitely generated subgroup of $F_N$ of infinite index. In the case of independent increments (i.e. $\Omega$ is a product probability space, and $T$ is the shift operator), Theorem \ref{ergo-1} specifies as follows. The following corollary is analogous to a theorem of Furstenberg for random products of matrices \cite{Fur63-2}, and to Karlsson's theorem for random products of elements of the mapping class group of a surface \cite[Corollary 4]{Kar12}.

\begin{cor} \label{random}
Let $\mu$ be a probability measure on $\text{Out}(F_N)$, whose support generates a nonelementary subgroup of $\text{Out}(F_N)$. For all $g\in F_N$, and almost every sample path $(\Phi_n)_{n\in\mathbb{N}}$ of the random walk on $(\text{Out}(F_N),\mu)$, we have $$\lim_{n\to +\infty}\frac{1}{n}\log{||\Phi_n(g)||}=l,$$ where $l$ is the drift of the random walk.
\end{cor}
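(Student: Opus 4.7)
The plan is to view the left random walk on $(\text{Out}(F_N),\mu)$ as an ergodic cocycle over $(\Omega,\mathbb{P})=(\text{Out}(F_N)^{\mathbb{N}},\mu^{\otimes\mathbb{N}})$ with $T$ the shift and $\phi(\omega)=\omega_1$, and then apply Theorem \ref{ergo-1} to reduce everything to a statement about the random direction $T(\omega)\in\overline{CV_N}$. The upper bound $\limsup\frac{1}{n}\log\|\Phi_n(g)\|\le l$ is immediate from Theorem \ref{ergo-2} (and is vacuous if $l=0$, in which case the conclusion follows at once). It therefore suffices to prove that, almost surely, every nontrivial $g\in F_N$ is hyperbolic in $T(\omega)$, so that the first bullet of Theorem \ref{ergo-1} gives the matching lower bound $\liminf\frac{1}{n}\log\|\Phi_n(g)\|\ge l$.

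The first step is to transform the almost sure existence of the direction $T(\omega)$ into a statement about $\mu$-stationary measures. By construction, the horofunction $\xi_\omega$ supplied by Karlsson--Ledrappier, and hence the projective tree $T(\omega)\in\overline{CV_N}^{prim}$ representing it via Theorem \ref{horocompactification}, is measurable with respect to the tail $\sigma$-algebra of $(\omega_n)_{n\ge 1}$. The usual shift-invariance argument then shows that the law $\nu$ of $T(\omega)$ is a $\mu$-stationary probability measure on $\overline{CV_N}^{prim}$, i.e.\ $\nu=\mu\ast\nu$.

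The heart of the argument is to show that $\nu$ is supported on trees with only trivial point stabilizers. Any $T\in\overline{CV_N}$ is very small, so the set of conjugacy classes of its point stabilizers is a finite collection of conjugacy classes of finitely generated subgroups of $F_N$, all of infinite index (by the results of Gaboriau--Levitt and Jiang recalled in the introduction). Consider the Borel $\text{Out}(F_N)$-equivariant map $\sigma$ sending $T$ to the (finite, unordered) set of conjugacy classes of its maximal point stabilizers. If $\nu$ charged the closed set of trees admitting a nontrivial elliptic, then $\sigma_\ast\nu$ would be a $\mu$-stationary probability measure on the countable set of finite collections of conjugacy classes of finitely generated subgroups of infinite index. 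Stationarity of a probability measure on a countable set forces the existence of a $\mu$-a.s.\ invariant atom, i.e.\ a finite union of conjugacy classes that is fixed by every element in the support of $\mu$. This exhibits a finite-index subgroup of the group generated by $\text{supp}(\mu)$ fixing the conjugacy class of some finitely generated subgroup of $F_N$ of infinite index, contradicting the nonelementary hypothesis.

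The main obstacle is precisely this last step: one has to argue carefully that a $\mu$-stationary measure on $\overline{CV_N}^{prim}$ giving positive mass to ``trees with elliptics'' produces a $\mu$-invariant finite set of conjugacy classes. Some care is needed because a stationary measure on a countable $G$-set need not be purely atomic \emph{a priori}, but one can bypass this by first conditioning $\nu$ on the closed invariant set of trees with nontrivial elliptics, passing to an ergodic component, and using the countability of the target of $\sigma$ to conclude that $\sigma_\ast\nu$ is purely atomic; the finiteness of each fibre $\sigma(T)$ then forces a finite orbit under $\text{supp}(\mu)$. Once this is established, Theorem \ref{ergo-1} applies to every $g\in F_N\setminus\{e\}$, yielding the corollary.
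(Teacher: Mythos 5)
Your overall architecture is the one the paper uses: reduce via Theorems \ref{ergo-1} and \ref{ergo-2} to showing that the tree $T(\omega)$ directing the walk may be taken to be a free action, and establish this by pushing a stationary measure forward under the equivariant map to the countable set of finite collections of conjugacy classes of point stabilizers and invoking a maximum-principle argument (this is exactly the paper's Proposition \ref{nonelementary}, proved via Lemma \ref{disjoint-translations}). The atomicity and finite-orbit considerations you raise at the end are handled correctly.

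The genuine gap is in your second step, where you assert that the law of $T(\omega)$ is a $\mu$-stationary measure ``by the usual shift-invariance argument'' and that $\xi_\omega$ is tail-measurable. Neither claim is available here. The horofunction produced by Karlsson--Ledrappier is not a canonical boundary limit of the orbit $\Phi_n(\omega)^{-1}b$: the theorem only provides, by a measurable selection from limit points along subsequences, \emph{some} $\xi_\omega$ satisfying $-\frac{1}{n}\xi_\omega(\Phi_n(\omega)^{-1}b)\to l$. Such a selection need not satisfy the equivariance relation $\xi_\omega=\phi_1(\omega)^{-1}\cdot\xi_{T\omega}$ (nor tail-measurability), so its distribution need not satisfy $\nu=\mu\ast\nu$, and your reduction to stationary measures does not go through as written. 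This is precisely the issue that Proposition \ref{Karlsson-Ledrappier} (Karlsson's refinement of the ergodic theorem for independent increments) is designed to circumvent: its hypothesis is quantified over \emph{all} $\mu$-stationary measures, and its conclusion is that the horofunction \emph{may be chosen} in any set of full measure for all of them. The correct logical order is therefore: first prove that every $\mu$-stationary measure on $CV_N(\infty)$ is concentrated on free actions when $gr(\mu)$ is nonelementary (your maximum-principle argument, i.e.\ Proposition \ref{nonelementary}), and then apply Proposition \ref{Karlsson-Ledrappier} to choose $T(\omega)$ free almost surely. With that substitution your proof coincides with the paper's.
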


\begin{proof}
In view of Theorem \ref{ergo-2}, it is enough to show that the tree $T(\omega)$ associated to the horofunction provided by Karlsson and Ledrappier's theorem, can almost surely be chosen to be free. This will be a consequence of Propositions \ref{Karlsson-Ledrappier} and \ref{nonelementary}. 
\end{proof}

\begin{rk}
When the random walk on $(\text{Out}(F_N),\mu)$ has positive drift with respect to the Lipschitz distance on $\text{Out}(F_N)$, we therefore get that all elements of $F_N$ almost surely have exponential growth along the sample path of the random walk, with the same exponential growth rate. Positivity of the drift is discussed in Section \ref{sec-drift}.  
\end{rk}

Our proof of Corollary \ref{random} relies on the following refinement of Karlsson and Ledrappier's theorem in the case of independent increments. The following statement was noticed by Karlsson in \cite[Section 2]{Kar12}, and follows from the proof of \cite[Theorem 18]{KL11-2}.

\begin{prop} (Karlsson \cite[Section 2]{Kar12}) \label{Karlsson-Ledrappier}
Let $G$ be a locally compact group acting by isometries on a (possibly asymmetric) quasi-proper metric space, and let $\mu$ be a probability measure on $G$ with finite first moment with respect to $d^{sym}$. Let $E\subseteq X(\infty)$ be a measurable subset such that for all $\mu$-stationary measures $\nu$ on $X\cup X(\infty)$, we have $\nu(E)=1$. Then for $\mu$-almost every $\omega$, the horofunction $\xi_{\omega}$ from Theorem \ref{KL} may be chosen to belong to $E$.
\end{prop}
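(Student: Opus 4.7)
The plan is to follow Karlsson's strategy in \cite{Kar12}: show that (after a suitable measurable selection) the random horofunction $\xi_\omega$ provided by Theorem \ref{KL} has a distribution which is $\mu$-stationary on $X\cup X(\infty)$, so that the hypothesis forces $\xi_\omega\in E$ almost surely. As a first step, I would revisit the proof of Theorem \ref{KL} to see that its conclusion holds for any accumulation point of $(\psi_{\Phi_n^{-1}(\omega)b})_{n\in\mathbb{N}}$ taken along a subsequence of "regular times" where the sublinear error terms furnished by Kingman's theorem are controlled. The set of such accumulation points is a (random) closed subset of the compact metrizable space $X\cup X(\infty)$, so the Kuratowski--Ryll-Nardzewski selection theorem supplies a measurable section $\omega\mapsto\xi_\omega$.

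The core step is then to verify that the law $\nu$ of this selection is $\mu$-stationary. Writing $\Omega=G^{\mathbb{N}}$ with shift $T$, the cocycle satisfies
\[
\Phi_n^{-1}(\omega)=\phi_1(\omega)^{-1}\,\Phi_{n-1}^{-1}(T\omega),
\]
and because $G$ acts by isometries on $X$ it acts continuously on the horocompactification via $\psi_{gz}(x)=\psi_z(g^{-1}x)-\psi_z(g^{-1}b)$. Passing to limits on both sides gives an identity of the form $\xi_\omega=\phi_1(\omega)^{-1}_{\ast}\xi_{T\omega}$ at the level of horofunctions, where $g^{-1}_{\ast}$ denotes the induced boundary action (the selection in the first step must be chosen consistently with this identity, which can be arranged because both sides are accumulation points of the same sequence). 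Since $\phi_1(\omega)$ is independent of $T\omega$ and $T\omega$ has the same law as $\omega$, integrating yields $\nu=\int_G g^{-1}_{\ast}\nu\,d\mu(g)$, i.e.\ $\nu$ is $\mu$-stationary in the sense of the proposition. The hypothesis then gives $\nu(E)=1$, so $\xi_\omega\in E$ for $\mu$-almost every $\omega$, as claimed.

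The main obstacle is the compatibility between the measurable selection and Theorem \ref{KL}: one must open up the Karlsson--Ledrappier proof in \cite{KL06,KL11-2} and check that the flexibility in their construction really does allow \emph{any} accumulation point along the regular times to play the role of $\xi_\omega$, so that the measurable section lands inside the "valid" horofunctions. In the asymmetric setting there is the additional small point of checking that the $G$-action on the horocompactification, defined through the above formula for $\psi_{gz}$, is continuous and well-defined on $X(\infty)$; this is standard but worth verifying because the horocompactification of $(X,d)$ is built from $\mathcal{C}(X)$ with topology induced by the symmetrized metric $d_{sym}$.
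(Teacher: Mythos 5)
There is a genuine gap, and it sits exactly where you flag your "main obstacle". Note first that the paper does not prove this proposition: it is quoted from Karlsson \cite[Section 2]{Kar12}, and the intended argument (from the proof of \cite[Theorem 18]{KL11-2}) runs in the \emph{opposite} direction from yours. One fixes in advance an arbitrary $\mu$-stationary measure $\nu$ on the compact space $X\cup X(\infty)$ (such a measure exists by the usual Ces\`aro-average argument); the Karlsson--Ledrappier proof then shows, via the Furstenberg-type drift formula $l=-\int_G\int \xi(g^{-1}b)\,d\nu(\xi)d\mu(g)$ together with the pointwise bound $-\xi(\Phi_n^{-1}b)\le d(b,\Phi_n^{-1}b)$, that for $\mathbb{P}\otimes\nu$-almost every pair $(\omega,\xi)$ one has $-\frac{1}{n}\xi(\Phi_n(\omega)^{-1}b)\to l$. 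Fubini then gives, for almost every $\omega$, that the set $A_\omega$ of horofunctions satisfying the conclusion of Theorem \ref{KL} has $\nu(A_\omega)=1$; since $\nu(E)=1$ by hypothesis, $A_\omega\cap E$ is nonempty and one simply chooses $\xi_\omega$ there. No measurable selection is required, because the statement only asserts existence of a valid $\xi_\omega$ in $E$ for each $\omega$.

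Your route — first select $\xi_\omega$, then prove its law is stationary — founders on the selection step. The assertion that the selection "can be arranged" to satisfy the exact identity $\xi_\omega=\phi_1(\omega)^{-1}_{\ast}\xi_{T\omega}$ is not justified: the two sides being accumulation points of essentially the same sequence does not let you choose one accumulation point on each fiber, measurably, so that equality holds exactly; and without exact equivariance the integration argument does not yield stationarity of the law. Producing such an equivariant measurable section of the family $\{A_\omega\}$ is in fact essentially equivalent to producing a stationary measure charging the sets $A_\omega$ fully, i.e. to the statement you are trying to prove, so the argument is circular at its core. Two further points would need care even if the selection existed: (a) it is not established in \cite{KL06} that \emph{every} accumulation point along the "regular times" satisfies the conclusion (the construction there involves a diagonal argument over $\epsilon$, and the valid limit points a priori depend on it); (b) with the identity $\Phi_n(\omega)^{-1}=\phi_1(\omega)^{-1}\Phi_{n-1}(T\omega)^{-1}$ the law you would obtain is stationary for the pushed-forward action of the \emph{inverted} increments, so one must check that this matches the convention for "$\mu$-stationary" used in the statement (it does here because of the paper's left-action convention $\Phi\cdot x=x\cdot\Phi^{-1}$, but this deserves a sentence).
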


\subsubsection{Stationary measures in the horoboundary of outer space}

Let $\mu$ be a probability measure on $\text{Out}(F_N)$. We now aim at understanding some properties of $\mu$-stationary measures on $CV_N(\infty)$. Given a probability measure $\mu$ on a countable group $G$ acting on a compact space $X$, there always exists a $\mu$-stationary Borel probability measure on $X$, obtained as a weak limit of the Cesàro averages of the convolution of $\mu^{\ast n}$ and any Borel probability measure on $X$ (see \cite{Fur73} or \cite[Lemma 2.2.1]{KM96}). Compactness of $CV_N(\infty)$ thus yields the following lemma.

\begin{lemma} \label{stationary-measure}
Let $\mu$ be a probability measure on $\text{Out}(F_N)$. Then there exists a $\mu$-stationary measure on $CV_N(\infty)$.
\qed
\end{lemma}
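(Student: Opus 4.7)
The plan is to apply the general existence principle for stationary measures (recalled just before the statement, from Furstenberg and from \cite[Lemma 2.2.1]{KM96}) to the compact $\text{Out}(F_N)$-space $CV_N(\infty)$. The only real content is to verify that $CV_N(\infty)$ is a compact metrizable space carrying a continuous $\text{Out}(F_N)$-action; once this is in place, the lemma follows from a standard Ces\`aro averaging argument applied directly on $CV_N(\infty)$ rather than on the full horocompactification.

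First I would note that by Theorem \ref{horocompactification}, the horocompactification of $CV_N$ is homeomorphic to $\overline{CV_N}^{prim}$, which is compact and metrizable, and the $\text{Out}(F_N)$-action on $CV_N$ extends continuously to it (it acts by isometries on $CV_N$, which extends to continuous maps on horofunctions, and $\text{Out}(F_N)$-equivariance of the homeomorphism is given by Theorem \ref{horocompactification}). In particular, $CV_N\subseteq\overline{CV_N}^{prim}$ is $\text{Out}(F_N)$-invariant, and so is its complement $CV_N(\infty)$.

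The next step is to show that $CV_N(\infty)$ is closed in $\overline{CV_N}^{prim}$, hence compact. This reduces to checking that $\psi(CV_N)$ is open in the horocompactification, which is the standard fact that a locally compact Hausdorff space is open in any Hausdorff compactification of itself. Local compactness of $CV_N$ follows from the properness of the symmetrized Lipschitz metric, established in \cite[Theorems 4.12 and 4.18]{FM11}.

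Finally, I would apply the Ces\`aro averaging argument directly on $CV_N(\infty)$: pick any Borel probability measure $\nu_0$ on the compact metrizable space $CV_N(\infty)$, form $\nu_n:=\frac{1}{n}\sum_{k=0}^{n-1}\mu^{\ast k}\ast\nu_0$, and extract a weak-$\ast$ subsequential limit $\nu$ from the weak-$\ast$ compact space of Borel probability measures on $CV_N(\infty)$. A routine telescoping computation shows that $\mu\ast\nu=\nu$, giving the desired stationary measure. The only genuine (and very modest) obstacle is verifying the closedness of $CV_N(\infty)$; everything else is a direct application of the principle cited just above the lemma.
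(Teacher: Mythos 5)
Your proposal is correct and is essentially the paper's argument: the lemma is proved by applying the standard Ces\`aro-averaging existence principle (Furstenberg, Kaimanovich--Masur) to the compact $\text{Out}(F_N)$-space $CV_N(\infty)$. The only thing you add is an explicit verification that $CV_N(\infty)$ is compact (via local compactness of $CV_N$ and openness of a dense locally compact subspace in a Hausdorff compactification), which the paper takes for granted but which is exactly the right justification.
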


The following statement is essentially proved in \cite[Proposition 2.4]{Hor14-4}, we sketch a proof for completeness. We recall that we have associated a \emph{canonical} representative to every class of primitive-equivalence in Section \ref{sec-prim}.

\begin{prop}\label{nonelementary}(Horbez \cite[Proposition 2.4]{Hor14-4})
Let $\mu$ be a probability measure on $\text{Out}(F_N)$, and let $\nu$ be a $\mu$-stationary measure on $CV_N(\infty)$. Then $\nu$ is concentrated on the set of trees $T\in CV_N(\infty)$ such that all conjugacy classes of point stabilizers in the canonical lift of $T$ to $\overline{CV_N}$ have finite $gr(\mu)$-orbits. In particular, if $gr(\mu)$ is nonelementary, then every $\mu$-stationary measure on $CV_N(\infty)$ is concentrated on the set of free $F_N$-actions.
\end{prop}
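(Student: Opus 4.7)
My plan is to exploit the following general fact: if $G$ is a countable group, $\mu$ is a probability measure on $G$, and $\bar\nu$ is a $\mu$-stationary probability measure on a countable $G$-set, then $\bar\nu$ is supported on the union of the finite $gr(\mu)$-orbits. The goal is to reduce the proposition to this discrete fact by extracting from each $T\in CV_N(\infty)$ a finite combinatorial datum that lies in a countable $\mathrm{Out}(F_N)$-set.

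Concretely, I will attach to each $T\in CV_N(\infty)$ its canonical representative $\widetilde{T}\in\overline{CV_N}$ (whose existence, recalled in Section \ref{sec-prim}, is what makes this pass to a genuine Borel assignment on the primitive compactification), together with the finite unordered set $S(T)$ of conjugacy classes of nontrivial point stabilizers of $\widetilde T$. In a very small $F_N$-tree there are only finitely many orbits of points with nontrivial stabilizer, and each such stabilizer is finitely generated of infinite index in $F_N$. Hence $S$ takes values in a countable $\mathrm{Out}(F_N)$-set $\mathcal S$. Using canonicity of the lift $T \mapsto \widetilde T$, the map $S$ is Borel-measurable and $\mathrm{Out}(F_N)$-equivariant, so $\bar\nu := S_\ast\nu$ is a $\mu$-stationary probability measure on $\mathcal S$.

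For the discrete fact itself I will invoke the standard maximum-mass argument: let $M$ be the supremum of $\bar\nu(\{s\})$ over $s\in\mathcal S$; the stationarity identity $\bar\nu(\{s\}) = \sum_{g} \mu(g)\, \bar\nu(\{g^{-1}s\})$ forces every $\mu$-translate of a maximizer to again be a maximizer, so the $gr(\mu)$-orbit of any maximizer must be finite, and peeling off maximizers and iterating shows that $\bar\nu$ is supported on the finite $gr(\mu)$-orbits of $\mathcal S$. Pulling this back through $S$, the first conclusion follows: for $\nu$-a.e.\ $T$, every conjugacy class of point stabilizer of $\widetilde{T}$ has finite $gr(\mu)$-orbit.

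For the nonelementary case, any nontrivial point stabilizer of $\widetilde T$ is finitely generated of infinite index; if its conjugacy class had finite $gr(\mu)$-orbit, $gr(\mu)$ would virtually stabilize that conjugacy class, contradicting the definition of nonelementary. Hence $\nu$-a.e.\ $\widetilde T$ has only trivial point stabilizers, i.e.\ the $F_N$-action is free. The only real obstacle I anticipate is verifying the structural inputs about very small trees (finiteness of the number of orbits of points with nontrivial stabilizer, together with finite generation and infinite index of these stabilizers) and checking that the map $T\mapsto S(T)$ built from the canonical representatives is genuinely measurable and equivariant; once those are in place, the reduction to the countable maximum-mass argument is purely formal.
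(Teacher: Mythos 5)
Your proposal is correct and follows essentially the same route as the paper: the paper also sends each $T\in CV_N(\infty)$ to the finite collection of conjugacy classes of point stabilizers of its canonical lift, viewed as an element of the countable $\text{Out}(F_N)$-set of finite collections of conjugacy classes of finitely generated subgroups, and then applies the maximum principle for stationary measures on countable sets (packaged there as Lemma \ref{disjoint-translations}). The only point you defer — measurability of $T\mapsto S(T)$ — is the one place the paper does real work, via a measurable selection of the fibers of $\partial CV_N\to CV_N(\infty)$ and a characterization of ellipticity through translation lengths.
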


Our proof of Proposition \ref{nonelementary} makes use of the following classical lemma, whose proof relies on a maximum principle argument.

\begin{lemma} \label{disjoint-translations} (Ballmann \cite{Bal89}, Woess \cite[Lemma 3.4]{Woe89}, Kaimanovich--Masur \cite[Lemma 2.2.2]{KM96}, Horbez \cite[Lemma 3.3]{Hor14-3})
Let $\mu$ be a probability measure on a countable group $G$, and let $\nu$ be a $\mu$-stationary probability measure on a $G$-space $X$. Let $D$ be a countable $G$-set, and let $\Theta:X\to D$ be a measurable $G$-equivariant map. If $E\subseteq X$ is a $G$-invariant measurable subset of $X$ satisfying $\nu(E)>0$, then $\Theta(E)$ contains a finite $gr(\mu)$-orbit.
\end{lemma}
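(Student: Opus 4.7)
The plan is to apply Lemma~\ref{disjoint-translations} with the $gr(\mu)$-action on $CV_N(\infty)$, via a carefully chosen countable $gr(\mu)$-set $D$ and equivariant map $\Theta$. Let $D$ denote the countable set of finite subsets of $F_N$-conjugacy classes of finitely generated subgroups of $F_N$, equipped with the natural $\text{Out}(F_N)$-action (and hence a $gr(\mu)$-action). Define $\Theta : CV_N(\infty) \to D$ by sending $T$ to the collection of conjugacy classes of nontrivial point stabilizers of the canonical representative of $T$ in $\overline{CV_N}$ (see Section~\ref{sec-prim}). This is well-defined because point stabilizers in a very small $F_N$-tree are finitely generated and admit only finitely many conjugacy classes; it is $\text{Out}(F_N)$-equivariant by the naturality of the canonical representative; and it is measurable, since for each fixed finitely generated $H \leq F_N$ the condition that $H$ be conjugate into a point stabilizer reduces to a countable set of vanishings $||h||=0$ which can be read off from the length function of $T$ via the canonical representative.

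Fix a conjugacy class $c$ of a finitely generated subgroup of $F_N$ of infinite index whose $gr(\mu)$-orbit is infinite, and set
$$E_c := \{T \in CV_N(\infty) : \Theta(T) \cap gr(\mu)\cdot c \neq \emptyset\}.$$
Then $E_c$ is a measurable $gr(\mu)$-invariant subset. Suppose for contradiction that $\nu(E_c)>0$. Applying Lemma~\ref{disjoint-translations} with $G=gr(\mu)$ (on which $\mu$ is naturally supported) yields an element $d \in \Theta(E_c)$ with finite $gr(\mu)$-orbit in $D$. The stabilizer of $d$ has finite index in $gr(\mu)$, and since $d$ is a finite set of conjugacy classes, the subgroup of $gr(\mu)$ fixing every class in $d$ pointwise also has finite index; hence every conjugacy class in $d$ has finite $gr(\mu)$-orbit. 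But by the definition of $E_c$, the set $d$ contains some element of $gr(\mu)\cdot c$, which has infinite $gr(\mu)$-orbit, a contradiction. Thus $\nu(E_c)=0$, and summing over the countably many such $c$ proves the first assertion.

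The second assertion is immediate: if $gr(\mu)$ is nonelementary then, by definition, no conjugacy class of a finitely generated subgroup of $F_N$ of infinite index is virtually fixed by $gr(\mu)$, and hence none has finite $gr(\mu)$-orbit. Combined with the fact that nontrivial point stabilizers of minimal very small $F_N$-trees are proper of infinite index, the first assertion forces $\Theta(T)=\emptyset$ for $\nu$-almost every $T$, i.e.\ the canonical representative of $T$ acts freely on $F_N$.

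The main technical point I expect to require care is the Borel measurability of $\Theta$: one must verify that the selection of the canonical representative within a primitive-equivalence class depends measurably on the primitive length function, and thence that its individual point stabilizers (not merely their union, which is the set of elliptic elements) are measurably recoverable from $T$. This should however follow from the intrinsic morphism-maximality characterisation of the canonical representative recalled in Section~\ref{sec-prim}, by enumerating candidate conjugacy classes over the countable family of finitely generated subgroups of $F_N$.
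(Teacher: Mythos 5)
Your proposal does not prove the statement at hand. The statement is the abstract Lemma~\ref{disjoint-translations} itself: for an arbitrary countable group $G$, a $\mu$-stationary measure $\nu$ on a $G$-space $X$, a countable $G$-set $D$ and a measurable equivariant map $\Theta:X\to D$, any $G$-invariant set $E$ of positive measure has $\Theta(E)$ containing a finite $gr(\mu)$-orbit. Your argument opens with ``apply Lemma~\ref{disjoint-translations}'' and then uses it as a black box; what you have actually written is a proof of Proposition~\ref{nonelementary} (the application to $CV_N(\infty)$, with the specific choice of $D$ as finite collections of conjugacy classes of finitely generated subgroups and $\Theta$ as the point-stabilizer map), not of the lemma. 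The entire mathematical content of the lemma --- the so-called maximum principle argument --- is absent.

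That missing argument runs as follows. Since $E$ is $G$-invariant and $\nu(E)>0$, the normalized restriction $\nu_E$ is again $\mu$-stationary, and so is its pushforward $\lambda:=\Theta_\ast\nu_E$, a $\mu$-stationary probability measure on the countable set $D$. Let $m:=\max_{d\in D}\lambda(\{d\})>0$ and let $M:=\{d\in D:\lambda(\{d\})=m\}$, a nonempty finite set. Stationarity gives $\lambda(\{d\})=\sum_{g}\mu(g)\,\lambda(\{g^{-1}d\})$ for each $d\in M$; as every term $\lambda(\{g^{-1}d\})$ is at most $m$ and the $\mu$-weighted average equals $m$, one gets $g^{-1}d\in M$ for every $g$ in the support of $\mu$. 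A finite set mapped into itself injectively is permuted, so $M$ is invariant under the group $gr(\mu)$, and any $d_0\in M$ has finite $gr(\mu)$-orbit; since $\lambda(\{d_0\})>0$ forces $d_0\in\Theta(E)$ and $\Theta(E)$ is $G$-invariant by equivariance, this finite orbit lies in $\Theta(E)$. You should supply this argument (or an equivalent one); everything you wrote about canonical representatives, measurability of point stabilizers, and nonelementarity belongs to the proof of Proposition~\ref{nonelementary}, which the paper treats separately.
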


\begin{proof}[Proof of Proposition \ref{nonelementary}]
Let $D$ be the countable set of all finite collections of conjugacy classes of finitely generated subgroups of $F_N$. For all $T\in CV_N(\infty)$, we let $\Theta(T)$ be the collection of conjugacy classes of point stabilizers in the canonical lift of $T$ to $\overline{CV_N}$. This set is finite \cite{Jia91} and belongs to $D$ by \cite[Corollary III.4]{GL95}. We have $\Theta(T)\neq\emptyset$ as soon as some representative of $T$ in $\overline{CV_N}$ is not free. We now prove that $\Theta$ is measurable, which will be enough to conclude by applying Lemma \ref{disjoint-translations} to $\Theta$. We denote by $\partial CV_N$ the boundary of Culler and Morgan's compactification of $CV_N$. The projection map $\pi:\partial CV_N\to CV_N(\infty)$ is closed, so \cite[Theorem V.3]{Cas67} and \cite[Corollary III.3]{CV77} imply that there exist countably many measurable maps $f_n:CV_N(\infty)\to\partial CV_N$, so that for all $T\in CV_N(\infty)$, we have $\pi^{-1}(T)=\overline{\{f_n(T)|n\in\mathbb{N}\}}$. Given a conjugacy class $H\in D$, we have $H\in\Theta(T)$ if and only if 

\begin{itemize}
\item for all $g\in F_N$ which is conjugate into $H$, and all $n\in\mathbb{N}$, we have $||g||_{f_n(T)}=0$, and
\item for all $g\in F_N$ which is not conjugate into $H$, there exists $n\in\mathbb{N}$ such that $||g||_{f_n(T)}\neq 0$.
\end{itemize}

\noindent Measurability of $\Theta$ follows.  
\end{proof}

\subsubsection{Drift of a random walk on a nonelementary subgroup of $\text{Out}(F_N)$} \label{sec-drift}

The \emph{free factor complex} $FF_N$, introduced by Hatcher and Vogtmann in \cite{HV98}, is defined when $N\ge 3$ as the simplicial complex whose vertices are the conjugacy classes of nontrivial proper free factors of $F_N$, and higher dimensional simplices correspond to chains of inclusions of free factors. (When $N=2$, one has to modify this definition by adding an edge between any two complementary free factors to ensure that ${FF}_2$ remains connected, and ${FF}_2$ is isomorphic to the Farey graph). We equip $FF_N$ with the simplicial metric $d_{FF_N}$, and we fix a basepoint $\ast_{FF_N}\in FF_N$ for measuring the drift of the random walk. The following theorem, due to Calegari and Maher \cite[Section 5.10]{CM13}, relies on work of Maher \cite{Mah11} and on the convergence of almost every sample path of the random walk on $(\text{Out}(F_N),\mu)$ to the Gromov boundary $\partial FF_N$ (which is also established in \cite[Theorem 4.2]{Hor14-4} by other methods). 

\begin{theo}(Calegari--Maher \cite[Theorem 5.34]{CM13})\label{Maher} 
Let $\mu$ be a probability measure on $\text{Out}(F_N)$, whose support is finite and generates a nonelementary subgroup of $\text{Out}(F_N)$ which is not virtually cyclic. Then the random walk on $(\text{Out}(F_N),\mu)$ has positive drift with respect to $d_{FF_N}$.
\end{theo}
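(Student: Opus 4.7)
The plan is to reduce the statement to general results on random walks on Gromov-hyperbolic spaces, via the Bestvina--Feighn theorem that $FF_N$ is Gromov-hyperbolic with fully irreducibles acting loxodromically.

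First, I would verify that $H := \mathrm{gr}(\mu)$ acts nonelementarily on $FF_N$ in the hyperbolic sense, i.e.\ contains two loxodromic isometries with pairwise distinct attracting and repelling fixed points in $\partial FF_N$. Equivalently, $H$ should contain two independent fully irreducible automorphisms $\Phi_1,\Phi_2$. Using the classification of subgroups of $\text{Out}(F_N)$ (Handel--Mosher), failure of this property forces $H$ either to be virtually cyclic, or to virtually preserve the conjugacy class of a proper free factor system. Both alternatives are excluded by hypothesis: the first by the non-virtual-cyclicity assumption, and the second by the standing nonelementarity condition of the paper, since a proper free factor is a finitely generated subgroup of $F_N$ of infinite index.

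Second, I would invoke Maher's theorem on random walks on Gromov-hyperbolic spaces. The finite support assumption on $\mu$ ensures the strongest possible moment conditions with respect to $d_{FF_N}$ (orbit maps $\text{Out}(F_N)\to FF_N$ being coarsely Lipschitz), so Kingman's subadditive ergodic theorem gives an almost-sure deterministic drift $\ell\ge 0$. To see $\ell>0$, one uses the two independent loxodromic elements to set up ping-pong: there exist $k\in\mathbb{N}$ and $g_1,g_2\in\mathrm{supp}(\mu^{\ast k})$ playing ping-pong on $\partial FF_N$, so that with positive probability, every $k$ steps the walk advances a uniform amount along a quasi-geodesic in $FF_N$. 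Persistent occurrence of these patterns along typical sample paths produces linear growth. Equivalently, by the convergence of almost every sample path to a point of $\partial FF_N$ established in \cite[Theorem 4.2]{Hor14-4}, one obtains a $\mu$-stationary hitting measure on $\partial FF_N$, which by Lemma \ref{disjoint-translations} applied to the action on the countable set of fixed pairs of loxodromic elements of $H$ must be non-atomic; a standard argument in hyperbolic geometry then translates non-atomicity of the boundary measure into positive drift.

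The main technical obstacle is that $FF_N$ is not locally finite, so the classical approach to positive drift on proper hyperbolic spaces (via compactness of closed balls and the Kaimanovich boundary argument) does not apply directly. Maher's approach, as extended by Calegari--Maher, circumvents this by working with equivariantly thickened shadows and exploiting the $\text{Out}(F_N)$-action rather than local compactness; the structural input required is precisely the existence of two independent loxodromics supplied by step one.
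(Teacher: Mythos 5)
This statement is imported by the paper from Calegari--Maher \cite[Theorem 5.34]{CM13} and is not proved in the text, so there is no internal proof to compare against; the paper only remarks that the result relies on Maher's work \cite{Mah11} and on convergence of sample paths to $\partial FF_N$. Your outline — hyperbolicity of $FF_N$ with fully irreducibles acting loxodromically, the subgroup classification reducing the hypotheses to the existence of two independent fully irreducibles in $\mathrm{gr}(\mu)$ (noting that a proper free factor is finitely generated of infinite index, so the paper's nonelementarity condition applies), and then Maher's linear-progress machinery adapted to the non-proper setting — is exactly the route taken in the cited source, and the reduction of the paper's nonelementarity hypothesis to nonelementarity of the action on $FF_N$ is correctly justified.
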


\begin{cor}\label{drift}
Let $\mu$ be a probability measure on $\text{Out}(F_N)$, whose support is finite and generates a nonelementary subgroup of $\text{Out}(F_N)$ which is not virtually cyclic. Then the random walk on $(\text{Out}(F_N),\mu)$ has positive drift with respect to $d_{CV_N}$.
\end{cor}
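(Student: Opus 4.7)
The plan is to deduce positive drift of $d_{CV_N}$ from the positive drift of $d_{FF_N}$ provided by Theorem~\ref{Maher}, via a coarse projection $CV_N\to FF_N$ that is Lipschitz already for the asymmetric metric. Specifically, I would use the $\text{Out}(F_N)$-equivariant coarse projection $\pi : CV_N \to FF_N$ introduced by Bestvina and Feighn in their proof of hyperbolicity of $FF_N$: to a tree $T \in CV_N$ one assigns a conjugacy class of proper free factor carried by a shortest candidate in the covolume-one representative of $T$. The map $\pi$ is well defined up to bounded error in $FF_N$ and is $\text{Out}(F_N)$-equivariant up to the same bounded error.

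The central technical step is to check that $\pi$ is coarsely Lipschitz with respect to the asymmetric Lipschitz metric, i.e.\ there exist constants $A, B > 0$ such that
\[
d_{FF_N}(\pi(T), \pi(T')) \leq A\, d_{CV_N}(T, T') + B \quad \text{for all } T, T' \in CV_N.
\]
This follows from White's formula (Theorem~\ref{White}) and a standard bounded-combinatorics argument. If $d_{CV_N}(T, T') \leq K$, then $\|g\|_{T'} \leq e^K \|g\|_T$ for every $g \in F_N$. A shortest candidate $g$ in $T$ satisfies $\|g\|_T \leq C_N$ for a uniform constant $C_N$, hence $\|g\|_{T'} \leq C_N e^K$; together with a shortest candidate $g'$ of $T'$, both $g$ and $g'$ are primitive elements of uniformly bounded length in $T'$, and the Bestvina--Feighn argument (already used for the symmetric Lipschitz metric) shows that the free factors they generate lie within $FF_N$-distance bounded by an affine function of $K$.

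With this inequality in hand the conclusion is essentially immediate. By isometry of the $\text{Out}(F_N)$-action on $(CV_N, d_{CV_N})$ we have $d_{CV_N}(b, \Phi_n^{-1} b) = d_{CV_N}(\Phi_n b, b)$. Applying the inequality to $T = \Phi_n b$, $T' = b$, and using equivariance of $\pi$ together with symmetry of $d_{FF_N}$,
\[
d_{FF_N}(\pi(b), \Phi_n \pi(b)) \;\leq\; A\, d_{CV_N}(\Phi_n b, b) + B + O(1).
\]
Dividing by $n$ and passing to the limit, Theorem~\ref{Maher} provides a strictly positive limit on the left-hand side, so
\[
l \;=\; \lim_{n \to \infty} \tfrac{1}{n}\, d_{CV_N}(b, \Phi_n^{-1} b) \;>\; 0.
\]
The main (and essentially only) obstacle in this plan is upgrading the standard symmetric coarse Lipschitz control for $\pi$ to the asymmetric one; once that is established, the argument reduces to a one-line use of the isometric action and Calegari--Maher's theorem.
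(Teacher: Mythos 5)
Your plan is correct and takes a genuinely different route from the paper's. The paper proves Proposition~\ref{sphere-Lipschitz} by passing through the sphere complex $\mathcal{S}'_N$: a Lipschitz map $\tau:\mathcal{S}'_N\to FF_N$ (Lemma~\ref{factor-sphere}), a halving surgery giving $d_{\mathcal{S}'_N}(S,S')\le K\log i(S,S')+L$ (Lemma~\ref{sphere-intersection}), and the two-sided comparison of $d_{CV_N}$ with $\log$ of sphere intersection numbers on orbit points (Proposition~\ref{Lipschitz-intersection}, from~\cite{Hor12}). You instead go directly through the Bestvina--Feighn projection $CV_N\to FF_N$, and your key move --- observing that a shortest candidate $g$ of $T$ and a shortest candidate $g'$ of $T'$ both have translation length at most $C_Ne^K$ in the \emph{single} tree $T'$ --- is exactly the right way to neutralize the asymmetry that you correctly flag as the main obstacle. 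The step that needs to be spelled out is the assertion that the cyclic free factors $\langle g\rangle$ and $\langle g'\rangle$ then lie within $FF_N$-distance $AK+B$: since their lengths in $T'$ grow like $e^K$, this requires the $FF_N$-distance to scale only \emph{logarithmically} in translation length, and a naive count of Whitehead moves reducing $g$ to a basis element would give only a linear bound, hence $O(e^K)$ rather than $O(K)$. That logarithmic scaling is precisely what the paper extracts from the halving surgery of Lemma~\ref{sphere-intersection} together with a polynomial bound on intersection numbers of bounded-length loops; you should either cite a Bestvina--Feighn estimate that gives it or note it is equivalent to the sphere-surgery argument. A further simplification worth recording: for Corollary~\ref{drift} one only needs the inequality for two points in the same $\text{Out}(F_N)$-orbit, hence in a fixed thick part of $CV_N$, where the forward and backward Lipschitz metrics are uniformly comparable; restricting there, the symmetric-metric coarse Lipschitz property of the projection already suffices and the asymmetric upgrade can be avoided entirely.
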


\begin{proof}
Corollary \ref{drift} follows from Theorem \ref{Maher} and from the following estimate relating the distances $d_{FF_N}$ and $d_{CV_N}$.
\end{proof}

\begin{prop} \label{sphere-Lipschitz}
There exist $K,L\in\mathbb{R}$ such that for all $\Phi,\Psi\in\text{Out}(F_N)$, we have $$d_{{FF}_N}(\Phi\ast_{FF_N},\Psi\ast_{FF_N})\le K d_{CV_N}(\Phi b,\Psi b)+L.$$
\end{prop}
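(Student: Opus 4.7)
The plan is to factor the orbit map through a coarsely Lipschitz, $\text{Out}(F_N)$-equivariant projection $\pi : CV_N \to FF_N$ and then apply it to $T = \Phi b$ and $T' = \Psi b$. Such a projection is standard in the literature on $\text{Out}(F_N)$, but the point here is that it can be made coarsely Lipschitz with respect to the \emph{asymmetric} distance $d_{CV_N}$, not just with respect to $d_{CV_N}^{sym}$, which is exactly what is needed since the hypothesis in the statement only involves $d_{CV_N}$.

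I would define $\pi(T)$ to be the (finite, bounded-cardinality) set of conjugacy classes of cyclic free factors $\langle g\rangle$ where $g$ ranges over candidates in $T$ of length at most some fixed constant $C_0$. Since a covolume-one graph always admits a candidate of length bounded in terms of $N$ alone, $\pi(T)$ is nonempty. The first key step is a diameter bound: there exists $D_0$ such that $\text{diam}_{FF_N}(\pi(T)) \le D_0$ for every $T \in CV_N$. This follows from the classical fact that two primitive elements realized as loops of bounded length in a single marked graph generate free factors at bounded distance in $FF_N$ (they are supported in a proper subgraph of bounded combinatorial complexity, giving a common free factor within a bounded neighbourhood of each). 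Equivariance $\pi(\Phi T) = \Phi \cdot \pi(T)$ is immediate from the definition.

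The second key step is the coarse Lipschitz estimate with respect to $d_{CV_N}$: there exist $K_0, L_0$ such that $\text{diam}_{FF_N}(\pi(T) \cup \pi(T')) \le K_0 d_{CV_N}(T,T') + L_0$ for all $T, T' \in CV_N$. This is where White's theorem (Theorem \ref{White}) is used crucially, and in exactly the right direction for the asymmetry: if $g$ is a candidate of length at most $C_0$ in $T$, then $\|g\|_{T'} \le e^{d_{CV_N}(T,T')} \|g\|_T$, so $g$ has bounded length in $T'$ once $d_{CV_N}(T,T')$ is bounded. Chaining along a sequence $T = T_0, T_1, \dots, T_n = T'$ of points in $CV_N$ with $d_{CV_N}(T_{i-1},T_i) \le 1$ and $n \le d_{CV_N}(T,T') + 1$, one applies the diameter bound at each step: both a shortest candidate in $T_{i-1}$ (which becomes a bounded-length primitive in $T_i$) and a shortest candidate in $T_i$ coexist with bounded length in $T_i$, so they are close in $FF_N$.

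Applying the estimate with $T = \Phi b$ and $T' = \Psi b$, the equivariance gives $\pi(\Phi b) = \Phi \cdot \pi(b)$ and $\pi(\Psi b) = \Psi \cdot \pi(b)$, so any choice of basepoint $\ast_{FF_N}$ inside (or within bounded distance of) $\pi(b)$ yields $d_{FF_N}(\Phi \ast_{FF_N}, \Psi \ast_{FF_N}) \le K_0 d_{CV_N}(\Phi b, \Psi b) + L$ with $L$ absorbing $L_0$ and twice the diameter of $\pi(b)$. The main obstacle is really the diameter bound in $FF_N$ for bounded-length primitive elements in a single graph; the coarse Lipschitz step reduces cleanly to this once the asymmetric direction of White's inequality is noticed. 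This is a well-known ingredient from the literature on projections to $FF_N$ (e.g.\ used implicitly by Bestvina--Feighn and by Kapovich--Rafi), and the proof essentially amounts to putting together these standard pieces.
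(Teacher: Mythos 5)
Your argument is correct in outline, but it takes a genuinely different route from the paper. The paper factors through Hatcher's sphere complex: it chains a Lipschitz map $\tau:\mathcal{S}'_N\to FF_N$ (Lemma \ref{factor-sphere}), the surgery estimate $d_{\mathcal{S}'_N}(S,S')\le K\log i(S,S')+L$ (Lemma \ref{sphere-intersection}), and the two-sided comparison between $d_{CV_N}$ and $\log$ of intersection numbers of sphere systems from the author's earlier work (Proposition \ref{Lipschitz-intersection}). You instead build the coarse projection $CV_N\to FF_N$ directly from short candidates and make it coarsely Lipschitz for the \emph{one-sided} metric via White's theorem and a unit-step chain (which exists because $(CV_N,d)$ is geodesic). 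This is the Bestvina--Feighn/Kapovich--Rafi approach; it avoids sphere systems entirely and correctly isolates the asymmetry issue, whereas the paper's route trades that for results already available to the author and gets the intersection-number comparison as a byproduct. Both are legitimate.

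One caveat on your justification of the diameter bound, which is the load-bearing lemma. Two primitive loops of length at most $C_0$ in a covolume-one graph need \emph{not} be supported in a common proper subgraph, nor even each in its own proper subgraph: when the graph has many short edges, a loop of length $C_0$ can cross every edge (e.g.\ $x_1x_2\cdots x_N$ in a rose with petals of length $1/N$). For \emph{candidates} the support is a subgraph of rank at most $2$, hence proper for $N\ge 3$, and two proper noncontractible subgraphs of a common marked graph do give free factors at uniformly bounded $FF_N$-distance; that handles $\text{diam}_{FF_N}(\pi(T))$. But your chaining step compares a candidate of $T_{i-1}$, viewed inside $T_i$ where it has bounded length but is no longer a candidate and need not be carried by any proper subgraph, with a candidate of $T_i$. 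What you actually need there is the stronger statement that any two \emph{primitive} (more generally, nonfilling) conjugacy classes of uniformly bounded length in a common covolume-one graph are at uniformly bounded distance in $FF_N$. That statement is true and is precisely the coarse well-definedness lemma of Bestvina--Feighn and Kapovich--Rafi, but its proof goes through a Whitehead-graph/surgery argument rather than the common-subgraph observation. Since you are explicitly importing that lemma from the literature, the proof stands; just be aware that the parenthetical reason you give for it is not the actual proof and would fail as stated.
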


Proposition \ref{sphere-Lipschitz} will follow from several distance estimates between various $\text{Out}(F_N)$-complexes, provided by Lemmas \ref{factor-sphere} and \ref{sphere-intersection} and Proposition \ref{Lipschitz-intersection}. We will first introduce yet another $\text{Out}(F_N)$-complex. Let $M_N:=\#^N S^1\times S^2$ be the connected sum of $N$ copies of $S^1\times S^2$, whose fundamental group is free of rank $N$. A \emph{sphere system} is a collection of disjoint, embedded $2$-spheres in $M_N$, none of which bounds a ball, and no two of which are isotopic. The \emph{sphere complex} $\mathcal{S}_N$, introduced by Hatcher in \cite{Hat95}, is the simplicial complex whose $k$-simplices are the isotopy classes of systems of $k+1$ spheres in $M_N$ (a $(k-1)$-dimensional face of a $k$-simplex $\Delta$ is obtained by removing one sphere from the sphere system corresponding to $\Delta$). We denote by $\mathcal{S}'_N$ the one-skeleton of the first barycentric subdivision of $\mathcal{S}_N$, which we equip with the simplicial metric $d_{\mathcal{S}'_N}$. Again, we fix a basepoint $\ast_{\mathcal{S}'_N}\in\mathcal{S}'_N$. There is a coarsely well-defined, coarsely equivariant map $\tau:\mathcal{S}'_N\to FF_N$, that maps a sphere system $S$ to the conjugacy class of the fundamental group of a complementary component in $M_N$ of a sphere in $S$. The map $\tau$ is Lipschitz, so we get the following estimate.

\begin{lemma} \label{factor-sphere}
There exists $C>0$ such that for all $\Phi,\Psi\in\text{Out}(F_N)$, we have $$d_{FF_N}(\Phi\ast_{FF_N},\Psi\ast_{FF_N})\le C d_{\mathcal{S}'_N}(\Phi\ast_{\mathcal{S}'_N},\Psi\ast_{\mathcal{S}'_N}).$$
\qed
\end{lemma}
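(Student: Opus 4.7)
The plan is to make precise and then exploit the coarse map $\tau:\mathcal{S}'_N\to FF_N$ mentioned just before the lemma statement. The argument has three ingredients: coarse well-definedness, coarse equivariance, and the coarse Lipschitz property.

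First, I would fix $\tau$ on vertices of $\mathcal{S}'_N$ as follows. A vertex of $\mathcal{S}'_N$ is a sphere system $S$ in $M_N$; pick any sphere $\sigma\in S$ and set $\tau(S)$ to be the conjugacy class of the fundamental group of any complementary component of $\sigma$ in $M_N$, which is a proper free factor of $\pi_1(M_N)=F_N$ (non-trivial as soon as $N\ge 2$). Different choices of $\sigma\in S$ and of complementary component yield free factors that share a common refinement corresponding to the system $S$ itself, so they lie at uniformly bounded $FF_N$-distance. Hence $\tau$ is coarsely well-defined, and it is coarsely $\text{Out}(F_N)$-equivariant because the action of $\text{Out}(F_N)$ on $\mathcal{S}'_N$ (realised via the mapping class group of $M_N$, see Laudenbach) is induced by homeomorphisms of $M_N$ that send complementary components to complementary components.

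Next, I would verify that $\tau$ is coarsely Lipschitz. It suffices to bound $d_{FF_N}(\tau(S_1),\tau(S_2))$ when $S_1,S_2$ are joined by an edge in $\mathcal{S}'_N$, i.e.\ when one is a subsystem of the other; say $S_1\subseteq S_2$. Any sphere $\sigma\in S_1$ also lies in $S_2$, so using $\sigma$ to define both $\tau(S_1)$ and $\tau(S_2)$ gives a common value, whence $d_{FF_N}(\tau(S_1),\tau(S_2))$ is bounded by the constant controlling the coarse well-definedness of $\tau$. Call the resulting Lipschitz constant $C'$, so that
\[
d_{FF_N}(\tau(x),\tau(y))\le C'\,d_{\mathcal{S}'_N}(x,y)
\]
for every pair $x,y$ of vertices of $\mathcal{S}'_N$.

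Finally, combining coarse equivariance with the triangle inequality yields the conclusion. Set $D:=d_{FF_N}(\tau(\ast_{\mathcal{S}'_N}),\ast_{FF_N})$ and let $B$ be the equivariance defect, i.e.\ a uniform bound on $d_{FF_N}(\tau(\Phi\cdot S),\Phi\cdot\tau(S))$. Then for all $\Phi,\Psi\in\text{Out}(F_N)$,
\[
d_{FF_N}(\Phi\ast_{FF_N},\Psi\ast_{FF_N})\le 2D+2B+d_{FF_N}(\tau(\Phi\ast_{\mathcal{S}'_N}),\tau(\Psi\ast_{\mathcal{S}'_N}))
\]
and the last term is at most $C'\,d_{\mathcal{S}'_N}(\Phi\ast_{\mathcal{S}'_N},\Psi\ast_{\mathcal{S}'_N})$. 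Absorbing the additive constant into a slightly larger multiplicative constant (using that distances on the $\text{Out}(F_N)$-orbit in $\mathcal{S}'_N$ are unbounded, or simply keeping it additive and then bumping $C'$ to $C$ to cover the sporadic case of small distances) gives the stated inequality.

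The argument is essentially bookkeeping; the only conceptually delicate point is the coarse well-definedness of $\tau$, and even that reduces to the observation that all complementary free factors of spheres in a single sphere system are compatible in a single free splitting of $F_N$, hence at $FF_N$-distance at most $2$. I do not anticipate any serious obstacle.
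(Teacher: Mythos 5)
Your argument fills in exactly the details behind the paper's one-sentence justification: the map $\tau$ sending a sphere system to the fundamental group of a complementary component of one of its spheres is coarsely well-defined, coarsely equivariant, and coarsely Lipschitz, and the lemma follows; this is the intended route. The only (shared) imprecision, which you flag, is that the inequality as stated is really a coarse Lipschitz bound with an additive error --- when $\Phi\ast_{\mathcal{S}'_N}=\Psi\ast_{\mathcal{S}'_N}$ the left-hand side need not vanish since the stabilizer of $\ast_{\mathcal{S}'_N}$ can move $\tau(\ast_{\mathcal{S}'_N})$ within a bounded orbit --- but an additive constant is harmless for the subsequent Proposition \ref{sphere-Lipschitz}.
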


Given two sphere systems $S$ and $S'$ in $M_N$, the \emph{intersection number} $i(S,S')$ is the minimal number of intersection circles between representatives of the isotopy classes of $S$ and $S'$. Assume that $S$ and $S'$ have been isotoped so as to minimize their number of intersection circles. There is a classical surgery procedure \cite{Hat95} that creates from a sphere $s$ in $S$ two spheres $s_1$ and $s_2$ that are both disjoint from $s$, and have fewer intersection circles with $S'$. Pick an innermost disk on a sphere $s'\in S'$, bounded by a circle $C$ of intersection with $s$. The circle $C$ splits $s$ into two disks $D_1$ and $D_2$. For all $i\in\{1,2\}$, the sphere $s_i$ consists of a parallel copy of $D_i$ attached to a parallel copy of $D$, see Figure \ref{fig-surgery}. Notice that all other intersection circles between $s$ and $S'$ are distributed over $s_1$ and $s_2$. In particular, there exists $j\in\{1,2\}$ such that $i(s_j,S')\le\frac{i(s,S')}{2}$ (isotoping $s_j$ to minimize the number of intersection circles with $S'$ can only decrease this number). An iterated application of this argument yields the following distance estimate in $\mathcal{S}'_N$ in terms of intersection numbers. (In the following statement, we take the convention $\log 0=0$.)

\begin{figure}
\begin{center}
\def\JPicScale{.8}
\input{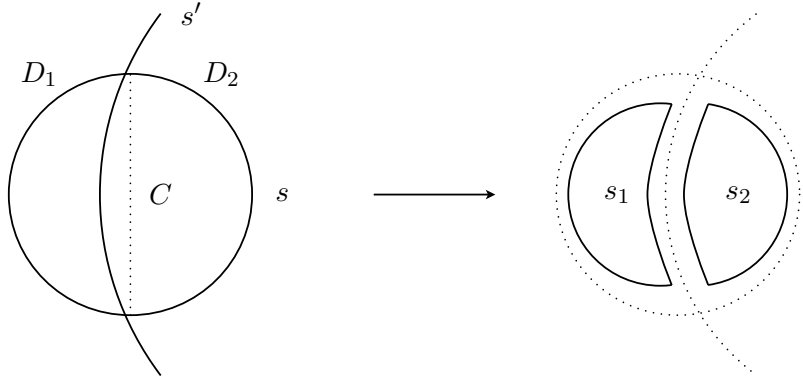}
\caption{The surgery procedure.}
\label{fig-surgery}
\end{center}
\end{figure}

\begin{lemma} \label{sphere-intersection}
There exist $K,L\in\mathbb{R}$ such that for all sphere systems $S,S'\in\mathcal{S}'_N$, we have $$d_{\mathcal{S}'_N}(S,S')\le K \log i(S,S')+L.$$
\end{lemma}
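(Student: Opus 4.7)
The plan is to iterate the surgery procedure described immediately above the lemma in order to decrease the intersection number geometrically, and then to convert the resulting sequence of sphere systems into a short path in $\mathcal{S}'_N$. As a base case, if $i(S,S')=0$ then $S$ and $S'$ may be realized disjointly; their union is then (up to discarding parallel duplicates) a sphere system containing both $S$ and $S'$ as subsystems. In $\mathcal{S}'_N$ the vertex $S\cup S'$ is adjacent to both $S$ and $S'$, so $d_{\mathcal{S}'_N}(S,S')\le 2$. With the convention $\log 0=0$, this is absorbed into the additive constant $L$.

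Assuming now $i(S,S')>0$, and that $S$ and $S'$ are in minimal position, I build a sequence $S=S_0,S_1,\dots,S_m$ of sphere systems with $i(S_m,S')=0$ as follows. Given $S_k$, choose a sphere $s\in S_k$ maximizing $i(s,S')$; since any sphere system in $M_N$ contains at most $3N-3$ spheres, this gives $i(s,S')\ge i(S_k,S')/(3N-3)$. Apply the excerpt's surgery to produce $s_j$ with $i(s_j,S')\le\tfrac{1}{2}i(s,S')$. The key geometric point is that $s_j$ is disjoint not only from $s$ but also from the other spheres of $S_k$: it is built from parallel copies of a subdisk of $s$ (automatically disjoint from $S_k\setminus\{s\}$) and of an innermost disk $D\subseteq s'$, and the innermost condition on $s'$ forces the interior of $D$ to meet no sphere of $S_k$. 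Set $S_{k+1}:=(S_k\setminus\{s\})\cup\{s_j\}$, discarding $s_j$ if it bounds a ball or is isotopic to a sphere already in $S_k$.

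There are two estimates to extract. First, since $S_k$ and $S_{k+1}$ share the subsystem $S_k\setminus\{s\}$, this subsystem is a vertex of $\mathcal{S}'_N$ adjacent to both, giving $d_{\mathcal{S}'_N}(S_k,S_{k+1})\le 2$. Second, by construction $i(S_{k+1},S')\le i(S_k,S')-\tfrac{1}{2}i(s,S')\le i(S_k,S')\bigl(1-\tfrac{1}{6N-6}\bigr)$, so the intersection number decreases geometrically. Iterating until $i(S_m,S')<1$ (and hence $=0$) therefore requires at most $m\le C_N\log i(S,S')+1$ steps, where $C_N=\bigl(-\log(1-\tfrac{1}{6N-6})\bigr)^{-1}$. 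Combining with the base case applied to $(S_m,S')$ yields $d_{\mathcal{S}'_N}(S,S')\le 2m+2\le K\log i(S,S')+L$ for suitable $K,L$ depending only on $N$. The one technical point that needs genuine care is verifying disjointness of $s_j$ from $S_k\setminus\{s\}$ and handling the degenerate situations in which $s_j$ is inessential or parallel to an existing sphere of $S_k$; once this bookkeeping is in hand, the rest reduces to the geometric decay estimate above.
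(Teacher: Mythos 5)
There is a genuine gap at the disjointness step, and it is not just bookkeeping. Your argument needs two things simultaneously: (i) the surgered sphere $s_j$ must be disjoint from $S_k\smallsetminus\{s\}$, so that $S_{k+1}$ is again a sphere system, and (ii) the sphere $s$ being surgered must be one maximizing $i(s,S')$, so that the whole system's intersection number drops by a definite factor. These two requirements conflict. The sphere $s_j$ contains a parallel copy of the disk $D\subseteq s'$, and $D$ is disjoint from $S_k\smallsetminus\{s\}$ only if $D$ is innermost among \emph{all} circles of $S_k\cap s'$, not merely among circles of $s\cap s'$. But a disk innermost with respect to $S_k\cap s'$ is bounded by a circle of intersection with some particular sphere $t\in S_k$ determined by the configuration, and there is no reason for $t$ to be your chosen $s$; if you insist on surgering $s$ and only take $D$ innermost with respect to $s\cap s'$, then $\operatorname{int}(D)$ may cross other spheres of $S_k$ essentially, and $(S_k\smallsetminus\{s\})\cup\{s_j\}$ is not a sphere system. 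If instead you surger whichever sphere the genuinely innermost disk dictates, you retain disjointness but lose the factor $\bigl(1-\tfrac{1}{6N-6}\bigr)$: the dictated sphere may meet $S'$ in very few circles, and the decay of $i(S_k,S')$ degenerates to an additive decrease, giving only a linear bound in $i(S,S')$ (this is the classical connectivity argument for the sphere complex, not the logarithmic estimate needed here).

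The paper sidesteps this entirely by abandoning the ambient system after the first step: it fixes a single sphere $s\in S$ and iterates the surgery on single spheres, producing $s=s_0,s_1,\dots,s_M$ with $i(s_{j},s_{j+1})=0$ and $i(s_j,S')\le i(s,S')/2^j$. Disjointness is then only ever required between $s_j$ and $s_{j+1}$, which the surgery gives for free, and the path $S,\;s,\;s\cup s_1,\;s_1,\;s_1\cup s_2,\;\dots,\;s_M,\;s_M\cup S',\;S'$ in $\mathcal{S}'_N$ has length $2\lceil\log_2 i(S,S')\rceil+4$. If you want to salvage your system-by-system scheme, you would need an additional argument controlling the intersections of $\operatorname{int}(D)$ with $S_k\smallsetminus\{s\}$ (for instance via a normal form for $S_k\cup S'$), which is precisely the work the single-sphere path avoids.
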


\begin{proof}
Let $s\in S$ be a sphere. Iterating the above argument yields a sequence $s=s_0,s_1,\dots,s_N$ of spheres, with $i(s_j,s_{j+1})=0$ for all $j\in\{1,\dots,N-1\}$ and $i(s_j,S')\le\frac{i(s,S')}{2^j}$. In particular, for $N:=\lceil\log_2i(S,S')\rceil$, we have $i(s_N,S')=0$. Hence the sequence $S,s,s\cup s_1,s_1,s_1\cup s_2,\dots,s_N,s_N\cup S',S'$ is a path of length $2\lceil\log_2i(S,S')\rceil+4$ joining $S$ to $S'$ in $\mathcal{S}'_N$. The lemma follows.
\end{proof}

One can finally relate the Lipschitz distance on $CV_N$ to intersection numbers.

\begin{prop} \label{Lipschitz-intersection} (Horbez \cite[Proposition 2.8]{Hor12})
There exist $K,L\in\mathbb{R}$ such that for all $\Phi,\Psi\in \text{Out}(F_N)$, we have $$\frac{1}{K}\log i(\Phi\ast_{\mathcal{S}'_N},\Psi\ast_{\mathcal{S}'_N})-L\le d_{CV_N}(\Phi b,\Psi b)\le K\log i(\Phi\ast_{\mathcal{S}'_N},\Psi\ast_{\mathcal{S}'_N})+L.$$
\end{prop}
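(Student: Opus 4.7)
I would prove Proposition~\ref{Lipschitz-intersection} by making explicit the classical dictionary (due to Hatcher \cite{Hat95}) between sphere systems in $M_N = \#^N S^1 \times S^2$ and minimal simplicial $F_N$-trees with trivial edge stabilizers. The plan is to choose the basepoint $b \in CV_N$ to be the rose whose $N$ petals all have length $1/N$, and to take $\ast_{\mathcal{S}'_N}$ to be the (barycenter of the) dual sphere system $\Sigma_0$, chosen so that cutting $M_N$ along $\Sigma_0$ yields a $3$-ball. Then for every $\Phi \in \text{Out}(F_N)$, the rose $\Phi b$ has petals labelled by a basis $(e_1^\Phi, \dots, e_N^\Phi)$ of $F_N$, and $\Phi \Sigma_0$ is its dual sphere system. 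Any bounded combinatorial discrepancy between $\Phi \ast_{\mathcal{S}'_N}$ and $\Phi \Sigma_0$ (arising from the first barycentric subdivision) contributes only additive constants which may be absorbed into $L$.

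The heart of the proof is the intersection--length identity
$$i(\Phi\Sigma_0,\Psi\Sigma_0) \;=\; \sum_{j=1}^N |e_j^\Psi|_{e^\Phi} \;=\; \sum_{i=1}^N |e_i^\Phi|_{e^\Psi},$$
where $|g|_{e^\Phi}$ denotes the cyclically reduced word length of $g$ in the basis $(e_i^\Phi)$. To establish this, I would put $\Phi\Sigma_0$ and $\Psi\Sigma_0$ in minimal position (via Hatcher's normal-form procedure), and then note that the Bass--Serre tree of the splitting of $F_N$ dual to $\Phi\Sigma_0$ is equivariantly identified with the Cayley tree of $F_N$ in the basis $(e_i^\Phi)$. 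Each sphere $s'_j \in \Psi\Sigma_0$ is dual to $e_j^\Psi$; lifting $s'_j$ to a sphere $\tilde s'_j$ in the universal cover $\widetilde{M}_N$ and counting its intersection circles with the lifts of $\Phi\Sigma_0$ produces exactly the letters in the cyclically reduced word for $e_j^\Psi$ in the basis $(e_i^\Phi)$. Summing over $F_N$-orbits of spheres gives the displayed formula, and the second equality follows by symmetry of the intersection pairing.

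Once the identity is in hand, the rest is a direct application of White's theorem. In a rose with petals of equal length, the candidates in the sense of Theorem~\ref{White} are either single petals or bouquets $e_i^\Phi(e_j^\Phi)^{\pm 1}$, and for the latter the stretch ratio $|e_i^\Phi(e_j^\Phi)^{\pm 1}|_{e^\Psi}/2$ is always dominated by $\max_k |e_k^\Phi|_{e^\Psi}$ by the triangle inequality for word lengths. Theorem~\ref{White} then yields
$$d_{CV_N}(\Phi b, \Psi b) \;=\; \log \max_i |e_i^\Phi|_{e^\Psi}.$$
Combined with the intersection--length identity and the trivial chain $\max_i \le \sum_i \le N \max_i$, we obtain
$$\log i(\Phi\Sigma_0,\Psi\Sigma_0) - \log N \;\le\; d_{CV_N}(\Phi b, \Psi b) \;\le\; \log i(\Phi\Sigma_0,\Psi\Sigma_0),$$
which gives the proposition with $K = 1$ (and $L$ absorbing $\log N$ and the bounded basepoint discrepancy).

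The main obstacle is the intersection--length identity itself: while it is essentially tautological once the sphere-system / splitting dictionary is set up, one must be careful with the normal-form reduction to minimal position and with equivariance in $\widetilde{M}_N$ so that intersection circles are correctly identified with letters in the cyclically reduced word of $e_j^\Psi$ in the basis $(e_i^\Phi)$. The subsequent reduction via White's theorem is then essentially automatic, which is why the same constant $K$ works in both directions of the inequality.
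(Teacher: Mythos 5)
The paper does not prove this proposition --- it is quoted from \cite[Proposition 2.8]{Hor12} --- so I am judging your argument on its own terms. The overall architecture (pass from sphere systems to dual roses, compare $d_{CV_N}$ of roses to word lengths of basis elements via White's theorem, compare word lengths to intersection numbers via Hatcher's normal form) is indeed the right one and is essentially the route taken in the cited source. But the two steps you describe as ``essentially tautological'' and ``essentially automatic'' both contain genuine gaps. The central intersection--length identity $i(\Phi\Sigma_0,\Psi\Sigma_0)=\sum_j|e_j^\Psi|_{e^\Phi}$ is false as an exact equality: for $\Phi=\Psi$ it asserts $0=N$, and more tellingly, for $\Psi$ a single Nielsen transformation (say $a\mapsto ab^{-1}$, $b\mapsto b$ in rank $2$) the two sphere systems are compatible --- the Cayley trees of $\{a,b\}$ and $\{ab^{-1},b\}$ admit a common refinement (a theta-graph splitting), so the sphere systems can be isotoped to be disjoint and $i=0$ --- while your right-hand side equals $3$. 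A sphere in a maximal system corresponds to an edge orbit of a free splitting, not to a conjugacy class, so its intersection circles with $\Sigma_0$ are \emph{not} in bijection with the letters of $e_j^\Psi$; what is true is only a coarse two-sided comparison between $i(\Phi\Sigma_0,\Psi\Sigma_0)$ and word lengths, and proving it (in particular the lower bound on $i$, which requires controlling Hatcher's normal form or the Guirardel core) is precisely the mathematical content of the proposition. This is the step that must be carried out, not asserted.

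The reduction via White's theorem also has a flaw. In the rose $\Phi b$ the candidates are petals \emph{and} two-petal bouquets, and their stretch ratios involve \emph{cyclically} reduced lengths in the basis $e^\Psi$. Your claim that the bouquet ratios are dominated by $\max_k|e_k^\Phi|_{e^\Psi}$ ``by the triangle inequality'' fails because cyclic word length is not subadditive (e.g.\ $u=aba^{-1}$ and $v=a^2ca^{-2}$ each have cyclic length $1$ while $uv$ has cyclic length $4$); this non-subadditivity is exactly why White's candidate list cannot be reduced to embedded loops. So the identity $d_{CV_N}(\Phi b,\Psi b)=\log\max_i|e_i^\Phi|_{e^\Psi}$ is unjustified, and one further needs to relate the cyclically reduced lengths appearing in the candidate ratios to the plain reduced lengths that the sphere-intersection count would see --- a bounded-cancellation type argument for a suitably chosen representative of the outer class. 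None of this is fatal to the strategy, since only logarithms are compared and all the needed comparisons are coarse, but as written the proof establishes neither inequality, and the claimed sharp constant $K=1$ should not be expected to survive the corrections.
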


\subsection{An Oseledets-like theorem for random products of outer automorphisms of $F_N$}\label{sec-rw-2}

When $gr(\mu)$ is elementary, we can no longer expect all elements of $F_N$ to grow exponentially fast along a typical sample path of the random walk, with the same exponential growth rate. A typical situation is the case where the support of $\mu$ only contains automorphisms that act as the identity on some proper subgroup of $F_N$: in this case, elements of $F_N$ belonging to this subgroup will not grow along any sample path of the random walk. 

However, in the case where $gr(\mu)$ is elementary, we can still provide information about the possible growth rates of elements of $F_N$ under random products of automorphisms of $F_N$. In this case, we give an analogue of a theorem due to Furstenberg and Kifer \cite{FK83} and Hennion \cite{Hen84} in the case of random products of matrices (which may be seen as a version of Oseledets' theorem). Several growth rates may arise, and we give a bound on their number. 

\subsubsection{Filtrations of $F_N$}

A \emph{filtration} of $F_N$ is a finite rooted tree $\tau$ such that

\begin{itemize}
\item associated to every node of $\tau$ is a (possibly trivial) subgroup $H\subseteq F_N$, and
\item the subgroup associated to the root of $\tau$ is $F_N$, and
\item we have $H'\subseteq H$ whenever $H'$ is a child of $H$.
\end{itemize}

A \emph{system of Lyapunov exponents} for the filtration $\tau$ is a set of real numbers $\lambda_H\ge 0$ associated to the nodes of $\tau$, such that $\lambda_{H'}\le\lambda_{H}$ whenever $H'$ is a descendant of $H$, and $\lambda_H=0$ if and only if $H$ is a leaf of $\tau$. A particular case of filtrations of $F_N$ is given by the following construction. We say that a group action on a tree is \emph{trivial} if the tree is reduced to a point. An \emph{$F_N$-chain of actions} is a finite rooted tree $\tau$ such that 

\begin{itemize}
\item associated to every node of $\tau$ is a pair $(H,T_{H})$, where $H$ is a subgroup of $F_N$ (the subgroup associated to the root of $\tau$ is $F_N$), and $T_H$ is a minimal, very small $H$-tree with dense orbits (the group $H$ might be equal to $\{e\}$, and the tree $T_H$ might be reduced to a point), and
\item all nodes whose associated action is trivial are leaves of $\tau$, and 

\item for all nodes whose associated action $(H,T_H)$ is nontrivial, the collection of subgroups $H'\subseteq F_N$ associated to the children of $(H,T_H)$ is a set of representatives of the conjugacy classes of point stabilizers in $T_H$ (in particular, the group $\{e\}$ is one of the children of $H$ as soon as the action on $T_H$ is nontrivial). 
\end{itemize}

In particular, leaves of $\tau$ are in one-to-one correspondence with trivial actions. We might have preferred not to add the trivial group to the collection of descendants of a nontrivial action, which would have led to some leaves of $\tau$ corresponding to free actions. However, it will turn out that including the trivial group in this collection is more natural for our purpose, because $e$ always has zero growth along any sample path of a random walk. Associated to any $F_N$-chain of actions is a filtration of $F_N$, obtained by forgetting the actions. The following theorem, whose proof we postpone to Section \ref{sec-count}, gives a bound on the size of any $F_N$-chain of actions.

\begin{theo}\label{count}
Any $F_N$-chain of actions has at most $N-1$ non-leaf nodes.
\end{theo}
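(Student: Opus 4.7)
The plan is to prove the bound by strong induction on the rank, with the (slightly strengthened) inductive hypothesis that for every subgroup $H$ of $F_N$ and every $H$-chain of actions $\tau$, the number $f(\tau)$ of non-leaf nodes satisfies $f(\tau) \leq \max(0, \text{rk}(H) - 1)$. The base cases $\text{rk}(H) \leq 1$ are immediate: neither the trivial group nor $\mathbb{Z}$ admits a nontrivial minimal very small tree with dense orbits, so $\tau$ is reduced to a single leaf and $f(\tau) = 0$.

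For the inductive step at rank $N \geq 2$, consider the root $(F_N, T)$ of $\tau$. If $T$ is trivial, the root is a leaf and $f(\tau) = 0 \leq N - 1$. Otherwise, let $H_1, \dots, H_k$ be representatives of the conjugacy classes of \emph{nontrivial} point stabilizers of $T$, with $n_i := \text{rk}(H_i) \geq 1$; together with the (always trivial) child $\{e\}$ these are exactly the children of the root. Writing $\tau_i$ for the sub-chain rooted at $(H_i, T_{H_i})$ and applying the inductive hypothesis (since $n_i < N$ by minimality of $T$), we obtain $f(\tau_i) \leq n_i - 1$ for each $i$, hence
\[
f(\tau) \;=\; 1 \,+\, \sum_{i=1}^k f(\tau_i) \;\leq\; 1 \,+\, \sum_{i=1}^k (n_i - 1).
\]

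The heart of the argument, and the main obstacle, is to establish the key inequality
\[
\sum_{i=1}^k (n_i - 1) \;\leq\; N - 2
\]
for the nontrivial point stabilizers of any minimal very small $F_N$-tree $T$ with dense $F_N$-orbits. My plan here is to derive it from the Gaboriau--Levitt index formula \cite[Theorem III.2]{GL95}, which asserts $\sum_{[v]} \bigl(2\,\text{rk}(G_v) + V_1(v) - 2\bigr) \leq 2N - 2$ for any minimal very small $F_N$-tree with trivial arc stabilizers (a condition automatically satisfied in the dense-orbit case). The delicate input is that at every vertex $v$ with nontrivial stabilizer one must have $V_1(v) \geq 2$: triviality of arc stabilizers forces $G_v$ to act \emph{freely} on the set of directions at $v$, and in the dense-orbit regime one must rule out the exotic possibility that $G_v$ acts transitively on these directions (one can exclude this, e.g., by combining denseness of the orbits with the minimality of $T$, though this verification is the technical point that deserves most care). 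Once $V_1(v) \geq 2$ is established, each orbit with nontrivial stabilizer contributes at least $2n_i$ to the index, so $\sum_i 2 n_i \leq 2N - 2$, i.e.\ $\sum_i n_i \leq N - 1$, and rearranging yields the desired inequality whenever $k \geq 1$; the case $k = 0$ (free action of $F_N$ on $T$) is immediate since the sum is empty and $N \geq 2$.

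Plugging the key inequality back into the recursion gives $f(\tau) \leq 1 + (N - 2) = N - 1$, completing the induction. Everything outside the key inequality is a straightforward recursive bookkeeping, so the entire argument hinges on producing the Gaboriau--Levitt-type bound on the ranks of point stabilizers; once that is granted, the theorem follows from a one-page induction.
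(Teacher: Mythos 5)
Your overall strategy is the same as the paper's: induct on the rank, bound the number of non-leaf nodes by $1$ plus the sum of the bounds for the sub-chains rooted at the point stabilizers of the root action, and control the ranks of those stabilizers via the Gaboriau--Levitt index formula (Proposition \ref{index}). The recursion and the base cases are fine. The one genuine gap is in your derivation of the key inequality $\sum_i(n_i-1)\le N-2$: you make it rest on the claim that every point $v$ of $T$ with nontrivial stabilizer satisfies $V_1(v)\ge 2$, i.e.\ that $G_v$ cannot act transitively on the directions at $v$. You flag this yourself as the delicate point and do not prove it; ``combining denseness of the orbits with the minimality of $T$'' is not an argument --- since $G_v$ acts freely on the directions at $v$, a transitive action would mean $v$ has infinitely many directions forming a single orbit, and nothing you cite excludes this a priori. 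As written, the proof is therefore incomplete at its crucial step.

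The good news is that the stronger claim is not needed. Minimality of $T$ (no extremal points) gives at least two directions at every point, and triviality of arc stabilizers (automatic for trees with dense orbits) makes every direction stabilizer trivial, so $V_1(v)\ge 1$ is immediate. The index formula then yields $\sum_i(2n_i-1)\le 2N-2$, i.e.\ $\sum_i n_i\le N-1+\tfrac{k}{2}$, hence $\sum_i(n_i-1)\le N-1-\tfrac{k}{2}\le N-\tfrac{3}{2}$ when $k\ge 1$; since the left-hand side is an integer, $\sum_i(n_i-1)\le N-2$, which is exactly your key inequality (the case $k=0$ is trivial for $N\ge 2$). This half-integer slack plus integrality is precisely how the paper closes the argument. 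A minor additional point: the bound $n_i<N$ that licenses the inductive call is itself a consequence of the same index inequality (via $V_1(v)\ge 1$), not of minimality alone, so you should cite Proposition \ref{index} there too.
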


\subsubsection{Oseledets filtrations of probability measures on $\text{Out}(F_N)$} \label{sec-Oseledets}

\begin{de}
Let $\mu$ be a probability measure on $\text{Out}(F_N)$. Let $F$ be a finitely generated subgroup of $F_N$. A filtration $\tau$ of $F$ is an \emph{Oseledets filtration} for $\mu$ if there exists a system of Lyapunov exponents $\{\lambda^{\mu}_H\}_{H\in V(\tau)}$ for $\tau$, such that for $\mathbb{P}$-almost every sample path of the random walk on $(\text{Out}(F_N),\mu)$, all nodes $H$ of $\tau$, and all elements $g\in H$ that are not conjugate into any child of $H$, we have $$\lim_{n\to +\infty}\frac{1}{n}\log||\Phi_n(g)||=\lambda^{\mu}_H.$$ For all $g\in F$, the growth rate $$\lim_{n\to +\infty}\frac{1}{n}\log||\Phi_n(g)||$$ is called the \emph{Lyapunov exponent} of $g$ for the measure $\mu$, and denoted by $\lambda^{\mu}(g)$.
\end{de}

\begin{theo}\label{Oseledets}
Let $\mu$ be a probability measure on $\text{Out}(F_N)$, having finite first moment with respect to $d_{CV_N}^{sym}$. Then there exists an Oseledets filtration for $\mu$, associated to an $F_N$-chain of actions. Moreover, for all nodes $H$ of the filtration, the conjugacy class of $H$ has finite $gr(\mu)$-orbit. 
\end{theo}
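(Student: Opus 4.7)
The plan is an induction on the rank $N$, with the inductive step driven by Karlsson and Ledrappier's ergodic theorem (Theorem \ref{KL}) applied to $(CV_N,d)$, combined with the horoboundary description (Theorem \ref{horocompactification}), the characterization of Busemann points (Theorem \ref{Busemann-dense}), and the analysis of $\mu$-stationary measures (Proposition \ref{nonelementary}). Denote by $l$ the drift of $\mu$ on $(CV_N,d)$. The base case, and also the subcase $l=0$, is handled by the one-node filtration rooted at $F_N$ with $\lambda_{F_N}^\mu = 0$: Theorem \ref{ergo-2} immediately forces $\lambda^\mu(g)=0$ for all $g\in F_N$, and this filtration is associated to the trivial $F_N$-chain of actions.

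Now suppose $l>0$. Applying Theorem \ref{KL}, and using Proposition \ref{Karlsson-Ledrappier} to constrain the horofunction to a set of full measure for every $\mu$-stationary measure, combined with Proposition \ref{nonelementary}, we obtain for $\mathbb{P}$-a.e.\ sample path a directing horofunction $\psi_{T_\omega}$ with $T_\omega \in \overline{CV_N}$, whose canonical representative has its (finite) set of conjugacy classes of point stabilizers contained in a finite $gr(\mu)$-orbit. Since $l>0$, the horofunction $\psi_{T_\omega}$ is unbounded below, so Theorem \ref{Busemann-dense} gives that $T_\omega$ has dense $F_N$-orbits. By Theorem \ref{ergo-1}, every $g$ hyperbolic in $T_\omega$ satisfies $\lambda^\mu(g)=l$.

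To produce a deterministic list of children of the root, let $\nu$ be the law of $T_\omega$ (a $\mu$-stationary measure on $CV_N(\infty)$), and let $\Theta$ be the measurable map sending a tree to its finite set of conjugacy classes of point stabilizers. The pushforward $\Theta_*\nu$ is a $\mu$-stationary measure on a countable $\text{Out}(F_N)$-set and, by Lemma \ref{disjoint-translations}, concentrates on a finite $gr(\mu)$-orbit. Choose $H_1,\ldots,H_k$ to be a set of representatives of the union of all conjugacy classes appearing in this orbit; these will be the children of the root. Every $g$ not conjugate into any $H_i$ is hyperbolic in $T_\omega$ for $\nu$-a.e.\ $\omega$, so Theorem \ref{ergo-1} yields $\lambda^\mu(g)=l$ for such $g$.

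For each $i$ we now invoke the induction hypothesis. The conjugacy class $[H_i]$ has finite $gr(\mu)$-orbit, so the stabilizer $\Gamma_{H_i}\leq gr(\mu)$ of $[H_i]$ has finite index, and maps naturally to $\text{Out}(H_i)$ by restriction modulo inner automorphisms of $H_i$. The return-time (hitting) measure of $\mu$ on $\Gamma_{H_i}$ pushes forward to a probability measure $\mu_i$ on $\text{Out}(H_i)$ of finite first moment; this is legitimate because point stabilizers of minimal very small $F_N$-trees with dense orbits have rank strictly less than $N$ (cf.\ \cite{GL95}), so the induction hypothesis applies to $(H_i, \mu_i)$. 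The resulting $H_i$-chain of actions is grafted below $H_i$, and a standard absorption argument comparing $\Phi_n$ restricted to $H_i$ with the induced walk transfers Lyapunov data to $(\Phi_n)$ on $\text{Out}(F_N)$. The main obstacle is the third paragraph, the deterministic selection of the root's children: one must decouple the random tree $T_\omega$ from the deterministic filtration by combining the finite-orbit behaviour of stationary measures with the fact that $\lambda^\mu$ factors through $gr(\mu)$-orbits of conjugacy classes (Kingman's theorem produces a deterministic limit, and conjugation invariance is clear). A secondary difficulty is ensuring the induced measure $\mu_i$ retains finite first moment in the Lipschitz metric on $CV$ associated to $H_i$, which one verifies by bounding the displacement on $CV_{H_i}$ in terms of the displacement on $CV_N$.
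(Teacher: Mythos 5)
Your proposal follows the same overall architecture as the paper's proof: induction on rank $N$, invocation of Karlsson--Ledrappier (Theorem \ref{KL}) together with the horoboundary identification (Theorem \ref{horocompactification}) and the Busemann characterization (Theorem \ref{Busemann-dense}) to obtain a directing tree $T_\omega$ with dense orbits when $l>0$, use of stationary measures plus the maximum-principle Lemma \ref{disjoint-translations} to control the point stabilizers, and descent to the point stabilizers via the first return measure $\mu^{A}$ (Propositions \ref{return-measure} and \ref{return}). The base case and the $l=0$ case are handled the same way.

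The genuine gap is in your third paragraph, and you are right to flag it as the main obstacle, but the route you sketch does not close it. First, the ``law of $T_\omega$'' is not obviously $\mu$-stationary: Karlsson--Ledrappier does not identify $\xi_\omega$ with a boundary hitting point (the horofunction is generally not the limit of $\psi_{\Phi_n^{-1}b}$, and there is a choice involved in Proposition \ref{Karlsson-Ledrappier}), so you cannot simply push this law forward to get a stationary measure on the countable set of finite collections of conjugacy classes. Second, even granting a stationary $\nu$, Lemma \ref{disjoint-translations} does not say $\Theta_*\nu$ concentrates on \emph{one} finite $gr(\mu)$-orbit; the conclusion in Proposition \ref{nonelementary} is only that $\nu$-a.e.\ tree has \emph{each} of its point-stabilizer conjugacy classes in a finite $gr(\mu)$-orbit, and distinct $\omega$ may contribute different orbits and different elliptic subgroups, so ``the union of all conjugacy classes appearing in this orbit'' is not well-defined. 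Third, and most importantly, the assertion that ``Kingman's theorem produces a deterministic limit'' for $\frac{1}{n}\log\|\Phi_n(g)\|$ does not hold: that sequence is not subadditive (the subadditive cocycle is $d(b,\Phi_n^{-1}b)$, which gives the drift $l$, not the length of an individual conjugacy class). For $g$ hyperbolic in $T_\omega$ the limit equals $l$ by Theorem \ref{ergo-1}; for $g$ elliptic the existence and determinism of the limit is precisely what has to be established and cannot be assumed. The paper's actual argument at this step is a proof by contradiction: if $\lambda^{\mathbf{\Phi}(\omega)}(g)$ were not essentially constant, then $\mathbb{P}\bigl(\lambda^{\mathbf{\Phi}(\omega)}(g)<l\bigr)>0$, forcing $g$ to be conjugate into some point stabilizer $H$ whose conjugacy class has finite $gr(\mu)$-orbit; one then applies the inductive hypothesis to the first return measure on $\mathrm{Stab}([H])$ and invokes Proposition \ref{return} to pin down $\lambda^{\mathbf{\Phi}}(g)=\frac{1}{C_A}\lambda^{\mu^A}(g)$ deterministically. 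This contradiction argument (rather than a direct appeal to Kingman) is what converts the \emph{a priori} random filtration into a deterministic one. Your fourth paragraph (rank bound via Gaboriau--Levitt, finite-moment transfer, return-measure descent) matches the paper, so once the determinism step is repaired along the lines above, the proof goes through.
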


As a consequence of Theorems \ref{count} and \ref{Oseledets}, we deduce that for all probability measures $\mu$ on $\text{Out}(F_N)$ having finite first moment with respect to $d_{CV_N}^{sym}$, there exists a finite collection of (deterministic) exponents $\lambda_1,\dots,\lambda_p>0$ such that for $\mathbb{P}$-almost every sample path of the random walk on $(\text{Out}(F_N),\mu)$, and all $g\in F_N\smallsetminus\{e\}$, the limit $$\lim_{n\to +\infty}\frac{1}{n}\log||\Phi_n(g)||$$ exists and belongs to $\{0\}\cup\{\lambda_1,\dots,\lambda_p\}$. Theorem \ref{count} implies that $p\le N-1$, we will improve this bound in Section \ref{sec-count}, see Corollary \ref{growth}.

\subsubsection{Existence of Oseledets filtrations}

\paragraph*{First return measures.}

Let $\mu$ be a probability measure on $\text{Out}(F_N)$, and let $A$ be a finite index subgroup of $gr(\mu)$. The subgroup $A$ is positively recurrent for the random walk on $(\text{Out}(F_N),\mu)$. The \emph{first return measure} on $A$, denoted by $\mu^A$, is the probability measure defined as the distribution of the point where the random walk issued from the identity of $\text{Out}(F_N)$ returns for the first time to $A$. Given a sample path $(\Phi_n(\omega))_{n\in\mathbb{N}}$ of the random walk on $(\text{Out}(F_N),\mu)$, and $m\in\mathbb{N}$, we let $\tau^A_m(\omega)$ be the $(m+1)^{st}$ time $n\in\mathbb{N}$ at which we have $\Phi_n(\omega)\in A$. Notice in particular that $\tau^A_0(\omega)=0$, and $\tau^A_1(\omega)$ is the first (positive) time at which the sample path returns to the recurrent subgroup $A$. We let $$C_A:=\lim_{n\to +\infty}\frac{\tau^A_n(\omega)}{n},$$ which almost surely exists, is independent of $\omega$, and $C_A>0$ by positive recurrence of the random walk on the finite set $gr(\mu)/A$. The following proposition is a variation in our context of a classical fact about first return measures, see for example \cite[Lemma 2.3]{Kai91} or \cite[Lemma 6.10]{BQ13} where it appears in other contexts.

\begin{prop}\label{return-measure}
Let $\mu$ be a probability measure on $\text{Out}(F_N)$ which has finite first moment with respect to $d^{sym}_{CV_N}$. Let $A$ be a finite index subgroup of $\text{gr}(\mu)$ which fixes the conjugacy class of a finitely generated malnormal subgroup $H\subseteq F_N$ of rank $k$. Then $\mu^A$ has finite first moment with respect to $d^{sym}_{CV_k}$. 
\end{prop}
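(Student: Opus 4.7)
The plan is to reduce the claim to a first-moment bound for $\mu^A$ with respect to the ambient metric $d_{CV_N}^{sym}$, and then to control this bound via a Wald-type identity for the first-return time to $A$. For the reduction, I would first use malnormality of $H$ to construct the homomorphism $\rho\colon A \to \text{Out}(H) \cong \text{Out}(F_k)$ by which $A$ acts on $CV_k$: for $\Psi \in A$, pick $\phi \in \text{Aut}(F_N)$ representing $\Psi$ with $\phi(H) = H$ and set $\rho(\Psi) := [\phi|_H]$. Malnormality gives $N_{F_N}(H) = H$, so any two such representatives differ by an inner automorphism of $F_N$ by an element of $H$, making $\rho$ well-defined.

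For the geometric comparison, choose the basepoint $b_k \in CV_k$ to be the minimal $H$-invariant subtree $b_N|_H$ of $b_N$, rescaled to have unit $H$-covolume. For $\Psi \in A$, the $H$-tree $\rho(\Psi) \cdot b_k$ is identified, via restriction of $F_N$-equivariant maps to $H$-equivariant maps between minimal subtrees, with the normalized minimal $H$-invariant subtree of $\Psi \cdot b_N$, and translation lengths $||h||_T$ for $h \in H$ agree in $T$ and in $T|_H$. Applying Theorem \ref{White} on both sides, and noting that the supremum over $H \setminus \{e\}$ is at most the supremum over $F_N \setminus \{e\}$, one obtains
\[ d_{CV_k}(b_k, \rho(\Psi) \cdot b_k) \le d_{CV_N}(b_N, \Psi \cdot b_N) + \log(V/V_\Psi), \]
where $V$ and $V_\Psi$ are the $H$-covolumes of $b_N|_H$ and $(\Psi \cdot b_N)|_H$ before normalization; the reverse inequality holds with the roles of $V$ and $V_\Psi$ exchanged. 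Summing the two, the logarithmic covolume terms cancel and we get
\[ d_{CV_k}^{sym}(b_k, \rho(\Psi) \cdot b_k) \le d_{CV_N}^{sym}(b_N, \Psi \cdot b_N). \]

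It thus remains to bound $\int d_{CV_N}^{sym}(\Psi \cdot b_N, b_N) \, d\mu^A(\Psi)$. Setting $|\Phi| := d_{CV_N}^{sym}(\Phi \cdot b_N, b_N)$, the isometric action of $\text{Out}(F_N)$ on $(CV_N, d_{CV_N}^{sym})$ gives the subadditivity $|\Phi\Psi| \le |\Phi| + |\Psi|$. Let $\tau$ be the first return time of the left random walk to $A$, so that $\Phi_\tau$ has law $\mu^A$ and $|\Phi_\tau| \le \sum_{i=1}^\tau |\phi_i|$. Since $A$ has finite index in $gr(\mu)$, the projection of the walk to the finite set $gr(\mu)/A$ is positively recurrent at the identity coset, so $E[\tau] < \infty$; combined with $E[|\phi_1|] < \infty$ (our hypothesis), Wald's identity yields $E[|\Phi_\tau|] \le E[\tau] \cdot E[|\phi_1|] < \infty$, which concludes the argument. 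The main technical step is the geometric comparison: the crucial cancellation of the normalization factors when passing to the symmetric distance rests on Theorem \ref{White} together with the invariance of translation lengths under restriction to the minimal $H$-invariant subtree.
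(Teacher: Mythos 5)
Your proof is correct and follows essentially the same route as the paper: reduce to the ambient metric by comparing $d^{sym}_{CV_k}$ at the normalized $H$-minimal subtrees with $d^{sym}_{CV_N}$, then bound the first moment of $\mu^A$ by a Wald-type identity using positive recurrence of the walk on the finite set $gr(\mu)/A$. The only cosmetic difference is in the comparison lemma: you obtain the constant by letting the covolume-normalization terms cancel under symmetrization, whereas the paper observes directly that the $H$-minimal subtrees of $\ast_{CV_N}$ and $\Phi\ast_{CV_N}$ have equal quotient volumes; both yield the same estimate.
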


\begin{proof}
Since $H$ is malnormal, all elements of $A$ induce a well-defined element of $\text{Out}(H)$. We choose a basepoint $\ast_{CV_N}$ in $CV_N$, and let its $H$-minimal subtree be a basepoint $\ast_{CV_k}$ for $CV_k$. Then there exists $C>0$ such that for all $\Phi\in A$, we have $$d^{sym}_{CV_k}(\Phi\ast_{CV_k},\ast_{CV_k})\le C d^{sym}_{CV_N}(\Phi\ast_{CV_N},\ast_{CV_N}).$$ Indeed, as $\Phi$ fixes the conjugacy class of $H$, the $H$-minimal subtrees of $\ast_{CV_N}$ and $\Phi\ast_{CV_N}$ have the same quotient volumes, and the translation length of any $g\in H$ is stretched by the same amount from $\Phi\ast_{CV_N}$ to $\ast_{CV_N}$ and from $\Phi\ast_{CV_k}$ to $\ast_{CV_k}$. Denoting by $L$ the (finite) first moment of $\mu$ with respect to $d^{sym}_{CV_N}$, we have  
\begin{displaymath}
\begin{array}{rl}
\int_{A} d_{CV_k}^{sym}(\Phi\ast_{CV_k},\ast_{CV_k})d\mu^{A}(\Phi)&\le C \int_{A} d_{CV_N}^{sym}(\Phi\ast_{CV_N},\ast_{CV_N})d\mu^{A}(\Phi)\\
&=C\int_{\Omega}d_{CV_N}^{sym}(\Phi_{\tau^A_1(\omega)}(\omega)\ast_{CV_N},\ast_{CV_N})d\mathbb{P}(\omega)\\
&\le C\int_{\Omega}\sum_{i=1}^{\tau^A_1(\omega)}d_{CV_N}^{sym}(\phi_i(\omega)\ast_{CV_N},\ast_{CV_N})d\mathbb{P}(\omega)\\
&=C\sum_{i=1}^{+\infty} \int_{\{\tau_1^A(\omega)\ge i\}}d_{CV_N}^{sym}(\phi_i(\omega)\ast_{CV_N},\ast_{CV_N})d\mathbb{P}(\omega)\\
&=CL\sum_{i=1}^{+\infty}\mathbb{P}(\tau^A_1(\omega)\ge i),
\end{array}
\end{displaymath}

\noindent where the last equality follows from independence of $\{\tau^A_1\ge i\}$ and the increments $\phi_j$'s for $j\ge i$. We thus get $$\int_{A} d_{CV_k}^{sym}(\Phi\ast_{CV_k},\ast_{CV_k})d\mu^{A}(\Phi)\le CL\sum_{i=1}^{+\infty}i\mathbb{P}(\tau^A_1(\omega)=i),$$ which is finite by positive recurrence of the random walk on the finite set $gr(\mu)/A$. 
\end{proof}

\begin{prop}\label{return}
Let $\mu$ be a probability measure on $\text{Out}(F_N)$, with finite first moment with respect to $d_{CV_N}^{sym}$. Let $H$ be a finitely generated malnormal subgroup of $F_N$ of rank $k$, whose conjugacy class $[H]$ has finite $\text{gr}(\mu)$-orbit. Let $A:=\text{Stab}([H])$. 

Assume that for all probability measures $\mu'$ on $A$ with finite first moment with respect to $d_{CV_{k}}^{sym}$, there exists an Oseledets filtration of $H$ for $\mu'$. Then any Oseledets filtration of $H$ for the measure $\mu^A$ is an Oseledets filtration of $H$ for the measure $\mu$, and for all $g\in H$, we have $\lambda^{\mu}(g)=\frac{1}{C_A}\lambda^{\mu^A}(g)$.
\end{prop}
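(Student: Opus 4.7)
The plan is to transfer an Oseledets filtration from the induced random walk at return times to $A$ back to the original random walk on $(\text{Out}(F_N),\mu)$, via the strong Markov property together with an interpolation argument controlling the oscillations of $\log\|\Phi_n(g)\|$ between consecutive returns.

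First I would apply Proposition~\ref{return-measure} to deduce that $\mu^A$, viewed as a probability measure on $\text{Out}(H)$ via the natural map $A\to\text{Out}(H)$ (well-defined because $H$ is malnormal), has finite first moment with respect to $d^{sym}_{CV_k}$. By hypothesis, there is then an Oseledets filtration $\tau$ of $H$ for $\mu^A$, with Lyapunov exponents $\{\lambda^{\mu^A}_{H'}\}_{H'\in V(\tau)}$, and I aim to show that $\tau$ is also an Oseledets filtration for $\mu$, with exponents $\lambda^{\mu}_{H'}:=C_A^{-1}\lambda^{\mu^A}_{H'}$. By the strong Markov property, for $\mathbb{P}$-a.e.\ sample path $(\Phi_n)_{n\in\mathbb{N}}$ of the random walk on $(\text{Out}(F_N),\mu)$, the subsequence $(\Phi_{\tau^A_m})_{m\in\mathbb{N}}$ is distributed as a sample path of the random walk on $(A,\mu^A)$. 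Since for $g\in H$ the word length in $F_N$ is bi-Lipschitz equivalent to the word length in a fixed basis of $H$, the Oseledets hypothesis applied to $\mu^A$ yields, for each node $H'\in V(\tau)$ and each $g\in H'$ not conjugate into any child of $H'$,
\[
\lim_{m\to+\infty}\frac{1}{m}\log\|\Phi_{\tau^A_m}(g)\|=\lambda^{\mu^A}_{H'}.
\]

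Next I would carry out the interpolation. Using $\|\psi(g)\|_b=\|g\|_{\psi^{-1}b}$, Theorem~\ref{White}, and the fact that $\text{Out}(F_N)$ acts by isometries on $CV_N$, one obtains, for $\tau^A_m\le n\le\tau^A_{m+1}$,
\[
\bigl|\log\|\Phi_n(g)\|-\log\|\Phi_{\tau^A_m}(g)\|\bigr|\le\sum_{i=\tau^A_m+1}^{n}d^{sym}_{CV_N}(\phi_i b,b)\le Z_m,
\]
where $Z_m:=\sum_{i=\tau^A_m+1}^{\tau^A_{m+1}}d^{sym}_{CV_N}(\phi_i b,b)$. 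By the strong Markov property the $Z_m$ are i.i.d., and Wald's identity gives $E[Z_1]=E[\tau^A_1]\cdot\int d^{sym}_{CV_N}(\phi b,b)\,d\mu(\phi)<+\infty$, using positive recurrence of $A$ in $gr(\mu)$ and the finite first moment hypothesis. Hence $Z_m/m\to 0$ almost surely. Combined with $\tau^A_m/m\to C_A$ (law of large numbers applied to the i.i.d.\ return time differences), this gives, for $\tau^A_m\le n\le\tau^A_{m+1}$,
\[
\frac{1}{n}\log\|\Phi_n(g)\|=\frac{m}{n}\cdot\frac{1}{m}\log\|\Phi_{\tau^A_m}(g)\|+O(Z_m/n)\longrightarrow\frac{1}{C_A}\lambda^{\mu^A}_{H'},
\]
which is the desired conclusion.

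The main obstacle is the interpolation step: one must ensure that the fluctuations $Z_m$ grow sublinearly in $m$. The key input is Wald's identity, whose applicability rests on $\tau^A_1$ being an integrable stopping time for the sequence of increments $(\phi_i)$; this in turn follows from positive recurrence on the finite quotient $gr(\mu)/A$ (giving $E[\tau^A_1]<+\infty$) combined with the finite first moment of $\mu$ with respect to $d^{sym}_{CV_N}$.
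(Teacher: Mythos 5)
Your proof is correct, and the key step is handled by a genuinely different mechanism from the paper's. The paper partitions time according to which conjugate $[H_i]$ of $[H]$ the walk has carried $[H]$ to, uses conjugated Oseledets filtrations for each first-return measure $\mu^{A_i}$ to identify the limit of $\frac{1}{n}\log\|\Phi_n(g)\|$ along each subset $I_i$, and then rules out disagreement of these sublimits by contradiction: two distinct sublimits would produce a positive-density set of times $n$ with $\bigl|\frac{1}{n+1}\log\|\Phi_{n+1}(g)\|-\frac{1}{n}\log\|\Phi_n(g)\|\bigr|\ge\delta$, forcing $\mu(\{\phi:\,d_{CV_N}(\phi b,b)\ge k\})\ge\alpha>0$ for every $k$, which is absurd. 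You instead work only with the returns to $A$ itself and control the oscillation over each excursion by $Z_m$, the sum of symmetric displacements of the increments in that excursion; the $Z_m$ are identically distributed with finite mean by Wald's identity (the same computation as in Proposition~\ref{return-measure}), so $Z_m/m\to 0$ almost surely by Borel--Cantelli, and the interpolation $\bigl|\log\|\Phi_n(g)\|-\log\|\Phi_{\tau^A_m}(g)\|\bigr|\le Z_m$ (valid by White's theorem, equivariance of the action, and telescoping) closes the argument. Your route is somewhat more economical — it never needs the filtrations of the conjugates $H_i$ or the density argument, and it yields a quantitative sublinear bound on the fluctuations between return times — at the cost of invoking the finite first moment of $\mu$ once more in the interpolation, where the paper's agreement-of-sublimits step only uses that $\mu$ is a probability measure. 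All the supporting facts you use (strong Markov property, positive recurrence on the finite orbit giving $E[\tau^A_1]<+\infty$ and $\tau^A_m/m\to C_A$, bi-Lipschitz comparability of word length in $H$ and in $F_N$) are legitimate and consistent with the paper's setup.
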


\begin{proof}
Let $\{[H]=[H_1],\dots,[H_p]\}$ be the $\text{gr}(\mu)$-orbit of the conjugacy class of $H$, and for all $i\in\{1,\dots,p\}$, let $A_i:=\text{Stab}([H_i])$. We start by showing the existence, for all $i\in\{1,\dots,p\}$, of an Oseledets filtration of $H_i$ for the measure $\mu^{A_i}$. Let $i\in\{1,\dots,p\}$. We choose an automorphism $\alpha_i\in gr(\mu)$ such that $\alpha_i([H])=[H_i]$ (with $\alpha_1=\text{id}$). Let $\mu_i$ be the measure on $A$ defined as $\mu_i:=(\text{ad}_{\alpha_i})_{\ast} \mu^{A_i}$ (where $\text{ad}_{\alpha_i}$ denotes the conjugation by $\alpha_i$). Using Proposition \ref{return-measure}, we get that $\mu_i$ has finite first moment with respect to $d_{CV_{k}}^{sym}$, so by hypothesis, there exists an Oseledets filtration of $H$ for the measure $\mu_i$. For almost every sample path $(\Psi_n^i)_{n\in\mathbb{N}}$ of the random walk on $(H,\mu_i)$, and all $g\in H$, we have $$\lim_{n\to +\infty}\frac{1}{n}\log ||\Psi_n^i(g)||=\lambda^{\mu_i}(g),$$ so for all $g'\in H_i$, we have $$\lim_{n\to +\infty}\frac{1}{n}\log ||\alpha_i\Psi_n^i\alpha_i^{-1}(g')||=\lambda^{\mu_i}(\alpha_i^{-1}(g')).$$ By definition of the measure $\mu_i$, this implies the existence of an Oseledets filtration of $H_i$ for the measure $\mu^{A_i}$, which is the $\alpha_i$-image of the Oseledets filtration of $H$ for $\mu_i$. The Lyapunov exponents $\lambda^{A_i}$ of the measure $\mu^{A_i}$ satisfy $\lambda^{{A_i}}(g')=\lambda^{\mu_i}(\alpha_i^{-1}(g'))$ for all $g'\in H_i$.
\\
\\
\indent Let now $(\Phi_n)_{n\in\mathbb{N}}$ be a sample path of the random walk on $(\text{Out}(F_N),\mu)$, and let $g\in H$. For all $n\in\mathbb{N}$, we set $g_n:=\Phi_n(g)$. For all $i\in\{1,\dots,p\}$, we let $I_i\subseteq\mathbb{N}$ be the set of all integers $n$ such that $\Phi_n([H])=[H_i]$, and we let $\tau_i(n)$ be the $n^{th}$ integer in $I_i$. The limit $$C_i:=\lim_{n\to +\infty}\frac{\tau_i(n)}{n}$$ almost surely exists, and $C_i>0$, by positive recurrence of the finite Markov chain on $\{[H_1],\dots,[H_p]\}$ induced by the random walk on $(\text{Out}(F_N),\mu)$. For all $n\in\mathbb{N}$, we have $\Phi_n (g)=\Psi_n^i (g_{\tau_i(1)})$, where $\Psi_n^i:=\phi_n\dots\phi_{\tau_i(1)+1}$. The sequence $(\Psi_n^i)_{n\in I_i}$ is a sample path of the random walk on $(A_i,\mu^{A_i})$, and therefore we have $$\lim_{\substack{n\to +\infty\\n\in I_i}}\frac{1}{n}\log ||\Phi_n (g)||=\frac{1}{C_i}\lambda^{{A_i}}(g_{\tau_i(1)}).$$ We will now prove that the limit $$\lim_{n\to +\infty}\frac{1}{n}\log ||\Phi_n(g)||$$ almost surely exists, i.e. that $\frac{1}{C_i}\lambda^{A_i}(g_{\tau_i(1)})$ does not depend on $i$. In particular, by choosing $i=1$, this will imply that $$\lim_{n\to +\infty}\frac{1}{n}\log ||\Phi_n(g)||=\frac{1}{C_A}\lambda^A(g),$$ and in particular any Oseledets filtration for $\mu^A$ is an Oseledets filtration for $\mu$.

In order to prove the above claim, we first notice that for all $\epsilon>0$, there exists $n_0\in\mathbb{N}$ such that for all $i\in\{1,\dots,p\}$, all $n\in I_i\cap [n_0,+\infty)$, and all $g\in H$, we have $$\left|\frac{1}{n}\log ||\Phi_n(g)||-\frac{1}{C_i}\lambda^{A_i}(g)\right|\le{\epsilon}.$$ Assume towards a contradiction that $\frac{1}{C_i}\lambda^{A_i}(g)\neq\frac{1}{C_{i'}}\lambda^{A_{i'}}(g)$ for some $i,i'\in\{1,\dots,p\}$. Then there exists an infinite set of integers $X$ with positive density such that for all $n\in X$, the integers $n$ and $n+1$ belong to two different sets $I_i,I_{i'}$ of the partition, with $\frac{1}{C_i}\lambda^{A_i}(g)\neq\frac{1}{C_{i'}}\lambda^{A_{i'}}(g)$. Consequently, there exists $\delta>0$ and an infinite set of integers $X'$ with positive density $\alpha>0$, such that for all $n\in X'$, we have $$\left|\frac{1}{n+1}\log ||\Phi_{n+1}(g)||-\frac{1}{n}\log||\Phi_n(g)||\right|\ge\delta,$$ and therefore $$\frac{||\Phi_{n+1}(g)||}{||\Phi_n(g)||}\ge e^{n\delta}.$$ Let $k\in\mathbb{N}$. There exists $n_k\in\mathbb{N}$ such that $e^{n_k\delta}\ge k$. For all $n\in X'\cap [n_k,+\infty)$, we have $$d_{CV_N}(\Phi_n^{-1}b,\Phi_{n+1}^{-1}b)\ge k,$$ or in other words $d_{CV_N}(\phi_{n+1}b,b)\ge k$. This implies that for all $k\in\mathbb{N}$, we have $$\mu(\{\phi\in\text{Out}(F_N)|d_{CV_N}(\phi b,b)\ge k\})\ge\alpha$$ (where we recall that $\alpha>0$ is the density of $X'$), which is impossible. So for all $i,i'\in\{1,\dots,p\}$, we have $\frac{1}{C_i}\lambda^{A_i}(g)=\frac{1}{C_{i'}}\lambda^{A_{i'}}(g)$, as claimed.
\end{proof}

\paragraph*{Proof of Theorem \ref{Oseledets}.}

We argue by induction on the rank $N$ of the free group. The claim holds true for $N=1$, so we assume that $N\ge 2$. We will first show that for almost every sample path $\mathbf{\Phi}$ of the random walk on $(\text{Out}(F_N),\mu)$, there exists an (\emph{a priori} random) filtration $\tau(\mathbf{\Phi})$ of $F_N$, together with an (\emph{a priori} random) system of Lyapunov exponents $\{\lambda^{\mathbf{\Phi}}(H)\}_{H\in V(\tau(\mathbf{\Phi}))}$, such that for all nodes $H$ of the filtration, and all $g\in H$ that are not conjugate into any child of $H$, we have $$\lim_{n\to +\infty}\frac{1}{n}\log ||\Phi_n(g)||=\lambda^{\mathbf{\Phi}}(H).$$ The fact the Lyapunov exponents are deterministic, and that the filtration can be chosen not to depend on the sample path, will be shown in the last paragraph of the proof. We keep the notations introduced in the proof of Theorems \ref{ergo-1} and \ref{ergo-2} in Section \ref{cocycle}. Recall that we have shown that $$\lim_{n\to +\infty}{\frac{1}{n}}\log ||\Phi_n(g)||=l$$ for all $g\in F_N$ which are hyperbolic in $T$, where $l$ is the drift of the random walk for the Lipschitz metric on $CV_N$. We are left understanding possible growth rates of elements of $F_N$ that are elliptic in $T$. If $l=0$, then all elements $g\in F_N$ grow subexponentially along the random walk, and we can choose $T$ to be trivial. Otherwise, the horofunction $\xi$ provided by Theorem \ref{KL} is unbounded from below, so Theorem \ref{Busemann-dense} implies that for almost every $\omega$, the tree $T(\omega)$ has dense orbits. Propositions \ref{Karlsson-Ledrappier} and \ref{nonelementary} show that we may have chosen $T$ so that all conjugacy classes of point stabilizers in $T$ have finite $\text{gr}(\mu)$-orbit. 

Let $\mathcal{C}$ be the collection of conjugacy classes of point stabilizers of $T$. All subgroups in $\mathcal{C}$ are malnormal, and they have rank at most $N-1$ by \cite[Theorem III.2]{GL95} (see Proposition \ref{index} below). Therefore, our induction hypothesis implies that for all $H\in\mathcal{C}$ of rank $k$, and all measures $\mu'$ on $\text{Out}(H)$ with finite first moment with respect to $CV_k$, there exists an Oseledets filtration of $H$ for the measure $\mu'$, which is associated to an $H$-chain of actions. Proposition \ref{return} then shows the existence of an Oseledets filtration of $H$ for the measure $\mu$, which is equal to the Oseledets filtration for $\mu^A$, where $A:=\text{Stab}([H])$. The conjugacy class of every node $H'\subseteq H$ of the filtration has finite $gr(\mu^A)$-orbit, and hence it has finite $gr(\mu)$-orbit. 

To get the desired filtration $\tau(\mathbf{\Phi})$ of $F_N$, notice that all elements of $F_N$ that do not belong to any subgroup in $\mathcal{C}$ have a Lyapunov exponent, which is greater than or equal to the Lyapunov exponent of any other element of $F_N$ (Theorem \ref{ergo-1}). We then let $\tau(\mathbf{\Phi})$ be the filtration of $F_N$ associated to the $F_N$-chain of actions whose root is the action $(F_N,T(\omega))$, to which we attach the $H$-chains of actions associated to the elliptic subgroups $H$ of $T(\omega)$ which were provided by the induction hypothesis.

We now show that the filtration $\tau(\mathbf{\Phi})$ is actually a (deterministic) Oseledets filtration for the measure $\mu$ (i.e. it is adapted to almost every sample path of the random walk on $(\text{Out}(F_N),\mu)$). It is enough to show that for all $g\in F_N$, the growth rate $\lambda^{\mathbf{\Phi}(\omega)}(g)$ of $g$ along the sample paths of the random walk on $(\text{Out}(F_N),\mu)$ is $\mathbb{P}$-essentially constant. Let $g\in F_N$. If $\lambda^{\mathbf{\Phi}(\omega)}(g)$ is not $\mathbb{P}$-essentially constant, then in particular $\mathbb{P}(\lambda^{\mathbf{\Phi(\omega)}}(g)<l)>0$ (where we recall that $l$ is the drift of the random walk on $(\text{Out}(F_N),\mu)$). Hence $g$ belongs to some subgroup $H\subseteq F_N$, whose conjugacy class has finite $\text{gr}(\mu)$-orbit. Let $A:=\text{Stab}([H])$. The induction hypothesis implies that the growth rate of $g$ along the sample paths of the random walk on $(A,\mu^A)$ is essentially constant, equal to $\lambda^{\mu^A}(g)$. Proposition \ref{return} therefore implies that $\lambda^{\mathbf{\Phi}}(g)$ is $\mathbb{P}$-essentially constant, equal to $\frac{1}{C_A}\lambda^{\mu^A}(g)$. This proves the claim.
\qed

\subsubsection{Bounding the size of $F_N$-chains of actions} \label{sec-count}

We will now prove Theorem \ref{count}, which bounds the size of any $F_N$-chain of actions. For all $T\in \overline{CV_N}$, and all $x\in T$, we define the \emph{index} $i(x):=2\text{rk}(\text{Stab}(x))+v_1(x)-2$, where $v_1(x)$ denotes the number of $\text{Stab}(x)$-orbits of directions with trivial stabilizer at $x$. This only depends on the $F_N$-orbit of $x$ in $T$. The \emph{index} $i(T)$ is then defined to be the sum of the indices of $x$ over all $F_N$-orbits of points $x\in T$. We will appeal to the following result of Gaboriau and Levitt.

\begin{prop} (Gaboriau--Levitt \cite[Theorem III.2]{GL95})\label{index}
For all trees $T\in \overline{CV_N}$, we have $i(T)\le 2N-2$. In particular, if $T$ has trivial arc stabilizers, then for all $x\in T$, we have $\text{rk}(\text{Stab}(x))\le N-1$. 
\end{prop}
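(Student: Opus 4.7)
The plan is to prove the inequality $i(T)\le 2N-2$ in two stages (first for simplicial trees via a direct Euler-characteristic computation, then for general $T\in\overline{CV_N}$ by approximation), and then derive the stabilizer rank bound as a consequence.

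First I would treat the case where $T$ is simplicial. Let $\Gamma=T/F_N$ be the quotient graph of groups. Very smallness forces every vertex group $G_v$ to be free of some rank $r_v$ and every edge group $G_e$ to have rank at most $1$. Euler--Poincaré for graphs of groups gives
\[
1-N \;=\; \sum_v(1-r_v) \;-\; \sum_e(1-\mathrm{rk}(G_e)),
\]
which rearranges to $\sum_v r_v=(v(\Gamma)-e(\Gamma))+(N-1)+e^{\mathrm{cyc}}(\Gamma)$, where $e^{\mathrm{cyc}}(\Gamma)$ is the number of edges with cyclic stabilizer. In $T$ the $\mathrm{Stab}(x)$-orbits of directions at a vertex $x$ lying above $v$ correspond bijectively to half-edges at $v$ in $\Gamma$, and those with trivial stabilizer correspond to half-edges of edges with trivial edge group. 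Summing $i(v)=2r_v+v_1(v)-2$ over vertices of $\Gamma$ and using the displayed identity together with $\sum_v v_1(v)=2(e(\Gamma)-e^{\mathrm{cyc}}(\Gamma))$ yields after cancellation $i(T)=2N-2$, so the bound holds with equality in the simplicial case.

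For a general $T\in\overline{CV_N}$, I would pass to the limit. By the Bestvina--Feighn / Levitt--Paulin approximation theorem, $T$ is an axes-limit of a sequence $(T_n)_{n\in\mathbb{N}}$ of simplicial trees in $CV_N$, each satisfying $i(T_n)=2N-2$ by the previous step. The remaining task is to establish lower semi-continuity of the index: $i(T)\le \liminf_n i(T_n)$. The cleanest way to formalize this is through the band-complex machinery of Gaboriau--Levitt: realize $T$ as the tree dual to a system of isometries on a foliated $2$-complex $K$ with $\pi_1(K)=F_N$, so $\chi(K)=1-N$, and prove the identity $i(T)=-2\chi(K)-\delta(T)$ with a non-negative defect $\delta(T)$ that accounts for orbits of leaves of the foliation collapsing together in the dual tree. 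This yields $i(T)\le 2N-2$ in full generality.

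For the ``in particular'' statement, assume $T$ has trivial arc stabilizers. If $g\in\mathrm{Stab}(x)$ fixed a direction at $x$, then $g$ would fix a non-degenerate initial segment of an edge in that direction, forcing $g=e$; hence every direction at $x$ has trivial stabilizer, so $v_1(x)$ is the total number of $\mathrm{Stab}(x)$-orbits of directions at $x$. Fix $x\in T$ with $r_x:=\mathrm{rk}(\mathrm{Stab}(x))$. If $i(x)\le 0$ then $2r_x+v_1(x)\le 2$ and in particular $r_x\le 1\le N-1$. Otherwise $i(x)\ge 1$, and since the contributions to $i(T)$ are all non-negative we have $i(x)\le i(T)\le 2N-2$, that is $2r_x+v_1(x)\le 2N$. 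If $r_x=N$ this forces $v_1(x)=0$, meaning $x$ has no directions and $T=\{x\}$, contradicting the non-triviality of the minimal $F_N$-action; hence $r_x\le N-1$.

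The main obstacle is the semi-continuity step in the general case: passing the exact identity $i(T_n)=2N-2$ into the inequality $i(T)\le 2N-2$ when limits can convert branch points of the $T_n$ into orbits with dense orbit behaviour. This is where the band-complex technology and the careful bookkeeping of $\delta(T)$ do the real work; the simplicial case and the deduction of the stabilizer bound are, by comparison, soft.
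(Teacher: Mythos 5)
The paper does not prove this proposition; it is quoted directly from Gaboriau--Levitt \cite[Theorem III.2]{GL95}, so there is no internal proof to compare against. Judging your argument on its own merits: the simplicial computation is correct and even gives the equality $i(T)=2N-2$ (your Euler--Poincar\'e identity is essentially the paper's Lemma \ref{euler}, which the paper proves by unfolding rather than by asserting the Euler characteristic formula), and the deduction of $\mathrm{rk}(\mathrm{Stab}(x))\le N-1$ from the index bound is sound, using that minimality forces $i(x)\ge 0$ at every point and that trivial arc stabilizers force trivial direction stabilizers.

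The genuine gap is the passage from simplicial trees to arbitrary $T\in\overline{CV_N}$, which is not a formalization detail but the entire content of the theorem. Two specific problems: first, not every very small $F_N$-tree is geometric, so you cannot in general ``realize $T$ as the tree dual to a system of isometries on a foliated $2$-complex $K$ with $\pi_1(K)=F_N$''; the proposed identity $i(T)=-2\chi(K)-\delta(T)$ has no ambient object $K$ to be stated for when $T$ is non-geometric. Second, lower semicontinuity of the index along an \emph{arbitrary} axes-convergent sequence of simplicial trees is not available and is far from soft: under weak convergence the branch points and stabilizers of the limit bear no controlled relation to those of the approximants (a free simplicial $T_n$ has all its index carried by trivially-stabilized branch points, while the limit may concentrate index in a rank-$(N-1)$ point stabilizer that is invisible in the $T_n$). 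Gaboriau--Levitt do not argue this way: they treat geometric trees via the Rips machine and an index formula for systems of isometries, and then handle general $T$ by resolving $T$ itself by a sequence of geometric trees converging \emph{strongly} to $T$ (i.e.\ with equivariant morphisms onto $T$), for which the index is monotone. A related unaddressed point is that the finiteness of the set of orbits with $i(x)>0$ -- needed for $i(T)$ to be a finite sum at all -- is itself part of what must be proved in the non-simplicial case.
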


\begin{proof}[Proof of Theorem \ref{count}]
We argue by induction on $N$. Every $\mathbb{Z}$-action on a tree with dense orbits is trivial, so we can assume that $N\ge 2$. Let $\tau$ be an $F_N$-chain of actions, and let $T$ be the action corresponding to the root of $\tau$. We denote by $p(\tau)$ the number of non-leaf nodes in $\tau$. Let $V$ be the collection of nodes of depth $1$ in $\tau$, which correspond to a set of representatives of the conjugacy classes of point stabilizers in $T$. For all $v\in V$, let $G_v$ be the associated subgroup of $F_N$, and let $\tau_v$ be the corresponding $G_v$-chain of actions. As $T$ has dense orbits, it follows from Proposition \ref{index} that for all $v\in V$, we have $\text{rk}(G_v)<N$. The induction hypothesis implies that for all $v\in V$, we have $p(\tau_v)\le \text{rk}(G_v)-1$, which implies that
\begin{displaymath}
\begin{array}{rl}
p(\tau)&\le 1+\sum_{v\in V}(\text{rk}(G_v)-1)\\
&<1+\frac{1}{2}\sum_{v\in V}(2\text{rk}(G_v)-1).
\end{array}
\end{displaymath} 

\noindent As arc stabilizers in $T$ are trivial, Proposition \ref{index} implies that $p(\tau)\le N-1$.
\end{proof}

\begin{ex}\label{sharp}
We construct an example of an $F_N$-chain of actions with $N-1$ nontrivial nodes, thus showing the optimality of the bound provided by Theorem \ref{count} in general. Let $T$ be a geometric $F_N$-tree with dense orbits, whose skeleton (see \cite[Definition 4.8]{Gui04} or \cite[Section 1]{Gui08}) consists of 

\begin{itemize}
\item one vertex corresponding to a minimal action with dense orbits of a subgroup of $F_N$ of rank $2$, dual to a measured lamination on a torus with a single boundary component, and
\item one vertex corresponding to a trivial action of a subgroup of $F_N$ of rank $N-1$, and
\item an edge of length $0$ joining them, whose stabilizer is cyclic, represented by the boundary curve of the torus. 
\end{itemize}

This defines a nontrivial, minimal, very small $F_N$-tree, in which a subgroup of $F_N$ of rank $N-1$ is elliptic. Repeating this construction, we get a sequence of subgroups $F_N=H_N\supseteq\dots\supseteq H_1=\mathbb{Z}$, in which the subgroup $H_i$ has rank $i$, together with minimal, very small $H_i$-trees with dense orbits, which are nontrivial as soon as $i\ge 2$, and such that for all $i\in\{2,\dots,N\}$, the subgroup $H_{i-1}$ is elliptic in $T_i$. This defines an $F_N$-chain of actions with $N-1$ non-leaf nodes, in which each node $(H_i,T_{H_i})$ with $i\ge 2$ has two children, namely the action $(H_{i-1},T_{i-1})$, and the trivial action of the trivial group. 
\end{ex}

\subsubsection{Good $F_N$-chains of actions}

Example \ref{sharp} shows that the bound on the size of an $F_N$-chain of actions provided by Theorem \ref{count} is optimal in general. We will now define a special class of \emph{good} $F_N$-chains of actions for which this bound can actually be improved. We will show that all Oseledets filtrations constructed in the proof of Theorem \ref{Oseledets} are good, which will lead to a better bound on the number of possible growth rates of elements of $F_N$ under random products of automorphisms. 

We refer to \cite{LP97} for a definition of geometric trees, see also \cite[Section 1.7]{Gui08}. Any geometric tree with dense orbits has a decomposition into a graph of actions where each nondegenerate vertex action is indecomposable \cite[Proposition 1.25]{Gui08} (the reader is referred to \cite[Section 1]{Gui08} for background material). We say that a tree in $\overline{CV_N}$ is \emph{of surface type} if it is geometric, and all its indecomposable subtrees are dual to laminations on surfaces. Let $\tau$ be an $F_N$-chain of actions. An element $g\in F_N$ is a \emph{special curve} for $\tau$ if there exists a node $(H,T_H)$ of $\tau$ corresponding to an action of surface type, such that $g$ is conjugate to an element that represents a boundary curve  of a surface dual to one of the indecomposable subtrees of $T_H$. An $F_N$-chain of actions $\tau$ is \emph{good} if all special curves of $\tau$ are elliptic in all nodes $(H,T_H)$ such that $c\in H$ (up to conjugacy). In other words, an $F_N$-chain of actions $\tau$ is good if and only if all special curves of $\tau$ are conjugate into some leaf of $\tau$. 

\begin{theo}\label{count-2}
 Any good $F_N$-chain of actions has at most $\frac{3N-2}{4}$ non-leaf nodes.
\end{theo}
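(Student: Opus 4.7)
Plan: induct on $N$; the cases $N \le 2$ are immediate since the only nontrivial dense-orbit $F_2$-action is the once-holed-torus surface action, whose only nontrivial child is the cyclic (leaf) boundary subgroup. For the inductive step, let $(F_N,T)$ be the root of a good chain $\tau$ with nontrivial children $H_1,\dots,H_k$ of ranks $r_1,\dots,r_k$ and sub-chains $\tau_1,\dots,\tau_k$. As in the proof of Theorem \ref{count}, the Gaboriau--Levitt index bound (Proposition \ref{index}) yields $\sum_{i=1}^k(2r_i-1)\le 2N-2$; combining this with the inductive hypothesis $p(\tau_i)\le\frac{3r_i-2}{4}$ applied to each $\tau_i$ (which is itself a good $H_i$-chain) produces
\[
p(\tau)=1+\sum_{i=1}^k p(\tau_i)\le \frac{3N+1-k/2}{4},
\]
which already meets the target bound $\frac{3N-2}{4}$ whenever $k\ge 6$. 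The main work is therefore to dispose of the five residual cases $k\in\{1,2,3,4,5\}$.

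In each small-$k$ case the index inequality concentrates almost all the available rank in a very small number of children: the surplus $2N-2-\sum(2r_i-1)$ is at most $5$, and the dense-orbit structure of $T$ then forces it to be close (up to its canonical graph-of-actions decomposition) to the Example \ref{sharp} model, with a nontrivial surface-type indecomposable component whose boundary curves are absorbed as edge stabilizers into one of the large-rank children. Exactly in this situation the goodness assumption becomes nontrivial: each such boundary curve is a special curve of $\tau$, and by hypothesis it must propagate as an elliptic element through the corresponding sub-chain until it lands in a leaf.

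To exploit this rigidity uniformly, the plan is to strengthen the induction by attaching to every good chain $\tau$ at a root $H$ of rank $r$ a finite set $C\subseteq H$ of conjugacy classes of elements that are required to terminate in leaves, and to establish a sharper bound of the form $p(\tau)\le\frac{3r-2-w(C,\tau)}{4}$ for a carefully engineered nonnegative weight $w$. The weight $w$ should increase whenever a non-leaf node of surface type exports a boundary-curve constraint into one of its children, in such a way that each of the problematic cases $k\le 5$ recursively inflates $w$ by at least the $1/4$ that is missing from the naive estimate. Given such a $w$, the case-by-case verification for $k=1,\dots,5$ reduces to bookkeeping with the index inequality, and optimality of the final bound is witnessed by Example \ref{sharp-good}.

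The main obstacle will be designing the weight $w$ in a genuinely recursive way. It must be additive enough across the branching of $\tau$ to survive the induction; it must simultaneously handle all five bad values of $k$, not only the sharpest case $k=1$, $r_1=N-1$; and it must cope with the situation in which $T$ admits \emph{both} surface-type and non-surface-type indecomposable components in its graph-of-actions decomposition (only the former contribute special curves to $C$, while both consume index budget). Getting the correct bookkeeping for the transfer of special curves from an ancestor node to its descendants, and matching this transfer to the $1/4$-unit deficits produced by the index count in each of the cases $k\le 5$, is the heart of the argument.
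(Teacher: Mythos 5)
Your arithmetic reduction to the cases where the root has at most five nontrivial children is fine, and you have correctly identified that the theorem requires a strengthened inductive statement of the form $p(\tau)\le\frac{3N-2-w(\tau)}{4}$, with a weight that goodness inflates in the critical configurations. But the proposal stops exactly where the proof has to start: the weight $w$ is never defined, and you explicitly defer its construction and the verification of the residual cases, which is the entire content of the theorem. For the record, the paper's choice is concrete: the weight is the rank $k(\tau)$ of the subgroup of the abelianization of $F_N$ generated by the leaf groups of $\tau$. Goodness enters because in the two genuinely critical configurations (a once-holed torus component attached to a single child $G_v$, and a four-holed sphere component) the relevant boundary curves are a commutator, respectively have product equal to the identity, so they die in $H_1(F_N)$ while remaining nontrivial (indeed primitive) in the abelianization of the child containing them; since goodness forces these curves into leaves of the sub-chain, one gets the strict inequality $k(\tau)<\sum_H k(\tau_H)$, which supplies exactly the missing unit of $\frac{1}{4}$. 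Without such an explicit invariant and the accompanying case check (which also needs the Euler characteristic formula of Lemma \ref{euler} and the classification of surfaces carrying minimal laminations to rule out degenerate components), the argument is a plan rather than a proof.

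There is also an independent gap in the case division. You assert that when the number of children is small, the index surplus forces $T$ to resemble the model of Example \ref{sharp}, with a surface-type indecomposable component whose boundary curves are special. This is unjustified: a dense-orbit tree that is \emph{not} of surface type (for instance one with an arational or Levitt-type indecomposable piece) can have a single point stabilizer of rank $N-1$, in which case there are no special curves at all, goodness is vacuous, and your weight cannot be inflated. The paper disposes of this situation by a different mechanism: a dense-orbit tree not of surface type admits a two-edge free splitting in which all point stabilizers are elliptic (\cite[Theorem 3.11]{Hor14-1}), and an Euler-characteristic count on the Grushko decomposition relative to the point stabilizers then gives $\sum_H(3\,\mathrm{rk}(H)-2)\le 3N-6$ directly, with no appeal to goodness. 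Some structural input of this kind is indispensable before the surface-type analysis can even begin.
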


\begin{theo}\label{Oseledets-2}
Let $\mu$ be a probability measure on $\text{Out}(F_N)$, having finite first moment with respect to $d_{CV_N}^{sym}$. Then there exists an Oseledets filtration for $\mu$, which is associated to a good $F_N$-chain of actions.
\end{theo}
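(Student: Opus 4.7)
My plan is to show that the inductive construction in the proof of Theorem \ref{Oseledets} already produces a good $F_N$-chain of actions, provided one strengthens Proposition \ref{nonelementary} so that it controls the conjugacy classes of point stabilizers of the indecomposable components of the tree rather than only of the full tree. Argue by induction on $N$, with base case $N=1$ trivial. For the inductive step, proceed verbatim as in the proof of Theorem \ref{Oseledets}: apply Theorem \ref{KL} together with Propositions \ref{Karlsson-Ledrappier} and \ref{nonelementary} to obtain, for almost every sample path, a tree $T$ with dense orbits whose point-stabilizer conjugacy classes have finite $\text{gr}(\mu)$-orbit, and recurse on each point stabilizer $G_v$ via Proposition \ref{return} and its first-return measure $\mu^{A_v}$.

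The additional ingredient I would establish is the following strengthening of Proposition \ref{nonelementary}: every $\mu$-stationary measure $\nu$ on $CV_N(\infty)$ is concentrated on the set of trees $T$ such that the conjugacy classes of point stabilizers of the \emph{indecomposable vertex actions} in the canonical graph of actions decomposition of (the canonical lift of) $T$ have finite $\text{gr}(\mu)$-orbit. The proof would follow that of Proposition \ref{nonelementary} line by line: by canonicity of the Guirardel decomposition of a tree with dense orbits into indecomposable pieces, the assignment $\Theta: T \mapsto \{\text{conjugacy classes of point stabilizers of indecomposable pieces of }T\}$ is a measurable, $\text{Out}(F_N)$-equivariant map into the countable set of finite collections of conjugacy classes of finitely generated subgroups of $F_N$ (finiteness of the image follows from Proposition \ref{index} applied to each indecomposable piece); Lemma \ref{disjoint-translations} then applies directly.

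Granted this strengthening, for each surface-type indecomposable component of $T$, the boundary curves generate cyclic point stabilizers of that piece, so for every special curve $c$ the conjugacy class $[\langle c \rangle]$ has finite $\text{gr}(\mu)$-orbit. In particular the word length $\|\Phi_n(c)\|$ stays uniformly bounded along almost every sample path of the random walk, so $c$ has growth rate zero; by Theorem \ref{ergo-1}, this forces $c$ to be elliptic in the Karlsson-Ledrappier tree chosen at the next recursive level. The quasi-isometric inclusion $G_v\hookrightarrow F_N$ and Proposition \ref{return} transport this bounded-length and ellipticity property inside the $G_v$-random walk for $\mu^{A_v}$, so iterating down the chain, each special curve $c$ remains elliptic at every descendant node. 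By finiteness of the chain (Theorem \ref{count}), $c$ is eventually conjugate into the subgroup associated to some leaf, establishing goodness.

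The main obstacle I anticipate is the Borel regularity of the map $\Theta$: one must verify that the assignment of a tree in $CV_N(\infty)$ to the conjugacy classes of point stabilizers of its indecomposable components can be made in a measurable way, using that Guirardel's decomposition is canonically determined by the translation length function. The subtlety lies in defining, in a Borel manner, the canonical lift from $CV_N(\infty) \cong \overline{CV_N}^{prim}$ to $\overline{CV_N}$ (which recovers the true tree structure), and then extracting the indecomposable decomposition measurably from the length function. Once this is in place, the argument above closes the induction and yields an Oseledets filtration associated to a good $F_N$-chain of actions.
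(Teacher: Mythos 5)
Your overall strategy is the right one --- the paper also proves Theorem \ref{Oseledets-2} by showing that the $F_N$-chain of actions built in the proof of Theorem \ref{Oseledets} is already good, reducing everything to a stationary-measure statement forcing the special curves to have finite $gr(\mu)$-orbit, hence bounded length and subexponential growth, hence ellipticity all the way down to a leaf. But the step you leave open is exactly the one requiring a new idea, and the ``line by line'' adaptation of Proposition \ref{nonelementary} you propose does not go through. First, the canonical decomposition into indecomposable pieces that your map $\Theta$ needs is only available for \emph{geometric} trees (\cite[Proposition 1.25]{Gui08}); a $\mu$-stationary measure on $CV_N(\infty)$ may charge non-geometric trees with dense orbits, for which no such decomposition exists, so your $\Theta$ is not even defined on a set of full measure unless you first isolate the surface-type trees in a measurable way. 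Second, and more importantly, the measurability argument in Proposition \ref{nonelementary} works because membership of a conjugacy class in $\Theta(T)$ is characterized purely by vanishing or non-vanishing of translation lengths in the lifts $f_n(T)$; the point stabilizers of the individual indecomposable pieces are in general not maximal point stabilizers of $T$, and deciding from the length function which elliptic cyclic subgroups are boundary subgroups of which piece amounts to reconstructing the decomposition --- precisely the obstacle you flag without resolving.

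The paper circumvents this by replacing your $\Theta$ with the map $T\mapsto\text{Dyn}(T)$, the set of conjugacy classes of \emph{minimal} subgroups, among the countable family $\mathcal{Y}$ of maximally elliptic subgroups of simplicial very small trees, that act nontrivially with dense orbits on their minimal subtree of $T$. This is defined for every $T$, and its measurability follows from semicontinuity of the quotient volume and continuity of translation lengths, so Lemma \ref{disjoint-translations} applies (Proposition \ref{stationary-measure-2}). Lemma \ref{dyn} (via \cite[Theorem 4.4]{Rey11-2}) then identifies $\text{Dyn}(T)$ with the stabilizers of the indecomposable pieces when $T$ is of surface type, and a special curve $c$ is recovered as the intersection of a point stabilizer of $T$ (finite orbit by Proposition \ref{nonelementary}) with a dynamical subgroup (finite orbit by Corollary \ref{good}); this is how the finite $gr(\mu)$-orbit of the conjugacy class of $c$ --- which you assume --- is actually obtained. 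From that point on, your argument (bounded length, zero growth rate, ellipticity at every level, hence conjugacy into a leaf) matches the paper's.
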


The proof of Theorem \ref{count-2} is given in Section \ref{sec-count-2}, and the proof of Theorem \ref{Oseledets-2} is given in Section \ref{sec-Oseledets-2}. As a consequence of Theorems \ref{count-2} and \ref{Oseledets-2}, we get the following result.

\begin{cor}\label{growth}
Let $\mu$ be a probability measure on $\text{Out}(F_N)$, having finite first moment with respect to $d_{CV_N}^{sym}$. Then there exist (deterministic) $\lambda_1,\dots,\lambda_p>0$ such that for $\mathbb{P}$-almost every sample path of the random walk on $(\text{Out}(F_N),\mu)$, and all $g\in F_N\smallsetminus\{e\}$, the limit $$\lim_{n\to +\infty}\frac{1}{n}\log ||\Phi_n(g)||$$ exists and belongs to $\{0\}\cup\{\lambda_1,\dots,\lambda_p\}$. In addition, we have $p\le\frac{3N-2}{4}$.
\end{cor}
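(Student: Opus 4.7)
The plan is to deduce this corollary from the two preceding results, Theorems \ref{Oseledets-2} and \ref{count-2}, with essentially no work beyond an orbit-tracking argument through the filtration. There should be no real obstacle; the genuine content lies in the two theorems being combined.

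First I would invoke Theorem \ref{Oseledets-2} to fix a (deterministic) Oseledets filtration $\tau$ for $\mu$ associated to a \emph{good} $F_N$-chain of actions, equipped with its system of Lyapunov exponents $\{\lambda^\mu_H\}_{H \in V(\tau)}$. The key structural observation is then that every non-trivial $g \in F_N$ selects a canonical node of $\tau$: starting from the root (labelled by $F_N$, into which $g$ is trivially ``conjugate''), descend iteratively to a child containing a conjugate of $g$ as long as such a child exists. Since $\tau$ is a finite rooted tree, this process terminates at some node $H_g$ into which $g$ is conjugate but which has no child with the same property.

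Next, I would apply the defining property of an Oseledets filtration: for $\mathbb{P}$-almost every sample path of the random walk on $(\text{Out}(F_N),\mu)$, for every node $H$ and every $g$ conjugate into $H$ but not into any child of $H$, one has $\frac{1}{n}\log\|\Phi_n(g)\|\to\lambda^\mu_H$. Applied to $H=H_g$, this gives the existence of the limit for the given $g$. By the definition of a system of Lyapunov exponents, $\lambda^\mu_{H_g}=0$ precisely when $H_g$ is a leaf of $\tau$, and is strictly positive otherwise. I would therefore let $\{\lambda_1,\dots,\lambda_p\}$ be the (finite) set of distinct values of $\lambda^\mu_H$ taken on the non-leaf nodes of $\tau$; by construction, every non-trivial $g$ has its almost-sure growth rate in $\{0\}\cup\{\lambda_1,\dots,\lambda_p\}$, and a single $\mathbb{P}$-null set works for all $g$ simultaneously since $F_N$ is countable.

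To conclude, the bound $p\leq\frac{3N-2}{4}$ is immediate from Theorem \ref{count-2}: the number of non-leaf nodes of the good $F_N$-chain of actions underlying $\tau$ is at most $\frac{3N-2}{4}$, and $p$ is bounded above by this number (possibly strictly smaller, if some non-leaf Lyapunov exponents coincide).
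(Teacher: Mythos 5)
Your proposal is correct and follows exactly the route the paper intends: the corollary is stated as an immediate consequence of Theorems \ref{Oseledets-2} and \ref{count-2}, obtained by descending through the good Oseledets filtration to the node $H_g$ for each nontrivial $g$, using that $\lambda_H>0$ exactly on non-leaf nodes, and bounding the number of non-leaf nodes by $\frac{3N-2}{4}$. Your remarks about countability of $F_N$ (to get a single null set) and about $p$ possibly being strictly smaller than the number of non-leaf nodes are accurate refinements of the same argument.
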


\begin{ex}\label{sharp-good}
We give an example, due to Levitt \cite{Lev09}, of a good chain of actions with $\frac{3N-2}{4}$ non-leaf nodes, thus showing that the bound in Theorem \ref{count-2} is optimal. Let $S$ be the compact, oriented surface of rank $N$ displayed on Figure \ref{fig-surface}, decomposed into $\frac{3N-2}{4}$ subsurfaces $S_i$ that are either tori with one boundary component, or spheres with $4$ boundary components. Let $H_0:=F_N$, and for all $i\in\{1,\dots,\frac{3N-2}{4}\}$, let $H_i$ be the fundamental group of the subsurface $\Sigma_i$ of $S$ obtained by removing $S_1,\dots,S_i$ from $S$ (we let $H_{\frac{3N-2}{4}}$ be the cyclic group generated by the rightmost boundary curve of the surface $S$ displayed on Figure \ref{fig-surface}). Let $T_i$ be a nontrivial $H_i$-tree with dense orbits, dual to a measured lamination on $\Sigma_i$ that is supported on $S_{i+1}$ (in particular $T_{\frac{3N-2}{4}}$ is trivial). Then the $F_N$-chain of actions displayed on Figure \ref{fig-surface} is good, because the boundary curves of $\Sigma_i$ are elliptic in all the descendants of $H_i$ that contain them. In addition, this $F_N$-chain of actions contains $\frac{3N-2}{4}$ nontrivial nodes.

The same example also shows that the bound in Corollary \ref{growth} is sharp, by letting $\mu$ be a Dirac measure supported on a diffeomorphism of $S$ that restricts to a pseudo-Anosov diffeomorphism of each surface $S_i$, with $\frac{3N-2}{4}$ different growth rates.
\end{ex}

\begin{figure}
\begin{center}
\def\JPicScale{.8}
\input{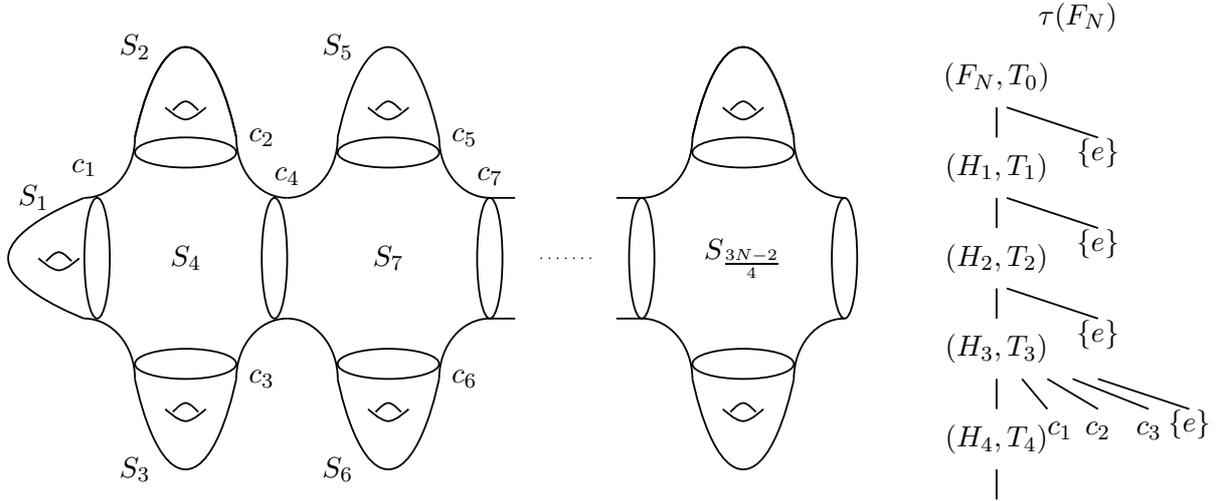}
\caption{The surface in Example \ref{sharp-good}.}
\label{fig-surface}
\end{center}
\end{figure}

\subsubsection{Existence of good Oseledets filtrations}\label{sec-Oseledets-2}

The goal of this section is to prove Theorem \ref{Oseledets-2}, by showing that the $F_N$-chain of actions constructed in the proof of Theorem \ref{Oseledets} is good.

\paragraph*{More on stationary measures on $CV_N(\infty)$.}

Let $\mathcal{Y}$ denote the collection of (finitely generated) subgroups of $F_N$ that are maximally elliptic in some simplicial tree in $\overline{CV_N}$. Given $T\in CV_N(\infty)$, we denote by $\text{Dyn}(T)$ the collection of all conjugacy classes of minimal subgroups in $\mathcal{Y}$ which act nontrivially with dense orbits on their minimal subtree in $T$. This definition makes sense by the descending chain condition satisfied by groups in $\mathcal{Y}$ \cite[Proposition 4.1]{HM09}. Subgroups whose conjugacy classes belong to $\text{Dyn}(T)$ are called \emph{dynamical subgroups} of $T$. It follows from our description of $\overline{CV_N}^{prim}$ in \cite{Hor14-1} that all lifts to $\overline{CV_N}$ of a tree in $CV_N(\infty)$ have the same dynamical subgroups. We let 
\begin{displaymath}
\Theta(T):=\left\{
\begin{array}{ll}
\text{Dyn}(T) &\text{~if~} \text{Dyn}(T) \text{~is finite}\\
\emptyset &\text{~otherwise}
\end{array}\right..
\end{displaymath} 

\noindent Measurability of $\Theta$ comes from upper semicontinuity of the quotient volume (see \cite[Section 3.3]{AK12}, where it is proved that the quotient volume of a tree $T\in\overline{cv_N}$ is equal to $0$ if and only if $T$ has dense orbits), and continuity of translation lengths. Applying Lemma \ref{disjoint-translations} to the map $\Theta$ yields the following fact.

\begin{prop}\label{stationary-measure-2}
Let $\mu$ be a probability measure on $\text{Out}(F_N)$. Then every $\mu$-stationary measure on $CV_N(\infty)$ is concentrated on the set of trees which either have infinitely many dynamical subgroups, or all of whose dynamical subgroups have finite $\text{gr}(\mu)$-orbits.
\qed
\end{prop}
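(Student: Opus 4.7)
The plan is to mirror the structure of the proof of Proposition \ref{nonelementary}, applying Lemma \ref{disjoint-translations} to the map $\Theta$ already defined in the excerpt. Let $\nu$ be a $\mu$-stationary measure on $CV_N(\infty)$, and let $E\subseteq CV_N(\infty)$ denote the complement of the set described in the statement, i.e.\ the set of trees $T$ such that $\mathrm{Dyn}(T)$ is finite \emph{and} at least one class in $\mathrm{Dyn}(T)$ has infinite $\mathrm{gr}(\mu)$-orbit. The goal is to show that $\nu(E)=0$. Let $D$ be the countable set of finite collections of conjugacy classes of finitely generated subgroups of $F_N$, equipped with its natural $\mathrm{Out}(F_N)$-action. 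Then $\Theta$, as defined just before the proposition, is a map from $CV_N(\infty)$ to $D$.

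Next I would check the hypotheses of Lemma \ref{disjoint-translations}. The map $\Theta$ is $\mathrm{Out}(F_N)$-equivariant: if $T\in CV_N(\infty)$ and $\Phi\in\mathrm{Out}(F_N)$, then a subgroup $H\in\mathcal{Y}$ acts with dense orbits on its minimal subtree in $T$ if and only if $\Phi(H)$ does so in $\Phi\cdot T$, so $\mathrm{Dyn}(\Phi\cdot T)=\Phi\cdot\mathrm{Dyn}(T)$, and finiteness of $\mathrm{Dyn}$ is preserved. Measurability of $\Theta$ is granted by the remark immediately preceding the proposition: upper semicontinuity of quotient volume on $\overline{cv_N}$ cuts out, for each finitely generated $H\subseteq F_N$, the Borel set of trees in which the minimal $H$-subtree has dense orbits, while continuity of translation lengths detects ellipticity; combined with the descending chain condition of \cite[Proposition 4.1]{HM09} used to define $\mathcal{Y}$, this determines $\mathrm{Dyn}(T)$ in a Borel fashion. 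Finally, $E$ is $\mathrm{gr}(\mu)$-invariant by equivariance of $\Theta$ and the fact that the $\mathrm{gr}(\mu)$-orbit of $[H]$ is infinite iff that of $[\Phi(H)]$ is, for $\Phi\in\mathrm{gr}(\mu)$.

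Now I would argue by contradiction. Suppose $\nu(E)>0$. Lemma \ref{disjoint-translations} applied to $\Theta|_E$ (viewed as taking values in $D$) then produces a point $T\in E$ such that $\Theta(T)=\mathrm{Dyn}(T)\in D$ has finite $\mathrm{gr}(\mu)$-orbit. Write $\mathrm{Dyn}(T)=\{[H_1],\dots,[H_k]\}$ and let $S\subseteq\mathrm{gr}(\mu)$ be the stabilizer of this \emph{collection}; by hypothesis $S$ has finite index in $\mathrm{gr}(\mu)$. The natural homomorphism $S\to\mathfrak{S}_k$ permuting the $[H_i]$ has a kernel $S'$ that still has finite index in $\mathrm{gr}(\mu)$ and fixes each $[H_i]$ individually. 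Hence each $[H_i]$ has finite $\mathrm{gr}(\mu)$-orbit, contradicting $T\in E$. Therefore $\nu(E)=0$, which is exactly the assertion.

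The only genuinely delicate point is the measurability of $\Theta$, and more precisely the measurable selection of the collection of \emph{minimal} dynamical subgroups; everything else is a formal application of the maximum-principle lemma \ref{disjoint-translations} combined with the elementary orbit argument above. I expect the measurability verification to be the only place one might wish to add details, but the template is already provided by the analogous argument in the proof of Proposition \ref{nonelementary}.
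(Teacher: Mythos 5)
Your proposal is correct and follows exactly the route the paper takes: the paper's entire proof is the sentence "Applying Lemma \ref{disjoint-translations} to the map $\Theta$ yields the following fact," with measurability of $\Theta$ justified in the preceding paragraph via upper semicontinuity of the quotient volume and continuity of translation lengths, just as you describe. Your unpacking of the contradiction (a finite $\mathrm{gr}(\mu)$-orbit inside $\Theta(E)$ forces each member of the corresponding finite collection $\mathrm{Dyn}(T)$ to have finite orbit, contradicting $T\in E$) is a faithful and correctly executed expansion of that one-line argument.
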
 

We now determine $\text{Dyn}(T)$ in the case where $T\in CV_N(\infty)$ is of surface type.

\begin{lemma}\label{dyn}
Let $T\in CV_N(\infty)$ be a tree of surface type with dense orbits. Then $\text{Dyn}(T)$ is equal to the set of stabilizers of the indecomposable subtrees of $T$.
\end{lemma}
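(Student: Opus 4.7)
The plan is to exploit the canonical graph of actions decomposition of $T$, together with the structure of subgroups in $\mathcal{Y}$ contained in surface vertex groups. First, since $T$ is geometric with dense orbits and of surface type, $T$ admits a decomposition as a graph of actions whose nondegenerate vertex trees $Y_\alpha$ are indecomposable and dual to a minimal, filling measured lamination on a subsurface $\Sigma_\alpha$. Collapsing each $Y_\alpha$ to a point yields a simplicial very small $F_N$-tree $S_T\in\overline{CV_N}$ in which the conjugacy classes of nondegenerate vertex stabilizers are exactly the $H_\alpha:=\text{Stab}(Y_\alpha)\cong\pi_1(\Sigma_\alpha)$. These surface subgroups are non-cyclic and thus maximally elliptic in $S_T$, so each $H_\alpha$ lies in $\mathcal{Y}$, and by construction it acts nontrivially with dense orbits on its minimal subtree $Y_\alpha$.

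The inclusion $\supseteq$ reduces to the following minimality claim: no proper subgroup $H'\subsetneq H_\alpha$ with $H'\in\mathcal{Y}$ acts with dense orbits on its minimal subtree in $T$. Given such an $H'$, it is by hypothesis maximally elliptic in some simplicial tree $S'\in\overline{CV_N}$; since $H'\subsetneq H_\alpha$, maximal ellipticity of $H'$ prevents $H_\alpha$ from being elliptic in $S'$, so the $H_\alpha$-minimal subtree of $S'$ is nondegenerate. This yields a nontrivial very small cyclic splitting of the surface group $H_\alpha=\pi_1(\Sigma_\alpha)$, and cyclic splittings of surface groups are geometric, hence carried by a system of disjoint essential simple closed curves on $\Sigma_\alpha$. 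Because the lamination dual to $Y_\alpha$ is minimal and fills $\Sigma_\alpha$, at least one of these curves must cross it transversely; the corresponding edge of the splitting then contributes to the $H'$-minimal subtree in $Y_\alpha$ a nondegenerate simplicial arc with nontrivial cyclic stabilizer, contradicting density of $H'$-orbits.

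For the reverse inclusion $\subseteq$, let $[H]\in\text{Dyn}(T)$, with minimal subtree $T_H\subseteq T$ on which $H$ acts with dense orbits. Since $T_H$ is connected and any edge of the skeleton of the graph of actions that meets $T_H$ would contribute a nondegenerate simplicial arc with trivial or cyclic stabilizer, density of orbits forces $T_H\subseteq Y_\alpha$ for some $\alpha$, and so $H\subseteq H_\alpha$ up to conjugation. Running the argument of the preceding paragraph with $H':=H$ then forces the equality $H=H_\alpha$, which completes the proof.

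The main obstacle is the cyclic-splitting step in the second paragraph: one must verify that a subgroup $H'\in\mathcal{Y}$ contained in the surface subgroup $\pi_1(\Sigma_\alpha)$ genuinely induces a splitting of $\pi_1(\Sigma_\alpha)$ along simple closed curves of $\Sigma_\alpha$ (combining the very-small structure of trees in $\overline{CV_N}$ with the geometricity of cyclic splittings of surface groups), and then exploit the transverse-intersection property of minimal filling laminations to produce the forbidden simplicial arc. Everything else is routine bookkeeping once the decomposition of $T$ into indecomposable pieces is in place.
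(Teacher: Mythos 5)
Your proof takes a genuinely different route from the paper, but both directions have gaps at the step where you try to contradict density of $H'$-orbits.

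The paper's argument runs through the transverse covering directly: given $F\in\text{Dyn}(T)$, it considers the intersections $T_F\cap T_H$ with the indecomposable pieces, and invokes Reynolds' rigidity theorem for indecomposable trees (\cite[Theorem 4.4]{Rey11-2}) to conclude that each such intersection is either at most a point or all of $T_H$ with $F\cap H$ of finite index; minimality of $F$ in $\mathcal{Y}$ then pins down a single nondegenerate piece, and the fact that groups in $\mathcal{Y}$ admit no proper finite-index extensions finishes it. You try to replace Reynolds' theorem by the geometry of cyclic splittings of surface groups, which is an interesting idea, but the step where you derive a contradiction does not hold up. In the $\supseteq$ direction you produce a very small cyclic splitting $S'$ of $H_\alpha$ carried by simple closed curves, find a curve $c$ crossing the lamination transversely, and then assert that ``the corresponding edge of the splitting then contributes to the $H'$-minimal subtree in $Y_\alpha$ a nondegenerate simplicial arc with nontrivial cyclic stabilizer.'' But that arc lives in $S'$, not in $Y_\alpha$; nothing transports it into $T$. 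The curve $c$ being hyperbolic in $Y_\alpha$ only tells you that the $H'$-minimal subtree of $Y_\alpha$ contains the axis of $c$, which is perfectly compatible with dense $H'$-orbits. What you actually need at this point is precisely the content of Reynolds' theorem: if $H'$ has infinite index in $H_\alpha$, then $H'$ cannot act with dense orbits on a nondegenerate subtree of the indecomposable tree $Y_\alpha$. The surface picture does not make this obvious without further work. In the $\subseteq$ direction you similarly assert that ``any edge of the skeleton of the graph of actions that meets $T_H$ would contribute a nondegenerate simplicial arc with trivial or cyclic stabilizer,'' but since $T$ has dense orbits the skeleton of its transverse covering has edges of length zero (cf.\ Example \ref{sharp}, where the connecting edge has length $0$), so there is no nondegenerate simplicial arc to appeal to, and the conclusion $T_H\subseteq Y_\alpha$ does not follow from connectedness alone. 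Here too the paper's route via Reynolds plus the minimality of $F$ in $\mathcal{Y}$ is what singles out a single indecomposable piece. In short: the collapse-to-$S_T$ device for showing $H_\alpha\in\mathcal{Y}$ is a fine observation, but the heart of the lemma requires a rigidity statement for indecomposable trees, and your argument silently assumes it rather than proving or citing it.
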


\begin{proof}
The tree $T$ admits a transverse covering $\mathcal{Z}$ by indecomposable subtrees (see \cite[Section 1]{Gui08} for definitions), whose skeleton has cyclic (or trivial) edge groups, and each tree in $\mathcal{Z}$ is dual to a minimal lamination on a surface \cite[Proposition 1.25]{Gui08}. Let $H\subseteq F_N$ be the stabilizer of one of these indecomposable subtrees $T_H\in\mathcal{Z}$. Let $F\in\text{Dyn}(T)$. The $F$-minimal subtree $T_F$ of $T$ inherits a transverse covering, given by the intersections of $T_F$ with the subtrees in $\mathcal{Z}$. As $F$ acts with dense orbits on $T_F$ by assumption, the intersection $T_F\cap T_H$ has dense $F\cap H$-orbits. As $T_H$ is indecomposable, this implies by \cite[Theorem 4.4]{Rey11-2} that either $T_F\cap T_H$ contains at most one point, or else that $T_F\cap T_H=T_H$, and $F\cap H$ has finite index in $H$. By minimality of $F$, we have $T_F\cap T_H=T_H$ for exactly one of the subtrees $T_H$ in the family $\mathcal{Z}$. As groups in $\mathcal{Y}$ do not have proper finite index extensions, this implies that $F\cap H=H$, and $F$ is the stabilizer of one of the indecomposable subtrees of $T$. Therefore, the set $\text{Dyn}(T)$ consists of the conjugacy classes of these stabilizers. 
\end{proof}

As a consequence of Proposition \ref{stationary-measure-2} and Lemma \ref{dyn}, we get the following fact.

\begin{cor}\label{good}
Let $\mu$ be a probability measure on $\text{Out}(F_N)$. Then every $\mu$-stationary measure on $CV_N(\infty)$ is concentrated on the set of trees $T\in CV_N(\infty)$ such that either 

\begin{itemize}
\item the tree $T$ is not of surface type, or
\item the tree $T$ is of surface type, and all conjugacy classes of the stabilizers of its indecomposable components have finite $\text{gr}(\mu)$-orbits.\qed
\end{itemize} 
\end{cor}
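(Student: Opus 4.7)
The plan is to read Corollary \ref{good} directly off Proposition \ref{stationary-measure-2} and Lemma \ref{dyn}. Let $\nu$ be a $\mu$-stationary probability measure on $CV_N(\infty)$. By Proposition \ref{stationary-measure-2}, the measure $\nu$ is concentrated on the Borel set $\mathcal{D}\subseteq CV_N(\infty)$ consisting of those trees $T$ such that either $\text{Dyn}(T)$ is infinite, or every conjugacy class in $\text{Dyn}(T)$ has finite $\text{gr}(\mu)$-orbit. It therefore suffices to show that whenever $T\in\mathcal{D}$ is of surface type, the set $\text{Dyn}(T)$ is finite and coincides with the collection of conjugacy classes of stabilizers of the indecomposable components of $T$.

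The coincidence of the two sets is precisely the content of Lemma \ref{dyn} (note that a tree $T\in CV_N(\infty)$ is automatically minimal with dense $F_N$-orbits, since if some $F_N$-minimal subtree were proper then $T$ would not lie in the primitive boundary; in any case, one can appeal to Lemma \ref{dyn} applied to the $F_N$-minimal subtree, which carries the same dynamical data). The main remaining point is finiteness. I would argue as follows: since $T$ is of surface type, it is geometric, and \cite[Proposition 1.25]{Gui08} (cited in the discussion preceding the definition of surface type) produces a transverse covering of $T$ by indecomposable subtrees whose skeleton is a simplicial $F_N$-tree with cyclic or trivial edge stabilizers. Such a skeleton has finitely many orbits of vertices, so there are only finitely many $F_N$-orbits of indecomposable subtrees; this yields finitely many conjugacy classes of stabilizers, i.e.\ $|\text{Dyn}(T)|<\infty$.

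Combining the two observations, every surface-type tree $T\in\mathcal{D}$ automatically falls in the second alternative of Proposition \ref{stationary-measure-2}: all classes in $\text{Dyn}(T)$ have finite $\text{gr}(\mu)$-orbit, and by Lemma \ref{dyn} these are exactly the conjugacy classes of stabilizers of the indecomposable components of $T$. Trees in $\mathcal{D}$ that are not of surface type satisfy the first alternative of the corollary trivially. Hence $\nu$ is concentrated on the union of these two classes of trees, which is what the corollary asserts. The only mild technical point to verify is that the subset of $CV_N(\infty)$ defined by the dichotomy in the statement is Borel, which follows from the measurability of $T\mapsto\text{Dyn}(T)$ (already used in establishing Proposition \ref{stationary-measure-2}) together with the measurability of the set of surface-type trees; this last fact is the only step where some care is needed, and it can be checked via the geometricity criteria in \cite{LP97,Gui08} and continuity of translation length functions.
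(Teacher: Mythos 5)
Your argument is correct and is essentially the paper's own: the corollary is stated there as an immediate consequence of Proposition \ref{stationary-measure-2} and Lemma \ref{dyn}, and you have simply filled in the routine details (finiteness of $\text{Dyn}(T)$ for surface-type trees via the transverse covering, and measurability of the relevant sets). One small caveat: your parenthetical claim that every tree in $CV_N(\infty)$ automatically has dense $F_N$-orbits is false — the horoboundary also contains completion points and trees with nontrivial arc stabilizers (see Section \ref{sec-Busemann}) — but this does not damage the argument, since Lemma \ref{dyn} is invoked only for the surface-type trees with dense orbits that actually arise, and the non-surface-type alternative absorbs the rest.
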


\paragraph*{Proof of Theorem \ref{Oseledets-2}.}

We prove that the $F_N$-chain of actions $\tau$ constructed in the proof of Theorem \ref{Oseledets} is good. We keep the notations from this proof. Arguing by induction again, we can assume that all filtrations $\tau_H$ are associated to good $H$-chains of actions. Proposition \ref{Karlsson-Ledrappier}, together with Corollary \ref{good}, shows that if $T$ is of surface type, then we might assume that the conjugacy classes of all stabilizers of the indecomposable components of $T$ (which are dual to minimal foliations on surfaces) have finite $gr(\mu)$-orbit. Let $c\in F_N$ represent a boundary curve of a surface dual to one of the indecomposable components of $T$. Then $c$ is the intersection of a point stabilizer of $T$ with a dynamical subgroup of $T$, which implies that the $gr(\mu)$-orbit of the conjugacy class of $c$ is finite, and therefore $c$ grows subexponentially along the random walk. In particular, the element $c$ belongs to one of the leaves of $\tau$, thus showing that $\tau$ is good.
\qed

\subsubsection{Bounding the size of good $F_N$-chains of actions}\label{sec-count-2}

The aim of this section is to prove Theorem \ref{count-2}, which provides a bound on the size of good $F_N$-chains of actions. Our proof is inspired from Levitt's similar statement in \cite{Lev09} for counting growth rates of a single automorphism of $F_N$.

\paragraph*{An Euler characteristic formula for small graph of groups decompositions.}

\begin{lemma}\label{euler}
Let $\mathcal{G}$ be a graph of groups decomposition of $F_N$, whose edge groups are (at most) cyclic. Denote by $V$ the number of vertices of $\mathcal{G}$, by $E_0$ the number of edges with trivial stabilizer, and by $R$ the sum of the ranks of the vertex stabilizers of $\mathcal{G}$. Then $N=R+E_0-V+1$.
\end{lemma}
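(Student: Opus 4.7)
The identity is a manifestation of Euler characteristic additivity for graphs of groups. Two observations set the stage: first, every vertex group $G_v$ is a subgroup of the free group $F_N$, hence free of some finite rank $r_v$, and by definition $\sum_v r_v = R$; second, every edge group is either trivial or infinite cyclic by hypothesis. Recall that a free group of rank $r$ has Euler characteristic $1-r$, that $\chi(\mathbb{Z}) = 0$, and that the trivial group has Euler characteristic $1$; in particular $\chi(F_N) = 1 - N$. Writing $E_1$ for the number of edges with infinite cyclic stabilizer, the additivity formula
$$\chi(\pi_1(\mathcal{G})) \;=\; \sum_{v} \chi(G_v) \;-\; \sum_{e} \chi(G_e)$$
applied to the decomposition $F_N = \pi_1(\mathcal{G})$ yields
$$1 - N \;=\; \sum_v (1 - r_v) \;-\; E_0 \cdot 1 \;-\; E_1 \cdot 0 \;=\; V - R - E_0,$$
which rearranges to $N = R + E_0 - V + 1$.

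To keep the argument self-contained and avoid invoking Euler characteristic additivity as a black box, I would realize it combinatorially by building an explicit aspherical $2$-complex $K$ with $\pi_1(K) \cong F_N$: place a wedge $W_v$ of $r_v$ circles at each vertex $v$, adjoin a $1$-cell joining the two basepoints for each trivial-stabilizer edge, and for each cyclic-stabilizer edge with generator $c_e$ glue a cylinder $S^1 \times [0,1]$ whose two boundary circles map to the loops representing $c_e$ in the appropriate vertex roses (in the HNN case both boundary circles map into the same rose). Since each such cylinder deformation retracts onto a circle, a cell count (or a straightforward retraction argument) gives $\chi(K) = V - R - E_0$. As $K$ is homotopy equivalent to a wedge of $N$ circles, $\chi(K) = 1 - N$, and the formula follows.

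The main (very mild) obstacle is simply the bookkeeping of cells in the cylinder attachments; there is no serious difficulty, and the whole argument is essentially a one-line Euler characteristic computation.
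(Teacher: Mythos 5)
Your argument is correct, but it takes a genuinely different route from the paper. The paper proves the identity by combinatorial surgery on the graph of groups: it invokes the unfolding lemma of Shenitzer, Swarup, Stallings and Bestvina--Feighn (Lemma 6.25 in the paper) to replace each edge with nontrivial cyclic stabilizer by an edge with trivial stabilizer, observing that each unfolding decreases $R$ by $1$ and increases $E_0$ by $1$, so $R+E_0$ is invariant; it then collapses a maximal subtree (each collapse decreasing $E_0$ and $V$ by $1$) to reduce to a one-vertex rose, where $N=R+E_0$ is immediate. You instead compute the Euler characteristic of the graph of groups directly, either via the additivity formula $\chi(\pi_1(\mathcal{G}))=\sum_v\chi(G_v)-\sum_e\chi(G_e)$ or via its topological incarnation as a cell count on the aspherical total space of the associated graph of spaces. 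Both proofs are valid. Your approach is shorter and bypasses the unfolding lemma entirely --- the only nontrivial input is asphericity of the graph of spaces (needed to identify $\chi(K)$ with $1-N$), or equivalently the standard Mayer--Vietoris argument behind Euler characteristic additivity; it also generalizes immediately to edge groups of any finite homological type. The paper's approach stays within elementary Bass--Serre theory plus the unfolding lemma, and has the side benefit of exhibiting the identity as invariant under the unfolding and collapsing moves, which is in the spirit of the surrounding arguments. One small point worth making explicit in your write-up: the vertex groups are finitely generated free (so that $r_v$ and hence $R$ make sense), which holds because they are vertex groups of a finite graph of groups decomposition of the finitely generated group $F_N$ with finitely generated edge groups; and the cylinders are glued along $\pi_1$-injective boundary circles (the generators $c_e$ are nontrivial), which is what guarantees asphericity of $K$.
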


Our proof of Lemma \ref{euler} relies on the following classical result.

\begin{lemma} \label{unfold}(Shenitzer \cite{She55}, Swarup \cite{Swa86}, Stallings \cite{Sta91}, Bestvina--Feighn \cite[Lemma 4.1] {BF94})
Let $\mathcal{G}$ be a graph of groups decomposition of $F_N$, whose edge groups are (at most) cyclic. Then there exists an edge $e$ in $\mathcal{G}$ with nontrivial stabilizer $G_e$, adjacent to a vertex $v$, and a free splitting of $G_v$ of the form $G_v=G_e\ast A$, so that if $e'\neq e$ is another edge adjacent to $v$ in $\mathcal{G}$, then $G_{e'}$ is conjugate into $A$.
\end{lemma}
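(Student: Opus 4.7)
The plan is to prove this unfolding lemma by reducing to the classical one-edge Shenitzer--Swarup theorem for free groups: if a free group $F$ admits a splitting as an amalgam $A*_C B$ or an HNN extension $A*_{\langle c_1=c_2\rangle}$ with $C$ infinite cyclic, then on at least one side of the edge the cyclic group $C$ is a free factor of the corresponding vertex group \cite{She55, Swa86, Sta91}. I would first derive the full lemma from this one-edge statement, then indicate briefly why the one-edge statement itself holds (for example via Stallings's foldings, or by a direct normal-form argument in $A*_C B$ when the ambient group is free).

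The first step is to reduce from the full graph of groups $\mathcal{G}$ to a one-edge splitting. Pick any edge $e_0$ of $\mathcal{G}$ with nontrivial stabilizer $G_{e_0}=\langle g\rangle$. Collapsing everything except $e_0$ in the Bass--Serre tree realizes $F_N$ as a one-edge splitting over $\langle g\rangle$ (an amalgam or an HNN extension, according to whether $e_0$ is separating). By Shenitzer's theorem, $\langle g\rangle$ is a free factor of (at least) one of the two collapsed vertex groups. Since collapsing edges with trivial stabilizer preserves vertex groups up to free product by Grushko's theorem, and the collapsed vertex groups are themselves free (being subgroups of $F_N$), this free factor decomposition descends through the collapsed subgraph to a free splitting $G_v=\langle g\rangle * A'$ at some original endpoint $v$ of $e_0$ in $\mathcal{G}$.

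The second step is to promote this splitting to the full compatibility statement, i.e.\ to arrange that every other edge $e'$ of $\mathcal{G}$ incident to $v$ has $G_{e'}$ conjugate in $G_v$ into the complement $A$. I would argue by minimality over the (nonempty) set of triples $(e,v,A)$ where $G_e$ is nontrivial, $v$ is an endpoint of $e$, and $G_v=G_e * A$, using a complexity such as the sum, over edges $e'\neq e$ incident to $v$, of the cyclic word length of a generator of $G_{e'}$ in a basis of $G_v$ compatible with the splitting $G_e*A$. At a minimizer, if some $G_{e'}$ failed to be conjugate into $A$, a suitable Whitehead automorphism of the free group $G_v$---or a slide move along an edge with trivial stabilizer, if one is available, or a switch of the chosen edge $e_0$ to a neighboring edge with nontrivial stabilizer---would strictly decrease the complexity while preserving the global isomorphism $\pi_1(\mathcal{G})\cong F_N$, contradicting minimality.

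The main obstacle is precisely this compatibility step. Shenitzer's theorem by itself supplies only an unrestricted free complement, and a single Whitehead automorphism of $G_v$ need not simultaneously push all incident cyclic groups $G_{e'}$ into a common complement of $G_e$; one has to be willing to change both the free splitting of $G_v$ \emph{and} the chosen edge $e$ along the way, and to track how these modifications interact with edges at other vertices. The minimality argument above encodes this flexibility, but carrying it out rigorously requires the careful bookkeeping done by Bestvina and Feighn \cite[Lemma 4.1]{BF94}, whose approach I would follow.
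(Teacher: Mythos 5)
The paper does not prove this lemma at all: it is quoted as a classical result (the Shenitzer--Swarup--Stallings unfolding lemma in the multi-edge form of Bestvina--Feighn \cite[Lemma 4.1]{BF94}) and used as a black box in the proof of Lemma \ref{euler}. So there is no in-paper argument to compare against; what remains is to assess whether your sketch would actually constitute a proof, and it would not, for two reasons.

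First, your descent step is not justified. After collapsing everything but $e_0$ you obtain a one-edge splitting of $F_N$ over $\langle g\rangle$, and Shenitzer's theorem makes $\langle g\rangle$ a free factor of one of the two \emph{collapsed} vertex groups, i.e.\ of the fundamental group of a whole subgraph of groups. That subgraph may contain further edges with nontrivial cyclic stabilizers, so Grushko's theorem (which only handles the trivially stabilized edges) does not reduce the collapsed group to a free product of the original vertex groups, and in any case the property ``$\langle g\rangle$ is a free factor of $G$'' does not pass to an intermediate subgroup $G_v$ with $\langle g\rangle\le G_v\le G$. Getting from the one-edge statement to a free splitting of the actual vertex group $G_v$ adjacent to $e_0$ already requires an induction over the whole graph of groups, which is essentially the content of \cite[Lemma 4.1]{BF94} rather than a formal consequence of the one-edge case.

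Second, the compatibility step --- arranging that \emph{every} other edge group at $v$ is conjugate into the complement $A$ --- is where all the remaining difficulty lives, and your treatment of it consists of naming a candidate complexity and a list of candidate moves, then stating that the verification ``requires the careful bookkeeping done by Bestvina and Feighn, whose approach I would follow.'' You never show that when the conclusion fails at a minimizer, one of the proposed moves (Whitehead automorphism, slide, change of edge) strictly decreases the complexity while preserving a graph-of-groups decomposition of $F_N$ with cyclic edge groups; without that, the minimality argument is vacuous. In short, your proposal is a reasonable roadmap that correctly identifies the two hard points, but both are deferred to the very reference being proved, so it is an outline rather than a proof. Given that the paper itself treats the lemma as a citation, the honest course is either to cite \cite[Lemma 4.1]{BF94} outright, as the paper does, or to carry out the induction on the number of edges with nontrivial stabilizer in full.
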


Lemma \ref{unfold} shows that we can "unfold" the edge $e$ and get another graph of groups decomposition of $F_N$ having fewer edges with nontrivial stabilizer, in which the vertex $v$ is replaced by a vertex with stabilizer equal to $A$, which has corank $1$ in $G_v$.

\begin{proof}[Proof of Lemma \ref{euler}]
Using Lemma \ref{unfold}, and arguing by downward induction on the number of edges with nontrivial stabilizer, we reduce to the case where all edges in $\mathcal{G}$ have trivial stabilizer (each unfolding operation decreases $R$ by $1$, and increases $E_0$ by $1$). By iteratively collapsing all edges in a maximal subtree of $\mathcal{G}$ (such a collapse decreases both $E_0$ and $V$ by $1$), we reduce to the case where the underlying graph of $\mathcal{G}$ is a rose, in which case Lemma \ref{euler} clearly holds.
\end{proof}

\paragraph*{Proof of Theorem \ref{count-2}.} 

Let $\tau$ be a good $F_N$-chain of actions. Let $k(\tau)$ be the rank of the subgroup of the abelianization of $F_N$ generated by the leaf groups of $\tau$. We will show by induction on $N$ that the number $p(\tau)$ of non-leaf nodes of $\tau$ satisfies $$p(\tau)\le\frac{3N-k(\tau)-2}{4}.$$ We let $T$ be the $F_N$-tree associated to the root of $\tau$. We denote by $\{G_v\}$ the collection of all subgroups associated to the children of the root in $\tau$, whose conjugates are the point stabilizers in $T$ by definition.
\\
\\
\noindent \textit{Case 1} : The tree $T$ is not of surface type.\\

\indent We refer the reader to \cite{GL10} for an introduction to (relative) JSJ decompositions (and Grushko decompositions in particular). By \cite[Theorem 3.11]{Hor14-1}, there exists a two-edge free splitting $S$ of $F_N$ in which all $G_v$'s are elliptic. This implies that any Grushko decomposition of $F_N$ relative to the collection $\{G_v\}$ has at least two edges with trivial stabilizer. We denote by $\mathcal{C}$ the collection of all conjugacy classes of subgroups of $F_N$ that are elliptic in all Grushko decompositions of $F_N$ relative to the collection $\{G_v\}$. Notice that for all $H\in\mathcal{C}$, the point stabilizers of the action of $H$ on its minimal subtree $T_H$ are conjugates of the $G_v$'s. We let $\tau_H$ be the $H$-chain of actions whose root corresponds to either

\begin{itemize}
\item the action $(H,T_H)$, where $T_H$ denotes the $H$-minimal subtree in $T$, if this action is nontrivial, or 
\item the action associated to $G_v$ in $\tau$ if $H=G_v$ for some $v\in V$,
\end{itemize}

\noindent to which we attach the trees $\tau_v$ corresponding to the subgroups $G_v$ conjugate into $H$. We have $$p(\tau)\le 1+\sum_{H\in\mathcal{C}}p(\tau_H).$$ As all subgroups $H\in\mathcal{C}$ have rank at most $N-1$, our induction hypothesis shows that $$p(\tau_H)\le\frac{3\text{rk}(H)-k(\tau_H)-2}{4}.$$ Let $G$ be a Grushko decomposition of $F_N$ relative to $\mathcal{C}$, and denote by $V$ (resp. $E$) the number of vertices (resp. of edges) in the graph of groups $G$. By collapsing edges to points if necessary, we can assume that no vertex of $G$ has trivial stabilizer. Therefore, we have 
\begin{displaymath}
\begin{array}{rl}
\sum_{H\in\mathcal{C}}(3\text{rk}(H)-2)&= 3\sum_{H\in\mathcal{C}}\text{rk}(H)-2V\\
&=3(N-E+V-1)-2V\\
&=3N-3E+V-3\\
&\le 3N-2E-2\\
&\le 3N-6
\end{array}
\end{displaymath}

\noindent because $V\le E+1$ by connectedness of $G$, and $E\ge 2$. 

In addition, any relation between elements in the subgroup generated by the leaves of $\tau_H$ still holds true when viewing these elements as elements of $F_N$, so $$k(\tau)\le\sum_{H\in\mathcal{C}}k(\tau_H).$$ Combining the above inequalities, we get that $$1+\sum_{H\in\mathcal{C}}p(\tau_H)\le\frac{3N-k(\tau)-2}{4},$$ and we are done in this case.
\\
\\
\textit{Case 2} : The tree $T$ is geometric, and only contains minimal surface components.\\

Then $T$ is dual to a graph of actions $\mathcal{G}$ having a vertex associated to each orbit of indecomposable subtrees $Y$ of $T$, a vertex associated to each conjugacy class of elliptic subgroup $H$ of $T$, and an edge joining $Y$ to $H$ whenever the fixed point of $H$ belongs to $Y$. All edges in $\mathcal{G}$ have cyclic (possibly trivial) stabilizer. Notice that $\mathcal{G}$ might not be minimal, in the case where some point stabilizer in $T$ is cyclic (and corresponds to a boundary curve of one of the surfaces dual to an indecomposable subtree of $T$) and extremal. We denote by $\mathcal{C}$ the set of conjugacy classes of point stabilizers in $T$, and by $\mathcal{C}_1$ (resp. $\mathcal{C}_{\ge 2}$) the set of conjugacy classes in $\mathcal{C}$ which have valence $1$ (resp. valence at least $2$) in $\mathcal{G}$. It follows from Proposition \ref{index} that $$\sum_{H\in\mathcal{C}_1}(2\text{rk}(H)-1)+\sum_{H\in\mathcal{C}_{\ge 2}}(2\text{rk}(H))\le 2N-2,$$ from which we deduce that $$\sum_{H\in\mathcal{C}_1}(3\text{rk}(H)-2)+\frac{1}{2}|\mathcal{C}_1|+\sum_{H\in\mathcal{C}_{\ge 2}}(3\text{rk}(H)-2)+2|\mathcal{C}_{\ge 2}|\le 3N-3,$$ or in other words $$\sum_{H\in\mathcal{C}}(3\text{rk}(H)-2)\le 3N-3-(\frac{1}{2}|\mathcal{C}_1|+2|\mathcal{C}_{\ge 2}|).$$ If $$\sum_{H\in\mathcal{C}}(3\text{rk}(H)-2)\le 3N-6,$$ then we are done as in Case 1. Otherwise, we either have $|\mathcal{C}_{\ge 2}|=1$ and $|\mathcal{C}_{1}|=0$, or $|\mathcal{C}_{\ge 2}|=0$ and $|\mathcal{C}_1|\le 4$. 
\\
\\
\indent We now assume that $|\mathcal{C}_{\ge 2}|=1$ and $|\mathcal{C}_1|=0$. In this case, the graph of actions $\mathcal{G}$ consists of a central vertex corresponding to an elliptic subgroup $H\in\mathcal{C}_{\ge 2}$, which is attached to $k$ indecomposable subtrees (dual to laminations on surfaces) by edges with trivial or cyclic stabilizers. We denote by $\sigma_1,\dots,\sigma_k$ the ranks of the stabilizers of these minimal components, and by $E_0$ the number of edges with trivial stabilizer in $\mathcal{G}$.  For all $i\in\{1,\dots,k\}$, we have $\sigma_i\ge 2$, and Lemma \ref{euler} implies that $$N=\sum_{i=1}^k(\sigma_i-1)+\text{rk}(H)+E_0.$$ Again, we get that $3\text{rk}(H)-2\le 3N-6$, except possibly if $k=1$ and $\sigma_1=2$ (and $E_0=0$). In this case, the corresponding surface is a torus having a single boundary component (there are no minimal foliations on spheres having at most $3$ boundary components, nor on projective planes with at most $2$ boundary components, nor on a Klein bottle with one boundary component \cite{CV91}). This contradicts the fact that $H\in\mathcal{C}_{\ge 2}$.
\\
\\
\indent We now assume that $|\mathcal{C}_{\ge 2}|=0$, and $|\mathcal{C}_1|\le 4$. In this case, the graph of actions $\mathcal{G}$ is a tree that consists of a single vertex $v_0$ corresponding to a connected surface $S$, attached to vertices corresponding to subgroups in $\mathcal{C}$ by edges with trivial or cyclic stabilizer. Denoting by $m$ the number of boundary components of $S$ (which is also equal to the number of edges with nontrivial stabilizer in $\mathcal{G}$), and by $s$ the rank of the fundamental group of $S$, we get from Lemma \ref{euler} that $$N=\sum_{H\in\mathcal{C}}\text{rk}(H)+s-m.$$ This implies that $$\sum_{H\in\mathcal{C}}(3\text{rk}(H)-2)\le 3N+m-3s,$$ which is bounded by $3N-6$ as soon as $3s-m\ge 6$ (in which case we conclude as in Case $1$). 

If $S$ is a nonorientable surface of genus $g\ge 1$, then $s=g+m-1$, and the condition $3s-m\ge 6$ is equivalent to $3g+2m\ge 9$. This condition is satisfied, except in the cases where either $g=1$ and $m\le 2$, or $g=2$ and $m=1$. However, as we have already mentioned, there is no minimal measured lamination on a projective plane having at most $2$ boundary components, nor on a Klein bottle with one boundary component.

If $S$ is an orientable surface of genus $g$, then $s=2g+m-1$, and the condition $3s-m\ge 6$ is equivalent to $6g+2m\ge 9$. This condition is satisfied, except in the cases where either $g=1$ and $m=1$, or $g=0$ and $m\le 4$.   

If $g=m=1$, then $S$ is a torus with a single boundary component, whose fundamental group $F_2$ is amalgamated in the corresponding splitting of $F_N$ to a group $G_v$ along its boundary curve $c$. The curve $c$ is trivial in the abelianization of $F_N$ (it is a commutator), while it is not in the abelianization of $G_v$ (it represents a primitive element in $G_v$). As $\tau$ is good, the element $c$ belongs to a leaf of the subtree $\tau_v$ of $\tau$, whose root subgroup is $G_v$. Hence $k(\tau)<k(\tau_v)$, and as $3\text{rk}(\tau_v)-2=3N-5$, we deduce that $$3\text{rk}(G_v)-k(\tau_v)-2\le 3N-k(\tau)-6,$$ which is enough to conclude.

If $g=0$ and $m\le 4$, then $S$ is a sphere with $4$ boundary components (there is no minimal lamination on a sphere having at most $3$ boundary components). Using goodness of $\tau$, and the fact that the product of the elements corresponding to its boundary curves is equal to $1$, we get that $$k(\tau)<\sum_{H\in\mathcal{C}}k(\tau_H)$$ (where $\tau_H$ denotes the subtree of $\tau$ whose root subgroup is $H$), and we conclude similarly.
\qed 

\subsubsection{Random products of mapping classes of surfaces}

In the case where $gr(\mu)$ is contained in the mapping class group $\text{Mod}(S)$ of a compact, orientable, hyperbolic surface $S$ with nonempty totally geodesic boundary, the length of the isotopy class of any simple closed curve on $S$, measured in any hyperbolic metric on $S$, is bi-Lipschitz equivalent to the length of the corresponding element of the (free) fundamental group of $S$. Given an oriented compact surface $S$ with genus $g$ and $s$ boundary components, the \emph{complexity} of $S$ is defined as $\xi(S):=3g+s-3$. In the case where $s\ge 1$, the rank $N$ of the fundamental group of $S$ satisfies $\xi(S)=\frac{3N-2}{4}$. Corollary \ref{growth} therefore yields the following statement, which refines Karlsson's results in \cite{Kar12}. 

\begin{cor}\label{mcg}
Let $S$ be a compact hyperbolic oriented surface with nonempty totally geodesic boundary. Let $\mu$ be a probability measure on $\text{Mod}(S)$, with finite first moment with respect to Thurston's asymmetric metric on the Teichmüller space of $S$. Then there exist (deterministic) $\lambda_1,\dots,\lambda_p> 0$ such that for almost every sample path $(\Phi_n)_{n\in\mathbb{N}}$ of the random walk on $(\text{Mod}(S),\mu)$, all simple closed curves $\alpha$ on $S$, and all hyperbolic metrics $\rho$ on $S$, either $l_{\rho}(\Phi_n(\alpha))$ grows subexponentially, or there exists $i\in\{1,\dots,p\}$ such that $$\lim_{n\to +\infty}\frac{1}{n}\log {l_{\rho}(\Phi_n(\alpha))}=\lambda_i.$$ In addition, we have $p\le \xi(S)$.
\end{cor}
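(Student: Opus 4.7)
The plan is to transport the argument of Corollary \ref{growth} from $\text{Out}(F_N)$ and outer space to $\text{Mod}(S)$ and Teichmüller space equipped with Thurston's asymmetric metric. Since $S$ has nonempty totally geodesic boundary, $\pi_1(S)$ is free of finite rank and, for any hyperbolic metric $\rho$ on $S$, the length $l_\rho(\alpha)$ of a simple closed curve $\alpha$ is bi-Lipschitz equivalent to the word length of the corresponding element of $\pi_1(S)$; so the growth rates claimed in the statement do not depend on $\rho$, and it suffices to work with a single model of hyperbolic length.

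I would follow the same three-step strategy as in Section \ref{sec-rw-2}. First, by Walsh's identification in \cite{Wal11} of the horoboundary of $\text{Teich}(S)$ for Thurston's metric with the space of projective measured laminations, combined with Karlsson--Ledrappier's theorem (Theorem \ref{KL}), almost every sample path is directed by a random projective measured lamination $\mathcal{L}$: simple closed curves transverse to $\mathcal{L}$ grow at speed $l$ (the drift), while curves disjoint from the support of $\mathcal{L}$ grow at a slower rate. Second, I would iterate on the essential subsurface filled by the complement of $\mathcal{L}$: the surface analogue of Proposition \ref{nonelementary}, obtained by applying Lemma \ref{disjoint-translations} to the map sending a lamination to the (finite) collection of its complementary isotopy classes of subsurfaces, ensures that the supporting subsurface of $\mathcal{L}$ has finite $\text{gr}(\mu)$-orbit; then the surface analogues of Propositions \ref{return-measure} and \ref{return} control first-return measures on stabilizers of subsurface-isotopy classes, using the coarse $\text{Mod}$-equivariant projection from $\text{Teich}(S)$ to the Teichmüller space of a subsurface. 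This produces an Oseledets-type filtration by essential subsurfaces and a finite, deterministic family of Lyapunov exponents $\lambda_1,\dots,\lambda_p > 0$.

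Third, the bound $p \le \xi(S)$ comes from running the Euler-characteristic argument of Theorem \ref{count-2} in the surface setting: each non-leaf node corresponds to a measured lamination on a subsurface $Y$ whose support fills pairwise disjoint essential subsurfaces $Y_1,\dots,Y_k \subseteq Y$ of positive complexity, with $\sum_j \xi(Y_j)$ bounded by $\xi(Y)$ minus the contribution of the splitting curves, so that an induction on complexity yields the desired bound. The main obstacle lies in adapting the \emph{goodness} argument of Section \ref{sec-count-2} to the surface setting: the boundary curves of the filling subsurfaces play the role of special curves and must be elliptic at all subsequent stages of the filtration. This requires a surface-theoretic analogue of Corollary \ref{good} ensuring that stationary measures on $\mathbb{PML}(S)$ are concentrated on laminations all of whose boundary multicurves have finite $\text{gr}(\mu)$-orbit, so that every such boundary curve ends up as a leaf of the filtration and is thus prevented from contributing to a new positive Lyapunov exponent.
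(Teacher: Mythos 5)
Your outline is essentially correct, but it takes a genuinely different (and much longer) route than the paper. The paper deduces Corollary \ref{mcg} in one step from Corollary \ref{growth}: since $\partial S\neq\emptyset$, the group $\pi_1(S)$ is free of some rank $N$, a mapping class induces an outer automorphism of $F_N$, and the hyperbolic length $l_\rho(\alpha)$ is bi-Lipschitz equivalent to the cyclic word length $||\alpha||$ --- so the exponential growth rates of $l_\rho(\Phi_n(\alpha))$ and of $||\Phi_n(\alpha)||$ coincide, the moment hypothesis transfers, and the bound $p\le\frac{3N-2}{4}$ becomes $p\le\xi(S)$ via the identity $\xi(S)=\frac{3N-2}{4}$ valid for surfaces with boundary. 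What you propose instead --- running Karlsson--Ledrappier against Walsh's identification of the horoboundary of $(\mathrm{Teich}(S),d_{Th})$ with $\mathcal{PMF}$, applying Lemma \ref{disjoint-translations} to the support map to get finite $gr(\mu)$-orbits, inducting via first-return measures on stabilizers of subsurfaces, and counting with complexity --- is precisely the paper's sketch of proof of Theorem \ref{theo-mcg}, whose raison d'être is to cover \emph{closed} surfaces, where no free-group shortcut exists. For the bounded case your route is overkill, though it does buy independence from the $\mathrm{Out}(F_N)$ machinery and avoids comparing the Thurston and Lipschitz metrics. Two simplifications you could make in the intrinsic approach: the ``goodness'' difficulty you flag largely evaporates for surfaces, since the boundary multicurve of the support of the directing lamination is determined by that support and hence already has finite $gr(\mu)$-orbit (so those curves grow subexponentially and are absorbed into the leaves automatically); and the bound $p\le\xi(S)$ needs no Euler-characteristic or unfolding argument, only the additivity estimate $\sum_i\xi(S_i)\le\xi(S)$ for pairwise disjoint essential subsurfaces of positive complexity, each non-leaf node contributing at least one such subsurface.
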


By combining our arguments with Karlsson's \cite{Kar12}, Corollary \ref{mcg} can also be proved in the case of a closed orientable surface. Proper subsurfaces play the role of proper free factors of $F_N$, and the filtration of $F_N$ provided by Theorem \ref{growth} is replaced by a decomposition of the surface into subsurfaces.

\begin{theo}\label{theo-mcg}
Let $S$ be a compact hyperbolic oriented surface with (possibly empty) totally geodesic boundary. Let $\mu$ be a probability measure on $\text{Mod}(S)$ having finite first moment with respect to Thurston's asymmetric metric on the Teichmüller space of $S$. Then there exists a decomposition of $S$ into subsurfaces $\{S_i\}_{1\le i\le k}$, and for all $i\in\{1,\dots,k\}$, a \emph{Lyapunov exponent} $\lambda_i\ge 0$, so that for almost every sample path $(\Phi_n)_{n\in\mathbb{N}}$ of the random walk on $(\text{Mod}(S),\mu)$, all simple closed curves $\alpha$ on $S$, and all hyperbolic metrics $\rho$ on $S$, the limit $$\lim_{n\to +\infty}\frac{1}{n}\log l_{\rho}(\Phi_n(\alpha))$$ exists, and is equal to the maximum of the Lyapunov exponents of a subsurface $S_i$ crossed by $\alpha$ (in the case where $\alpha$ is one of the curves defining the decomposition of $S$, the limit is equal to $0$). The number of positive Lyapunov exponents is bounded by $\xi(S)$.
\end{theo}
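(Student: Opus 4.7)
The plan is to mimic the proof of Theorem \ref{Oseledets-2} and Corollary \ref{growth} for $\text{Out}(F_N)$, with the mapping class group of $S$ acting on the Teichmüller space $\text{Teich}(S)$ equipped with Thurston's asymmetric metric in place of the action of $\text{Out}(F_N)$ on $CV_N$. The key input that replaces Theorem \ref{horocompactification} is Walsh's description \cite{Wal11} of the horocompactification of $\text{Teich}(S)$ as Thurston's compactification by projectivized measured laminations, with Busemann points corresponding to uniquely ergodic filling laminations, and the horofunction associated to a lamination $\eta$ having the form $\psi_\eta(x)=\log\sup_\gamma (i(\gamma,\eta)/l_x(\gamma)) - \log\sup_\gamma (i(\gamma,\eta)/l_b(\gamma))$. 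Fix a basepoint $b\in\text{Teich}(S)$, a hyperbolic metric $\rho$, and note that word lengths in the fundamental group and hyperbolic lengths $l_\rho(\alpha)$ of simple closed curves are bi-Lipschitz equivalent, so it suffices to control $l_b(\Phi_n\alpha)=l_{\Phi_n^{-1}b}(\alpha)$.

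First, I would apply the Karlsson--Ledrappier theorem (Theorem \ref{KL}, which was used by Karlsson in \cite{Kar12} in exactly this setting) to obtain, for almost every sample path $(\Phi_n)$, a random horofunction $\psi_{\eta(\omega)}$ with $-\frac{1}{n}\psi_{\eta(\omega)}(\Phi_n^{-1}b)\to l$, where $l$ is the drift for Thurston's metric. If $l=0$ then all curves have subexponential growth and the trivial decomposition works. Otherwise the horofunction is unbounded from below, so by the surface analogue of Theorem \ref{Busemann-dense} (Walsh's characterization), $\eta(\omega)$ is supported on a filling lamination. Using Proposition \ref{Karlsson-Ledrappier} together with an analogue of Proposition \ref{nonelementary}, I would show that one may choose $\eta(\omega)$ so that for every $\mu$-stationary measure on $\text{Teich}(S)(\infty)$, the support of the measure gives laminations whose minimal components have subsurface supports whose isotopy classes have finite $gr(\mu)$-orbits (this is the Kaimanovich--Masur argument \cite[Lemma 2.2.2]{KM96} applied through the $gr(\mu)$-equivariant measurable map sending a lamination to the unordered tuple of isotopy classes of subsurfaces filled by its minimal components, which takes values in a countable set).

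Second, I would read off from $\eta(\omega)$ a decomposition of $S$: the subsurfaces $Y_1,\dots,Y_k$ are those filled by the minimal components of $\eta(\omega)$, together with the complementary subsurface $Y_0$ on which $\eta(\omega)$ has no support. By the intersection-number formula for $\psi_\eta$, any simple closed curve $\alpha$ that crosses some $Y_j$ with $j\ge 1$ (i.e. has positive intersection with the corresponding minimal component) satisfies $\lim\frac{1}{n}\log l_b(\Phi_n\alpha)=l$, as in the proof of Theorems \ref{ergo-1}--\ref{ergo-2}. For curves $\alpha$ lying in $Y_0$ (or lying entirely in the boundary curves of the decomposition), the top Lyapunov exponent $l$ does not apply, and I would study their growth by passing to the finite-index subgroup $A<gr(\mu)$ stabilizing the isotopy class of the decomposition, taking the first return measure $\mu^A$, which still has finite first moment by the analogue of Proposition \ref{return-measure} (the restriction of Thurston's metric to each subsurface factor is Lipschitz-dominated by Thurston's metric on $S$ via the subsurface projection), and then inducting on the complexity $\xi(S)$; the induction grounds on surfaces of complexity $1$ (once-punctured tori or four-holed spheres), whose Teichmüller spaces are already handled by Karlsson's original argument or by Corollary \ref{mcg}. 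Proposition \ref{return} adapted to this setting then guarantees that the Lyapunov exponents on the descendant subsurfaces are deterministic and equal to $\frac{1}{C_A}$ times those of $\mu^A$, and a final argument identical to the last paragraph of the proof of Theorem \ref{Oseledets} promotes the a.\,s.-defined decomposition to a deterministic one.

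The bound $p\le\xi(S)$ will follow from an Euler-characteristic-style count analogous to Theorem \ref{count-2}: each positive Lyapunov exponent arises from a surface factor in the iterated decomposition that itself supports a filling minimal lamination, so it has complexity at least $1$, and subsurfaces at different levels of the decomposition have disjoint interiors; additivity of complexity under disjoint union, together with the fact that every surface supporting a minimal filling lamination satisfies $\xi\ge 1$ (ruling out annuli, pairs of pants, and once-punctured disks), gives $\sum_i \xi(Y_{i_j})\le\xi(S)$, hence $p\le\xi(S)$. The main obstacle I expect is not any individual step but the bookkeeping in the inductive descent: checking that the first return measure to the stabilizer of the decomposition genuinely has finite first moment with respect to Thurston's metric on each subsurface factor, and that restricting to a subsurface commutes appropriately with taking horofunctions (so that the induction hypothesis is applicable). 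This is the technical heart and is the analogue, in the mapping class group setting, of the subtle point handled by Proposition \ref{return-measure} for $CV_N$.
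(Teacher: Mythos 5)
Your proposal follows essentially the same route as the paper's own sketch: Karlsson--Ledrappier applied in $\text{Teich}(S)$ with Thurston's metric, Walsh's identification of the horoboundary with $\mathcal{PMF}$, the maximum-principle argument (Lemma \ref{disjoint-translations}) applied to the map sending a foliation to the supports of its minimal components to get finite $gr(\mu)$-orbits, the decomposition of $S$ into the subsurfaces filled by the minimal components of $\eta$ plus the complement, the inductive descent via first return measures to the stabilizer of the decomposition as in Propositions \ref{return-measure} and \ref{return}, and a complexity count for the bound $p\le\xi(S)$. One caveat: your claim that positivity of the drift forces $\eta(\omega)$ to be filling (via ``unbounded below $\Rightarrow$ dense orbits'' as in Theorem \ref{Busemann-dense}) is false in the Teichm\"uller setting --- Walsh showed that \emph{every} point of the horoboundary for Thurston's metric is a Busemann point (the paper states this explicitly), and already the horofunction of a single simple closed curve is unbounded from below, so no such dichotomy is available. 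This misstep is harmless here because you never actually use fillingness: you retain the complementary subsurface $Y_0$ and treat it by induction, exactly as the paper does. The remaining technical points you flag (finite first moment of the first return measure on the subsurface Teichm\"uller factors, and the deterministic upgrade of the random decomposition) are precisely the ones the paper handles by the analogues of Propositions \ref{return-measure} and \ref{return}, so your assessment of where the work lies is accurate.
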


We call such a decomposition an \emph{Oseledets decomposition} of $S$ for the measure $\mu$.

\begin{proof}[Sketch of proof of Theorem \ref{theo-mcg}]
The horoboundary of the Teichmüller space $Teich(S)$ of $S$, equipped with Thurston's asymmetric metric, has been identified by Walsh with the space $\mathcal{PMF}$ of projectified measured foliations. Applying Lemma \ref{disjoint-translations} to the map $\Theta$ that sends a measured foliation to its support (which is a disjoint union of subsurfaces of $S$), we see that all $\mu$-stationary measures on $\mathcal{PMF}$ are concentrated on the set of measured foliations whose supports have finite $gr(\mu)$-orbit. Following Karlsson's argument in \cite{Kar12}, we get for almost every sample path $(\Phi_n)_{n\in\mathbb{N}}$ of the random walk on $(\text{Mod}(S),\mu)$ the existence of $\eta\in\mathcal{PMF}$ such that for all simple closed curves $\alpha$ on $S$ such that $i(\eta,\alpha)>0$, and all hyperbolic metrics $\rho$ on $S$, we have $$\lim_{n\to +\infty}\frac{1}{n} \log l_{\rho}(\Phi_n(\alpha))=l,$$ where $l$ denotes the drift of the random walk on $(\text{Mod}(S),\mu)$. In addition, we can assume that the support of $\eta$ has finite $gr(\mu)$-orbit. The condition $i(\eta,\alpha)=0$ is equivalent to $\alpha$ lying in the complement $S'$ of the support of $\eta$ in $S$ (or $\alpha$ being one of the boundary curves of this support). Arguing by induction on the complexity of the surface, we get the existence of a decomposition of $S'$, which is an Oseledets decomposition for the first return measure on the stabilizer of $S'$. Arguing as in Proposition \ref{return}, we get that the decomposition of $S$ obtained by adding the boundary curves of $S'$ to this decomposition of $S'$ is an Oseledets decomposition for $\mu$.
\end{proof}

\bibliographystyle{amsplain}
\bibliography{/Users/Camille/Documents/Bibliographie}

\end{document}